\documentclass[12pt, english, a4paper]{amsart}

\usepackage[english]{babel}

\usepackage[normalem]{ulem}

\usepackage[utf8]{inputenc}

\usepackage{geometry}
\usepackage{setspace}

\usepackage[style=alphabetic, bibstyle=alphabetic, citestyle=alphabetic, backend=biber, doi=false, isbn=false, maxnames=99, backref=true]{biblatex} 

\usepackage[autostyle,italian=guillemets]{csquotes} 

\usepackage{indentfirst}
\usepackage[T1]{fontenc}

\usepackage[title]{appendix} 
\usepackage{booktabs} 
\usepackage{caption} 
\usepackage{threeparttable} 
\usepackage{listings}
\usepackage{subcaption}
\usepackage{comment}

\usepackage{amsmath}
\usepackage{amssymb}
\usepackage{amsthm}
\usepackage{amsfonts}
\usepackage{mathrsfs}
\usepackage{mathtools} 
\usepackage{siunitx} 
\usepackage{tikz-cd}

\usepackage{graphicx}
\usepackage[colorlinks=true, allcolors=blue]{hyperref}
\usepackage{url}
\usepackage{color}
\usepackage[dvipsnames]{xcolor}

\usepackage{algorithm}
\usepackage{algorithmicx}
\usepackage{algpseudocode}
\usepackage{chngcntr}
\usepackage{lipsum}
\usepackage{diagbox}
\usepackage{paralist}
\usepackage{float}

\newtheorem{The}{Theorem}
\newtheorem{Pro}[The]{Proposition}
\newtheorem{Lem}[The]{Lemma}
\newtheorem{Cor}[The]{Corollary}

\newtheorem{Que}[The]{Question}
\newtheorem{Not}[The]{Notation}

\theoremstyle{definition}
\newtheorem{Def}[The]{Definition}
\newtheorem{Rem}[The]{Remark}

\numberwithin{The}{section}
\numberwithin{equation}{section}

\newcommand{\C}{\mathbb{C}}
\newcommand{\R}{\mathbb{R}}
\newcommand{\Q}{\mathbb{Q}}
\newcommand{\Z}{\mathbb{Z}}

\newcommand{\Qbar}{\overline{\Q}}

\DeclareMathOperator{\mfrak}{\mathfrak{m}}

\DeclareMathOperator{\Cfrak}{\mathfrak{C}}
\DeclareMathOperator{\Jfrak}{\mathfrak{J}}

\DeclareMathOperator{\Acal}{\mathcal{A}}
\DeclareMathOperator{\Bcal}{\mathcal{B}}
\DeclareMathOperator{\Ccal}{\mathcal{C}}

\DeclareMathOperator{\Hcal}{\mathcal{H}}

\DeclareMathOperator{\Jcal}{\mathcal{J}}

\DeclareMathOperator{\Ocal}{\mathcal{O}}

\DeclareMathOperator{\Abb}{\mathbb{A}}

\DeclareMathOperator{\Fbb}{\mathbb{F}}
\DeclareMathOperator{\Gbb}{\mathbb{G}}

\DeclareMathOperator{\abf}{\mathbf{a}}

\DeclareMathOperator{\Cbf}{\mathbf{C}}
\DeclareMathOperator{\Jbf}{\mathbf{J}}

\DeclareMathOperator{\Jac}{Jac}

\DeclareMathOperator{\Aut}{Aut}
\DeclareMathOperator{\End}{End}
\DeclareMathOperator{\SL}{SL}
\DeclareMathOperator{\GL}{GL}
\DeclareMathOperator{\GCD}{GCD}

\DeclareMathOperator{\Gal}{Gal}
\DeclareMathOperator{\M}{M}
\DeclareMathOperator{\Stab}{Stab}

\usepackage{float}   
\usepackage{caption} 



\title{Automorphism groups of curves with simple Jacobians}

\author{Luca Mauri}
\address{\small University of Pisa, Department of Mathematics, Largo Bruno Pontecorvo 5, Pisa}
\email{luca.mauri@phd.unipi.it}

\date{}

\begin{document}

\keywords{Jacobian varieties, simple abelian varieties, automorphisms of curves, endomorphisms of abelian varieties, superelliptic curves.}

\subjclass[2020]{14H40, 14K12, 14H37, 14K05, 14H45}


\begin{abstract}
    We classify all automorphism groups of smooth, projective, connected curves over an algebraically closed field of characteristic $0$ with simple Jacobians. For every nontrivial group in this classification, we construct a family of curves with that automorphism group and prove that a density-one set of members has simple Jacobian. These families yield infinitely many new examples of unlikely intersections in the moduli spaces of principally polarized abelian varieties.
\end{abstract}


\maketitle

\setcounter{tocdepth}{1}

\tableofcontents


    \section{Introduction}
    \label{sec: Introduction}
            
        Let $k$ be an algebraically closed field of characteristic $0$ and let $\Acal$ be a principally polarized abelian variety over $k$. Denote by $\End( \Acal )$ the endomorphism ring of $\Acal$ and by $\End^{0}( \Acal ) \coloneqq \End (\Acal) \otimes_{\Z} \Q$ its endomorphism algebra. For simple abelian varieties, Albert's classification \cite{AlbertClassification, AlbertClassificationI, AlbertClassificationII} gives strong restrictions on the possible $\Q$-algebras $\End^{0}(\Acal)$, and work of Shimura \cite{ShimuraClassification} shows that, over $\C$, essentially all algebras allowed by these restrictions do occur, up to a small list of exceptional cases. Shimura's result can then be extended to $\Qbar$ by specialization. Since every abelian variety is isogenous to a product of simple abelian varieties, this gives a rather complete picture of endomorphism algebras of abelian varieties up to isogeny.
    
        The situation is very different for Jacobians of curves (we will always assume our curves to be smooth, projective, and connected, unless otherwise specified). If $\Ccal / k$ is a curve of genus $\ge 2$ and $\Jcal$ is its Jacobian, $\End^{0}(\Jcal)$ is constrained not only by the theory of abelian varieties, but also by the fact that $\Jcal$ comes from a curve. No classification is known of the endomorphism algebras of simple Jacobians. Moreover, even such a classification would not immediately yield a classification for all Jacobians, since a Jacobian need not be isogenous to a product of simple Jacobians.
    
        The natural action of $\Aut(\Ccal)$ on $\Jcal$ gives an injection of $\Aut(\Ccal)$ into the group of finite-order units of $\End(\Jcal)$; see \cite[Theorem~12.1, \S~III]{milneAV}. This leads to the following question.
    
        \begin{Que}
        \label{Main question; sec: Introduction}
            Let $k$ be an algebraically closed field of characteristic $0$. Classify the finite groups that occur as automorphism groups of curves over $k$ of genus $\ge 2$ whose Jacobians are simple.
        \end{Que}
    
        Our main result gives a complete answer to this problem.

        \begin{The}
        \label{Main theorem; sec: Introduction}
            Let $k$ be an algebraically closed field of characteristic $0$, and let $\Ccal / k$ be a curve of genus $\ge 2$ with simple Jacobian. Then $\Aut ( \Ccal )$ is isomorphic to one of the following groups:
            \begin{enumerate}[(i)]
                \item the cyclic group $C_{p^{n}}$ for some prime $p$ and some positive integer $n$;
                \item the cyclic group $C_{pq}$ for some distinct primes $p, q$ such that $pq \neq 6$;
                \item the generalized quaternion group $G_{2^{n}}$ (cf.~Theorem~\ref{the: classification Aut of hyperelliptic curves; sec: Preliminaries}) for some positive integer $n$;
                \item the trivial group.
            \end{enumerate}
            Moreover, every group in the list is the automorphism group of some curve over $k$ of genus $\ge 2$ with simple Jacobian. 
        \end{The}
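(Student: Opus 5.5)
The plan has two halves: first restrict the group, then realize each group on the list.

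\textbf{Restricting the group.} Let $G=\Aut(\Ccal)$ and $\Jcal=\Jac(\Ccal)$ with $g\ge 2$. For any subgroup $H\le G$, the quotient map $\Ccal\to\Ccal/H$ gives an isogeny
\[
\Jcal \sim \Jac(\Ccal/H)\times \mathcal{P},
\]
with $\Jac(\Ccal/H)$ identified with the image of the norm element $\sum_{h\in H}h$. Simplicity therefore forces, for every nontrivial $H$, either $g(\Ccal/H)=0$ or $\Ccal/H$ of genus $g$. The second case is impossible by Riemann--Hurwitz for $g\ge 2$. Hence every nontrivial subgroup has rational quotient.

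Next I use the group algebra. The map $\Q[G]\to\End^0(\Jcal)$ lands in a division algebra, since $\Jcal$ is simple. So its image is a quotient of $\Q[G]$ that is a division algebra, namely a single simple factor $D$ of $\Q[G]$ in which $G$ embeds faithfully (the action is faithful for $g\ge 2$). A finite group embedding in the unit group of a division ring is a subgroup of $D^\times$, so by Amitsur's classification it has all Sylow subgroups cyclic or generalized quaternion. Moreover, every nontrivial idempotent $e_H=\frac{1}{|H|}\sum_{h\in H} h$ must act as zero, which is exactly $g(\Ccal/H)=0$ again. Thus $D$ must be a factor of $\Q[G]$ killing all $e_H$ for $H\ne 1$, and $G$ must act fixed-point-freely on $D$.

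Next I combine these constraints. Take an element of prime order $p$ and consider $\Ccal\to\Ccal/\langle\sigma\rangle\cong\mathbb{P}^1$. If $G$ contained a subgroup $C_p\times C_p$, then the relation $\sum_{H\text{ index }p} e_H$ up to correction gives $p\,\Jcal\sim\bigoplus_H \Jac(\Ccal/H)=0$, a contradiction; so $G$ has all Sylow subgroups cyclic or quaternion. Now every nontrivial subgroup has genus-zero quotient, so $G$ embeds in $\Aut$ of a curve with a fixed-point structure governed by Theorem~\ref{the: classification Aut of hyperelliptic curves; sec: Preliminaries} style arguments. In particular, $\Ccal$ is a cyclic cover of $\mathbb{P}^1$ of degree $p$ for each order-$p$ element, and whenever $G$ contains an involution, $\Ccal$ is hyperelliptic. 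I would then split into two cases.

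\emph{Hyperelliptic case.} The reduced group $G/\langle\iota\rangle$ lies in the list of finite subgroups of $\mathrm{PGL}_2$. The condition that $\Ccal/H$ be rational for all $H$ eliminates the groups whose subgroups not containing $\iota$ give positive-genus quotients. This should leave $C_{2^n}$, $C_{2p}$ and $G_{2^n}$, and should exclude $C_6$ (there $\Ccal/C_3$ or $\Ccal/C_2'$ has positive genus, or $\Jcal$ splits via $\mathbb{Q}(\zeta_3)$ considerations).

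\emph{Non-hyperelliptic case, $|G|$ odd.} The group is metacyclic. Since $\Ccal/H\cong\mathbb{P}^1$ for all $H$ and there are few branch points, I argue that nonabelian groups and three distinct prime factors yield a nontrivial isogeny decomposition. The tool is the relation $e_1 = \sum_i e_{H_i} - \ldots$ in $\Q[G]$ for Frobenius groups $C_p\rtimes C_q$ (Kani--Rosen), which writes a multiple of $\Jcal$ as a product of Jacobians of proper quotients. This leaves $C_{p^n}$ and $C_{pq}$.

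\textbf{Realization.} For $C_{p^n}$ and $C_{pq}$ I use superelliptic curves $y^m=f(x)$ with $f$ generic. The Jacobian has a new part with CM-type action by $\Q(\zeta_m)$ and, generically, $\End^0=\Q(\zeta_m)$. Simplicity of the whole Jacobian requires the old parts $\Jac(\Ccal/H)$ to vanish, which I arrange by the branching data, for example $y^{p^n}=x(x-1)(x-t)$-type models with exponents chosen so that every intermediate quotient is rational. For $G_{2^n}$ I use hyperelliptic curves with that reduced automorphism group. The density-one simplicity follows from a Hilbert irreducibility or big-monodromy argument on the $\Q(\zeta_m)$-module $H^1$. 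Here I compute that the Mumford--Tate/monodromy group is the full unitary group of signature $(a,b)$, and I check that the automorphism group is exactly $G$ for generic parameter.

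The main obstacle is twofold. In the classification, it is excluding $C_6$ and the nonabelian odd groups, where the idempotent relations must be found by hand. In the construction, it is proving simplicity, which needs a monodromy computation. I would first try Deligne--Mostow/Rohde-type results on monodromy of cyclic covers.
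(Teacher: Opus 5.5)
\textbf{Restriction half.} Your first step, that every nontrivial $H\le\Aut(\Ccal)$ has $\Ccal/H\cong\mathbb{P}^1$, is the paper's. Replacing the paper's fixed-point-free action on $H^0(\Ccal,\Omega^1_{\Ccal})$ plus Wolf's classification by ``$G$ is a finite subgroup of the division algebra $\End^0(\Jcal)$'' plus Amitsur is a legitimate substitute. It gives at once the unique involution, cyclic or generalized quaternion Sylow subgroups, and metacyclicity in odd order, and it already excludes $C_p\times C_p$ and the Frobenius groups $C_p\rtimes C_q$.

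The gap is in how you propose to finish. Suppose $G\subset\Delta^\times$ with $\Delta$ a division algebra. Every $e_H$ with $H\neq 1$ is an idempotent, and $e_H=1$ would give $h=h\,e_H=e_H=1$ for all $h\in H$; so $e_H$ maps to $0$ in $\Delta$. Hence no relation $1=\sum_{H\neq 1}c_He_H$ can hold in $\Q[G]$: Kani--Rosen relations can never split $\Jcal$ once $G$ embeds in a division ring. But every group you still need to exclude does embed:
$C_6$, $C_{4p}$, $C_{2p^2}$, $C_{p^2q}$ and $C_{pqr}$ in cyclotomic fields; the dicyclic groups $C_p\rtimes C_4$, $\SL(2,\Fbb_3)$ and $\SL(2,\Fbb_5)$ in definite quaternion algebras; and odd nonabelian metacyclic groups such as $C_7\rtimes C_9$, in the cyclic division algebra $(\Q(\zeta_{21})/\Q(\sqrt{-3},\sqrt{-7}),\sigma,\zeta_3)$. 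The Frobenius groups your Kani--Rosen step targets are already gone.

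So the exclusion has to be geometric, and the missing ingredient is Riemann--Hurwitz on the superelliptic model. For $\sigma$ of order $pq$, write $\Ccal$ as $y^{pq}=f(x)$; the quotients $y^p=f(x)$ and $y^q=f(x)$ must be rational, which forces $g(\Ccal)\in\{(p-1)(q-1),(p-1)(q-1)/2\}$. This does the remaining work:
\begin{itemize}
\item It rules out elements of order $pqr$.
\item An element of order $6$ forces $g=2$, where the hyperelliptic table leaves only groups with several involutions. For $C_6$ itself every subgroup has rational quotient; what kills it is that a genus-$2$ curve with an automorphism of order $6$ always has a second involution.
\item An element of order $2p$ forces $g\le p-1$, which with the table and uniqueness of the involution leaves only $C_{2p}$.
\item Combined with a branch-point count for a cyclic $C_{q^2}$ ($q$ odd) acting on $\Ccal/\langle\sigma_2\rangle\cong\mathbb{P}^1$, with $\sigma_2$ normal of order $p$, it disposes of $C_{p^aq^b}$ with $b\ge 2$ and of the nonabelian $C_m\rtimes C_n$ surviving Wolf/Amitsur.
\end{itemize}
Neither your hyperelliptic paragraph (``should leave'', ``should exclude'') nor your odd-order paragraph supplies this.

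\textbf{Realization half.} There are three further gaps here.

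For $C_{pq}$ there is no ``generic $f$''. By the same genus constraint, $\Ccal\to\Ccal/C_{pq}$ has either three branch points (indices $p,q,pq$: the Catalan curve) or four (indices $p,p,q,q$). In the four-point case the curve is $z^p=(x^q-a_1^q)/(x^q-a_2^q)$, which carries the extra involution $(x,z)\mapsto(a_1a_2/x,\gamma/z)$ with $\gamma^p=a_1^q/a_2^q$. So it never has simple Jacobian together with automorphism group $C_{pq}$. The only example is $y^p=x^q-1$, and its simplicity must be imported (Hazama, Goodson).

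The trivial group is not addressed at all. One option is a non-hyperelliptic curve of genus $\ge 3$ over a number field with open $\ell$-adic image, plus Faltings.

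For $C_{p^n}$ with $n\ge 2$ and for $G_{2^n}$, ``the monodromy is the full unitary group, so generically $\End^0=\Q(\zeta_m)$'' is asserted, not proved, and it is false in cases your sketch covers. The branching is forced: two points of index $p^n$ and all others of index $p$. In particular $y^{p^n}=x(x-1)(x-t)$ itself has the positive-genus quotient $y^p=x(x-1)(x-t)$. For $p=2$, your four-point example becomes $y^2=x(x^{2^{n-1}}-a_1)(x^{2^{n-1}}-a_2)$. This curve always has the non-hyperelliptic involution $(x,y)\mapsto(\gamma/x,\beta y/x^{2^{n-1}+1})$, so its Jacobian is never simple; this is why the paper needs $L\ge 3$ when $p=2$.

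In place of your monodromy step the paper uses:
\begin{itemize}
\item CM computations for the base cases (Lang's criterion via the stabilizer of the CM type);
\item degenerations to reducible fibers $\Jac(\Ccal_1)\times\Jac(\Ccal_2)$;
\item tracking of the $(1-\Phi)$-torsion (resp.\ $2$-torsion) through two different degenerations to exclude splittings;
\item Masser's specialization theorem to obtain curves over $\Qbar$, hence over any $k$.
\end{itemize}
You need an argument of that strength, or a genuinely completed monodromy computation together with a specialization theorem.
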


        \begin{Rem}
        \label{rem: Main theorem; sec: Introduction}
            The action of $C_{p^{n}}$ (resp.~$G_{2^{n}}$) on a curve $\Ccal / k$ of genus $g \ge 2$ with simple Jacobian provides an embedding of $\Q ( \zeta_{p^{n}} )$ (resp.~the quaternion algebra $\left( \frac{ ( \zeta_{2^{n+1}} - \zeta_{2^{n+1}}^{-1} )^{2}, \; -1 }{ \Q( \zeta_{2^{n+1}} + \zeta_{2^{n+1}}^{-1} ) } \right)$) into $\End^{0} \bigl( \Jac ( \Ccal ) \bigr)$, which implies the constraint $\varphi ( p^{n} ) \mid 2g$ (resp.~$2^{n+1} \mid 2g$). We show that for every such genus $g$, the group $C_{p^{n}}$ (resp.~$G_{2^{n}}$) is the automorphism group of some curve over $k$ of genus $g$ with simple Jacobian, unless $p = 2$ and $g \in \bigl\{ \varphi ( 2^{n} ) / 2, \varphi ( 2^{n} ) \bigr\}$ (resp.~$g = 2^{n}$).
        \end{Rem}

        The proof of this theorem consists of two rather different parts. To prove the first statement, we analyze the action of $\Aut ( \Ccal )$ on the $k$-vector space of regular differential forms on $\Ccal$ and show that simplicity of the Jacobian forces this representation to be fixed-point-free. We then use the classification of finite groups admitting such representations, due to Wolf \cite[Theorems~6.1.11 and~6.3.1]{WolfBookFixRep}, together with strong constraints coming from the geometry of superelliptic curves, to exclude all groups except those appearing in Theorem~\ref{Main theorem; sec: Introduction}. 
    
        The second part of the proof is constructive (and significantly harder). Here, the main difficulty is to produce (families of) curves with prescribed automorphism groups and simple Jacobians, as there are few methods to prove the simplicity of a Jacobian. 
        We now describe the main results concerning the constructive part.
        They use the following notion of density, which we adopt throughout the paper:
        \begin{Def}
        \label{def: 100 percent meaning; sec: Introduction}
            Let $U \subseteq \Abb_{\Qbar}^{L}$ be a non-empty open subscheme and let $K$ be a number field. Denote by $H$ the absolute multiplicative Weil height on $K^{L}$, see Definition~\ref{def: height; subsec: specialization methods; sec: Preliminaries}. We say that a certain property $\mathcal{P}$ holds for $100 \%$ of the points in $U ( \Qbar ) \cap K^{L}$ if
            \begin{equation*}
                \lim_{ B \to \infty } \frac{ \#  \bigl\{ P \in U ( \Qbar ) \cap K^{L} \colon H ( P ) \le B \text{ and } \mathcal{P} \text{ holds for } P \bigr\} }{ \# \bigl\{ P \in U ( \Qbar ) \cap K^{L} \colon H ( P ) \le B \bigr\} } = 1 .
            \end{equation*}
        \end{Def}
        
        \begin{The}
        \label{Main theorem Cpn; sec: Introduction}
            Let $p$ be a prime, and let $n > 1$ and $L \ge 1$ be integers. If $p = 2$, assume in addition $L \ge 3$. For $\abf = ( a_{1}, \dots, a_{L} ) \in \Abb^{L}_{\Qbar} ( \Qbar )$, set $f_{\abf} ( x ) \coloneqq x \cdot \prod_{ l = 1 }^{L} \bigl( x^{ p^{ n - 1 } } - a_{l} \bigr)$, and let $U \subset \Abb^{L}_{\Qbar}$ be the non-empty open subscheme defined by $\operatorname{disc}_{x} ( f_{\abf} ) \neq 0$. For every $\abf \in U ( \Qbar )$, let $\Cfrak_{\abf} / \Qbar$ denote the smooth projective model of the affine curve 
            \begin{equation*}
                y^{p} = f_{\abf} ( x ) = x \cdot \prod_{ l = 1 }^{L} \bigl( x^{ p^{ n - 1 } } - a_{l} \bigr) .
            \end{equation*}
            For every number field $K$ and for $100 \%$ of the points $\abf \in U ( \Qbar ) \cap K^{L}$, one has $\Aut_{\Qbar} ( \Cfrak_{\abf} ) \cong C_{ p^{n} }$, and the Jacobian $\Jfrak_{\abf} \coloneqq \Jac ( \Cfrak_{\abf} )$ is (geometrically) simple, with $\End_{\Qbar} (\Jfrak_{\abf} ) \cong \Z [ \zeta_{ p^{n} } ]$. Moreover, if $L\ge 2$, the curves $\Cfrak_{\abf}$ obtained in this way represent infinitely many distinct isomorphism classes over $\Qbar$, and their Jacobians represent infinitely many distinct isogeny classes over $\Qbar$.
        \end{The}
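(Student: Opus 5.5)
Here is the plan: treat the family as a family of superelliptic curves with a large built-in automorphism, prove the generic member has endomorphism algebra exactly $\Q(\zeta_{p^n})$ with $\Z[\zeta_{p^n}]$ as order, and then pass to $100\%$ of specializations by a Hilbert-irreducibility-type specialization theorem (presumably the one recalled in the preliminaries on specialization methods).

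\textbf{The automorphism of order $p^n$.} Let $\zeta=\zeta_{p^n}$. The map
\[
\sigma\colon (x,y)\mapsto (\zeta^{p}x,\ \zeta y)
\]
preserves the equation. Indeed $x^{p^{n-1}}$ is invariant under $x\mapsto\zeta^p x$, since $(\zeta^p)^{p^{n-1}}=1$. Hence $f_{\abf}(\zeta^p x)=\zeta^p f_{\abf}(x)$, which matches $(\zeta y)^p=\zeta^p y^p$. The automorphism $\sigma$ has order $p^n$, and $\sigma^{p^{n-1}}$ is the superelliptic automorphism $y\mapsto \zeta_p y$.

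\textbf{The generic member.} By Riemann--Hurwitz the genus is $g=(p-1)(Lp^{n-1})/2$ when $\deg f=Lp^{n-1}+1$ is prime to $p$; note $\varphi(p^n)\mid 2g$. The group $\langle\sigma\rangle$ acts on regular differentials. I would compute this action explicitly on the standard basis $x^i\,dx/y^j$ and check that no eigenvalue of $\sigma$ is a root of unity of order $<p^n$. This gives $\Q(\zeta)\hookrightarrow\End^0(\Jfrak_{\abf})$ with $H^0(\Omega)$ a sum of primitive characters, and identifies the CM-type-like signature of the action.

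To show the generic Jacobian is simple with $\End^0=\Q(\zeta)$, I would use the dimension of the family. The parameter space has dimension $L$ modulo scaling, so $L-1$ effective moduli. I would compare this with the dimension of the Shimura variety of PEL type attached to $\Q(\zeta)$ with the computed signature. By Moonen/Deligne–Mostow-type arguments (or monodromy computations à la Zarhin/Xue–Zarhin for superelliptic curves), the image of the family is not contained in a smaller special subvariety. Concretely, I would compute the monodromy of $H^1$ of the family: the Galois group of $f_{\abf}$ over $\Q(a_1,\dots,a_L)$ is large, being a wreath product acting on the roots. I would then use Zarhin-style doubly-transitivity arguments, applied to the quotient curve $y^p=x\prod(x-a_l)$ via $x\mapsto x^{p^{n-1}}$ as needed, to get $\End(\Jfrak_{\text{gen}})=\Z[\zeta]$. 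The extra hypothesis $L\ge3$ for $p=2$ reflects small cases where the Mumford–Tate group degenerates or extra isogeny factors appear.

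\textbf{Specialization and the automorphism group.} With the generic endomorphism ring known, the specialization theorem gives $\End(\Jfrak_{\abf})=\Z[\zeta]$ outside a thin set. Thin sets have density zero for the height count of Definition~\ref{def: 100 percent meaning; sec: Introduction}. Simplicity follows because $\End^0$ is a field of degree $\varphi(p^n)$ acting on a variety of dimension $g$; a decomposition would produce zero divisors.

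For the automorphism group, $\Aut(\Cfrak_{\abf})$ injects into the torsion units of $\Z[\zeta]$, which are $\mu_{2p^n}$ (or $\mu_{p^n}$ for $p=2$). By Theorem~\ref{Main theorem; sec: Introduction} the group must be cyclic of prime-power order or $C_{pq}$. Excluding an automorphism acting as $-1$ uses the hyperelliptic involution criterion: $-1\in\Aut$ forces the curve to be hyperelliptic. I would exclude hyperellipticity generically for $p$ odd, and for $p=2$ compare with $C_{2^{n+1}}$, using that $-1$ is already $\sigma^{2^{n-1}}$.

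\textbf{Infinitely many classes.} For $L\ge2$, non-isotriviality of the family (there are $L-1\ge1$ moduli) plus finiteness of curves in a fixed isogeny class with this structure gives infinitely many isomorphism and isogeny classes among the specializations.

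\textbf{Main obstacle.} I expect the hardest step to be the generic endomorphism computation, namely ruling out a larger Mumford–Tate group reduction. I would first try a monodromy argument, degenerating $a_l\to a_m$ to produce Picard–Lefschetz transvections generating a big subgroup of the unitary group over $\Z[\zeta]$.
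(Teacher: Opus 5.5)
The gap is the step you flag as the main obstacle: you never prove that the generic Jacobian satisfies $\End^{0}(\Jbf^{\mathrm{al}})\cong\Q(\zeta_{p^{n}})$, and that is the whole content of the theorem. Once it is known, the rest is formal, as you say: the passage to $100\%$ of the fibres (your thin-set route is an acceptable substitute for Masser's theorem), simplicity, $\End=\Z[\zeta_{p^{n}}]$ and $\Aut\cong C_{p^{n}}$. However, none of the mechanisms you list works for this family.

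\begin{itemize}
\item The Galois group of $f_{\abf}$ is not large. Over $\Q(a_{1},\dots,a_{L})$ the polynomial already splits into the $L+1$ irreducible factors $x$ and $x^{p^{n-1}}-a_{l}$, so its Galois group is not even transitive on the roots.
\item Worse, $\Jbf[1-\Phi]\cong\Fbb_{p}^{L}$ has the basis $D_{\alpha_{1}},\dots,D_{\alpha_{L}}$. Each $D_{\alpha_{l}}$ is the class of a full root orbit of $x^{p^{n-1}}-\alpha_{l}$, hence Galois-invariant over $\Qbar(\alpha_{1},\dots,\alpha_{L})$. So the module that Zarhin's method needs to be \emph{very simple} is trivial; this is exactly the obstruction the paper points out.
\item There is no quotient curve $y^{p}=x\prod_{l}(x-a_{l})$. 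Every quotient of $\Cfrak_{\abf}$ by a nontrivial subgroup of $\langle\sigma\rangle$ is $\mathbb{P}^{1}$, with coordinate a power of $x$. Any map onto a curve of positive genus would contradict the simplicity you are trying to prove.
\item A dimension count against the unitary Shimura variety cannot settle the question. The family has only $L-1$ moduli, in general far fewer than the dimension of that Shimura variety (the unlikely-intersection phenomenon). For larger $L$ it is also fewer than the dimension of sub-loci with bigger endomorphism algebra, e.g.\ those where $\Jbf^{\mathrm{al}}\sim Z_{1}\times Z_{2}$ with both factors carrying $\Q(\zeta_{p^{n}})$-multiplication.
\item A monodromy computation is possible in principle but is not carried out. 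It would have to detect that for $p=2$, $L=2$ the generic Jacobian is isogenous to a square. Also, for odd $p$ the collisions $a_{l}\to a_{m}$ give complex reflections of finite order, not Picard--Lefschetz transvections.
\end{itemize}

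The paper closes this gap by induction on $L$ through degenerations to reducible curves. Replacing $a_{l}$ by $t^{p^{n}}a_{l}$ for $l\in S$ gives a stable limit $\Ccal_{1}\cup\Ccal_{2}$ with Jacobian $\Jac(\Ccal_{1})\times\Jac(\Ccal_{2})$. Here $\Ccal_{1},\Ccal_{2}$ are members of the same family with $L-|S|$ and $|S|$ parameters. $\End^{0}(\Jbf^{\mathrm{al}})$ embeds into the endomorphism algebra of the limit, preserving the dimensions of kernels of idempotents. For odd $p$ the argument runs as follows.
\begin{itemize}
\item The case $L=1$ is CM with a CM type of trivial stabilizer.
\item The case $L=2$ is handled inside $\M(2,\Q(\zeta_{p^{n}}))$ together with a non-CM argument.
\item In the inductive step, a putative splitting $\Jbf^{\mathrm{al}}\sim Z_{1}\times Z_{2}$ is ruled out by tracking where the points $D_{\alpha_{l}}$ specialize under the two degenerations $S=\{L+1\}$ and $S=\{1\}$.
\end{itemize}
For $p=2$ the induction must start at $L=3$.

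Your argument for infinitely many classes also needs repair. Showing there are exactly $L-1$ moduli requires proving that the moduli map has finite fibres. Your ``finiteness of curves in a fixed isogeny class'' is not justified as stated: over $\Qbar$ an isogeny class can meet a positive-dimensional family infinitely often. You would have to work over the fixed field $K$ with isogeny estimates and Faltings' finiteness theorem, or invoke a result such as the one the paper cites.
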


        \begin{The}
        \label{main theorem G2n; sec: introduction}
            Let $n \ge 1$ and $L \ge 1$ be integers. For $\abf = ( a_{1}, \dots, a_{L} ) \in \Gbb_{m, \Qbar}^{L} ( \Qbar )$, set $f_{\abf} ( x ) \coloneqq x \cdot \bigl( x^{2^{n+1}} - 1 \bigr) \cdot \prod_{l = 1}^{L} \bigl[ \bigl( x^{2^{n}} - a_{l} \bigr) \cdot \bigl( x^{2^{n}} - a^{-1}_{l} \bigr) \bigr]$, and let $U \subset \Gbb_{m, \Qbar}^{L}$ be the non-empty open subscheme defined by $\operatorname{disc}_{x} ( f_{\abf} ) \neq 0$. For every $\abf \in U ( \Qbar )$, let $\Cfrak_{\abf} / \Qbar$ denote the smooth projective model of the affine curve 
            \begin{equation*}
                y^{2} = f_{\abf} ( x ) = x \cdot \bigl( x^{2^{n+1}} - 1 \bigr) \cdot \prod_{l = 1}^{L} \bigl[ \bigl( x^{2^{n}} - a_{l} \bigr) \cdot \bigl( x^{2^{n}} - a^{-1}_{l} \bigr) \bigr] .
            \end{equation*}
            For every number field $K$ and for $100 \%$ of the points $\abf \in U ( \Qbar ) \cap K^{L}$, one has $\Aut_{\Qbar} ( \Cfrak_{\abf} ) \cong G_{ 2^{n} }$, and the Jacobian $\Jfrak_{\abf} \coloneqq \Jac ( \Cfrak_{\abf} )$ is (geometrically) simple, with endomorphism algebra $\End_{\Qbar}^{0} (\Jfrak_{\abf} ) \cong \left( \frac{ ( \zeta_{2^{n+1}} - \zeta_{2^{n+1}}^{-1} )^{2}, \; -1 }{ \Q( \zeta_{2^{n+1}} + \zeta_{2^{n+1}}^{-1} ) } \right)$. Moreover, if $L\ge 2$, the curves $\Cfrak_{\abf}$ obtained in this way represent infinitely many distinct isomorphism classes over $\Qbar$, and their Jacobians represent infinitely many distinct isogeny classes over $\Qbar$.
        \end{The}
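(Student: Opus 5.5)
The proof will follow the strategy of the cyclic case, Theorem~\ref{Main theorem Cpn; sec: Introduction}, adapted to the hyperelliptic setting. There are four steps: identify the generic automorphism group, compute the endomorphism algebra of a generic member, specialize simplicity to $100\%$ of the algebraic points, and prove the statements about isomorphism and isogeny classes.

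\emph{Step 1: the automorphism group.} On $\Cfrak_{\abf}$ there are the automorphisms
\begin{equation*}
\sigma \colon (x,y) \mapsto (\zeta_{2^{n+1}}^{2} x, \zeta_{2^{n+2}}\, y), \qquad \tau \colon (x,y) \mapsto \bigl(1/x,\; \sqrt{-1}\, y / x^{d}\bigr),
\end{equation*}
with $d = \deg f_{\abf}/2$, rounded appropriately, and with the constants adjusted so that $\tau$ preserves the equation.

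The map $\tau$ is defined because $f_{\abf}$ is palindromic up to sign. This is why the factors $x^{2^{n}}-a_l$ and $x^{2^{n}}-a_l^{-1}$ come in pairs, and why $x(x^{2^{n+1}}-1)$ appears. These automorphisms generate a lift of the dihedral group $D_{2^{n+1}}$, or a cyclic group, acting on $\mathbb{P}^{1}$. Using Theorem~\ref{the: classification Aut of hyperelliptic curves; sec: Preliminaries}, I will check that this lift is $G_{2^{n}}$.

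To show that the reduced automorphism group is exactly this dihedral group, I will argue as follows. The set of branch points is invariant under the full reduced group. A larger group ($D_{m}$ with bigger $m$, $A_4$, $S_4$, $A_5$) would impose algebraic conditions on $\abf$. These conditions cut out a proper closed subset of $U$, and proper closed subsets have density zero in the sense of Definition~\ref{def: 100 percent meaning; sec: Introduction}. I will verify properness by exhibiting one parameter value for which the reduced group is exactly the dihedral one.

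\emph{Step 2: the generic endomorphism algebra.} The action of $G_{2^{n}}$ embeds the quaternion algebra $Q$ into $\End^{0}$ of the Jacobian. To show equality for the generic fibre, I plan to run the monodromy or degeneration argument used for Theorem~\ref{Main theorem Cpn; sec: Introduction}, on the quotient family. Concretely, I will degenerate the curve by letting some $a_l$ collide, either with each other or with roots of $x^{2^{n+1}}-1$. This should show that the geometric generic Jacobian is simple with $\End^{0} = Q$.

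One alternative is to decompose the Jacobian along $\sigma$ into eigenspaces of the $\Q(\zeta_{2^{n+1}})$-action. Then I would show, via the signature of the action on differentials, that the Hodge structure is generic of the relevant type. The point is that the generic Mumford--Tate group is the full centralizer of $Q$, so there are no extra endomorphisms.

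\emph{Step 3: specialization.} I will use a height-density version of Masser's or Hilbert irreducibility specialization theorem (Definition~\ref{def: height; subsec: specialization methods; sec: Preliminaries} and the surrounding specialization results). This transfers the property $\End_{\Qbar} = $ the generic endomorphism ring to $100\%$ of the points in $U(\Qbar)\cap K^{L}$.

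\emph{Step 4: infinitely many classes when $L \ge 2$.} The $j$-type invariants of the branch locus (cross-ratios) are nonconstant in $\abf$. Since each isomorphism class contains only finitely many parameters, infinitely many curves arise. For isogeny classes, I will use that an isogeny class contains only finitely many principally polarized varieties of bounded height. Alternatively, the set of $\abf$ whose Jacobians are isogenous to a fixed one has density zero by an argument of unlikely-intersection or Faltings-height type.

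I expect Step~2 to be the main obstacle, namely proving that the generic endomorphism algebra is exactly $Q$ and no larger. The quaternion case is subtler than the cyclic one: the centralizer computation must rule out the possibility that the Jacobian is isogenous to a square. This could happen because $Q$ may split at the infinite places. I would first try to handle this by a degeneration to a curve whose Jacobian has a known, explicitly simple factor. I would then combine this with the signature computation to bound the Mumford--Tate group.
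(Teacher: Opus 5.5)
\textbf{The gap is Step~2.} Your Steps~3 and~4 are essentially what the paper does, and once $\End^{0}(\Jbf^{\mathrm{al}})\cong D$ (your $Q$) is known for the geometric generic fibre, everything else is formal. For Step~2, though, you offer only a plan, and the tools you name do not suffice.

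Letting the $a_l$ collide with one another makes $2^{n+1}$ pairs of branch points merge. The stable limit is then irreducible with $2^{n+1}$ non-separating nodes, so the N\'eron fibre has toric rank $2^{n+1}$. In that situation Lemma~\ref{lem: End0 embeds and torsion bijects under specialization; subsec: specialization methods; sec: Preliminaries} (embedding of $\End^{0}$, bijection on torsion) does not apply. Letting $a_l\to\pm1$ instead produces triple collisions and a compact-type limit.

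More fundamentally, a single compact-type degeneration only gives upper bounds. Take the one of Proposition~\ref{pro: particular specialization; subsec: Specialization methods; sec: Generalized quaternions groups}, with special fibre $\Jac(\Ccal_1)\times\Jac(\Ccal_2)^{2}$. Its bounds are compatible with $\Jbf^{\mathrm{al}}\sim Z_1\times Z_2$, where $Z_1$ and $Z_2$ specialize to the two factors, and nothing in your sketch excludes this. The missing ingredients are:
\begin{itemize}
\item an induction on $L$;
\item two different degenerations ($S=\{L+1\}$ and $S=\{1\}$), in each of which any splitting is forced to specialize onto the two factors;
\item tracking the explicit $\Fbb_{2}$-basis $D^{\pm}_{\alpha_l,i}, P_i$ of $\Jbf[2]$ through both degenerations, which forces $Z_1[2]\cap Z_2[2]\neq 0$ (Corollary~\ref{cor: exclusion cases with torsion arguments; subsec: Specialization methods; sec: Generalized quaternions groups}).
\end{itemize}
Simplicity, together with the inductive hypothesis, then gives $\End^{0}(\Jbf^{\mathrm{al}})\hookrightarrow\End^{0}(\Jac(\Ccal_1))\cong D$.

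The base case $L=1$ needs separate input. The CM fibre at $a_1=\sqrt{-1}$ is $y^{2}=x(x^{2^{n+2}}-1)$, whose Jacobian is $\sim X^{2}$ with $\End^{0}(X)\cong K_{n+3}$; this bounds the number of isogeny factors by $2$ and forces isotypicity. The degeneration at $a_1=2$ embeds $\End^{0}(\Jbf^{\mathrm{al}})$ into $\M(2,K_{n+2})$, which leaves only $D$ or $\M(2,K_{n+2})$. Finally, $K_{n+2}\not\hookrightarrow K_{n+3}$ (Remark~\ref{rem: Kn not in Kn+1; subsec: The case p=2; sec: Cyclic groups of order p^n}) excludes $\M(2,K_{n+2})$.

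Your Mumford--Tate alternative would require proving that the family is Hodge-generic in the relevant PEL locus. That is a monodromy computation, and it is exactly the hard part; the signature of the action on differentials only identifies the locus.

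\textbf{Smaller points.} Your worry that $D$ ``may split at the infinite places'' is backwards. $D$ is totally definite (Remark~\ref{rem: D is a division algebra; subsec: Setup; sec: Generalized quaternions groups}), and the proof uses this crucially. Every nonzero homomorphism out of $D$ is injective, so $D$ embeds into the endomorphism algebra of every isotypic component, which rules out components with commutative $\End^{0}$. The real danger is $\End^{0}\cong\M(2,F)$ with $F$ a maximal subfield of $D$, such as $K_{n+2}$, and that is exactly what the base case excludes.

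In Step~1, your jump-locus argument is valid in principle, but it needs an explicit check at one parameter. The paper gets $\Aut\cong G_{2^{n}}$ for free from $\End^{0}\cong D$. Simplicity and Theorem~\ref{the: possible Aut nice curve with simple Jacobian; sec: Possible automorphism groups} give $G_{2^{m}}$ with $m\ge n$. If $m>n$, then $\Q(\zeta_{2^{n+2}})$, of degree $2^{n+1}$, would sit inside $D$, whose maximal subfields have degree $2^{n}$.

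Finally, your $\sigma$ must multiply $y$ by $\zeta_{2^{n+1}}$, not $\zeta_{2^{n+2}}$. The case $n=1$ is already in \cite{cantoral2023monodromy}.
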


        It has been observed before that the Torelli locus intersects loci of abelian varieties with large endomorphism rings in subvarieties that are often larger than naive dimension counts predict. Many examples of this phenomenon are known: work of Tautz--Top--Verberkmoes \cite{TautzExamples}, Mestre \cite{MestreExamples}, Hashimoto--Murabayashi \cite{HashimotoExamples}, de Jong--Noot \cite{JongExamples}, Ekedahl--Serre \cite{EkedahlExamples}, Shimada \cite{ShimadaExamples}, Ellenberg \cite{EllenbergExamples}, Naranjo--Ortega--Pirola--Spelta \cite{SpeltaExamples}, and Cantoral-Farfán--Lombardo--Voight \cite{cantoral2023monodromy} exhibit families of curves whose Jacobians carry unusually large endomorphism algebras. Theorems~\ref{Main theorem Cpn; sec: Introduction} and~\ref{main theorem G2n; sec: introduction} provide infinitely many new families of this kind. For example, fix a positive integer $L$ and let $g = \varphi ( p^{n} ) \cdot L / 2$. The image of the map to the moduli space induced by the family in Theorem~\ref{Main theorem Cpn; sec: Introduction} has dimension $L - 1$ (Theorem~\ref{the: there are infinitely many classes; subsec: Preliminaries; sec: Cyclic groups of order p^n}). On the other hand, the dimension of the locus of abelian varieties of dimension $g$ whose endomorphism ring contains a subring isomorphic to $\Z [ \zeta_{p^{n}} ]$ is at most $g^{2} / \bigl( 2 \cdot \varphi ( p^{n} ) \bigr)$ (\cite[\S~$9.6$, p.~266]{BirkenhakeLangeAbVar}). Since the dimension of the moduli space of principally polarized abelian varieties of dimension $g$ is $g ( g + 1 ) / 2$ (\cite[\S~8.1 and \S~8.2]{BirkenhakeLangeAbVar}) and the dimension of the Torelli locus inside this moduli space is $3g - 3$ (\cite[\S~2]{HarrisModuliCurves}), it follows that, if $g \ge 9$, the intersection of the Torelli locus with the locus of abelian varieties of dimension $g$ whose endomorphism ring contains a subring isomorphic to $\Z [ \zeta_{p^{n}} ]$ has negative expected dimension.

        \subsection{Relation to the existing literature}
        \label{subsec: Relation to the existing literature; sec: introduction}

            We review previous results in the literature regarding Question~\ref{Main question; sec: Introduction}. To show that a group can be realized as the automorphism group of a curve over $k$ of genus $\ge 2$ with simple Jacobian, it is sufficient to consider the case $k = \Qbar$, since $\Qbar$ embeds into any algebraically closed field of characteristic $0$ (here we use the invariance of automorphism groups, endomorphism algebras, and simplicity of the Jacobian under extensions of algebraically closed fields of characteristic zero).

            We believe that the fact that the trivial group appears as the automorphism group of a curve over $\Qbar$ of genus $\ge 2$ with simple Jacobian is well-known. As we did not find a direct reference, we prove this fact in Lemma~\ref{lem: trivial group is realizable; sec: Preliminaries}.

            In a series of papers \cite{Zarhin2000, Zarhin2005, Zarhin2009, Zarhin2018, Zarhin2026}, Zarhin proves that $C_{p}$ appears as the automorphism group of a curve over $\Qbar$ of genus $\ge 2$ with simple Jacobian for every prime $p$. See Theorem~\ref{the: Zarhin; sec: Cyclic groups of order p^n} for a precise statement.

            The classical quaternion group $G_{2}$ appears as the automorphism group of infinitely many curves over $\Qbar$ with simple Jacobians of every even genus $g \ge 4$, as shown by Cantoral-Farfán--Lombardo--Voight in \cite{cantoral2023monodromy}.

            Finally, for all distinct primes $p, q$ such that $pq \neq 6$, the Catalan curve $y^{p} = x^{q} - 1$ over $\Qbar$ has automorphism group isomorphic to $C_{pq}$ and simple Jacobian, see \cite[\S~$2$]{HazamaCatalan} or \cite[\S~$3.2$]{GoodsonCatalan}. We complete the picture by showing that, up to isomorphism, this is the \textit{only} example with this automorphism group and simple Jacobian (Theorem~\ref{the: main; sec: Cyclic groups of order pq}).
        
        \subsection{Methods}
        \label{subsec: Methods; sec: Introduction}

            As anticipated, the proof of the second statement of Theorem~\ref{Main theorem; sec: Introduction} is considerably more difficult than that of the first, as there are few methods to prove the simplicity of a Jacobian. 
            
            One of the available techniques is Zarhin's method. This was introduced in \cite{Zarhin2000} to study the absolute simplicity of Jacobians of hyperelliptic curves over number fields, and the key idea is to study the simultaneous action of the absolute Galois group of the number field and of the endomorphism algebra of the Jacobian on the $2$-torsion group. In subsequent years, Zarhin refined this technique to analyze the Jacobians of superelliptic curves of $p^{n}$-degree by introducing the concepts of very simple, strongly simple, and central simple representations (see \cite[Definition~4.1]{Zarhin2026}), and by replacing the $2$-torsion group with the $( 1 - \zeta_{p^{n}} )$-torsion group. When attempting to apply Zarhin's method to the Jacobians of the curves appearing in Theorem~\ref{Main theorem Cpn; sec: Introduction}, one encounters several difficulties, in particular regarding the application of \cite[Lemma~2.3]{Zarhin2026} (cf. Remark~\ref{rem: impossibility of applying Zarhin method; sec: Cyclic groups of order p^n}). 

            Another technique was introduced by Cantoral-Farfán, Lombardo, and Voight in \cite{cantoral2023monodromy} to study the Jacobians of the curves appearing in Theorem~\ref{main theorem G2n; sec: introduction} when $n = 1$. They used an induction on the genus. The base case consisted of studying two explicit curves, of genera $4$ and $6$, which was carried out directly using techniques based on reduction modulo primes (see \cite{CostaComputationEndJacobians, DavideEndJacobiansFiniteField}). The inductive step was established via specialization techniques and degenerations to reducible curves with components of lower genus (see Section~\ref{subsec: specialization methods; sec: Preliminaries}).

            This second technique can be applied to both families of curves appearing in Theorems~\ref{Main theorem Cpn; sec: Introduction} and \ref{main theorem G2n; sec: introduction}. There are, however, two significant differences. In the setting of \cite{cantoral2023monodromy}, the base case consisted of two curves, while in our setting it consists of infinitely many curves. We overcome this difficulty using the theory of abelian varieties with complex multiplication (CM) and, for the case $p = 2$ of Theorem~\ref{Main theorem Cpn; sec: Introduction}, a careful study of particular curves. CM theory provides a criterion to prove simplicity of Jacobians in terms of combinatorial data called CM types and, in our setting, the CM type can be read off easily from the explicit equation of the curves. The second difference concerns the inductive step. In the setting of \cite{cantoral2023monodromy}, it was sufficient to keep track of how the endomorphisms specialize to reducible members of the family, whereas in our setting we must also keep track of how the torsion specializes.

        \subsection{Acknowledgments}
        \label{subsec: Acknowledgements; sec: Introduction}

            The author thanks Davide Lombardo for posing the question that started this work and for many useful discussions. The author is supported by the ``National Group for Algebraic and Geometric Structures, and their Application'' (GNSAGA--INdAM).

    \section{Preliminaries}
    \label{sec: Preliminaries}

        Let $k$ be an algebraically closed field of characteristic $0$. We assume all curves to be smooth, projective, and connected, unless otherwise specified.

        The following lemma, though easy, is the key to all subsequent arguments.

        \begin{Lem}
        \label{lem: necessary condition simplicity; sec: Preliminaries}
            Let $\Ccal / k$ be a curve with simple Jacobian, and let $f \colon \Ccal \to \Ccal_{1}$ be a nonconstant morphism of curves. If $g ( \Ccal_{1} ) < g ( \Ccal )$, then $\Ccal_{1} \cong \mathbb{P}_{1, k}$.
        \end{Lem}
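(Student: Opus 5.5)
The plan is to use the standard fact that a nonconstant morphism of curves makes the Jacobian of the target isogenous to an abelian subvariety of the Jacobian of the source. Let $d = \deg f \ge 1$, and write $\Jcal = \Jac(\Ccal)$ and $\Jcal_{1} = \Jac(\Ccal_{1})$. By Picard functoriality, $f$ induces a homomorphism $f^{*} \colon \Jcal_{1} \to \Jcal$. By Albanese functoriality (pushforward of divisors), it induces a homomorphism $f_{*} \colon \Jcal \to \Jcal_{1}$. The projection formula for divisors gives $f_{*} \circ f^{*} = [d]$ on $\Jcal_{1}$, since $f_{*} f^{*} D = d \cdot D$ for every divisor $D$ on $\Ccal_{1}$.

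Multiplication by $d$ is an isogeny and has finite kernel, so $\ker f^{*} \subseteq \Jcal_{1}[d]$ is finite. Hence $f^{*}(\Jcal_{1})$ is an abelian subvariety of $\Jcal$ of dimension $g(\Ccal_{1})$. Since $\Jcal$ is simple, this image is either $0$ or all of $\Jcal$.

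The hypothesis $g(\Ccal_{1}) < g(\Ccal)$ rules out the second option, because then the image has dimension strictly less than $\dim \Jcal$. So the image is $0$, which forces $g(\Ccal_{1}) = \dim f^{*}(\Jcal_{1}) = 0$. A smooth projective connected curve of genus $0$ over an algebraically closed field is isomorphic to $\mathbb{P}_{1,k}$, and this gives the claim.

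The only step needing care is the finiteness of $\ker f^{*}$, which rests on the identity $f_{*} f^{*} = [d]$. This identity is immediate at the level of divisor classes and is standard (e.g.\ Milne's notes on Jacobians). Beyond that there is no real obstacle. The characteristic-$0$ hypothesis is not needed here, since the argument works over any algebraically closed field.
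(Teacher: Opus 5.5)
Your proposal is correct and follows the paper's proof: both use $f_{*} \circ f^{*} = [\deg f]$ to show that $f^{*}$ has finite kernel, so its image is an abelian subvariety of dimension $g(\Ccal_{1}) < g(\Ccal)$, and simplicity then forces $g(\Ccal_{1}) = 0$. The only difference is presentational: you argue directly from the dichotomy that the image is $0$ or all of the Jacobian, whereas the paper assumes $\Ccal_{1} \not\cong \mathbb{P}_{1,k}$ and derives a contradiction.
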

        \begin{proof}
            The morphism $f$ is automatically finite and surjective, and it induces a homomorphism of abelian varieties $f^{*} \colon \Jac ( \Ccal_{1} ) \to \Jac ( \Ccal )$ with finite kernel. Indeed, the morphism $f$ induces another homomorphism of abelian varieties $f_{*} \colon \Jac ( \Ccal ) \to \Jac ( \Ccal_{1} )$, called the trace map, such that $f_{*} \circ f^{*}$ is the multiplication by $\deg ( f )$ on $\Jac ( \Ccal_{1} )$. Hence, the kernel of $f^{*}$ is contained in the kernel of the multiplication by $\deg ( f )$, which is finite.

            Since $g ( \Ccal_{1} ) < g ( \Ccal )$, the homomorphism $f^{*}$ cannot be surjective. Assume, for contradiction, that $\Ccal_{1} \not \cong \mathbb{P}_{1, k}$. Since $k$ is algebraically closed, it follows that $g ( \Ccal_{1} ) > 0$. Hence, the image of $f^{*}$ is a proper abelian subvariety of $\Jac ( \Ccal )$, since the kernel of $f^{*}$ is finite, which contradicts the simplicity of $\Jac ( \Ccal )$.
        \end{proof}

        This is the first consequence of Lemma~\ref{lem: necessary condition simplicity; sec: Preliminaries}.
        
        \begin{The}
        \label{the: necessary condition simplicity; sec: Preliminaries}
            Let $\Ccal / k$ be a curve of genus $\ge 2$ with simple Jacobian. For every $\sigma \in \Aut ( \Ccal ) \setminus \{ \operatorname{id} \}$, the quotient curve $\Ccal / \langle \sigma \rangle$ is isomorphic to $\mathbb{P}_{1, k}$.
        \end{The}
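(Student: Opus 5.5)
The plan is to apply Lemma~\ref{lem: necessary condition simplicity; sec: Preliminaries} to the quotient map $\pi \colon \Ccal \to \Ccal_{1} \coloneqq \Ccal / \langle \sigma \rangle$. First, since $\Aut ( \Ccal )$ is finite (genus $\ge 2$), $\sigma$ has finite order $m > 1$. The quotient of a smooth projective curve by a finite group is again a smooth projective connected curve, and $\pi$ is a nonconstant finite morphism of degree $m \ge 2$. Hence, by the lemma, it suffices to show that $g ( \Ccal_{1} ) < g ( \Ccal )$.

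For the genus inequality I will use Riemann--Hurwitz in characteristic $0$, where all ramification is tame:
\begin{equation*}
    2 g ( \Ccal ) - 2 = m \bigl( 2 g ( \Ccal_{1} ) - 2 \bigr) + \sum_{P \in \Ccal} ( e_{P} - 1 ) \ge m \bigl( 2 g ( \Ccal_{1} ) - 2 \bigr) .
\end{equation*}
Suppose, for contradiction, that $g ( \Ccal_{1} ) \ge g ( \Ccal ) \ge 2$. Then $2 g ( \Ccal_{1} ) - 2 > 0$, so
\begin{equation*}
    2 g ( \Ccal ) - 2 \ge m \bigl( 2 g ( \Ccal_{1} ) - 2 \bigr) \ge 2 \bigl( 2 g ( \Ccal ) - 2 \bigr) ,
\end{equation*}
which forces $2 g ( \Ccal ) - 2 \le 0$. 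This contradicts $g ( \Ccal ) \ge 2$, so $g ( \Ccal_{1} ) < g ( \Ccal )$.

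Alternatively, one can use the cheaper fact that $\pi^{*}$ identifies $H^{0} ( \Ccal_{1}, \Omega^{1} )$ with the $\sigma$-invariants in $H^{0} ( \Ccal, \Omega^{1} )$. The Riemann--Hurwitz route, however, avoids having to show that the invariant subspace is proper.

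Applying Lemma~\ref{lem: necessary condition simplicity; sec: Preliminaries} then gives $\Ccal_{1} \cong \mathbb{P}_{1, k}$. I expect no genuine obstacle here. The only point needing care is the strict genus inequality, which is where $\deg \pi \ge 2$ and $g ( \Ccal ) \ge 2$ enter, namely via the strict positivity of $2 g - 2$.
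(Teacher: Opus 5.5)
Your proposal is correct and follows the same route as the paper: apply the preceding lemma to the quotient map $\Ccal \to \Ccal/\langle \sigma \rangle$, with the strict genus drop coming from Riemann--Hurwitz. You also write out the Riemann--Hurwitz inequality explicitly, using $\deg \pi \ge 2$ and $2g(\Ccal)-2>0$, where the paper only asserts the genus drop.
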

        \begin{proof}
            Consider a nontrivial element $\sigma$ of $\Aut ( \Ccal )$. Since $g (\Ccal) \ge 2$, $\sigma$ has finite order and the quotient scheme $\Ccal / \langle \sigma \rangle$ exists. Since $\Ccal$ is a curve, $\Ccal / \langle \sigma \rangle$ is a curve. Moreover, since $\sigma$ is not the identity, from the Riemann--Hurwitz formula we deduce $g \bigl( \Ccal / \langle \sigma \rangle \bigr) < g ( \Ccal )$. Apply Lemma~\ref{lem: necessary condition simplicity; sec: Preliminaries} to the nonconstant morphism $f_{\sigma} \colon \Ccal \to \Ccal / \langle \sigma \rangle$.
        \end{proof}

        \subsection{Curves}
        \label{subsec: Curves; sec: Preliminaries}

            In Proposition~\ref{pro: a curve with simple Jacobian is superelliptic; sec: Preliminaries}, we show that any curve over $k$ of genus $\ge 2$ with simple Jacobian and nontrivial automorphism group is superelliptic. To this end, we now review some definitions and properties, starting with hyperelliptic curves.

            \begin{Def}[hyperelliptic curve]
            \label{def: hyperelliptic curve; sec: Preliminaries}
                A hyperelliptic curve is a curve $\Ccal / k$ which admits a hyperelliptic involution, that is, an element $\sigma \in \Aut ( \Ccal )$ of order $2$ such that $\Ccal / \langle \sigma \rangle \cong \mathbb{P}_{1, k}$.
            \end{Def}

            \begin{Pro}
            \label{pro: unique element of order 2 in AutC if C is simple; sec: Preliminaries}
                Let $\Ccal / k$ be a curve of genus $\ge 2$ with simple Jacobian. There exists at most one element of order $2$ in $\Aut ( \Ccal )$.
            \end{Pro}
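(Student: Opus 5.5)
By Theorem~\ref{the: necessary condition simplicity; sec: Preliminaries}, every element $\sigma$ of order $2$ in $\Aut ( \Ccal )$ has $\Ccal / \langle \sigma \rangle \cong \mathbb{P}_{1, k}$. So every such $\sigma$ is a hyperelliptic involution in the sense of Definition~\ref{def: hyperelliptic curve; sec: Preliminaries}. The claim therefore reduces to the classical fact that a curve of genus $g \ge 2$ has at most one hyperelliptic involution. I will prove this directly, so that no external reference is needed.

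Suppose $\sigma, \tau$ are distinct elements of order $2$. Both quotient maps $\pi_{\sigma}, \pi_{\tau} \colon \Ccal \to \mathbb{P}_{1, k}$ have degree $2$. Each therefore defines a $g^{1}_{2}$ on $\Ccal$, and the $g^{1}_{2}$ on a curve of genus $\ge 2$ is unique. The standard argument for uniqueness runs as follows. If $D, D'$ are two effective divisors of degree $2$ with $h^{0} = 2$, and they are not linearly equivalent, then $h^{0}(D + D') \ge 3$ by the usual product-of-sections argument. Combining this with Clifford's theorem on $D + D'$ (of degree $4$) gives a contradiction when $g \ge 3$. When $g = 2$, degree $4 = 2g$ is not special, Riemann--Roch gives $h^{0} = 3$, and one argues instead via the canonical divisor: every $g^{1}_{2}$ equals $|K|$.

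A cleaner route for $g = 2$, and in fact in general, is to note that a degree-$2$ map to $\mathbb{P}^{1}$ is determined up to an automorphism of $\mathbb{P}^{1}$ by its linear system. So $\sigma$ and $\tau$, being the deck involutions of the same $g^{1}_{2}$, coincide. This yields $\sigma = \tau$.

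Alternatively, I could avoid linear systems entirely using Lemma~\ref{lem: necessary condition simplicity; sec: Preliminaries}. If $\sigma \neq \tau$ are distinct hyperelliptic involutions, then $\sigma\tau$ commutes with both. Indeed, the hyperelliptic involution is central: it acts as $-1$ on $H^{0}(\Ccal, \Omega^{1})$, since the quotient is $\mathbb{P}_{1, k}$ and so there are no invariant differentials. Hence $\sigma\tau$ acts as $+1$ on $H^{0}(\Ccal, \Omega^{1})$. Then $\Ccal / \langle \sigma \tau \rangle$ has genus $g \ge 2$. This contradicts Riemann--Hurwitz, which forces the genus of the quotient by a nontrivial automorphism to drop, as used in the proof of Theorem~\ref{the: necessary condition simplicity; sec: Preliminaries}.

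I prefer this last argument, and it needs only two checks. First, $\sigma$ acts as $-1$ on differentials: its $+1$ eigenspace is $H^{0}$ of the quotient, which is $0$, and $\sigma^{2} = 1$. Second, an automorphism acting trivially on $H^{0}(\Ccal, \Omega^{1})$ with $g \ge 2$ is the identity, because the genus of the quotient equals the dimension of the invariants, and this must be $< g$ for a nontrivial automorphism. This argument does not even need simplicity beyond its use in Theorem~\ref{the: necessary condition simplicity; sec: Preliminaries}. The only mild obstacle is justifying the identification of the genus of the quotient with the invariant differentials, which follows from the fact that $H^{0}(\Ccal/G, \Omega^{1})$ equals $H^{0}(\Ccal, \Omega^{1})^{G}$ in characteristic $0$.
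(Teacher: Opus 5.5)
Your proof is correct, and it makes the same reduction as the paper: by Theorem~\ref{the: necessary condition simplicity; sec: Preliminaries}, two distinct involutions would be two distinct hyperelliptic involutions. The routes differ only at the last step. The paper simply cites the classical uniqueness of the hyperelliptic involution \cite[\S~$7.4.3$, Proposition~$4.29$]{QingAlgGeom}. You prove it directly via the representation on $H^{0}(\Ccal, \Omega^{1}_{\Ccal})$:
\begin{itemize}
\item each involution has no invariant differentials and squares to $1$, so it acts as $-1$;
\item hence $\sigma\tau$ acts trivially;
\item but for the nontrivial automorphism $\sigma\tau$, Riemann--Hurwitz gives $\dim H^{0}(\Ccal, \Omega^{1}_{\Ccal})^{\langle \sigma\tau \rangle} = g(\Ccal/\langle\sigma\tau\rangle) < g$.
\end{itemize}
This makes the argument self-contained. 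The only input is the characteristic-$0$ identification $H^{0}(\Ccal/H, \Omega^{1}) = H^{0}(\Ccal, \Omega^{1})^{H}$, which the paper itself uses in Theorem~\ref{the: AutC admits irr fixed point free rep; subsec: Representation theory; sec: Preliminaries}.

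Two small remarks on your preferred argument. First, centrality of $\sigma$ plays no role: the two operators are the scalar $-1$, so their product is the identity in either order. Second, you can shorten the final step by invoking Theorem~\ref{the: necessary condition simplicity; sec: Preliminaries} once more instead of Riemann--Hurwitz. Since $\sigma\tau \neq 1$, that theorem forces $H^{0}(\Ccal, \Omega^{1}_{\Ccal})^{\langle\sigma\tau\rangle} = 0$. Yet this space is all of $H^{0}(\Ccal, \Omega^{1}_{\Ccal})$, which has dimension $g \ge 2$.

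Your discarded first sketch is not right as written. The bound $h^{0}(D+D') \ge 3$ does not contradict Clifford's bound $h^{0}(D+D') \le 3$. To get a contradiction you need the four products $s_{i}t_{j}$ to be linearly independent, so that $h^{0}(D+D') \ge 4$. Independence is the classical bidegree-$(2,2)$ argument in $\mathbb{P}_{1} \times \mathbb{P}_{1}$. Since you drop that route, the final proof is unaffected.
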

            \begin{proof}
                By Theorem~\ref{the: necessary condition simplicity; sec: Preliminaries}, two distinct elements of order $2$ in $\Aut ( \Ccal )$ would be two distinct hyperelliptic involutions of $\Ccal$, which contradicts \cite[\S~$7.4.3$, Proposition~$4.29$]{QingAlgGeom}.
            \end{proof}

            Hyperelliptic curves have been studied extensively since the nineteenth century, resulting in a well-developed theory. By \cite[\S~$7.4.3$, Corollary~$4.31$]{QingAlgGeom}, the hyperelliptic involution $\iota$ of a hyperelliptic curve $\Ccal$ is central in $\Aut ( \Ccal )$ and $\Aut ( \Ccal ) / \langle \iota \rangle$ is a finite subgroup of $\Aut \bigl( \mathbb{P}_{1, k} \bigr)$.
            
            The finite subgroups of $\Aut \bigl( \mathbb{P}_{1, k} \bigr)$ are, up to isomorphism, the cyclic groups $C_{n}$, the dihedral groups $D_{n}$, the alternating groups $A_{4}$ and $A_{5}$, and the symmetric group $S_{4}$ (see \cite[\S~$2.5$, Proposition~$16$]{SerreSubgroupsPGL2}). Hence, $\Aut ( \Ccal )$ is a central $C_{2}$-extension of one of these groups. Because of ramification constraints, some extensions cannot occur as the automorphism group of a hyperelliptic curve. The next theorem classifies precisely the extensions that do occur.
            
            \begin{The}
            \label{the: classification Aut of hyperelliptic curves; sec: Preliminaries}
                The automorphism group of a hyperelliptic curve $\Ccal$ of genus $g \ge 2$ is isomorphic to one of the following groups:
                \begin{center}
                \small
                    \begin{tabular}{|c|c|c|c|c|c|}
                        \hline
                        $\Aut ( \Ccal )$ & $\Aut ( \Ccal ) / \langle \iota \rangle$ & Constraints & $\Aut ( \Ccal )$ & $\Aut ( \Ccal ) / \langle \iota \rangle$ & Constraints \\
                        \hline
                        $C_{2} \times C_{n}$ & $C_{n}$ & $n \mid ( 2g + 2 )$, $g \neq n - 1$ & $D_{2n}$ & $D_{n}$ & $n \mid g$ \\
                        \hline
                        $C_{2n}$ & $C_{n}$ & $n \mid ( 2g + 1 )$ or $n \mid 2g$ & $H_{n}$ & $D_{n}$ & $n \mid ( g + 1 )$, $n < g + 1$ \\
                        \hline
                        $C_{2} \times D_{n}$ & $D_{n}$ & $n \mid ( g + 1 )$ & $U_{n}$ & $D_{n}$ & $2n \mid ( 2g - n )$ \\
                        \hline
                        $V_{n}$ & $D_{n}$ & $2n \mid ( 2g + 2 - n )$ & $G_{n}$ & $D_{n}$ & $n \mid g$, $n < g$ \\
                        \hline
                        $C_{2} \times A_{4}$ & $A_{4}$ & $g \ge 5$ & $W_{2}$ & $S_{4}$ & $g \ge 5$ \\
                        \hline
                        $\SL ( 2, \Fbb_{3} )$ & $A_{4}$ & $g \ge 4$ & $W_{3}$ & $S_{4}$ & $g \ge 8$ \\
                        \hline
                        $C_{2} \times S_{4}$ & $S_{4}$ & $g \ge 3$ & $C_{2} \times A_{5}$ & $A_{5}$ & $g \ge 5$ \\
                        \hline
                        $\GL ( 2, \Fbb_{3} )$ & $S_{4}$ & $g \ge 2$ & $\SL ( 2, \Fbb_{5} )$ & $A_{5}$ & $g \ge 14$ \\
                        \hline
                    \end{tabular}
                \end{center}
                where
                \begin{align*}
                    V_{n} &\coloneqq \bigl\langle x, y \,\lvert\, x^{4}, y^{n}, ( x \cdot y )^{2}, ( x^{-1} \cdot y )^{2} \bigr\rangle , & H_{n} &\coloneqq \bigl\langle x, y \,\lvert\, x^{4}, y^{2} \cdot x^{2}, ( x \cdot y )^{n} \bigr\rangle , \\
                    U_{n} &\coloneqq \bigl\langle x, y \,\lvert\, x^{2}, y^{2n}, x \cdot y \cdot x \cdot y^{n+1} \bigr\rangle , & G_{n} &\coloneqq \bigl\langle x, y \,\lvert\, x^{2} \cdot y^{n}, y^{2n}, x^{-1} \cdot  y \cdot x \cdot y \bigr\rangle , \\
                    W_{2} &\coloneqq \bigl\langle x, y \,\lvert\, x^{4}, y^{3}, y \cdot x^{2} \cdot y^{-1} \cdot x^{2}, ( x \cdot y )^{4} \bigr\rangle , & W_{3} &\coloneqq \bigl\langle x, y \,\lvert\, x^{4}, y^{3}, x^{2} \cdot ( x \cdot y )^{4}, ( x \cdot y )^{8} \bigr\rangle .
                \end{align*}
            \end{The}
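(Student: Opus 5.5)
We would follow the classical approach of Brandt--Stichtenoth and Bujalance--Gamboa--Gromadzki, which is most likely what the paper cites rather than reproves. Write $\Ccal$ as the smooth model of $y^{2} = f(x)$ with $f$ squarefree of degree $2g+1$ or $2g+2$. Let $B \subset \mathbb{P}_{1,k}$ be the branch locus of the hyperelliptic map $\pi \colon \Ccal \to \mathbb{P}_{1,k}$, with $\# B = 2g+2$. By the facts recalled before the statement, $\iota$ is central and $\overline{G} \coloneqq \Aut(\Ccal)/\langle \iota \rangle$ embeds in $\Aut(\mathbb{P}_{1,k}) = \operatorname{PGL}_{2}(k)$. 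Its image is exactly the stabilizer of $B$: every automorphism of $\Ccal$ descends to a Möbius transformation preserving $B$, and conversely every Möbius transformation $\gamma$ preserving $B$ lifts, since $f \circ \gamma$ equals $f$ up to a constant and a square factor. Hence $\overline{G}$ is one of $C_{n}, D_{n}, A_{4}, S_{4}, A_{5}$, and $\Aut(\Ccal)$ is a central extension of $\overline{G}$ by $C_{2}$. The plan is to determine, for each $\overline{G}$, which extension occurs and for which $g$, purely in terms of how $B$ meets the exceptional orbits of $\overline{G}$.

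The first step is a local computation that decides the lift of a single element. Let $\overline{\gamma} \in \overline{G}$ have order $m$. After a change of coordinates, $\overline{\gamma}(x) = \zeta_{m} x$ with fixed points $0$ and $\infty$. A lift is $\gamma(x,y) = \bigl(\zeta_{m} x, \lambda y\bigr)$ with $\lambda^{2} = f(\zeta_{m} x)/f(x)$, a constant. Write $f(x) = x^{\epsilon_{0}} h(x^{m})$, where $\epsilon_{0} \in \{0,1\}$ records whether $0 \in B$; whether $\infty \in B$ is read off from the parity of $\deg f$. One then checks that $\gamma$ has order $m$ or $2m$. For $m$ even, the order is $2m$ exactly when the number of fixed points of $\overline{\gamma}$ lying in $B$ is odd. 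For $m$ odd one may always choose a lift of order $m$. For the involutions in $\overline{G}$ this gives the dichotomy between lifts of order $2$ and lifts of order $4$.

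Next we determine the extension group-theoretically. A central $C_{2}$-extension of $\overline{G}$ is determined by which elements lift to elements of doubled order, together with the relations in a presentation of $\overline{G}$. For $\overline{G} = C_{n}$ this yields $C_{2} \times C_{n}$ or $C_{2n}$. For $D_{n}$ it yields the six groups $C_{2} \times D_{n}, V_{n}, D_{2n}, H_{n}, U_{n}, G_{n}$, according to whether the rotation lifts with doubled order and whether the two classes of reflections (one class when $n$ is odd) lift to involutions or to elements of order $4$. For $A_{4}, S_{4}, A_{5}$ the known Schur covers give $C_{2} \times \overline{G}$, $\SL(2,\Fbb_{3})$, $\GL(2,\Fbb_{3})$, $W_{2}$, $W_{3}$ and $\SL(2,\Fbb_{5})$.

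Finally we impose the numerical constraints. $B$ is a disjoint union of $\overline{G}$-orbits, namely generic orbits of size $|\overline{G}|$ together with some of the short orbits. For $C_{n}$ the short orbits are the two fixed points. For $D_{n}$ they have sizes $2, n, n$ when $n$ is even, and $2, n$ when $n$ is odd. For $A_{4}, S_{4}, A_{5}$ they have sizes $(4,4,6)$, $(6,8,12)$ and $(12,20,30)$ respectively. Which short orbits lie in $B$ determines the extension type via the local computation above. Counting then gives $2g+2 = \sum(\text{orbit sizes})$, which produces the divisibility conditions in the table, such as $n \mid 2g+2$, $2n \mid 2g+2-n$ and $n \mid g$. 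The inequalities, such as $g \neq n-1$, $n < g$ and $g \ge 14$, express that $\overline{G}$ must be the full stabilizer. For example, if $B$ consists only of short orbits, the stabilizer may be strictly larger: $B = \{0,\infty\} \cup \mu_{n}$ with $n = g+1$ has dihedral symmetry. This is excluded by requiring at least one generic orbit, or by checking the small cases directly.

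The main obstacle is this last bookkeeping, and in particular proving realizability: for each admissible pair $(\overline{G}, g)$ we need a choice of orbits whose stabilizer is exactly $\overline{G}$ and not larger. We would handle this by taking the generic orbits general, so that the stabilizer is exactly $\overline{G}$ by the maximality of the finite subgroups of $\operatorname{PGL}_{2}(k)$ containing it, and by checking the finitely many degenerate low-genus cases by hand.
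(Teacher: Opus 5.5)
The paper gives no argument of its own here. It cites \cite[Theorem~$3.1$]{BujalanceAutHyp} and \cite[Theorems~$2.1$ and~$2.2$]{ShaskaAutHyp}, and notes that the table records only \emph{necessary} constraints. Your Brandt--Stichtenoth outline is the natural way to reprove it, but the two concrete inputs that decide which extension occurs are wrong as stated.

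First, the lifting rule is false. If $\overline{\gamma}$ has even order $m$, every orbit of $\langle \overline{\gamma} \rangle$ other than its two fixed points has size $m$, and $\# B = 2g+2$ is even. So the number of fixed points of $\overline{\gamma}$ in $B$ is $0$ or $2$, never odd; equivalently $\deg f \equiv \epsilon_{0} \pmod{2}$, so $0 \in B$ if and only if $\infty \in B$. Read literally, your rule says no element of even order ever lifts with doubled order, so $C_{2n}$ with $n$ even, $\SL(2,\Fbb_{3})$ and $\GL(2,\Fbb_{3})$ could never appear. Yet on $y^{2} = x(x^{4}-1)$ the lift $(x,y) \mapsto (ix, \zeta_{8} y)$ of $x \mapsto ix$ has order $8$. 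Your own computation gives the correct rule: $\lambda^{2} = \zeta_{m}^{\epsilon_{0}}$, so for even $m$ one has $\lambda^{m} = (-1)^{\epsilon_{0}}$. Thus the lifts have order $2m$ if and only if the fixed points of $\overline{\gamma}$ lie in $B$.

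Second, your short-orbit data for $D_{n}$ is wrong when $n$ is odd. For every $n \ge 2$ the quotient $\mathbb{P}_{1,k} \to \mathbb{P}_{1,k}/D_{n}$ has signature $(2,2,n)$, so there are always three short orbits, of sizes $2, n, n$. For odd $n$ and $D_{n} = \langle x \mapsto \zeta_{n} x,\ x \mapsto 1/x \rangle$ these are $\{0,\infty\}$, $\mu_{n}$ and $-\mu_{n}$. Each reflection fixes one point of $\mu_{n}$ and one of $-\mu_{n}$. Parity of $\# B$ forces $\mu_{n} \subset B \iff -\mu_{n} \subset B$, and in that case the reflections lift to elements of order $4$. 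With your list ($2$ and $n$ only), parity would push the size-$n$ orbit out of $B$, so these configurations are missing from your case analysis. They do occur: for generic $b$, the curve $y^{2} = (x^{6}-1)(x^{3}-b)(x^{3}-b^{-1})$ has genus $5$, $\Aut(\Ccal)/\langle\iota\rangle \cong D_{3}$ and $\Aut(\Ccal) \cong H_{3}$.

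Finally, realizability is not part of the statement, so the step you call the main obstacle is unnecessary. What the inequalities actually require is that every configuration violating them has a strictly larger stabilizer. ``At least one generic orbit'' is not the right test for this. For the $C_{2} \times C_{n}$ row with $g = n-1$, $B = \{x^{n} = a\} \cup \{x^{n} = b\}$ consists of two generic orbits, yet it is preserved by $x \mapsto c/x$ with $c^{n} = ab$. Your own example $\{0,\infty\} \cup \mu_{n}$ (which has $n = 2g$, not $g+1$) likewise contains the generic $C_{n}$-orbit $\mu_{n}$ and still has extra symmetry. The remaining exclusions you must check are:
\begin{itemize}
\item $B = \mu_{2n}$ for $H_{n}$ with $n = g+1$;
\item $B = \{0,\infty\} \cup \mu_{2n}$ for $G_{n}$ with $n = g$ (both of these are $D_{2n}$-invariant);
\item the cube and the octahedron for $A_{4}$ at $g = 3$ and $g = 2$.
\end{itemize}
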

            \begin{proof}
                The table lists only the constraints relevant to our purposes. For the complete set of necessary and sufficient conditions for a group to appear as the automorphism group of some hyperelliptic curve of genus $\ge 2$, we refer the reader to \cite[Theorem~$3.1$]{BujalanceAutHyp} or \cite[Theorems~$2.1$ and~$2.2$]{ShaskaAutHyp}.
            \end{proof}

            We continue by recalling the definition and properties of superelliptic curves.
            
            \begin{Def}[superelliptic curve]
            \label{def: superelliptic curve; sec: Preliminaries}
                A superelliptic curve is a curve whose function field is the field of fractions of $k [ x, y ] / \bigl( y^{n} - f(x) \bigr)$ for some integer $n \ge 2$ and some $f \in k [ x ] $ such that $y^{n} - f(x)$ is irreducible.
            \end{Def}

            Superelliptic curves generalize hyperelliptic curves, which correspond to the case $n = 2$ with $f$ being a square-free polynomial.
            
            \begin{Not}
            \label{not: superelliptic curve; sec: Preliminaries}
                We say that a superelliptic curve is given by $y^{n} = f(x)$ if its function field is the field of fractions of $k [ x, y ] / \bigl( y^{n} - f(x) \bigr)$.
            \end{Not}

            An interesting class of superelliptic curves is given by Catalan curves.
            
            \begin{Def}[Catalan curve]
            \label{def: Catalan curve; sec: Preliminaries}
                A Catalan curve is a superelliptic curve given by $y^{p} = x^{q} - 1$ for some distinct primes $p, q$.
            \end{Def}
            
            \begin{The}
            \label{the: properties of Catalan curve; sec: Preliminaries}
                The Catalan curve given by $y^{p} = x^{q} - 1$, where $p, q$ are distinct primes such that $pq \neq 6$, has automorphism group isomorphic to $C_{pq}$, and its Jacobian is simple with endomorphism algebra isomorphic to $\Q ( \zeta_{pq} )$.
            \end{The}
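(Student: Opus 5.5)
Let $\Ccal$ be the smooth projective model of $y^{p} = x^{q} - 1$ with $p \neq q$ primes and $pq \neq 6$. I would first treat the Jacobian and its endomorphism algebra through the Fermat-curve / CM route, and only then determine the automorphism group.

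\textbf{Genus and the $C_{pq}$-action.} The curve carries the automorphisms $\alpha\colon (x,y)\mapsto(x,\zeta_p y)$ and $\beta\colon (x,y)\mapsto(\zeta_q x,y)$. These commute, so $\sigma=\alpha\beta$ has order $pq$. Since $\gcd(p,q)=1$, the point at infinity is a single totally ramified point of $x$. Riemann--Hurwitz applied to $x$ then gives
\[
2g-2 = -2p + (p-1)(q+1),
\]
hence $g=(p-1)(q-1)/2$.

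\textbf{The Jacobian is CM of type $\Q(\zeta_{pq})$.} The $\C$-vector space $H^0(\Ccal,\Omega^1)$ has the basis
\[
\omega_{a,b}=x^{b-1}\,dx/y^{a},
\]
indexed by pairs $1\le a\le p-1$, $1\le b\le q-1$ with $a/p+b/q<1$. This basis has $g$ elements and consists of eigenvectors of $\sigma$. Because $\Z[\sigma]$ acts on the Jacobian, $\Q(\zeta_{pq})$ embeds into $\End^0(\Jac\Ccal)$ exactly when no eigencharacter comes from a proper quotient. The eigencharacter of $\omega_{a,b}$ is $\zeta_p^{a}\zeta_q^{-b}$ (up to sign conventions), which is primitive of order $pq$ since $a\not\equiv 0 \pmod p$ and $b\not\equiv 0\pmod q$. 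So $\Q[\sigma]$ acts through $\Q(\zeta_{pq})$, a field of degree $(p-1)(q-1)=2g$. Thus $\Jac\Ccal$ has CM by $\Q(\zeta_{pq})$, with CM type
\[
\Phi=\{\,t\in(\Z/pq)^\times : \langle t a_0/p\rangle+\langle t b_0/q\rangle<1\,\},
\]
the classical Fermat-quotient type.

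\textbf{Simplicity.} By Shimura--Taniyama, the CM abelian variety is simple (isotypic of a simple factor) if and only if $\Phi$ is primitive, that is, its stabilizer $\{u\in(\Z/pq)^\times:u\Phi=\Phi\}$ is trivial. This combinatorial check is the main obstacle. I would follow Koblitz--Rohrlich or Aoki on the simplicity of Fermat quotients. Suppose $u\neq1$ stabilizes $\Phi$. Summing $\langle ua/p\rangle+\langle ub/q\rangle$ over suitable orbits, or counting elements of $\Phi$ in each coset of $\langle u\rangle$, yields a contradiction, except in small cases that are checked directly. The case $pq=6$ is excluded for a reason: $y^2=x^3-1$ and $y^3=x^2-1$ are elliptic, and in genus $1$ the CM field $\Q(\zeta_6)$ has degree $2$, so that case is degenerate and not relevant here. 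Once $\Phi$ is primitive, $\Jac\Ccal$ is simple and $\End^0=\Q(\zeta_{pq})$, because a simple CM abelian variety has endomorphism algebra equal to its CM field.

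\textbf{Automorphism group.} Lemma~\ref{lem: necessary condition simplicity; sec: Preliminaries} and Proposition~\ref{pro: unique element of order 2 in AutC if C is simple; sec: Preliminaries} now apply. Every finite-order unit of $\End(\Jac\Ccal)$ lies in $\mu(\Q(\zeta_{pq}))$, which is $\mu_{2pq}$ or $\mu_{pq}$. Since $\Aut(\Ccal)$ injects into this group, it is cyclic of order $pq$ or $2pq$ and contains $\langle\sigma\rangle$.

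It remains to exclude order $2pq$ when $p$ and $q$ are odd. In that case the element $-1$ would be an automorphism acting as $-1$ on the Jacobian, that is, a hyperelliptic involution $\iota$. Then $\sigma$ would descend to $\Ccal/\langle\iota\rangle\cong\mathbb P^1$ as an automorphism of order $pq$. Comparing with the table of Theorem~\ref{the: classification Aut of hyperelliptic curves; sec: Preliminaries}, the group $C_{2pq}$ occurs with $C_{pq}$ as quotient only if $pq\mid 2g+2$ or $pq\mid 2g+1$ or $pq\mid 2g$. But $2g=(p-1)(q-1)$ is coprime to $pq$, and $(p-1)(q-1)+1$ and $(p-1)(q-1)+2$ are divisible by $pq$ only in tiny cases, which can be checked by hand.

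If instead one of the primes is $2$, say $p=2$, then $y^2=x^q-1$ with $q\ge5$. Here $\mu(\Q(\zeta_{2q}))=\mu_{2q}$, so $\Aut\cong C_{2q}$ directly.
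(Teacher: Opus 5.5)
Your route is sound. It is also genuinely more self-contained than the paper's, which proves this statement only by citing Hazama and Goodson. You reduce simplicity and the computation of $\End^0$ to primitivity of the CM type of $\Q(\sigma)\cong\Q(\zeta_{pq})$, which is the same Shimura--Taniyama criterion the paper itself applies to $y^p=x(x^{p^{n-1}}-a)$. You then read off $\Aut(\Ccal)$ from the roots of unity in $\End^0$. The citation buys brevity; your route buys a proof from Shimura--Taniyama plus elementary combinatorics.

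The price is that you must actually prove primitivity, and your sentence about ``summing over orbits'' does not do that. Since essentially all the content of the statement sits in this step, it should be written out. It is short, and it is the same fibre count as in Lemma~\ref{lem: stabilizer of CM type; subsec: Preliminaries; sec: Cyclic groups of order p^n}:
\begin{itemize}
\item Identify $t\in(\Z/pq\Z)^\times$ with $(i,j)$, where $1\le i\le p-1$ and $1\le j\le q-1$, so that $\Phi=\{(i,j): i/p+j/q<1\}$. By symmetry, assume $p<q$.
\item The fibre of $\Phi$ over $i$ is $\{1,\dots,m_i\}$ with $m_i=\lfloor q(p-i)/p\rfloor$. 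This is strictly decreasing in $i$ because $q/p>1$.
\item A stabilizing $u=(u_1,u_2)$ maps the fibre over $i$ bijectively onto the fibre over $u_1 i$. Hence $m_{u_1 i}=m_i$ for all $i$, which forces $u_1=1$.
\item Then $u_2$ preserves $\{1,\dots,m\}$, where $m=\lfloor q/p\rfloor\ge 1$ and $2m<q$ because $q$ is odd. In particular $u_2\le m$.
\item If $u_2\ge 2$, let $k u_2$ be the least multiple of $u_2$ exceeding $m$. Then $k\le m$ and $m<k u_2\le 2m<q$, which contradicts invariance. So the stabilizer is trivial.
\end{itemize}
This argument also works for $pq=6$, where $\Phi$ is a single element and the Jacobian is a CM elliptic curve. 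The hypothesis $pq\neq 6$ is needed only to make $g\ge 2$, which you use for the injection $\Aut(\Ccal)\hookrightarrow\End(\Jac\Ccal)^\times$.

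There are two smaller slips.
\begin{itemize}
\item The form $x^{b-1}dx/y^{a}$ is regular at the point at infinity if and only if $qa-pb\ge 1$, i.e.\ $b/q<a/p$, not $a/p+b/q<1$. For instance, $x^{2}dx/y$ on $y^3=x^5-1$ satisfies your condition but has a pole there. Substituting $i=p-a$ gives back your triangle together with the eigencharacter $\zeta_p^{i}\zeta_q^{j}$. So the CM type and everything after it are unaffected.
\item In the automorphism step, $2g=(p-1)(q-1)$ need not be coprime to $pq$; take $p=3$, $q=7$. What you need is that none of $2g$, $2g+1$, $2g+2$ is divisible by $pq$. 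This holds with no small cases to check, since $0<2g<2g+2=pq-p-q+3<pq$.
\end{itemize}
Alternatively, once simplicity is established, $C_{2pq}$ with $p,q$ odd is excluded at once by Theorem~\ref{the: Aut does not contain elements order pqr; sec: Preliminaries}. That theorem does not depend on the Catalan statement, so there is no circularity.
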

            \begin{proof}
                See \cite[\S~$2$]{HazamaCatalan} or \cite[\S~$3.2$]{GoodsonCatalan}.
            \end{proof}

            Next, we show that the trivial group appears as the automorphism group of a curve over $\Qbar$ of genus $\ge 2$ with simple Jacobian.

            \begin{Lem}
            \label{lem: trivial group is realizable; sec: Preliminaries}
                For every integer $g \ge 3$, there exists a curve over $\Qbar$ of genus $g$ with simple Jacobian and trivial automorphism group.
            \end{Lem}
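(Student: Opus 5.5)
We argue by a genericity argument over $\C$, then descend to $\Qbar$. Fix $g \ge 3$ and work in the moduli space $M_{g}$ of curves of genus $g$ over $\C$, an irreducible variety of dimension $3g-3$, defined over $\Q$.

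\textbf{Step 1: trivial automorphisms.} It is classical that for $g \ge 3$ the locus of curves with nontrivial automorphisms is a proper closed subset of $M_{g}$. A dimension count via Riemann--Hurwitz shows that curves admitting an automorphism of prime order form finitely many families of dimension $< 3g-3$ (this fails for $g=2$, where every curve is hyperelliptic, hence the hypothesis $g\ge 3$). Call the complement $V_{1}$; it is a nonempty Zariski open subset.

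\textbf{Step 2: simple Jacobian.} Every Jacobian carries the principal polarization, so a very general curve $\Ccal$ over $\C$ has $\End(\Jac(\Ccal)) = \Z$. This follows from the fact that the Hodge structure of a very general curve has Mumford--Tate group $\mathrm{GSp}_{2g}$, or equivalently that the monodromy of the universal family over $M_{g}$ is Zariski dense in $\mathrm{Sp}_{2g}$. Since $\End(\Jac(\Ccal))=\Z$ has no nontrivial idempotents in $\End^{0}$, the Jacobian is simple. The locus where this fails is a countable union of proper closed subvarieties, namely the Noether--Lefschetz-type loci where extra endomorphisms appear.

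\textbf{Step 3: passage to $\Qbar$.} This step is the main obstacle, because a countable union of proper closed sets can contain all $\Qbar$-points. We handle it by specialization. Take a family $\mathcal{X}\to S$ of genus-$g$ curves over a $\Q$-variety $S$ dominating $M_{g}$, for instance plane quartics when $g=3$, or more generally a suitable open subset of a Hilbert scheme. Restrict to $S \cap V_{1}$ and apply Masser's specialization theorem, or Hilbert irreducibility for the $\ell$-adic monodromy in the form used later in the paper (see the specialization methods section). These give $\Qbar$-points $s$, in fact $100\%$ of those of bounded degree, at which the Galois image on $T_{\ell}\Jac(\mathcal{X}_{s})$ equals the generic monodromy, an open subgroup of $\mathrm{GSp}_{2g}(\Z_{\ell})$. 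By Faltings, $\End_{\Qbar}(\Jac(\mathcal{X}_{s}))\otimes\Z_{\ell}$ then commutes with an open subgroup of $\mathrm{Sp}_{2g}(\Z_{\ell})$, hence equals $\Z_{\ell}$. Therefore $\Jac(\mathcal{X}_{s})$ is geometrically simple. Choosing $s$ also in the open set $V_{1}$ gives a trivial automorphism group.

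An alternative for Step 3, which avoids big monodromy, is a single explicit curve over $\Q$ for each $g$. Its Jacobian can be shown simple by reduction at two primes, comparing Frobenius characteristic polynomials to see that each is irreducible and yields a field with no common proper subfields. Its automorphism group is then checked directly. The specialization argument, however, handles all $g\ge 3$ uniformly, so we would follow it.
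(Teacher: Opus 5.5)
Your strategy is sound in outline: trivial automorphism group is a nonempty open condition on $M_{g}$ for $g \ge 3$, and big $\ell$-adic image plus Faltings gives $\End_{\Qbar} = \Z$ at a suitable algebraic point. But Step~3 has a gap exactly where you locate the main obstacle.

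Both tools you name need a base with many $K$-rational points. Hilbert irreducibility, classical or Serre's $\ell$-adic version, needs the Hilbert property. Lemma~\ref{lem: how many fibers of Jfrak have bigger End0; subsec: specialization methods; sec: Preliminaries} counts $K$-points of an open subset of $\mathbb{A}^{L}$. A base $S$ dominating $M_{g}$ is not of this kind for large $g$. Since $M_{g}$ is of general type for $g \ge 24$, $S$ is not unirational, and Bombieri--Lang even predicts that $S(K)$ is not Zariski dense for any number field $K$. So ``restrict to $S \cap V_{1}$ and apply Hilbert irreducibility'' is not available as written. For $g = 3$, with $S$ an open set of plane quartics, it is fine.

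You gesture at the fix with ``points of bounded degree'', but it has to be carried out. Take a finite surjective map $\pi \colon S^{\circ} \to \mathbb{A}^{3g-3}_{\Q}$ from an affine open $S^{\circ}$ inside the preimage of $V_{1}$. The fibres of $\pi$ over $K$-points are points of degree $\le \deg \pi$. Then either:
\begin{itemize}
\item apply Serre's $\ell$-adic Hilbert irreducibility on $\mathbb{A}^{3g-3}$ to the representation induced from $\pi_{1}(S^{\circ})$; or
\item use Masser's theorem for points of bounded degree, together with the fact that $\pi$ maps a degree-$D$ hypersurface section of $S^{\circ}$ into a hypersurface of degree $O(D)$ (take a norm), which contains comparatively few $K$-points.
\end{itemize}
Your claim of ``$100\%$ of those of bounded degree'' is not justified by any of this, but it is also unnecessary, since one good point suffices.

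The paper sidesteps all of this. It quotes \cite[Theorems~1.1 and~5.4]{LandesmanTrivialExample} for a \emph{non-hyperelliptic} genus-$g$ curve over a number field whose $\ell$-adic image is open and Zariski dense in $\mathrm{GSp}_{2g}$. Such curves are obtained there from families over rational bases, which need not dominate $M_{g}$. The paper then enlarges the field so all geometric endomorphisms are defined and gets $\End^{0}_{\Qbar} = \Q$ from Faltings, as in your last step.

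Triviality of $\Aut$ then comes for free rather than from your Step~1. $\Aut(\Ccal)$ injects into the torsion units $\{\pm 1\}$ of $\End^{0}$. A nontrivial element would be an involution with quotient $\mathbb{P}_{1}$ by Theorem~\ref{the: necessary condition simplicity; sec: Preliminaries}, i.e.\ a hyperelliptic involution, which non-hyperellipticity excludes.

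This is the real difference in economy. Your Step~1 requires the point to land in a dense open subset of $M_{g}$, and that is what pushes you onto a dominating, possibly non-unirational base. With the $\{\pm 1\}$ argument, any family of non-hyperelliptic curves with big monodromy over a rational base suffices, and Hilbert irreducibility on $K$-points applies directly.
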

            \begin{proof}
                First, we fix some notation. Let $K$ be a number field, $\Gamma_{K}$ be its absolute Galois group and $\Acal$ be an abelian variety over $K$. Let $\ell$ be a prime, $V_{\ell} ( \Acal )$ be the $\ell$-adic Tate module of $\Acal$ and $\rho_{\Acal, \ell^{\infty}} \colon \Gamma_{K} \to \Aut \bigl( V_{\ell}(\Acal) \bigr)$ be the $\ell$-adic Galois representation attached to $\Acal$. Denote by $G_{\Acal, \ell^{\infty}}$ the image of $\rho_{\Acal, \ell^{\infty}}$.
            
                By \cite[Theorems~1.1 and~5.4]{LandesmanTrivialExample}, for every integer $g \ge 3$ there exists a non hyperelliptic curve $\Ccal$ over $K$ of genus $g$ such that $G_{\Jcal, \ell^{\infty}}$ is $\ell$-adically open and Zariski dense in the symplectic group $\mathrm{GSp}_{2g} ( \Q_{\ell} )$, where $\Jcal$ is the Jacobian of $\Ccal$. Let $K'$ be the minimal finite extension of $K$ over which all the geometric endomorphisms of $\Jcal$ are well defined. Since $K'$ is a finite extension of $K$, we can replace $K$ with $K'$ without losing the property that $G_{\Jcal, \ell^{\infty}}$ is $\ell$-adically open and Zariski dense in $\mathrm{GSp}_{2g} ( \Q_{\ell} )$.
            
                Faltings' isogeny theorem asserts that $\End \bigl( \Jcal \bigr) \otimes_{\Z} \Q_{\ell}$ is isomorphic to the $\Q_{\ell}$-algebra of endomorphisms of $V_{\ell} \bigl( \Jcal \bigr)$ that commute with the action of the Zariski closure of $G_{\Jcal, \ell^{\infty}}$ inside $\mathrm{GSp}_{2g} ( \Q_{\ell} )$, which is $\mathrm{GSp}_{2g} ( \Q_{\ell} )$. It follows that $\End \bigl( \Jcal \bigr) \otimes_{\Z} \Q_{\ell}$ is isomorphic to $\Q_{\ell}$; hence, $\End^{0} \bigl( \Jcal \bigr)$ is isomorphic to $\Q$ and $\Jcal$ is simple. Since all the geometric endomorphisms of $\Jcal$ are defined over $K$, we deduce that $\Jcal$ is geometrically simple with geometric endomorphism algebra isomorphic to $\Q$.

                Since the geometric automorphism group of $\Ccal$ injects into the group of units of finite order of the geometric endomorphism algebra of $\Jcal$, see \cite[Theorem~$12.1$, \S~III]{milneAV}, we deduce that the geometric automorphism group of $\Ccal$ is either trivial or isomorphic to $C_{2}$. In the latter case, $\Ccal$ would be hyperelliptic by Theorem~\ref{the: necessary condition simplicity; sec: Preliminaries}, which is a contradiction; so, we have the result.
            \end{proof}

        \subsection{Abelian varieties}
        \label{subsec: Abelian varieties; sec: Preliminaries}

            We now recall two facts about abelian varieties.
            
            \begin{Def}[abelian variety with CM]
            \label{def: abelian variety of CM type; subsec: Abelian varieties; sec: Preliminaries}
                An abelian variety $A$ over a field $K$ of characteristic $0$ is said to have complex multiplication (CM) by a CM field $F$ if there exists an embedding $F \xhookrightarrow{} \End^{0} ( A )$ and $[ F \colon \Q ] = 2 \dim(A)$. The CM type of $A$ is the set of characters of the action of $F$ on $H^{0} ( A, \Omega^{1}_{A} ) \otimes \overline{K}$. If $F / \Q$ is Galois, then the CM type can be identified with a subset of $\Gal ( F / \Q )$.
            \end{Def}

            \begin{Lem}
            \label{lem: divisibility dimension and degree End simple abelian varieties; subsec: Abelian varieties; sec: Preliminaries}
                Let $X$ be a simple abelian variety over a field $k$ of characteristic $0$. The $\Q$-dimension of $\End^{0} ( X )$ divides $2 \dim ( X )$.
            \end{Lem}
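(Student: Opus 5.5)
Since $X$ is simple, $D \coloneqq \End^{0}(X)$ is a division algebra of finite dimension over $\Q$: every nonzero endomorphism $\alpha$ has image an abelian subvariety, which must be $X$ itself, and connected kernel component a proper abelian subvariety, hence $0$. So $\alpha$ is an isogeny and invertible in $\End^{0}(X)$. The plan is to let $D$ act on a $\Q$-vector space of dimension $2\dim(X)$ and use that every module over a division algebra is free.

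Over $\C$, or more generally after choosing an embedding of a finitely generated field of definition into $\C$, I would use $H_{1}(X(\C), \Q) \cong \Q^{2\dim(X)}$. In general, to avoid the embedding, I would use the $\ell$-adic Tate module $V_{\ell}(X)$ for a prime $\ell$. The rational homology is a faithful left $D$-module, because an endomorphism acting trivially on the lattice $H_{1}(X(\C),\Z)$ is zero. As a finite-dimensional module over the division ring $D$, it is free, say $H_{1} \cong D^{r}$. Comparing $\Q$-dimensions gives $2\dim(X) = r\cdot \dim_{\Q} D$, which proves the claim.

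Two points need checking. First, I must justify the passage from $k$ to $\C$. Since $X$ and all of its finitely many generators of $\End(X)$ are defined over a finitely generated subfield $k_{0}\subset k$, we can embed $k_{0}$ into $\C$. Because $\operatorname{char} k = 0$, the endomorphism algebra does not change under extensions of algebraically closed fields. For simplicity of $X$, only the injectivity of $\End^{0}(X)$ into $\End^{0}(X_{\C})$ is needed, and the argument above only uses that $D$ is a division algebra acting faithfully.

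Second, and this is the more delicate point if one prefers to stay algebraic, is the $\ell$-adic version. Here $D\otimes_{\Q}\Q_{\ell} \hookrightarrow \End_{\Q_{\ell}}(V_{\ell}X)$, but $D\otimes\Q_{\ell}$ need not be a division algebra, so freeness fails. One instead uses that the reduced characteristic polynomial computation gives $\dim_{\Q}D \mid 2\dim X$ via the degree of the norm map (Mumford, \S 19). For that reason I would present the complex-analytic argument as the main proof.
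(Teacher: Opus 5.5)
Your complex-analytic argument is correct and is essentially the argument behind the remark in \cite[p.~182]{mumford1970abelian} that the paper cites: descend to a finitely generated field, embed it in $\C$, and use that $H_{1}(X(\C),\Q)$, of dimension $2\dim(X)$, is a vector space over the division algebra $\End^{0}(X)$. Your aside on the $\ell$-adic route is mistaken, however: the characteristic-free reduced-norm computation of Mumford's \S 19 only yields $[K:\Q]\cdot d \mid 2\dim(X)$, where $K$ is the center and $d^{2}=[\End^{0}(X):K]$, and no characteristic-free argument can do better, since the full divisibility fails in positive characteristic (a supersingular elliptic curve has a $4$-dimensional endomorphism algebra), so characteristic $0$ must enter exactly as in your reduction to $\C$.
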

            \begin{proof}
                \cite[Remark on p.~$182$]{mumford1970abelian}.
            \end{proof}

        \subsection{Specialization methods}
        \label{subsec: specialization methods; sec: Preliminaries}

            Let $L$ be a positive integer and $U$ be a non-empty open subscheme of $\Abb_{\Qbar}^{L}$. Consider a smooth family $\Cfrak \to U$ of curves and let $\Jfrak \to U$ be the Jacobian scheme of $\Cfrak \to U$. Moreover, let $\Cbf$ (resp.~$\Jbf$) be the generic fiber over $\Qbar ( U )$ of $\Cfrak \to U$ (resp.~$\Jfrak \to U$), and let $\Cbf^{\mathrm{al}}$ (resp.~$\Jbf^{\mathrm{al}}$) be the base change to an algebraic closure $\Qbar ( U )^{\mathrm{al}}$ of $\Qbar ( U )$. 

            Given a point $a = ( a_{1}, \dots, a_{L} ) \in U ( \Qbar )$, the specialization of $\Cfrak \to U$ at $a$ is the fiber $\Cfrak_{a}$, and the specialization of $\Jfrak \to U$ at $a$ is the fiber $\Jfrak_{a}$, which is the Jacobian of $\Cfrak_{a}$ since $\Jfrak \to U$ is the Jacobian scheme of $\Cfrak \to U$.

            It is possible to specialize $\Cfrak \to U$ at the $\Qbar$-points of the boundary $\partial U$ of $U$ in the following way (note that $\partial U =  \Abb^{L}_{\Qbar} \setminus U$ since $U$ is dense).
            Consider an integral, closed, normal subscheme $T \subset U$ isomorphic to a non-empty open subscheme of $\Abb^{1}_{\Qbar}$, a finite morphism $T' \to T$, and the pullbacks
            \begin{center}
                \begin{tikzcd}
                    \Cfrak_{T'} \arrow{r} \arrow{d} & \Cfrak_{T} \arrow{r} \arrow{d} & \Cfrak \arrow{d} \\ 
                    T' \arrow{r} & T \arrow{r} & U
                \end{tikzcd}
            \end{center}
                
            Suppose that $T'$ is itself isomorphic to a non-empty open subscheme of $\Abb^1_{\Qbar}$ and that the minimal regular model $\Cfrak_{\mathrm{reg}} \to \Abb^1_{\Qbar}$ of $\Cfrak_{T'} \to T'$ is semistable. Let $\Jfrak_{\mathrm{Ner}} \to \Abb^{1}_{\Qbar}$ be the Néron model of the abelian variety $\Jac \bigl( ( \Cfrak_{\mathrm{reg}} )_{\Qbar(T')} \bigr)$ over the field $\Qbar ( T' )$, so that $\Jfrak_{\mathrm{Ner}} \to \Abb^{1}_{\Qbar}$ is a semiabelian scheme; see \cite[Example~$8$, p.~$246$]{BoschNeronModel}. 

            \begin{Def}[admissible scheme]
            \label{def: admissible scheme; subsec: specialization methods; sec: Preliminaries}
                We call a scheme $T'$ as above \emph{admissible}. Given $t \in \Abb^{1}_{\Qbar}(\Qbar) \supset T'(\Qbar)$, we denote by $\Cfrak_{T', \, t}$ the fiber of $\Cfrak_{\mathrm{reg}}$ at $t$, and by $\Jfrak_{T', \, t}$ the fiber of $\Jfrak_{\mathrm{Ner}}$ at $t$.
            \end{Def}

            We now investigate the relation between the geometric endomorphism algebra of $\Jbf^{\mathrm{al}}$ and that of $\Jfrak_{T', t}$, and the relation between the torsion of $\Jbf^{\mathrm{al}}$ and that of $\Jfrak_{T', t}$, at least under the assumption that $\Jfrak_{T', t}$ is an abelian variety.

            \begin{Lem}
            \label{lem: End0 embeds and torsion bijects under specialization; subsec: specialization methods; sec: Preliminaries}
                Let $T, T'$ and $t \in \Abb^{1}_{\Qbar} ( \Qbar ) \supset T' ( \Qbar )$ be as in Definition~\ref{def: admissible scheme; subsec: specialization methods; sec: Preliminaries}, and suppose that $\Jfrak_{T', t}$ is an abelian variety. 
                
                There exists a canonical embedding $\End^{0} ( \Jbf^{\mathrm{al}} ) \xhookrightarrow{} \End^{0} ( \Jfrak_{T', t} )$ that preserves the dimensions of the identity components of the kernels of the idempotents. Moreover, given an endomorphism $\theta \in \End ( \Jbf^{\mathrm{al}} )$ with finite kernel, there is a bijection $\Jbf [ \theta ] \xrightarrow{\sim} \Jfrak_{T', t} [ \theta ]$.
            \end{Lem}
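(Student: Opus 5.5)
The proof will pass to a discrete valuation ring and apply the Néron mapping property. Write $R$ for the local ring of $\Abb^{1}_{\Qbar}$ at $t$. It is a discrete valuation ring with fraction field $\Qbar(T')$ and residue field $\Qbar$. Base change $\Jfrak_{\mathrm{Ner}}$ to $R$. By assumption, the special fibre $\Jfrak_{T',t}$ is an abelian variety. Since $\Jfrak_{\mathrm{Ner}}$ is semiabelian, its identity component over $R$ is then an abelian scheme $\mathcal{A}/R$, and so $J := \Jac\bigl((\Cfrak_{\mathrm{reg}})_{\Qbar(T')}\bigr)$ has good reduction at $t$. Because $T' \to T \hookrightarrow U$, the field $\Qbar(T')$ embeds into $\Qbar(U)^{\mathrm{al}}$, and $J \otimes \Qbar(U)^{\mathrm{al}} \cong \Jbf^{\mathrm{al}}$. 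All geometric endomorphisms of $\Jbf^{\mathrm{al}}$ are defined over a finite extension $M$ of the generic point. Replacing $T'$ by its normalization in $M$, which is again an open subscheme of a smooth curve, and $t$ by a point above it, we may assume $\End(\Jbf^{\mathrm{al}}) = \End(J)$. This step needs care: the new base should still fit Definition~\ref{def: admissible scheme; subsec: specialization methods; sec: Preliminaries}, or else one runs the argument directly over the strict henselization $R^{\mathrm{sh}}$ of $R$, whose fraction field contains the field of definition of the endomorphisms after a finite extension. Since we only use $R$ locally, the second option is simpler. Good reduction is preserved, because the Néron model of an abelian variety with good reduction commutes with unramified and, more generally, arbitrary finite base change.

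Next, by the Néron mapping property, every $\phi \in \End(J)$ extends uniquely to an endomorphism of $\mathcal{A}$, and restricting to the special fibre gives a ring homomorphism $\End(J) \to \End(\Jfrak_{T',t})$. This map is injective. Indeed, $\End$ of an abelian scheme injects into $\End$ of its $\ell$-adic Tate module, and the $\ell$-power torsion $\mathcal{A}[\ell^{m}]$ is finite étale over $R$ (characteristic $0$). Hence $\mathcal{A}[\ell^{m}](R^{\mathrm{sh}})$ maps bijectively to both the generic and the special $\ell^{m}$-torsion. An endomorphism vanishing on the special fibre therefore vanishes on all $\ell$-power torsion of the generic fibre, so it is zero. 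Tensoring with $\Q$ gives the canonical embedding of $\End^{0}$.

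For the statement about idempotents, let $e \in \End^{0}$ be an idempotent and choose $N$ with $Ne \in \End$. Then $\dim \ker(e)^{0} = \dim(J) - \operatorname{rank}$, where the rank is that of $Ne$ acting on $V_{\ell}$, divided by $2$. The same formula holds on the special fibre. Since $V_{\ell}(J) \cong V_{\ell}(\Jfrak_{T',t})$ equivariantly for the specialized endomorphisms, by the finite étale argument above, the ranks agree. So the dimensions of the identity components of the kernels coincide.

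Finally, let $\theta$ have finite kernel. Then $\theta$ is an isogeny of $J$, and by the Néron property it extends to an isogeny of the abelian scheme $\mathcal{A}$. Its kernel $\mathcal{A}[\theta]$ is a finite flat group scheme over $R$, killed by $\deg\theta$, and it is étale because we are in characteristic $0$. Its generic and special fibres have the same order $\deg\theta$, since the degree is locally constant. Over the strictly henselian ring $R^{\mathrm{sh}}$, points of a finite étale scheme biject with points of both fibres. Composing with $\Jbf[\theta] = J[\theta](\overline{\Qbar(T')}) = \mathcal{A}[\theta](R^{\mathrm{sh}})$, where the first equality uses that $\theta$ and its torsion are already defined after the finite extension (enlarge it if needed), gives the bijection $\Jbf[\theta] \xrightarrow{\sim} \Jfrak_{T',t}[\theta]$.

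The main obstacle is the bookkeeping of base change: making all endomorphisms and all $\theta$-torsion rational over the local base without losing the good-reduction hypothesis at $t$. This is resolved by working over $R^{\mathrm{sh}}$ and a finite extension of it, and invoking the compatibility of Néron models of abelian varieties with good reduction under such extensions.
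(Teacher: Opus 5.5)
There is a genuine gap at the very start. The claim that $\Qbar(T')$ embeds into $\Qbar(U)^{\mathrm{al}}$ with $J\otimes\Qbar(U)^{\mathrm{al}}\cong\Jbf^{\mathrm{al}}$ is false in general. In the setup preceding the lemma, $T$ is a \emph{closed} curve in $U$. It is a proper closed curve as soon as $L\ge 2$, which is exactly how the lemma is used later: there $T$ is the image of $t\mapsto(t^{p^n\varepsilon_1}a_1,\dots,t^{p^n\varepsilon_L}a_L)$. Consequently:
\begin{itemize}
\item the generic point of $T'$ lies over a non-generic point of $U$;
\item $\Qbar(T)$ is a residue field of $U$, not an extension of $\Qbar(U)$;
\item $J$ is a \emph{specialization} of $\Jbf$, not a model of it.
\end{itemize}
For the $C_{p^n}$ family with $L\ge 3$, the moduli image has dimension $L-1\ge 2$. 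So $\Jbf^{\mathrm{al}}$ cannot even be defined over a field of transcendence degree $1$. The same confusion affects two later steps. ``Normalizing $T'$ in $M$'' makes no sense, since $M$ is finite over $\Qbar(U)$, not over $\Qbar(T')$. The identification $\Jbf[\theta]=J[\theta](\overline{\Qbar(T')})$ in your torsion step fails for the same reason. As written, your argument only proves the lemma when $T$ is dense in $U$, i.e.\ for a one-parameter family.

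What is missing is the first of the two specializations the paper composes:
\begin{itemize}
\item from the generic point of $U$ to the generic point of $T$, via \cite[Lemma~6.1.6]{cantoral2023monodromy};
\item only then from the generic point of $T'$ to $t$, via \cite[Lemma~6.1.7(a)]{cantoral2023monodromy}.
\end{itemize}
For torsion, the paper first passes to a finite \'etale cover of $U$ over which $\theta$ and $\Jfrak[\theta]$ are rational. It then extends $\theta$ to the abelian scheme $\Jfrak$ and restricts sections along $\Jfrak(U)[\theta]\to\Jfrak(T')[\theta]\to\Jfrak_{T',t}[\theta]$.

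The first step cannot be done with the N\'eron mapping property, because $\mathcal{O}_{U,\eta_T}$ has dimension $L-1$. You need instead that homomorphisms of abelian schemes over a normal base extend from the generic fibre. Alternatively, you can use a chain of DVR specializations through codimension-one points.

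Once this step is added, the rest of your proposal works:
\begin{itemize}
\item your finite-\'etale/Tate-module argument applies verbatim at the generic point of $T$, since $\Jfrak[\ell^m]$ is finite \'etale over $U$;
\item your DVR argument handles the point $t$;
\item your proof of the idempotent statement via ranks of $V_\ell(Ne)$ becomes valid. It is a cleaner alternative to the paper's argument through $q$-torsion of the kernels and complementary subvarieties.
\end{itemize}
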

            \begin{proof}
                Let $\Jbf_{T}^{\mathrm{al}}$ (resp.~$\Jbf_{T'}^{\mathrm{al}}$) be the scalar extension to $\Qbar ( T )^{\mathrm{al}}$ (resp.~$\Qbar ( T' )^{\mathrm{al}}$) of the generic fiber of $\Jfrak_{T} \to T$ (resp.~$\Jfrak_{T'} \to T'$). Since $T' \to T$ is finite, $\Qbar ( T )^{\mathrm{al}} = \Qbar ( T' )^{\mathrm{al}}$, hence $\Jbf_{T}^{\mathrm{al}} = \Jbf_{T'}^{\mathrm{al}}$. Moreover, since $\Jfrak_{T', t}$ is defined over $\Qbar$, we have $\Jfrak_{T', t} = \Jfrak_{T', t}^{\mathrm{al}}$.
                    
                Let us begin by analyzing the relation between the geometric endomorphism algebras. By \cite[Lemma~$6.1.6$ $(d)$]{cantoral2023monodromy} we deduce that there exists a canonical embedding $\End^{0} ( \Jbf^{\mathrm{al}} ) \hookrightarrow \End^{0} ( \Jbf_T^{\mathrm{al}} )$. By \cite[Lemma~$6.1.7$ $(a)$]{cantoral2023monodromy} we deduce that there exists a canonical embedding $\End^{0} ( \Jbf_{T'}^{\mathrm{al}} ) \xhookrightarrow{} \End^{0} ( \Jfrak^{\mathrm{al}}_{T', t} )$. It follows that there exists a canonical embedding $\iota \colon \End^{0} ( \Jbf^{\mathrm{al}} ) \xhookrightarrow{} \End^{0} ( \Jfrak_{T', \, t}^{\mathrm{al}} )$.

                Let us continue by analyzing the relation between the torsion subgroups. Note first that an endomorphism of the generic fiber extends to an endomorphism of $\Jfrak$ by \cite[Lemma~6.1.6 (a)]{cantoral2023monodromy}.
                There exists a canonical map $\Jbf [ \theta ] \to \Jbf_{T} [ \theta ]$ constructed as follows. Since we are working in characteristic zero, the $\theta$-torsion subscheme $\Jfrak [ \theta ]$ is finite étale over $U$, hence there exists a finite étale cover $U' \to U$ such that all the $\theta$-torsion sections are defined over $U'$. After replacing $U, T, T' \subseteq \Abb^{1}_{\Qbar}$ by suitable finite étale covers, and $t$ by a $\Qbar$-point in the cover of $\Abb^{1}_{\Qbar}$ (by abuse of language, we again denote these covers and this point by $U, T, T'$ and $t$), we may then assume that $\theta$ is defined over $\Qbar(U)$, that it extends to $\mathfrak{J}$, and that all the $\theta$-torsion is defined over $U$. 
                The extension property of abelian schemes then shows that every section in $\Jbf [ \theta ]$ extends uniquely to a section in
                $\Jfrak ( U ) [ \theta ]$, so that the specialization map $\Jfrak ( U ) [ \theta ] \to \Jbf [ \theta ]$ is an isomorphism. We also have a restriction map $\Jfrak ( U ) [ \theta ] \to \Jfrak ( T' ) [ \theta ]$ and a specialization map $\Jfrak ( T' ) [ \theta ] \to \Jfrak_{ T', t } [ \theta ]$. These two maps are injective by \cite[Proposition~$3$, \S~$7.3$]{BoschNeronModel}. Composing with the inverse of $\Jfrak ( U ) [ \theta ] \to \Jbf [ \theta ]$, we obtain an embedding $\Jbf [ \theta ] \xhookrightarrow{} \Jfrak_{ T', t } [ \theta ]$. Moreover, since $\bigl| \ker ( \theta ) \bigr| = \deg ( \theta )$ and the degree is preserved under specialization maps, it follows that $\bigl| \Jbf [ \theta ] \bigr| = \bigl| \Jfrak_{ T', t } [ \theta ] \bigr|$. Hence, the embedding $\Jbf [ \theta ] \xhookrightarrow{} \Jfrak_{ T', t } [ \theta ]$ is a bijection.

                Finally, we prove the statement regarding the dimensions. For each $h \in \End^{0} ( \Jbf^{\mathrm{al}} )$, let $G_{h, 1}$ (resp.~$G_{h, 2}$) be the kernel of a chosen endomorphism in $\End ( \Jbf^{\mathrm{al}} )$ (resp.~$\End ( \Jfrak_{T', t}^{\mathrm{al}} )$) of the form $Nh$ (resp.~$\iota ( N h )$) for $N \in \Z \setminus \{ 0 \}$. Although these algebraic groups depend on the chosen multiple, their identity components, which are abelian varieties, do not. We have to prove that $\dim ( G_{h, 1}^{0} ) = \dim ( G_{h, 2}^{0} )$.

                Let $q$ be a prime not dividing the order of $G_{h, 2} / G_{h, 2}^{0}$. Then, $G_{h, 2} [ q ] = G_{h, 2}^{0} [ q ]$; otherwise, $G_{h, 2} / G_{h, 2}^{0}$ would contain an element of order $q$. Since the specialization of endomorphisms commutes with the specialization of torsion points by construction, and because the specialization of torsion points is injective, there exists an embedding $G_{h, 1}^{0} [ q ] \xhookrightarrow{} G_{h, 2} [ q ] = G_{h, 2}^{0} [ q ]$. As $G_{h, 1}^{0} [ q ]$ (resp.~$G_{h, 2}^{0} [ q ]$) is an $\Fbb_{q}$-vector space of dimension $2 \dim ( G_{h, 1}^{0} )$ (resp.~$2 \dim ( G_{h, 2}^{0} )$), it follows that $\dim ( G_{h, 1}^{0} ) \le \dim ( G_{h, 2}^{0} )$. Similarly, $\dim ( G_{1 - h, 1}^{0} ) \le \dim ( G_{1 - h, 2}^{0} )$.

                Let $h \in \End^{0} ( \Jbf^{\mathrm{al}} )$ be an idempotent. Then, $G_{h, 1}^{0}$ and $G_{1 - h, 1}^{0}$ intersect in only finitely many points. Since $1 = h + ( 1 - h )$, it follows that $G_{h, 1}^{0}$ and $G_{1 - h, 1}^{0}$ are complementary abelian subvarieties of $\Jbf^{\mathrm{al}}$. Similarly, $G_{h, 2}^{0}$ and $G_{1 - h, 2}^{0}$ are complementary abelian subvarieties of $\Jfrak_{T', t}$. Since $\dim ( \Jbf^{\mathrm{al}} ) = \dim ( \Jfrak_{T', t} )$, $\dim ( G_{h, 1}^{0} ) \le \dim ( G_{h, 2}^{0} )$ and $\dim ( G_{1 - h, 1}^{0} ) \le \dim ( G_{1 - h, 2}^{0} )$, it follows that $\dim ( G_{h, 1}^{0} ) = \dim ( G_{h, 2}^{0} )$, as desired.
            \end{proof}

            The following simple remark will be useful:
            
            \begin{Rem}
            \label{rem: simple observation of lem: End0 embeds and torsion bijects under specialization; subsec: specialization methods; sec: Preliminaries}
                Let $T, T'$ and $t \in \Abb^{1}_{\Qbar} ( \Qbar ) \supset T' ( \Qbar )$ be as in Definition~\ref{def: admissible scheme; subsec: specialization methods; sec: Preliminaries}, and suppose that $\Jfrak_{T', t}$ is an abelian variety. Fix an isotypic component $Z$ of $\Jbf^{\mathrm{al}}$. For each isotypic component $Y$ of $\Jfrak_{T', t}$, consider the sequence of morphisms
                \begin{equation*}
                    \End^{0} ( Z ) \xhookrightarrow{} \End^{0} ( \Jbf^{\mathrm{al}} ) \xhookrightarrow{\iota} \End^{0} ( \mathfrak{J}_{T', t} ) \twoheadrightarrow \End^{0} ( Y ) ,
                \end{equation*}
                where the embedding $\iota$ is provided by the first statement of Lemma~\ref{lem: End0 embeds and torsion bijects under specialization; subsec: specialization methods; sec: Preliminaries} and the other morphisms are the natural ones.

                As $Z$ is isotypic, $\End^{0} ( Z )$ is a simple algebra; thus, the induced morphism $\End^{0} ( Z ) \to \End^{0} ( Y )$ is either injective or trivial. Since $\End^{0} ( \Jbf^{\mathrm{al}} ) \xhookrightarrow{\iota} \End^{0} ( \mathfrak{J}_{T', t} )$ is nontrivial, the induced morphism cannot be trivial for every $Y$; thus, there exists an isotypic component $Y$ of $\Jfrak_{T', t}$ such that $\End^{0} ( Z )$ embeds into $\End^{0} ( Y )$. 

                Moreover, if $\Jbf^{\mathrm{al}}$ is isotypic, the induced morphism $\End^{0} ( \Jbf^{\mathrm{al}} ) \to \End^{0} ( Y )$ is injective for every $Y$, since the embedding $\iota$ maps the identity of $\End^{0} ( \Jbf^{\mathrm{al}} )$ to the identity of $\End^{0} ( \mathfrak{J}_{T', t}^{\mathrm{al}} )$.
            \end{Rem}

            Lemma~\ref{lem: End0 embeds and torsion bijects under specialization; subsec: specialization methods; sec: Preliminaries} has an interesting consequence:

            \begin{Lem}
            \label{lem: isogeny decomposition specializes; subsec: specialization methods; sec: Preliminaries}
                Let $T, T'$ and $t \in \Abb^{1}_{\Qbar} ( \Qbar ) \supset T' ( \Qbar )$ be as in Definition~\ref{def: admissible scheme; subsec: specialization methods; sec: Preliminaries}, and suppose that $\Jfrak_{T', t}$ is an abelian variety. 
                    
                Assume that $\Jbf^{\mathrm{al}}$ is isogenous to $X_{1} \times \cdots \times X_{S}$ for some positive integer $S$ and abelian varieties $X_{1}, \dots, X_{S}$ over $\Qbar ( U )^{\mathrm{al}}$. Then, there exist abelian varieties $Y_{1}, \dots, Y_{S}$ over $\Qbar$ such that $\Jfrak_{T', t}$ is isogenous to $Y_{1} \times \cdots \times Y_{S}$ and $\dim ( X_{s} ) = \dim ( Y_{s} )$ for every integer $1 \le s \le S$.

                In particular, there exists a simple factor of $\Jbf^{\mathrm{al}}$ of dimension at least the maximal dimension of a simple factor of $\Jfrak_{T', t}$.
            \end{Lem}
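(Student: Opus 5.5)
The plan is to translate the isogeny decomposition of $\Jbf^{\mathrm{al}}$ into a complete system of orthogonal idempotents in $\End^{0}(\Jbf^{\mathrm{al}})$. We then transport these idempotents to $\Jfrak_{T',t}$ via the canonical embedding $\iota$ of Lemma~\ref{lem: End0 embeds and torsion bijects under specialization; subsec: specialization methods; sec: Preliminaries}. The dimension-preservation clause of that lemma will give the dimensions of the resulting pieces.

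First, fix an isogeny $\phi\colon X_{1}\times\cdots\times X_{S}\to\Jbf^{\mathrm{al}}$ with quasi-inverse $\psi$, so that $\phi\psi=N$ for some $N\ne0$. Let $\pi_{s}$ be the projection onto $X_{s}$ and $i_{s}$ the inclusion of $X_{s}$. Set $e_{s}\coloneqq\phi\, i_{s}\pi_{s}\,\psi/N\in\End^{0}(\Jbf^{\mathrm{al}})$. Then the $e_{s}$ are pairwise orthogonal idempotents with $\sum_{s}e_{s}=1$. The abelian subvariety $\operatorname{im}(Ne_{s})$ is isogenous to $X_{s}$, and it coincides up to finite groups with the identity component of $\ker(N(1-e_{s}))$. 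Hence $\dim\ker(N(1-e_{s}))^{0}=\dim X_{s}$.

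Next, push everything forward through the ring embedding $\iota$. The images $f_{s}\coloneqq\iota(e_{s})$ are again pairwise orthogonal idempotents summing to $1$, since $\iota$ is a unital ring homomorphism; that it preserves the identity is noted in Remark~\ref{rem: simple observation of lem: End0 embeds and torsion bijects under specialization; subsec: specialization methods; sec: Preliminaries}. Define $Y_{s}\coloneqq\ker\bigl(\iota(N(1-e_{s}))\bigr)^{0}$, with $N$ chosen to clear denominators. Applying the dimension statement of Lemma~\ref{lem: End0 embeds and torsion bijects under specialization; subsec: specialization methods; sec: Preliminaries} to the idempotent $1-e_{s}$ gives $\dim Y_{s}=\dim X_{s}$.

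It remains to see that the addition map $Y_{1}\times\cdots\times Y_{S}\to\Jfrak_{T',t}$ is an isogeny. The dimensions add up to $\dim\Jfrak_{T',t}=\dim\Jbf^{\mathrm{al}}$. For surjectivity, note that any $x$ equals $\sum_{s}f_{s}x$ up to clearing denominators, and $Nf_{s}x$ is killed by $N(1-f_{s})$ because $f_{s}(1-f_{s})=0$; moreover $\operatorname{im}(Nf_{s})$ is connected, so it lies in $Y_{s}$. The map is therefore surjective between varieties of equal dimension, hence an isogeny.

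For the final assertion, let $Z$ be a simple factor of $\Jfrak_{T',t}$ of maximal dimension. Apply the above to a decomposition of $\Jbf^{\mathrm{al}}$ into simple factors $X_{s}$. Then $\Jfrak_{T',t}\sim\prod Y_{s}$, so $Z$ is a simple factor of some $Y_{s}$, and $\dim Z\le\dim Y_{s}=\dim X_{s}$.

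The only delicate point is the bookkeeping between kernels and images of the rational idempotents. That is, checking that $\operatorname{im}(Ne_{s})$ and $\ker(N(1-e_{s}))^{0}$ agree up to isogeny, so that the dimension-preservation clause, which is stated for kernels, applies exactly to the pieces we need. This comes down to standard complementary-subvariety arguments.
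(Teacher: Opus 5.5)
Your proposal is correct and follows essentially the same route as the paper. You encode the decomposition by rational idempotents, push them through the unital embedding $\iota$, use the dimension-preservation clause to get $\dim Y_s = \dim X_s$, and conclude that the addition map $Y_1 \times \cdots \times Y_S \to \mathfrak{J}_{T',t}$ is an isogeny. The only difference is in that last step: the paper asserts the addition map has finite kernel because $g$ does, while you prove surjectivity from $\sum_s \iota(e_s) = 1$ and compare dimensions, which is the more complete justification.
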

            \begin{proof}
                Without loss of generality, we can consider every $X_{s}$ as an abelian subvariety of $\Jbf^{\mathrm{al}}$ and the isogeny as the morphism $g \colon X_{1} \times \cdots \times X_{S} \to \Jbf^{\mathrm{al}}$ given by $( x_{1}, \dots, x_{S} ) \mapsto x_{1} + \cdots + x_{S}$. Let $N \ge 1$ be an integer and $h_{1}, \dots, h_{S}$ be endomorphisms of $\Jbf^{\mathrm{al}}$ such that for every $1 \le s \le S$ we have $h_{s}^{2} = N \cdot h_{s}$ and the identity component of the kernel of $h_{s}$ is $X_{s}$. 
                
                By the first statement of Lemma~\ref{lem: End0 embeds and torsion bijects under specialization; subsec: specialization methods; sec: Preliminaries}, there is an embedding $\iota \colon \End^{0} ( \Jbf^{\mathrm{al}} ) \xhookrightarrow{} \End^{0} ( \Jfrak_{T', t} )$ that preserves the dimensions of the identity components of the kernels of the idempotents. For each $h_{s}$, let $Y_{s}$ be the identity component of the kernel of $\iota ( h_{s} )$. By construction $\dim ( X_{s} ) = \dim ( Y_{s} )$ for every integer $1 \le s \le S$. Moreover, the morphism $\tilde{g} \colon Y_{1} \times \cdots \times Y_{S} \to \Jfrak_{T', t}$ given by $( y_{1}, \dots, y_{S} ) \mapsto y_{1} + \cdots + y_{S}$ has a finite kernel (equivalently, the kernel has dimension $0$), since the same holds for $g$. It follows that $\tilde{g}$ is an isogeny.
            \end{proof}

            \begin{Not}
            \label{not: specialization of abelian subvar; subsec: specialization methods; sec: Preliminaries}
                Following the notation of Lemma~\ref{lem: isogeny decomposition specializes; subsec: specialization methods; sec: Preliminaries}, for each $1 \le s \le S$, we say that the abelian subvariety $Y_{s}$ of $\Jfrak_{T', t}$ is the specialization of the abelian subvariety $X_{s}$ of $\Jbf^{\mathrm{al}}$.
            \end{Not}

            Lemma~\ref{lem: End0 embeds and torsion bijects under specialization; subsec: specialization methods; sec: Preliminaries} implies that the geometric endomorphism algebra of every fiber of $\Jfrak \to U$ which is an abelian variety contains a subalgebra isomorphic to $\End^{0} ( \Jbf^{\mathrm{al}} )$. We now investigate how many fibers of $\Jfrak \to U$ have geometric endomorphism algebra strictly larger than that of $\Jbf^{\mathrm{al}}$.

            \begin{Def}
            \label{def: height; subsec: specialization methods; sec: Preliminaries}
                Let $K$ be a number field of degree $d = [ K : \Q ]$. We denote by
                \begin{equation*}
                    H ( x_{0} : \dots : x_{n} ) = \prod_{ v \in M_{K} } \max_{i} | x_{i} |_{v}^{ n_{v} / d }
                \end{equation*}
                the absolute multiplicative Weil height on $\mathbb{P}_{ n, K }(K)$, where $M_{K}$ is the set of places of $K$ and, for every $v \in M_{K}$, we set $n_{v} \coloneqq [ K_{v} : \Q_{p} ]$ where $p$ is the place below $v$. Moreover, we denote by
                \begin{equation*}
                    H_{K} ( x_{0} : \dots : x_{n} ) \coloneqq H ( x_{0} : \dots : x_{n} )^{d}
                \end{equation*}
                the $K$-relative multiplicative Weil height. Finally, for $( x_{1}, \dots, x_{L} ) \in K^{L}$ we set as usual
                \begin{equation*}
                    H ( x_{1}, \dots, x_{L} ) \coloneqq H ( 1 : x_{1} : \dots : x_{L} )
                \end{equation*}
                and
                \begin{equation*}
                    H_{K} ( x_{1}, \dots, x_{L} ) \coloneqq H_{K} ( 1 : x_{1} : \dots : x_{L} ) .
                \end{equation*}
            \end{Def}  

            The following lemma is probably well-known to experts, but we have not been able to locate it in the literature in the simple form below, where the bound is linear in the degree of the polynomial.

            \begin{Lem}
            \label{lem: number of bounded zeros polynomial; subsec: Preliminaries; sec: Cyclic groups of order p^n}
                Let $K$ be a number field of degree $d = [ K : \Q ]$, and let $P \in K [ X_{1}, \dots, X_{L} ]$ be a non-zero polynomial of degree $D$. There exists a constant $C = C ( K, L ) > 0$, independent of $P$, such that, for every $B \ge 1$, we have
                \begin{equation*}
                    \# \bigl\{ ( x_{1}, \dots, x_{L} ) \in K^{L} \colon P ( x_{1}, \dots, x_{L} ) = 0,\ H ( x_{1}, \dots, x_{L} ) \le B \bigr\} \le C \cdot D \cdot B^{ d \cdot L } .
                \end{equation*}
                Equivalently, for every $T \ge 1$ we have
                \begin{equation*}
                    \# \bigl\{ ( x_{1}, \dots, x_{L} ) \in K^{L} \colon P ( x_{1}, \dots, x_{L} ) = 0,\ H_{K} ( x_{1}, \dots, x_{L} ) \le T \bigr\} \le C \cdot D \cdot T^{L} .
                \end{equation*}
            \end{Lem}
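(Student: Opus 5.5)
Our approach is induction on $L$, using the classical Schwartz--Zippel slicing argument combined with the standard count of points of bounded height in $K$. Two inputs are needed. First, Schanuel's theorem (or simply an upper bound of the correct order) gives a constant $c_{1} = c_{1}(K)$ such that
\begin{equation*}
\#\{x \in K : H_{K}(x) \le T\} \le c_{1} T^{2}.
\end{equation*}
Second, we need a more refined bound on the number of tuples of bounded height. Since $H_{K}(x_{1},\dots,x_{L}) \ge \max_{i} H_{K}(x_{i})$, the set $\{H_{K}(x) \le T\}$ is contained in the product of the one-variable boxes. The product bound alone would give $T^{2L}$, which is too weak. We will therefore work directly with the height $H_{K}(1:x_{1}:\dots:x_{L})$ and use the projective Schanuel count, which gives $\asymp T^{L+1}$ points. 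We will then exploit the fact that zeros of $P$ lose one dimension.

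The argument runs as follows. By induction on $L$ we prove the bound $\le C D T^{L}$ for the zeros. For $L=1$, $P$ has at most $D$ roots, and $D \le D T$ since $T \ge 1$. For the inductive step, write $P = \sum_{j=0}^{e} P_{j}(X_{1},\dots,X_{L-1}) X_{L}^{j}$ with $P_{e} \neq 0$ and $e \le D$. Split the zeros into two classes. In the first class, $P_{e}(x') \neq 0$, where $x' = (x_{1},\dots,x_{L-1})$. For each such $x'$ there are at most $e \le D$ choices of $x_{L}$. The number of $x'$ with $H_{K}(x') \le T$ is at most $c\,T^{L}$, and this is where the key point enters: we need this count to be $\ll T^{L}$, not $T^{2(L-1)}$. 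In the second class, $P_{e}(x') = 0$. Here $x'$ is a zero of a nonzero polynomial of degree $\le D$ in $L-1$ variables, so by induction there are at most $C D T^{L-1}$ such $x'$. For each of them, $x_{L}$ ranges over all $x$ with $H_{K}(x) \le T$, which gives a factor $c_{1} T^{2}$. The total is $C D T^{L+1}$, which is too big.

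This shows that naive slicing loses a power. The exponent therefore has to be understood correctly. Schanuel's theorem gives $\#\{x \in K^{L} : H_{K}(x) \le T\} \sim c\,T^{L+1}$ (affine points inside $\mathbb{P}_{L}$). So the claimed bound $D\,T^{L}$ is exactly one power of $T$ below the total count, which is the expected saving for a hypersurface. The slicing must accordingly be arranged so that the "bad" fibres cost only a factor $T$, not $T^{2}$. To achieve this, we first make a linear change of variables, with coefficients in a fixed finite set (depending only on $L$, and chosen so that the leading form of $P$ does not vanish on it), to make $P$ monic in $X_{L}$ up to a constant. This is possible once we choose points where the top homogeneous part is nonzero; the set of candidate points can be taken inside a grid of size $D+1$ in each coordinate, but to keep the constant independent of $D$ we instead choose the direction among $\{(1,c_{2},\dots,c_{L}) : c_{i} \in \{0,\dots,D\}\}$, which changes heights only by a factor $(D+1)^{O(1)}$. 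Because this factor might spoil linearity in $D$, we will prefer the alternative below.

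The alternative is to project instead of slicing. We map the zero set $Z(P)$ to the first $L-1$ coordinates. Each fibre over a point $x'$ with $P(x',X_{L}) \not\equiv 0$ has at most $D$ points, and the image points satisfy $H_{K}(x') \le T$, of which there are $\ll T^{L}$ by Schanuel in dimension $L-1$. The fibres where $P(x',X_{L}) \equiv 0$ lie over the common zero locus of all the $P_{j}$, which is a proper closed subset. There we use a gcd argument: after dividing $P$ by its content in $X_{L}$, we may write $P = Q(x') R$, where $R$ has no factor depending only on $x'$. The zeros of $Q(x')$ times the $X_{L}$-line are handled by induction in $L-1$ variables, with the count controlled by projecting onto $(x_{2},\dots,x_{L})$ instead, after permuting the variables so that $Q$ involves $X_{1}$. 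For $R$, the locus where all coefficients vanish has codimension $\ge 2$, and this is handled by a crude bound. The main obstacle is precisely this bookkeeping in the degenerate fibres. We expect it to be resolved by choosing the projection direction adapted to a variable actually occurring in each irreducible factor, and by summing over the factors, whose degrees add up to $D$; this is what makes the final bound linear in $D$.
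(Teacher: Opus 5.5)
Your proposal has a genuine gap: it never bounds the zeros lying in degenerate fibres, which is the crucial step. The non-degenerate part is fine. Projecting $Z(P)$ to $x'=(x_1,\dots,x_{L-1})$, each fibre with $P(x',X_L)\not\equiv 0$ has at most $D$ points, and there are $\ll T^{L}$ points $x'$ with $H_K(x')\le T$. The problem is the zeros over $x'$ with $P(x',X_L)\equiv 0$, and your last sentences concede this is left open. None of the repairs you sketch closes it:

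\begin{itemize}
\item \emph{The content factor $Q(x')$.} Counting $x'\in Z(Q)$ by induction and then allowing every $x_L$ with $H_K(x_L)\le T$ loses exactly the factor $T$ you found in your first attempt. A degenerate fibre is bounded by $\asymp T^{2}$, while $\{H_K\le T\}\subset K^{L}$ has only $\asymp T$ points per fibre on average over its projection. Projecting along $X_1$ instead only produces a new degenerate locus, where all $X_1$-coefficients of $Q$ vanish, so the recursion never reaches anything you have proved.
\item \emph{The primitive part $R$.} The locus $W\times K$, with $\operatorname{codim} W\ge 2$, needs a bound $\ll D\,T^{L-2}$ for points of height $\le T$ on $W$. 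That is a point-counting statement for subvarieties of higher codimension, whose degree can be of order $D^{2}$ by B\'ezout. It is harder than the lemma itself, so it cannot be dismissed as a ``crude bound''.
\item \emph{The linear change of variables.} As you noticed yourself, it destroys the linear dependence on $D$.
\end{itemize}

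The missing idea is to remove the non-product shape of the height region before slicing. The paper represents each point $(1:x_1:\dots:x_L)$ by an integral vector $(y_0,\dots,y_L)\in\mathcal{O}_K^{L+1}\setminus\{0\}$ with $|\sigma(y_i)|\le cB$ for every embedding $\sigma$ (\cite[Proposition~2.2]{MR4516199}, which absorbs the class-group issue). Every point therefore has a representative in the product set $\mathcal{B}_K(cB)^{L+1}$, where $\mathcal{B}_K(R)=\{a\in\mathcal{O}_K : |\sigma(a)|\le R \text{ for all } \sigma\}$ has $\ll R^{d}$ elements by a lattice-point count in the Minkowski embedding. The elementary Schwartz--Zippel lemma, applied to the homogenization $F$ of $P$ (nonzero, homogeneous of degree $D$) on this grid, gives at most $D\,|\mathcal{B}_K(cB)|^{L}\ll D\,B^{dL}$ zeros. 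Overcounting by multiple representatives is harmless for an upper bound. On a genuine product grid a degenerate slice costs only $|\mathcal{B}_K(cB)|$, the common size of every coordinate range. The degree drop $\deg P_e\le D-e$ of the leading coefficient then keeps the bound linear in $D$. This is precisely what fails when you slice the affine height region directly.
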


            For the proof, we will need the following statement.

            \begin{Pro}[{\cite[Proposition~2.2]{MR4516199}}]
            \label{prop: bound on affine coordinates; subsec: Preliminaries; sec: Cyclic groups of order p^n}
                Let $K$ be a number field. There exists a constant $c = c ( K ) > 0$ such that for every $n \ge 1$, each point $( x_{0} \colon \dots \colon x_{n} ) \in \mathbb{P}_{n, K} ( K )$ can be represented by a point $ ( y_{0}, \dots, y_{n} ) \in \Ocal_{K}^{ n + 1 } \setminus \{ 0 \}$ satisfying
                \begin{equation*}
                    \max_{ 0 \le i \le n } | \sigma ( y_{i} ) | \le c \cdot H ( x_{0} \colon \dots \colon x_{n} )
                \end{equation*}
                for every embedding $\sigma \colon K \hookrightarrow \C$.
            \end{Pro}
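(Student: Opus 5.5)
The plan is the standard combination of finiteness of the class group, to control the non-archimedean places, and Dirichlet's unit theorem, to balance the archimedean places. Write $d = [K:\Q]$ and fix once and for all integral ideals $\mathfrak{a}_1, \dots, \mathfrak{a}_h$ of $\Ocal_K$ representing the ideal classes of $K$. These depend only on $K$, not on $n$. All constants below will depend only on these choices and on $K$.

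\emph{Step 1 (integral representative generating a fixed ideal).} Let $(x_0 : \dots : x_n) \in \mathbb{P}_{n,K}(K)$. Clearing denominators, we may assume $x_i \in \Ocal_K$, and we let $I = (x_0, \dots, x_n)$ be the ideal they generate. Then $I = \lambda \mathfrak{a}_j$ for some $j$ and some $\lambda \in K^\times$. Setting $z_i = x_i/\lambda$, the vector $z = (z_0, \dots, z_n)$ represents the same point, and its entries generate exactly $\mathfrak{a}_j$, so they lie in $\Ocal_K$.

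We now compute the finite part of the height. For each finite place $v$ we have $\max_i |z_i|_v^{n_v} = |\mathfrak{a}_j|_v^{n_v}$. Taking the product over all finite places gives
\begin{equation*}
\prod_{v \nmid \infty} \max_i |z_i|_v^{n_v} = N(\mathfrak{a}_j)^{-1}.
\end{equation*}
Since $H$ does not depend on the representative, the product formula yields
\begin{equation*}
\prod_{v \mid \infty} \max_i |z_i|_v^{n_v} = N(\mathfrak{a}_j)\, H(x)^d \le C_1\, H(x)^d,
\end{equation*}
where $C_1 = \max_j N(\mathfrak{a}_j)$.

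\emph{Step 2 (balancing with units).} Multiplying $z$ by a unit $u \in \Ocal_K^\times$ preserves integrality, the generated ideal $\mathfrak{a}_j$, and the product above. For each archimedean place $v$, set $\ell_v = n_v \log \max_i |z_i|_v$, and let $\bar\ell = \frac{1}{d}\sum_v \ell_v$. The vector $(\ell_v - n_v\bar\ell)_v$ lies in the trace-zero hyperplane $\{w : \sum_v w_v = 0\}$. By Dirichlet's theorem, the image of $\Ocal_K^\times$ under $u \mapsto (n_v \log|u|_v)_v$ is a full lattice in that hyperplane. It therefore has a bounded fundamental domain, of diameter $R = R(K)$.

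Choose $u$ so that, for the vector $y = u z$, the quantity $\bigl|n_v\log\max_i|y_i|_v - n_v\bar\ell\bigr|$ is at most $R$ for every archimedean $v$. This is possible because $\log\max_i|u z_i|_v = \log|u|_v + \log\max_i|z_i|_v$. Note that $\bar\ell$ is unchanged by this replacement, since the product of the archimedean terms is unchanged.

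\emph{Step 3 (conclusion).} Every embedding $\sigma \colon K \hookrightarrow \C$ corresponds to an archimedean place $v$, with $|\sigma(\cdot)| = |\cdot|_v$ and $n_v \in \{1,2\}$. Hence
\begin{equation*}
\log \max_i |\sigma(y_i)| \le \bar\ell + R.
\end{equation*}
By Step 1, $d\,\bar\ell = \log \prod_{v\mid\infty}\max_i|z_i|_v^{n_v} \le \log C_1 + d\log H(x)$. Combining the two bounds,
\begin{equation*}
\max_i |\sigma(y_i)| \le e^{R} C_1^{1/d}\, H(x).
\end{equation*}
Thus the proposition holds with $c = e^R C_1^{1/d}$, which depends only on $K$ and not on $n$. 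Moreover $y \neq 0$, since its entries generate the nonzero ideal $\mathfrak{a}_j$.

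The only point requiring care is that $c$ must be uniform in $n$. This holds because both the class representatives and the unit-lattice fundamental domain are chosen independently of $n$, and the passage from the $n+1$ coordinates to the data used above happens only through $\max_i$. There is no real obstacle beyond this bookkeeping.
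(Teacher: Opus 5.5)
Your proof is correct. Rescaling so that the coordinates generate one of finitely many fixed integral ideals pins down the non-archimedean part of $H$, Dirichlet's unit theorem then balances the archimedean sizes up to a bounded error, and none of the constants depend on $n$. The paper does not prove this proposition itself but quotes it from the cited reference, and your class-group-plus-units argument is the standard proof of this statement.
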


            \begin{proof}[Proof of Lemma~\ref{lem: number of bounded zeros polynomial; subsec: Preliminaries; sec: Cyclic groups of order p^n}]
                Let $(r_1, r_2)$ be the signature of the number field $K$. For $R \ge 1$, let
                \begin{equation*}
                    \Bcal_{K} ( R ) \coloneqq \bigl\{ a \in \Ocal_{K} \colon | \sigma ( a ) | \le R \text{ for every embedding } \sigma \colon K \hookrightarrow \C \bigr\} 
                \end{equation*}
                and
                \begin{equation*}
                    S( R ) \coloneqq \bigl\{ ( a_{1}, \dots, a_{ r_{1} }, b_{1}, \dots, b_{ r_{2} } ) \in \R^{ r_{1} } \times \C^{ r_{2} } \colon | a_{i} |, | b_{j} | \le R \text{ for $1 \le i \le r_{1}$ and $1 \le j \le r_{2}$} \bigr\} .
                \end{equation*}
                We prove first that there exists a constant $c_{K} > 0$ such that $ | \Bcal_{K} ( R ) | \le c_{K} \cdot R^{d}$ for all $R \ge 1$. Let $\iota \colon \Ocal_{K} \hookrightarrow \R^{ r_{1} } \times \C^{ r_{2} } \simeq \R^{d}$ be the Minkowski embedding and note that $\iota \bigl( \Bcal_{K} ( R ) \bigr) = \iota ( \Ocal_{K} ) \cap S ( R )$. Since $\iota ( \Ocal_{K} )$ is a lattice, $S ( R ) = R \cdot S ( 1 )$ and the volume of $S ( 1 )$ depends only on $K$, \cite[Theorem 2 on p.~128]{MR1282723} implies the existence of such a $c_{K}$.

                It follows from this and the elementary Schwartz--Zippel lemma that, for every non-zero polynomial $Q \in K [ X_{1}, \dots, X_{m} ]$ of degree $E$, we have
                \begin{equation}
                \label{eq: 1; lem: number of bounded zeros polynomial; subsec: Preliminaries; sec: Cyclic groups of order p^n}
                    \# \bigl\{ ( y_{1}, \dots, y_{m} ) \in \Bcal_{K} ( R )^{m} \colon Q ( y_{1}, \dots, y_{m} ) = 0 \bigr\} \le E \cdot \# \Bcal_{K} ( R )^{ m - 1 } \le c_{ K, m } \cdot E \cdot R^{ d ( m - 1 ) } ,
                \end{equation}
                where $c_{ K, m } = c_{K}^{ m - 1 }$ is independent of $Q$.

                Let $F ( X_{0}, \dots, X_{L} ) = X_{0}^{D} \cdot P \left( \frac{ X_{1} }{ X_{0} }, \dots, \frac{ X_{L} }{ X_{0} } \right)$ be the homogenization of $P$, which is a non-zero homogeneous polynomial of degree $D$. Every point $( x_{1}, \dots, x_{L} ) \in K^{L}$ with $P ( x_{1}, \ldots, x_{L} ) = 0$ determines a point $( 1 : x_{1} : \dots : x_{L} ) \in \mathbb{P}_{ L, K } ( K )$ satisfying $F ( 1, x_{1}, \dots, x_{L} ) = 0$. If $H ( x_{1}, \dots, x_{L} ) \le B$, Proposition~\ref{prop: bound on affine coordinates; subsec: Preliminaries; sec: Cyclic groups of order p^n} yields the existence of a representative $( y_{0}, \dots, y_{L} ) \in \Ocal_{K}^{ L + 1 } \setminus \{ 0 \}$ for this projective point such that $| \sigma ( y_{i} ) | \le c \cdot B$ for every $i$ and every embedding $\sigma \colon K \hookrightarrow \C$, that is, $( y_{0}, \dots, y_{L} )$ belongs to $\Bcal_{K} ( R )^{L+1}$ for $R = c \cdot B$. Since $F$ is homogeneous, we have $F ( y_{0}, \dots, y_{L} ) = 0$.

                It follows from \eqref{eq: 1; lem: number of bounded zeros polynomial; subsec: Preliminaries; sec: Cyclic groups of order p^n}, applied to the polynomial $F$ in $L + 1$ variables, that the number of points $( y_{0}, \dots, y_{L} )$ in $\Bcal_{K} ( c \cdot B )$ that satisfy $F ( y_{0}, \ldots, y_{L} ) = 0$ is at most
                \begin{equation*}
                    c_{ K, L + 1 } \cdot D \cdot ( c \cdot B )^{ d \cdot L } \ll_{ K, L } D \cdot B^{ d \cdot L }.
                \end{equation*}
                Counting the vectors $( y_{0}, \dots, y_{L} )$ can only overcount the projective points $( 1 : x_{1} : \dots : x_{L} )$, so the same upper bound holds for the original affine points $( x_{1}, \dots, x_{L} )$. This proves the first assertion. The second is an immediate consequence of the definition.
            \end{proof}

            \begin{Lem}
            \label{lem: how many fibers of Jfrak have bigger End0; subsec: specialization methods; sec: Preliminaries}
                Suppose that $\Cfrak \to U$ is the base change to $\Qbar$ of a family $\tilde{\Cfrak} \to \tilde{U}$ defined over the number field $F$. For every number field $K \supseteq F$ and for $100\%$ of the points $( a_{1}, \dots, a_{L} ) \in U ( \Qbar ) \cap K^{L}$, the canonical specialization map $\End^{0} ( \Jbf^{ \mathrm{al} } ) \to \End^{0} ( \Jfrak_{ ( a_{1},  \ldots, a_{L} ) } )$ is an isomorphism.
            \end{Lem}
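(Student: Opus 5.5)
Our plan is to combine a Hilbert-irreducibility argument for the $\ell$-adic Galois representations of the family with the counting bound of Lemma~\ref{lem: number of bounded zeros polynomial; subsec: Preliminaries; sec: Cyclic groups of order p^n}. After a finite extension of $F$ we may assume that all geometric endomorphisms of $\Jbf^{\mathrm{al}}$ are defined over $F(\tilde U)$; enlarging $K$ only makes the statement harder, but the finitely many extensions involved are independent of the point. We then fix a prime $\ell$ and consider the $\ell$-adic representation $\rho\colon \pi_{1}^{\mathrm{et}}(\tilde U_{K}) \to \mathrm{GSp}_{2g}(\Z_{\ell})$ attached to $\tilde\Jfrak \to \tilde U$, with image $G$. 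Its geometric part, the image of $\pi_1^{\mathrm{et}}(U)$, has Zariski closure whose commutant in $\End(V_\ell)$ is $\End^{0}(\Jbf^{\mathrm{al}})\otimes\Q_\ell$. This is Faltings' theorem over the function field $\Qbar(U)$, combined with the fact that the generic Mumford--Tate/monodromy picture is computed after passing to $\Qbar(U)^{\mathrm{al}}$.

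For a point $a\in U(\Qbar)\cap K^{L}$, the decomposition group gives $G_a=\rho(\Gamma_{K})\subseteq G$. The key observation is that if $G_a$ contains an open subgroup of $G$, and in fact if $G_a$ contains the image of the geometric monodromy up to finite index, then by Faltings' isogeny theorem over $K$ (as in Lemma~\ref{lem: trivial group is realizable; sec: Preliminaries}) we get
\[
  \End^{0}(\Jfrak_{a}\otimes\Qbar)\otimes\Q_\ell \subseteq \End_{G_a^{\circ}}(V_\ell) \subseteq \End_{\rho(\pi_1(U))^{\mathrm{Zar}}}(V_\ell) = \End^{0}(\Jbf^{\mathrm{al}})\otimes\Q_\ell .
\]
Together with the injectivity of the specialization map from Lemma~\ref{lem: End0 embeds and torsion bijects under specialization; subsec: specialization methods; sec: Preliminaries}, this forces the map to be an isomorphism. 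It therefore suffices to show that $G_a=G$ for $100\%$ of points.

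For this we use the standard Serre-style reduction: since $G$ is a compact $\ell$-adic Lie group, there is an open normal subgroup $\Phi(G)$ (a Frattini-type subgroup) such that any closed $H\subseteq G$ surjecting onto $G/\Phi(G)$ equals $G$. Thus $G_a\ne G$ forces the image of $\Gamma_K$ in the finite group $G/\Phi(G)$ to lie in one of finitely many maximal subgroups $M$. Each such $M$ corresponds to a finite étale cover $Y_M\to \tilde U_K$ that is geometrically irreducible or has no $K$-points, of degree $\ge 2$, and the bad points are contained in the images of $Y_M(K)$. Quantitative Hilbert irreducibility (S.~D.~Cohen's theorem over number fields, or the Serre--Cohen bound $O(B^{dL-d/2}\log B)$ for thin sets counted by $H$) shows that these images have density $0$ among points of height $\le B$. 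The denominator is handled as follows: the total count of points in $K^{L}$ with $H\le B$ is $\asymp B^{d(L+1)}$ by Schanuel's theorem, and the complement of $U$ has only $O(B^{dL})$ points by Lemma~\ref{lem: number of bounded zeros polynomial; subsec: Preliminaries; sec: Cyclic groups of order p^n}. Hence points of $U(\Qbar)\cap K^{L}$ still have count $\asymp B^{d(L+1)}$, and the thin set is negligible.

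The main obstacle is the first step: making precise that the Zariski closure of the image of $\Gamma_{K}$ at a point with $G_a=G$ has the same commutant as the geometric monodromy. This requires passing from the arithmetic monodromy $G$ to its geometric part, which may have finite index, and arguing with connected components. We would handle it by replacing $K$ with the finite extension trivializing $G/\rho(\pi_1(U_{\Qbar}))$ modulo the identity component, which is harmless by the uniformity above, and by using that geometric endomorphisms of $\Jfrak_a$ commute with an open subgroup of $G_a$.
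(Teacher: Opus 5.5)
Your strategy (Hilbert irreducibility for the $\ell$-adic monodromy, then Faltings at the fibre) is a legitimate alternative to the paper's, but the identity your key chain of inclusions rests on is false. The commutant of the Zariski closure of the \emph{geometric} monodromy $\rho(\pi_1(U))$ is not $\End^0(\Jbf^{\mathrm{al}})\otimes\Q_\ell$ in general, and this is not Faltings' theorem: Faltings needs a finitely generated base field, and over $\Qbar(U)$ the analogous statement fails. For a constant family $\Ccal_{0}\times U$ the geometric monodromy is trivial, so its commutant is all of $\End(V_\ell)\cong\M(2g,\Q_\ell)$, which is far larger than $\End^0(\Jac(\Ccal_{0}))\otimes\Q_\ell$. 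More generally, any isotrivial isogeny factor breaks the identity; over $\C$, Deligne's fixed-part theorem only matches endomorphisms with monodromy invariants that are also Hodge classes. So your displayed chain gives no upper bound on $\End^0(\Jfrak_a)$. The ``main obstacle'' you describe, matching the image at $a$ with the geometric monodromy, is aimed at the wrong group.

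The repair is to work only with the arithmetic monodromy $G$, the image of $\pi_1(\tilde U_K)$, equivalently of $\mathrm{Gal}(\overline{K(\tilde U)}/K(\tilde U))$.
\begin{itemize}
\item Since $K(\tilde U)$ is finitely generated over $\Q$, Faltings gives $\End^0(\Jbf^{\mathrm{al}})\otimes\Q_\ell=\End_{\overline{G}^{\circ}}(V_\ell)$. Over the number field $K$ one likewise has $\End^0(\Jfrak_a)\otimes\Q_\ell=\End_{\overline{G_a}^{\circ}}(V_\ell)$.
\item If $G_a$ is open in $G$, then $\overline{G_a}^{\circ}=\overline{G}^{\circ}$. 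The two algebras then have the same dimension, and the injective specialization map is an isomorphism.
\item No enlargement of $K$ is needed. Nor do you need your reduction that the geometric endomorphisms become defined over $F'(\tilde U)$ after a constant field extension; that is false in general, since they may require a non-constant cover of $\tilde U$.
\end{itemize}
Your Frattini and thin-set steps are then sound, once the bound is corrected. For points of $K^L$ counted by $H(1:a)$, Cohen--Serre gives $O(B^{d(L+1/2)}\log B)$, which is still $o(B^{d(L+1)})$. The bound $O(B^{dL-d/2}\log B)$ you quote cannot be right, since a single hyperplane already has $\asymp B^{dL}$ points.

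By contrast, the paper never touches Galois representations. It feeds Masser's effective theorem \cite{MasserSpecializationEnd} (the exceptional points of height $\le B$ lie on a hypersurface of degree $O((\log B)^{\lambda})$) into its degree-linear count of points on hypersurfaces. This yields $O((\log B)^{\lambda}B^{dL})$ exceptional points, a $B^{d}$ power saving against the $B^{d/2}$ of the thin-set route. Your corrected route gives the stronger qualitative conclusion that the whole $\ell$-adic image is preserved at the good points.
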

            \begin{proof}
                Let $\tilde{ \Jfrak } \to \tilde{ U }$ be the Jacobian scheme of $\tilde{ \Cfrak } \to \tilde { U }$, and note that $\Jfrak_{ ( a_{1}, \dots, a_{L} ) } = \tilde{ \Jfrak }_{ ( a_{1}, \dots, a_{L} ) } \times_{F} \Qbar$, where we regard $( a_{1}, \dots, a_{L} ) \in \tilde{ U } ( K ) $ as a point of $U ( \Qbar ) \cap K^{L}$.
                For each positive real number $B$, let $V ( K, B )$ be the set of points $( a_{1}, \dots, a_{L} ) \in U ( \Qbar ) \cap K^{L}$ of absolute multiplicative Weil height at most $B$, and let $V_{\mathrm{ex}} ( K, B )$ be the subset of points in $V ( K, B )$ such that $\End^{0} ( \Jbf^{\mathrm{al}} ) \to \End^{0} ( \Jfrak_{ ( a_{1},  \ldots, a_{L} ) } )$ is not an isomorphism. Note that $V ( K, B )$ is a finite set (\cite[Lemma~$2$]{NorthcottProperty}). For a finite subset $S \subset \Abb_{\Qbar}^{L}(\Qbar) \cap K^{L}$, let $\omega ( S )$ be the minimal degree of a non-zero polynomial in $L$ variables with coefficients in $K$ vanishing on $S$.
                
                By applying \cite[Theorem on p.~459]{MasserSpecializationEnd} to the base change to $K$ of $\tilde{\Jfrak} \to \tilde{U}$, we deduce the existence of constants $C, \lambda > 0$ such that 
                \begin{equation*}
                    \omega \bigl( V_{\mathrm{ex}} ( K, B ) \bigr) \le C \bigl( \max \{ 1, \log(B) \} \bigr)^{\lambda}.
                \end{equation*}
                By Lemma~\ref{lem: number of bounded zeros polynomial; subsec: Preliminaries; sec: Cyclic groups of order p^n} we deduce that
                \begin{equation}
                \label{eq: 1; the: reduction first statement main the; subsec: Preliminaries; sec: Cyclic groups of order p^n}
                    \bigl| V_{\mathrm{ex}} ( K, B ) \bigr| = O \bigl( \log(B)^{\lambda} \cdot B^{ d \cdot L } \bigr) \qquad \text{with respect to $B$,}
                \end{equation}
                where $d \coloneqq [ K \colon \Q ]$. Since $U$ is open in $\Abb_{\Qbar}^{L}$, there exists a non-zero polynomial $P$ in $L$ variables with coefficients in $K$ such that its zero set contains $( \Abb_{\Qbar}^{L} \setminus U ) ( \Qbar ) \cap K^{L}$. Lemma~\ref{lem: number of bounded zeros polynomial; subsec: Preliminaries; sec: Cyclic groups of order p^n} implies that the number of points in $( \Abb_{\Qbar}^{L} \setminus U ) ( \Qbar ) \cap K^{L}$ of absolute multiplicative Weil height at most $B$ is $ O ( B^{ d \cdot L } )$ with respect to $B$. Since the number of points in $ \Abb_{\Qbar}^{L} ( \Qbar ) \cap K^{L}$ of absolute multiplicative Weil height at most $B$ is $\asymp B^{ d \cdot ( L + 1 ) }$ (\cite[Corollary on p.~447]{SchanuelEstimatesNorthcootProperty}), we deduce that 
                \begin{equation}
                \label{eq: 2; the: reduction first statement main the; subsec: Preliminaries; sec: Cyclic groups of order p^n}
                    | V ( K, B ) | \asymp B^{ d \cdot ( L + 1 ) } \qquad \text{with respect to $B$.}
                \end{equation}
                Combining \eqref{eq: 1; the: reduction first statement main the; subsec: Preliminaries; sec: Cyclic groups of order p^n} and \eqref{eq: 2; the: reduction first statement main the; subsec: Preliminaries; sec: Cyclic groups of order p^n} we deduce
                \begin{equation*}
                    \lim_{ B \to \infty } \frac{ | V_{\mathrm{ex}} ( K, B ) | }{ | V ( K, B ) | } = 0 .
                \end{equation*}
            \end{proof}

    \section{Possible automorphism groups}
    \label{sec: Possible automorphism groups}

        In this section, we find several constraints on the automorphism group of a curve over $k$ of genus $\ge 2$ with simple Jacobian. We then use these constraints to obtain a small list of groups to which the automorphism group of a curve over $k$ of genus $\ge 2$ with simple Jacobian must belong. In particular, we show the following:

        \begin{The}
        \label{the: possible Aut nice curve with simple Jacobian; sec: Possible automorphism groups}
            Let $\Ccal / k$ be a curve of genus $\ge 2$ with simple Jacobian. Then $\Aut ( \Ccal )$ is isomorphic to one of the following groups:
            \begin{enumerate}[(i)]
                \item the cyclic group $C_{p^{n}}$ for some prime $p$ and some positive integer $n$;
                \item the cyclic group $C_{pq}$ for some distinct primes $p, q$ such that $pq \neq 6$;
                \item the generalized quaternion group $G_{2^{n}}$ (cf. Theorem~\ref{the: classification Aut of hyperelliptic curves; sec: Preliminaries}) for some positive integer $n$;
                \item the trivial group.
            \end{enumerate}
        \end{The}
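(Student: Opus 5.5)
Let $G = \Aut(\Ccal)$, assumed nontrivial, and write $g = g(\Ccal) \ge 2$. The plan is to show that $G$ acts freely, outside the origin, on $H^{0}(\Ccal, \Omega^{1}_{\Ccal})$, to use Wolf's classification to cut down the possible $G$, and then to use the superelliptic structure to finish.

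\textbf{Step 1: fixed-point-free representation.} By Theorem~\ref{the: necessary condition simplicity; sec: Preliminaries}, for every $\sigma \in G \setminus \{\operatorname{id}\}$ we have $\Ccal/\langle\sigma\rangle \cong \mathbb{P}_{1,k}$. Hence the invariant differentials vanish:
\begin{equation*}
H^{0}(\Ccal,\Omega^{1}_{\Ccal})^{\langle\sigma\rangle} \cong H^{0}\bigl(\Ccal/\langle\sigma\rangle,\Omega^{1}\bigr) = 0 .
\end{equation*}
So no nontrivial element of $G$ has eigenvalue $1$ on this $g$-dimensional representation, and the representation is fixed-point-free.

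\textbf{Step 2: restricting $G$ via Wolf.} Wolf's theorems \cite[Theorems~6.1.11 and~6.3.1]{WolfBookFixRep} then give the following.
\begin{itemize}
\item Every subgroup of order $pq$ is cyclic. In particular, the Sylow subgroups are cyclic or generalized quaternion, so every abelian subgroup is cyclic.
\item By Proposition~\ref{pro: unique element of order 2 in AutC if C is simple; sec: Preliminaries}, $G$ has at most one involution.
\end{itemize}
Now split into cases.
\begin{itemize}
\item \emph{$|G|$ even.} The unique involution $\iota$ is hyperelliptic, so $\Ccal$ is hyperelliptic. Intersect Wolf's list with the table of Theorem~\ref{the: classification Aut of hyperelliptic curves; sec: Preliminaries}. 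The groups $C_{2}\times C_{n}$, $C_{2}\times D_{n}$, $V_{n}$, $D_{2n}$, $H_{n}$, $U_{n}$ and $C_{2}\times A_{4}$ contain two commuting involutions, so they are excluded. The groups $\SL(2,\Fbb_{3})$, $\GL(2,\Fbb_{3})$, $W_{2}$, $W_{3}$ and $\SL(2,\Fbb_{5})$ must be treated separately. What survives is $C_{2n}$ and the quaternion-type groups $G_{n}$.
\item \emph{$|G|$ odd.} Metacyclic groups with all Sylow subgroups cyclic remain. Take $\sigma \in G$ of prime order $p$. Then $\Ccal \to \Ccal/\langle\sigma\rangle \cong \mathbb{P}_{1}$ is a cyclic cover, so $\Ccal$ is superelliptic $y^{p}=f(x)$, and the normalizer of $\langle\sigma\rangle$ descends to a finite subgroup of $\Aut(\mathbb{P}_{1})$ permuting the branch points.
\end{itemize}

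\textbf{Step 3: geometric constraints (main obstacle).} The goal is to show that $G$ must be cyclic of order $p^{n}$ or $pq$, or generalized quaternion $G_{2^{n}}$. I expect this to be the hardest step. The tool is to apply Lemma~\ref{lem: necessary condition simplicity; sec: Preliminaries} to the intermediate quotients $\Ccal \to \Ccal/H$ for suitable subgroups $H$, which must all have genus $0$, together with Riemann--Hurwitz.

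\emph{Cyclic case.} For $G \cong C_{m}$ with $m$ divisible by three distinct primes, or by $p^{2}q$, there is a subgroup $H$ whose image in $\Aut(\mathbb{P}_{1})$ is a cyclic rotation with two fixed points. One then computes that some intermediate quotient $\Ccal/\langle\tau\rangle$, with $\tau$ of prime order, has positive genus. The reason is that the branch points of the $\langle\tau\rangle$-cover must lie over only the two fixed points of the rotation, plus full orbits. The same count should show that for $m=pq$ the cover is totally ramified over exactly two points after normalizing, which forces the Catalan curve; the case $pq=6$ is then excluded because $y^{2}=x^{3}-1$ has genus $1$.

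\emph{Hyperelliptic case.} For $G_{n}$ with $n$ not a power of $2$, the element of odd prime order generates a quotient with positive genus. Equivalently, $\Jac(\Ccal)$ acquires a nontrivial isogeny factor from $\Ccal/\langle y^{\text{odd}}\rangle$; I would check this via the explicit model $y^{2}=x(x^{2n}-1)\cdots$. The exceptional groups $\SL(2,\Fbb_{3})$, $\GL(2,\Fbb_{3})$ and $\SL(2,\Fbb_{5})$ are handled the same way: an element of order $3$ or $5$ has a quotient of positive genus.

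\emph{Odd non-cyclic metacyclic groups.} These should be excluded the same way, since they contain a noncentral element of prime order whose quotient picks up genus from the orbit structure of the branch points.

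\emph{Backup.} If the genus counts become messy, I would use Lemma~\ref{lem: divisibility dimension and degree End simple abelian varieties; subsec: Abelian varieties; sec: Preliminaries}: the image of $\Q[G]$ in $\End^{0}(\Jac(\Ccal))$ is a simple algebra whose dimension divides $2g$. Combining this with the fact that the representation on differentials is fixed-point-free, this excludes many of the remaining cases.
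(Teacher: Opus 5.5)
Your Steps 1 and 2 have the same architecture as the paper's proof: the fixed-point-free action on $H^{0}(\Ccal,\Omega^{1}_{\Ccal})$, Wolf's classification, the hyperelliptic table in even order, and superelliptic models in odd order. The gap is Step 3, where every group is actually excluded. It is a list of expectations rather than an argument, and the quantitative fact that drives all the exclusions is missing.

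\textbf{The missing genus bound.} Suppose $\Aut(\Ccal)$ has an element of order $pq$, with $p\neq q$ primes. Write $\Ccal$ as $y^{pq}=f(x)$; the quotients $y^{p}=f(x)$ and $y^{q}=f(x)$ must be rational. Riemann--Hurwitz, expressed through the numbers $\Acal_{s}$ of branch points of index $s$, then gives $\Acal_{pq}+\Acal_{p}=\Acal_{pq}+\Acal_{q}=2$. Hence $g(\Ccal)\in\{(p-1)(q-1),(p-1)(q-1)/2\}$. This is exactly your ``intermediate quotients have genus $0$'' idea, but it has to be turned into this bound. After that the exclusions are mechanical:
\begin{itemize}
\item Three incompatible genus values exclude elements of order $pqr$.
\item Suppose $\langle\sigma\rangle$ of order $p$ is normal and $\Aut(\Ccal)/\langle\sigma\rangle$ contains $C_{q^{2}}$ with $q$ odd. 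The orbits of a rotation of order $q^{2}$ on the branch locus of $y^{p}=f(x)$ give $g\ge(p-1)(q^{2}-2)/2>(p-1)(q-1)$, a contradiction.
\item In even order, an element of order $6$ forces $g=2$, and one of order $2p$ forces $g\le p-1$. Compare with the table: $n\mid 2g$ or $n\mid 2g+1$ for $C_{2n}$; $n\mid g$ and $n<g$ for $G_{n}$; $g\ge 4,8,14$ for $\SL(2,\Fbb_{3})$, $W_{3}$, $\SL(2,\Fbb_{5})$. Only $C_{2^{n}}$, $C_{2p}$ with $p\ge 5$, and $G_{2^{n}}$ survive.
\end{itemize}
Your backup cannot replace this bound: conditions such as $\varphi(pqr)\mid 2g$ give no contradiction without an upper bound on $g$.

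\textbf{Specific errors.}
\begin{itemize}
\item $C_{2}\times C_{n}$ and $H_{n}$ with $n$ odd have a unique involution. Indeed $C_{2}\times C_{n}\cong C_{2n}$, and in $H_{n}$ every lift of a reflection has order $4$. So ``two commuting involutions'' does not dispose of them.
\item $\GL(2,\Fbb_{3})$ and $W_{2}$ are already excluded because they have several involutions. But $W_{3}$ is the binary octahedral group: it has a unique involution and a fixed-point-free $2$-dimensional representation. It survives Step 2 and is never treated in Step 3.
\item Your exclusion of $C_{6}$ rests on a uniqueness statement for the Catalan curve that you do not prove and do not need. As phrased it cannot be right: a cyclic cover of $\mathbb{P}_{1}$ whose only branch points are two totally ramified ones has genus $0$. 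The $C_{pq}$-quotient of $y^{p}=x^{q}-1$ is branched over three points, with indices $p$, $q$, $pq$. The paper's own uniqueness proof needs a four-case analysis with auxiliary involutions. The direct route is $g=2$ from the element of order $6$, plus the table.
\item In the odd non-abelian case, the quotient by a noncentral element of prime order in the normal cyclic subgroup $C_{m}$ is $\mathbb{P}_{1}$ by construction, so that is not where genus appears. What you need is the structure forced by the fixed-point-free condition. Every element of prime order in $C_{n}$ centralizes $C_{m}$, since otherwise there is a non-cyclic subgroup of order $pq$. Hence any prime $q_{1}$ dividing the order of the action satisfies $q_{1}^{2}\mid n$. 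Taking $\sigma\in C_{m}$ of order $p_{1}$, this gives both an element of order $p_{1}q_{1}$ and a copy of $C_{q_{1}^{2}}$ in $G/\langle\sigma\rangle$. The orbit count above then gives the contradiction.
\end{itemize}
Deriving solvability from the Sylow structure, as you do, rather than from Feit--Thompson as the paper does, is a legitimate simplification.
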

        \begin{proof}
            Combine Lemmas~\ref{lem: Aut even cardinality; subsec: Classification; sec: Possible automorphism groups} and~\ref{lem: Aut odd cardinality; subsec: Classification; sec: Possible automorphism groups} below.
        \end{proof}

        First, we prove that any curve over $k$ of genus $\ge 2$ with simple Jacobian and nontrivial automorphism group is superelliptic.

        \begin{Pro}
        \label{pro: a curve with simple Jacobian is superelliptic; sec: Preliminaries}
            Let $\Ccal / k$ be a curve of genus $\ge 2$ with simple Jacobian. If $\Aut ( \Ccal )$ contains an element of order $n \ge 2$, then $\Ccal$ is isomorphic to a superelliptic curve given by $y^{n} = f(x)$ for some $f \in k [ x ]$.
        \end{Pro}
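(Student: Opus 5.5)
Let $\sigma \in \Aut(\Ccal)$ have order $n \ge 2$. The plan is to realize the function field of $\Ccal$ as a cyclic Kummer extension of a rational function field, and then to choose the Kummer generator carefully.

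First, Theorem~\ref{the: necessary condition simplicity; sec: Preliminaries} gives $\Ccal / \langle \sigma \rangle \cong \mathbb{P}_{1,k}$. On function fields this says $k(\Ccal)^{\langle \sigma \rangle} = k(x)$ for some $x$ transcendental over $k$. The extension $k(\Ccal)/k(x)$ is then Galois with group $\langle \sigma \rangle \cong C_n$.

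Second, since $k$ is algebraically closed of characteristic $0$, it contains a primitive $n$-th root of unity $\zeta_n$. Kummer theory therefore applies: there is $y \in k(\Ccal)$ with $\sigma(y) = \zeta_n y$. We can obtain $y$ concretely via Hilbert 90, or by decomposing $k(\Ccal)$ as a $k(x)$-representation of $C_n$ into eigenspaces and picking a nonzero element of the $\zeta_n$-eigenspace. This $y$ satisfies the following.
\begin{itemize}
\item[(a)] $y^n$ is $\sigma$-invariant, so $y^n = h(x) \in k(x)$.
\item[(b)] $k(\Ccal) = k(x,y)$. Indeed, the stabilizer of $y$ in $\langle \sigma \rangle$ is trivial, because $\zeta_n$ is primitive, so $k(x,y)$ is the whole field by Galois theory.
\item[(c)] $[k(x,y):k(x)] = n$, so $T^n - h(x)$ is irreducible over $k(x)$.
\end{itemize}

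Third, we pass from a rational function $h$ to a polynomial. Write $h = a/b$ with $a, b \in k[x]$ and $b \neq 0$. Replace $y$ by $y' = y\, b(x)$, which is still a $\zeta_n$-eigenvector. Then $y'^n = a(x)\, b(x)^{n-1} =: f(x) \in k[x]$ and $k(x,y') = k(x,y)$. Since the degree is still $n$, the polynomial $y^n - f(x)$ is irreducible in $k(x)[y]$, and by Gauss's lemma also in $k[x,y]$ (it is monic in $y$). Hence $k(\Ccal)$ is the fraction field of $k[x,y]/(y^n - f(x))$. Because a smooth projective curve is determined up to isomorphism by its function field, $\Ccal$ is isomorphic to the superelliptic curve given by $y^n = f(x)$, as required by Definition~\ref{def: superelliptic curve; sec: Preliminaries}.

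The only real input is the rationality of the quotient, which Theorem~\ref{the: necessary condition simplicity; sec: Preliminaries} already supplies. The step needing the most care is checking that the eigenvector $y$ generates the full extension, which is where primitivity of $\zeta_n$ is used.
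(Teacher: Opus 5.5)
Your proposal is correct and follows the paper's proof. Both arguments get rationality of $\Ccal/\langle\sigma\rangle$ from simplicity of the Jacobian, apply Kummer theory to the cyclic extension $k(\Ccal)/k(x)$, and clear denominators by multiplying by an $n$-th power. You additionally spell out what the paper leaves implicit: that the $\zeta_n$-eigenvector generates the whole extension, and that $y^n - f(x)$ is irreducible in $k[x,y]$ by Gauss's lemma.
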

        \begin{proof}
            Let $\sigma \in \Aut ( \Ccal )$ be of order $n \ge 2$. Since the Jacobian of $\Ccal$ is simple, $\Ccal / \langle \sigma \rangle$ is isomorphic to $\mathbb{P}_{1, k}$ (Theorem~\ref{the: necessary condition simplicity; sec: Preliminaries}); hence, the quotient morphism $\Ccal \to \Ccal / \langle \sigma \rangle$ is a ramified cover of $\mathbb{P}_{1, k}$ with abelian Galois group $\langle \sigma \rangle$ of order $n$. Since $k ( x )$ contains the $n$-th roots of unity, Kummer theory implies that
            \begin{equation*}
                k ( \Ccal ) = k ( x ) ( \sqrt[n]{f} ) = \operatorname{Frac} \Bigl( k [ x, y ] / \bigl( y^{n} - f(x) \bigr) \Bigr)
            \end{equation*}
            for some $f \in k ( x )$. Up to multiplying $f$ by the $n$-th power of its denominator, which does not change $k ( x ) ( \sqrt[n]{f} )$, we may assume that $f \in k [ x ]$.
        \end{proof}
            
        Let $\Ccal / k$ be a superelliptic curve given by $y^{n} = f(x)$, where $n \ge 2$ is an integer and $f \in k [ x ]$, and consider the degree $n$, ramified, Galois cover $\pi \colon \Ccal \to \mathbb{P}_{1, k}$ defined by $( x, y ) \mapsto x$. We provide a formula expressing the genus of $\Ccal$ in terms of the ramification data of $\pi$. 
            
        \begin{Def}
        \label{def: branch index; sec: Preliminaries}
            We define the branch index of a point $z \in \mathbb{P}_{1} ( k ) $ to be the ramification index of any point $a \in \pi^{-1} ( z ) $. This definition is independent of $a$ since $\pi$ is a Galois cover.
        \end{Def}

        \begin{Not}
        \label{not: mz; sec: Preliminaries}
            For each $z \in \mathbb{P}_{1}(k)$, define $m_{z}$ as follows. If $z$ is a zero of $f$, let $m_{z}$ be its multiplicity. If $z = \infty$, let $m_{\infty}$ be the degree of $f$. Otherwise, let $m_{z}$ be $n$.
        \end{Not}

        \begin{Lem}
        \label{lem: genus superelliptic curves; sec: Preliminaries}
            Let $\Ccal / k$ be a superelliptic curve given by $y^{n} = f(x)$, where $n \ge 2$ is an integer and $f \in k [ x ]$, and consider the cover $\pi \colon \Ccal \to \mathbb{P}_{1, k}$ defined by $( x, y ) \mapsto x$. The genus of $\Ccal$ satisfies
            \begin{equation*}
                2 g ( \Ccal ) = 2 - 2n + \sum_{ \substack{ s \mid n \\ s \neq 1 } } \Acal_{s} \cdot \biggl( n - \frac{n}{s} \biggr) ,
            \end{equation*}
            where $\Acal_{s}$ is the number of $z \in \mathbb{P}_{1}(k)$ such that $\GCD ( n, m_{z} ) = n / s$.
        \end{Lem}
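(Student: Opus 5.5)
The plan is to apply the Riemann--Hurwitz formula to the Galois cover $\pi \colon \Ccal \to \mathbb{P}_{1, k}$ of degree $n$. We have
\begin{equation*}
    2 g ( \Ccal ) - 2 = n \cdot ( 0 - 2 ) + \sum_{ z \in \mathbb{P}_{1}(k) } \sum_{ a \in \pi^{-1}(z) } ( e_{a} - 1 ) .
\end{equation*}
Since $\pi$ is Galois, every point over $z$ has the same ramification index $e_{z}$ (the branch index of Definition~\ref{def: branch index; sec: Preliminaries}), and there are exactly $n / e_{z}$ such points. Hence the contribution of $z$ is $( n / e_{z} ) ( e_{z} - 1 ) = n - n / e_{z}$. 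The formula then follows once we show that $e_{z} = n / \GCD ( n, m_{z} )$ for every $z$: grouping the points $z$ according to $s = e_{z}$ gives exactly $\sum_{s \mid n,\, s \neq 1} \Acal_{s} ( n - n/s )$, where the terms with $s = 1$ vanish.

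The key step is the local computation of $e_{z}$ via Kummer theory. The extension $k(\Ccal) = k(x)(y)$ with $y^{n} = f$ is cyclic of degree $n$, since $y^{n} - f$ is irreducible. For a place $z$ of $k(x)$ with uniformizer $t$, write $f = u \cdot t^{v_{z}(f)}$ with $u$ a unit at $z$. Since $k$ is algebraically closed of characteristic $0$, $u$ is an $n$-th power in the completion $k((t))$ by Hensel's lemma. Hence locally the extension is generated by an $n$-th root of $t^{v}$, where $v = v_{z}(f)$.

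Set $d = \GCD(n, v)$. Locally, $y^{n} = t^{v}$ splits into $d$ factors $y^{n/d} = \zeta t^{v/d}$, with $\zeta$ running over the $d$-th roots of unity. Each factor defines a totally ramified extension of degree $n/d$, because $v/d$ is prime to $n/d$. So there are $d$ points above $z$, each with $e = n/d$.

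It remains to check that $\GCD(n, v_{z}(f)) = \GCD(n, m_{z})$ in all cases of Notation~\ref{not: mz; sec: Preliminaries}:
\begin{itemize}
    \item For a root $z$ of $f$, we have $v_{z}(f) = m_{z}$.
    \item For a finite $z$ with $f(z) \neq 0$, we have $v = 0$, so $\GCD = n = \GCD(n, n)$.
    \item At $\infty$, with uniformizer $1/x$, we have $v_{\infty}(f) = -\deg f$, and $\GCD(n, -\deg f) = \GCD(n, \deg f)$.
\end{itemize}

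I do not expect a serious obstacle. The only points needing care are the local splitting argument, in particular the Hensel step that removes the unit $u$, and the sign convention at $\infty$. Neither affects the GCD.
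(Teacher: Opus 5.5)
Your proof is correct and follows the same route as the paper: Riemann--Hurwitz for the degree-$n$ cover, combined with the ramification formula $e_{z} = n / \GCD ( n, m_{z} )$. The only difference is that you derive this formula yourself, via the local Kummer/Hensel factorization of $Y^{n} - t^{v}$ over $k((t))$, whereas the paper cites Stichtenoth (Proposition~3.7.3) and Bouw for it.
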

        \begin{proof}
            The branch points of $\pi$ can occur only above the zeros of $f$ and the point at infinity $\infty$, see \cite[Proposition~$3.7.3$]{StichtenothBranchPointsSupCurves}. The branch index of $z \in \mathbb{P}_{1}(k)$ is $e_{z} = n / \GCD ( n, m_{z} )$, see \cite[Proposition~$3.7.3$]{StichtenothBranchPointsSupCurves} and \cite[paragraph after Assumption~$4.1$]{BouwBranchPoints}. The lemma follows from the Riemann--Hurwitz formula.
        \end{proof}

        As an interesting consequence of the genus formula provided by Lemma~\ref{lem: genus superelliptic curves; sec: Preliminaries}, we have the following proposition:
            
        \begin{Pro}
        \label{pro: genus superelliptic curve pq; sec: Preliminaries}
            Let $\Ccal_{pq} / k$ be a superelliptic curve of genus $\ge 2$ given by $y^{pq} = f(x)$, for some distinct primes $p, q$ and some $f \in k [ x ]$, with simple Jacobian. The genus of $\Ccal_{pq}$ is either $( p - 1 ) ( q - 1 )$ or $( p - 1 ) ( q - 1 ) / 2$.         
        \end{Pro}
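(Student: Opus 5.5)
Consider the automorphism $\sigma \colon (x,y) \mapsto (x, \zeta_{pq} y)$ of order $pq$. The powers $\sigma^{p}$ (of order $q$) and $\sigma^{q}$ (of order $p$) are nontrivial, so by Theorem~\ref{the: necessary condition simplicity; sec: Preliminaries} both quotients $\Ccal_{pq}/\langle \sigma^{p} \rangle$ and $\Ccal_{pq}/\langle \sigma^{q} \rangle$ have genus $0$. These quotients are the superelliptic curves $w^{p} = f(x)$ with $w = y^{q}$, and $v^{q} = f(x)$ with $v = y^{p}$. Strictly, these are the normalizations of the corresponding affine curves, which are irreducible because $y^{pq} - f$ is.

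The plan is to apply Lemma~\ref{lem: genus superelliptic curves; sec: Preliminaries} three times. Write $A_{s}$ for the number of $z \in \mathbb{P}_{1}(k)$ with $\GCD(pq, m_{z}) = pq/s$, for $s \in \{p, q, pq\}$. A point with $s = pq$ has $\GCD(m_{z}, pq) = 1$, so it is branched in both quotients. A point with $s = p$ has $q \mid m_{z}$ and $p \nmid m_{z}$, so it is branched only in the degree-$p$ quotient. Symmetrically, points with $s = q$ are branched only in the degree-$q$ quotient. Lemma~\ref{lem: genus superelliptic curves; sec: Preliminaries} for the degree-$p$ quotient gives
\[
0 = 2 - 2p + (A_{p} + A_{pq})(p-1),
\]
so $A_{p} + A_{pq} = 2$. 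Likewise the degree-$q$ quotient gives $A_{q} + A_{pq} = 2$.

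For $\Ccal_{pq}$ itself, Lemma~\ref{lem: genus superelliptic curves; sec: Preliminaries} gives
\[
2g = 2 - 2pq + A_{p}(pq - q) + A_{q}(pq - p) + A_{pq}(pq - 1).
\]
Substituting $A_{p} = A_{q} = 2 - A_{pq}$ with $A_{pq} \in \{0, 1, 2\}$:
\begin{itemize}
\item $A_{pq} = 2$ gives $2g = 0$, contradicting $g \ge 2$.
\item $A_{pq} = 1$ gives $2g = 2 - 2pq + 2pq - p - q + pq - 1 = (p-1)(q-1)$, so $g = (p-1)(q-1)/2$.
\item $A_{pq} = 0$ gives $2g = 2 - 2pq + 4pq - 2p - 2q = 2(p-1)(q-1)$, so $g = (p-1)(q-1)$.
\end{itemize}

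The step needing most care is checking that the branch index bookkeeping transfers correctly to the quotients. Concretely, the multiplicities $m_{z}$ must be reused with $n$ replaced by $p$ or $q$, including at $z = \infty$ where $m_{\infty} = \deg f$; this works since $\GCD(p, m_{z})$ is determined by $\GCD(pq, m_{z})$. Points with $\GCD(pq, m_{z}) = pq$, including generic points with $m_{z} = pq$, are unbranched everywhere and contribute nothing. Once this is settled, the rest is the arithmetic above.
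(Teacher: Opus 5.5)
Your proof is correct and is essentially the paper's argument. The paper gets $g(\Ccal_{p}) = g(\Ccal_{q}) = 0$ by applying Lemma~\ref{lem: necessary condition simplicity; sec: Preliminaries} to the maps $(x,y)\mapsto(x,y^{q})$ and $(x,y)\mapsto(x,y^{p})$, which are exactly your quotient maps by $\langle\sigma^{p}\rangle$ and $\langle\sigma^{q}\rangle$. It then applies Lemma~\ref{lem: genus superelliptic curves; sec: Preliminaries} three times, as you do, to reach $g=(p-1)(q-1)(1-\Acal_{pq}/2)$; your explicit exclusion of $A_{pq}=2$ via $g\ge 2$ spells out a step the paper leaves implicit.
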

        \begin{proof}
            Consider the curves $\Ccal_{p} \colon y^{p} = f(x)$ and $\Ccal_{q} \colon y^{q} = f(x)$.  There are nonconstant morphisms
            \begin{align*}
                \Ccal_{pq} &\to \Ccal_{p} \qquad\qquad\qquad ( x, y ) \mapsto ( x, y^{q} ) , \\
                \Ccal_{pq} &\to \Ccal_{q} \qquad\qquad\qquad ( x, y ) \mapsto ( x, y^{p} ) .
            \end{align*}
            Since $g ( \Ccal_{pq} ) \ge 2$, from the Riemann--Hurwitz formula we deduce that $g ( \Ccal_{p} ) < g ( \Ccal_{pq} )$ and $g ( \Ccal_{q} ) < g ( \Ccal_{pq} )$; hence, $g ( \Ccal_{p} ) = g ( \Ccal_{q} ) = 0$ (Lemma~\ref{lem: necessary condition simplicity; sec: Preliminaries}).

            Let us compute the genus of $\Ccal_{p}$ and $\Ccal_{q}$ using Lemma~\ref{lem: genus superelliptic curves; sec: Preliminaries}. Let $m_{z, pq}$, $m_{z, p}$ and $m_{z, q}$ denote the corresponding value of $m_{z}$ for $\Ccal_{pq}$, $\Ccal_{p}$ and $\Ccal_{q}$ (Notation~\ref{not: mz; sec: Preliminaries}). For each $z \in \mathbb{P}_{1}(k)$ that is either a zero of $f$ or $\infty$, we have $m_{z, pq} = m_{z, p} = m_{z, q}$; hence, since $p$ and $q$ are distinct primes, we deduce
            \begin{align*}
                \GCD ( pq, m_{z, pq} ) &= 1 \qquad\quad \Longrightarrow \qquad\quad \GCD ( p, m_{z, p} ) = 1 \text{ and } \GCD ( q, m_{z, q} ) = 1 , \\
                \GCD ( pq, m_{z, pq} ) &= p \qquad\quad \Longrightarrow \qquad\quad \GCD ( p, m_{z, p} ) = p \text{ and } \GCD ( q, m_{z, q} ) = 1 , \\
                \GCD ( pq, m_{z, pq} ) &= q \qquad\quad \Longrightarrow \qquad\quad \GCD ( p, m_{z, p} ) = 1 \text{ and } \GCD ( q, m_{z, q} ) = q . 
            \end{align*}
            For each divisor $s$ of $pq$, let $\Acal_{s}$ be the number of $z \in \mathbb{P}_{1}(k)$ such that $\GCD ( pq, m_{z, pq} ) = pq / s$. Lemma~\ref{lem: genus superelliptic curves; sec: Preliminaries} applied to each of the curves $\Ccal_{pq}$, $\Ccal_{p}$ and $\Ccal_{q}$ implies
            \begin{align}
            \label{equation 1 pro: genus superelliptic curve pq; sec: Preliminaries}
                2 g ( \Ccal_{pq} ) &= 2 - 2pq + \Acal_{pq} \cdot ( pq - 1 ) + \Acal_{p} \cdot ( pq - q ) + \Acal_{q} \cdot ( pq - p ) , \\
                2 g ( \Ccal_{p} ) &= 2 - 2p + ( \Acal_{pq} + \Acal_{p} ) \cdot ( p - 1 ) , \notag \\
                2 g ( \Ccal_{q} ) &= 2 - 2q + ( \Acal_{pq} + \Acal_{q} ) \cdot ( q - 1 ) \notag .
            \end{align}
            Since $g ( \Ccal_{p} ) = g ( \Ccal_{q} ) = 0$, we deduce
            \begin{equation}
            \label{equation 2 pro: genus superelliptic curve pq; sec: Preliminaries}
                \Acal_{pq} + \Acal_{p} = 2 , \qquad\qquad\qquad \Acal_{pq} + \Acal_{q} = 2 ,
            \end{equation}
            which, together with \eqref{equation 1 pro: genus superelliptic curve pq; sec: Preliminaries}, gives
            \begin{equation*}
                g ( \Ccal_{pq} ) = ( p - 1 ) ( q - 1 ) \biggl( 1 - \frac{\Acal_{pq}}{2} \biggr) .
            \end{equation*}
            The proposition follows since \eqref{equation 2 pro: genus superelliptic curve pq; sec: Preliminaries} implies that $0 \le \Acal_{pq} \le 2$.
        \end{proof}

        We now derive constraints on the automorphism group of a curve over $k$ of genus $\ge 2$ with simple Jacobian.
            
        \begin{The}
        \label{the: Aut does not contain elements order pqr; sec: Preliminaries}
            Let $\Ccal / k$ be a curve of genus $\ge 2$ with simple Jacobian. Then $\Aut ( \Ccal )$ does not contain elements of order $pqr$, where $p, q, r$ are distinct primes.
        \end{The}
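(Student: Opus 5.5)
Suppose, for contradiction, that $\sigma \in \Aut ( \Ccal )$ has order $N = pqr$ with $p, q, r$ distinct primes. By Proposition~\ref{pro: a curve with simple Jacobian is superelliptic; sec: Preliminaries}, $\Ccal$ is a superelliptic curve $y^{N} = f(x)$, and $\sigma$ can be taken to act by $y \mapsto \zeta_{N} y$. For every divisor $d \ne 1, N$ of $N$ we use the intermediate curve $\Ccal_{d} \colon y^{d} = f(x)$. Its function field is $k ( x, y^{N/d} )$, the fixed field of $\sigma^{d}$, so $\Ccal_{d} \cong \Ccal / \langle \sigma^{d} \rangle$; in particular $y^{d} - f$ is irreducible.

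The first step is to show that $g ( \Ccal_{d} ) = 0$ for all these $d$. This follows from Theorem~\ref{the: necessary condition simplicity; sec: Preliminaries}, or from Lemma~\ref{lem: necessary condition simplicity; sec: Preliminaries} together with Riemann--Hurwitz as in the proof of Proposition~\ref{pro: genus superelliptic curve pq; sec: Preliminaries}.

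Next, fix the notation. For $z \in \mathbb{P}_{1}(k)$ let $e_{z} = N / \GCD ( N, m_{z} )$ be the branch index of $z$ in $\Ccal \to \mathbb{P}_{1, k}$. The branch index of $z$ in $\Ccal_{d} \to \mathbb{P}_{1, k}$ is $d / \GCD ( d, m_{z} )$, and this equals $\GCD ( d, e_{z} )$.

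The second step uses $g ( \Ccal_{p} ) = 0$. By Lemma~\ref{lem: genus superelliptic curves; sec: Preliminaries}, this says that exactly two points $z$ satisfy $p \mid e_{z}$. The same holds for $q$ and for $r$.

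The third step applies the computation in the proof of Proposition~\ref{pro: genus superelliptic curve pq; sec: Preliminaries} to the curve $\Ccal_{pq}$. That computation uses only the branch data, so it applies verbatim with $\Acal_{pq}$ the number of $z$ with $pq \mid e_{z}$. It gives
\[
g ( \Ccal_{pq} ) = ( p - 1 ) ( q - 1 ) \bigl( 1 - \Acal_{pq} / 2 \bigr).
\]
Since $g ( \Ccal_{pq} ) = 0$, we get $\Acal_{pq} = 2$: exactly two points have $pq \mid e_{z}$. These two points are among the two points with $p \mid e_{z}$, so they coincide with them. Hence the two points with $p \mid e_{z}$ also satisfy $q \mid e_{z}$. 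Running the same argument with $\Ccal_{pr}$ shows they also satisfy $r \mid e_{z}$. So both points have $e_{z} = pqr$.

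Every other point $z$ is divisible by none of $p, q, r$, so $e_{z} = 1$. Therefore $\Ccal \to \mathbb{P}_{1, k}$ has exactly two branch points, each totally ramified. Riemann--Hurwitz then gives
\[
2g ( \Ccal ) - 2 = -2N + 2 ( N - 1 ) = -2,
\]
so $g ( \Ccal ) = 0$, contradicting $g ( \Ccal ) \ge 2$.

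The only delicate point is justifying that each $\Ccal_{d}$ really is the quotient $\Ccal / \langle \sigma^{N/d} \rangle$, so that the genus-zero conclusion applies and the branch-index bookkeeping $\GCD ( d, e_{z} )$ is valid. Both follow from identifying the fixed fields inside the Kummer extension $k ( x, y )$, and the rest is the arithmetic above.
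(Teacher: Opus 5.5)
Your argument is correct, but it reaches the contradiction by a different route than the paper.

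The paper applies Proposition~\ref{pro: genus superelliptic curve pq; sec: Preliminaries} to $\Ccal$ itself, three times, with the elements $\sigma^{r}, \sigma^{q}, \sigma^{p}$ of orders $pq$, $pr$, $qr$. This gives three lists of admissible values for $g(\Ccal)$: $(p-1)(q-1)$ or half of it, and similarly for the other two pairs. The proof then ends by asserting that these lists have no common value. The paper leaves this check to the reader: with $p<q<r$, matching the first two lists forces $r=2q-1$, and matching the third then forces $p \ge q$.

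You instead run the computation from the proof of that proposition on the quotient curves $\Ccal_{pq}$ and $\Ccal_{pr}$. These have genus zero because they are quotients by nontrivial powers of $\sigma$. This determines the whole ramification of $\Ccal \to \mathbb{P}_{1,k}$ (exactly two totally ramified points and nothing else), and $g(\Ccal)=0$ follows at once.

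What each approach buys:
\begin{itemize}
\item Your version is self-contained and avoids the arithmetic case analysis. It also proves a structural fact: a cyclic cover of $\mathbb{P}_{1,k}$ of degree $pqr$ all of whose proper intermediate covers have genus zero has genus zero itself.
\item The paper's version is shorter because it uses the proposition as a black box on $\Ccal$.
\end{itemize}

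Two small remarks:
\begin{itemize}
\item In your last paragraph, $\Ccal/\langle\sigma^{N/d}\rangle$ should read $\Ccal/\langle\sigma^{d}\rangle$, as in your first paragraph.
\item That identification is not really delicate for the genus-zero step. The morphism $(x,y)\mapsto(x,y^{N/d})$ together with Lemma~\ref{lem: necessary condition simplicity; sec: Preliminaries} and Riemann--Hurwitz already suffices. The identity $d/\GCD(d,m_{z})=\GCD(d,e_{z})$ does rely on $N$ being squarefree, which it is here.
\end{itemize}
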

        \begin{proof}
            Let $\sigma \in \Aut ( \Ccal )$ be of order $pqr$ for some distinct primes $p, q, r$. Since $\sigma^{r}$ has order $pq$, Propositions~\ref{pro: a curve with simple Jacobian is superelliptic; sec: Preliminaries} and~\ref{pro: genus superelliptic curve pq; sec: Preliminaries} imply that
            \begin{equation*}
                g ( \Ccal ) = ( p - 1 ) ( q - 1 ) \qquad\quad \text{or} \qquad\quad g ( \Ccal ) = \frac{( p - 1 ) ( q - 1 )}{2} .
            \end{equation*}
            Since $\sigma^{q}$ has order $pr$, the same propositions imply that
            \begin{equation*}
                g ( \Ccal ) = ( p - 1 ) ( r - 1 ) \qquad\quad \text{or} \qquad\quad g ( \Ccal ) = \frac{( p - 1 ) ( r -  1 )}{2} .
            \end{equation*}
            Since $\sigma^{p}$ has order $qr$, the same propositions imply that
            \begin{equation*}
                g ( \Ccal ) = ( q - 1 ) ( r - 1 ) \qquad\quad \text{or} \qquad\quad g ( \Ccal ) = \frac{( q - 1 ) ( r  - 1 )}{2} .
            \end{equation*}
            It is easy to see that $p, q, r$ cannot be distinct.
        \end{proof}

        \begin{The}
        \label{the: Aut does not have particular cyclic subgroups; sec: Preliminaries}
            Let $\Ccal / k$ be a curve of genus $\ge 2$ with simple Jacobian. Suppose that
            \begin{enumerate}
                \item there exists $\sigma_{1} \in \Aut ( \Ccal )$ of order $pq$, for some distinct primes $p, q$;
                \item there exists $\sigma_{2} \in \Aut ( \Ccal )$ of order $p$ such that $\langle \sigma_{2} \rangle$ is a normal subgroup of $\Aut ( \Ccal )$.
            \end{enumerate}
            If $q \neq 2$, there does not exist a subgroup of $\Aut ( \Ccal ) / \langle \sigma_{2} \rangle$ isomorphic to $C_{q^{2}}$.
        \end{The}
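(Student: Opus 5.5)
Throughout, argue by contradiction. Suppose that $q \neq 2$ and that $\Aut ( \Ccal ) / \langle \sigma_{2} \rangle$ contains a subgroup $\bar H \cong C_{q^{2}}$, generated by the image $\bar\tau$ of some $\tau \in \Aut ( \Ccal )$. The idea is to count the branch points of the cyclic cover $\Ccal \to \Ccal / \langle \sigma_{2} \rangle$ in two incompatible ways: the genus (controlled by $\sigma_{1}$) fixes their number, while the $C_{q^{2}}$-symmetry forces that number into a set it cannot reach.

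\emph{Step 1: the quotient and its $\bar H$-action.} By Theorem~\ref{the: necessary condition simplicity; sec: Preliminaries}, $\Ccal / \langle \sigma_{2} \rangle \cong \mathbb{P}_{1, k}$, so $\pi \colon \Ccal \to \mathbb{P}_{1, k}$ is a cyclic Galois cover of prime degree $p$. Since $\langle \sigma_{2} \rangle$ is normal, $\Aut ( \Ccal ) / \langle \sigma_{2} \rangle$ acts on $\mathbb{P}_{1, k}$. This action is faithful: an automorphism of $\Ccal$ inducing the identity on $\Ccal / \langle \sigma_{2} \rangle$ lies in the Galois group of $\pi$, which is $\langle \sigma_{2} \rangle$. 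Hence $\bar H \cong C_{q^{2}}$ embeds in $\Aut ( \mathbb{P}_{1, k} )$. After a change of coordinate, $\bar\tau$ acts as $x \mapsto \zeta x$ with $\zeta$ a primitive $q^{2}$-th root of unity. Every nontrivial element of $\bar H$ then fixes exactly $0$ and $\infty$. Consequently every $\bar H$-orbit on $\mathbb{P}_{1}(k)$ has size $1$ (only $\{0\}$ and $\{\infty\}$) or $q^{2}$.

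\emph{Step 2: the branch locus is $\bar H$-stable.} Let $B \subset \mathbb{P}_{1}(k)$ be the branch locus of $\pi$, and set $r = |B|$. Since $\bar H$ normalizes the deck group, $B$ is $\bar H$-stable. By Step 1, $r \equiv 0, 1, 2 \pmod{q^{2}}$, and more precisely $r \in \{0,1,2\} + q^{2}\Z_{\ge 0}$. Since $\pi$ has prime degree, every branch point has index $p$, so Riemann--Hurwitz (or Lemma~\ref{lem: genus superelliptic curves; sec: Preliminaries}) gives
\begin{equation*}
2 g ( \Ccal ) = ( p - 1 ) ( r - 2 ) .
\end{equation*}

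\emph{Step 3: the genus from $\sigma_{1}$.} By Proposition~\ref{pro: a curve with simple Jacobian is superelliptic; sec: Preliminaries} applied to $\sigma_{1}$ and Proposition~\ref{pro: genus superelliptic curve pq; sec: Preliminaries}, $g ( \Ccal ) \in \{ ( p - 1 ) ( q - 1 ), ( p - 1 ) ( q - 1 ) / 2 \}$. Substituting into Step 2 gives $r = 2q$ or $r = q + 1$. In both cases $2 < r < q^{2}$, because $q \ge 3$. This contradicts $r \in \{0,1,2\} + q^{2}\Z_{\ge 0}$, which proves the theorem.

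For $q = 2$ the value $r = q^{2} = 4 = 2q$ is allowed, which is exactly why that case is excluded.

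The main point to check carefully is the faithfulness in Step 1 and the orbit description, in particular that elements of order $q$ in $\bar H$ also have only the fixed points $0, \infty$. This holds because every nontrivial element of a cyclic subgroup of $\mathrm{PGL}_{2}$ acting by $x \mapsto \zeta^{j} x$ fixes only $0$ and $\infty$.
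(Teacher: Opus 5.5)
Your proof is correct and follows essentially the same route as the paper: both count the branch points of the degree-$p$ cover $\Ccal \to \Ccal/\langle \sigma_{2} \rangle \cong \mathbb{P}_{1,k}$ as a union of $C_{q^{2}}$-orbits of size $1$ (only $0,\infty$) or $q^{2}$. Both then compare the resulting genus $(p-1)(r-2)/2$ with the values $(p-1)(q-1)$ or $(p-1)(q-1)/2$ forced by $\sigma_{1}$. Your orbit-counting formulation, with its explicit check that $\Aut(\Ccal)/\langle\sigma_{2}\rangle$ acts faithfully on the quotient, is a cleaner packaging of the paper's explicit factorization $f(x) = x^{n_{0}} \prod_{i,j} (x - \zeta_{q^{2}}^{j} a_{i})^{n_{i,j}}$.
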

        \begin{proof}
            Assumption~$1$ and Propositions~\ref{pro: a curve with simple Jacobian is superelliptic; sec: Preliminaries} and~\ref{pro: genus superelliptic curve pq; sec: Preliminaries} imply that the genus of $\Ccal$ is either
            \begin{equation}
            \label{equation 1; the: Aut does not have particular cyclic subgroups; sec: Preliminaries}
                g ( \Ccal ) = ( p - 1 ) ( q -  1 ) \qquad\quad \text{or}\qquad\quad g ( \Ccal ) = \frac{( p - 1 ) ( q -  1 )}{2} .
            \end{equation}
            Since $\sigma_{2}$ has order $p$, we have that $\Ccal$ is isomorphic to a superelliptic curve $y^{p} = f(x)$, for some $f \in k [ x ]$, and the projection morphism $\Ccal \to \mathbb{P}_{1, k}$ given by $( x, y ) \mapsto x$ is the quotient morphism $\Ccal \to \Ccal / \langle \sigma_{2} \rangle$ (Proposition~\ref{pro: a curve with simple Jacobian is superelliptic; sec: Preliminaries}).
                
            Assume, for contradiction, that there exists a subgroup of $\Aut ( \Ccal ) / \langle \sigma_{2} \rangle$ isomorphic to $C_{q^{2}}$, and let $h$ be a generator of this subgroup. Every element of $\Aut ( \Ccal ) / \langle \sigma_{2} \rangle$ acts on $\Ccal / \langle \sigma_{2} \rangle \cong \mathbb{P}_{1, k}$ and stabilizes the branch locus of $\Ccal \to \mathbb{P}_{1, k}$. Since every cyclic subgroup of order $n$ of $\Aut ( \mathbb{P}_{1, k} )$ is conjugate to the subgroup generated by $z \mapsto \zeta_{n} \cdot z$, up to a change of coordinates on $\mathbb{P}_{1, k}$ we may assume that $h$ acts on $\mathbb{P}_{1, k}$ as multiplication by $\zeta_{q^{2}}$.
            It follows that
            \begin{equation*}
                f(x) = x^{n_{0}} \cdot \prod_{i = 1}^{d} \prod_{j = 1}^{q^{2}} ( x - \zeta_{q^{2}}^{j} \cdot a_{i} )^{n_{i, j}}
            \end{equation*}
            for some integer $d \ge 1$, some elements $a_{i} \in k^{*}$ such that the $a_{i}^{q^{2}}$ are pairwise distinct, some integer $0 \le n_{0} < p$ and some integers $1 \le n_{i, j} < p$ for $1 \le i \le d$ and $1 \le j \le q^{2}$. Let $\varepsilon = 1$ if $\infty$ is a branch point of $\Ccal \to \mathbb{P}_{1, k}$ and $\varepsilon = 0$ otherwise.

            The degree of the ramification divisor of $\Ccal \to \mathbb{P}_{1, k}$ is $( p - 1 ) ( dq^{2} + \varepsilon )$ if $n_{0} = 0$, and $( p - 1 ) ( dq^{2} + \varepsilon + 1 )$ if $n_{0} > 0$. The genus of $\Ccal$ is $( p - 1 ) ( dq^{2} + \varepsilon - 2 ) / 2$ if $n_{0} = 0$, and $( p - 1 ) ( dq^{2} + \varepsilon - 1 ) / 2$ if $n_{0} > 0$. From \eqref{equation 1; the: Aut does not have particular cyclic subgroups; sec: Preliminaries} we deduce that $n_{0} = \varepsilon = 0$, $d = 1$ and $q = 2$, which contradicts the assumption $q > 2$. 
        \end{proof}

        \subsection{Representation theory}
        \label{subsec: Representation theory; sec: Preliminaries}

            We show that the automorphism group of a curve over $k$ of genus $\ge 2$ with simple Jacobian admits finite-dimensional, irreducible, fixed-point-free representations over $k$. We assume all representations to be over $k$ and finite-dimensional.
            
            \begin{Def}[fixed-point-free representation]
            \label{def: fixed point free representation; subsec: Representation theory; sec: Preliminaries}
                Let $G$ be a finite group. A representation $\rho \colon G \to \GL ( V )$ is fixed-point-free if
                \begin{equation*}
                    V^{H} \coloneqq \bigl\{ v \in V \,\colon\, \rho(h)(v) = v \quad \forall \, h \in H \bigr\} = \{ 0 \}  
                \end{equation*}
                for every nontrivial subgroup $H$ of $G$.
            \end{Def}
           
            \begin{The}
            \label{the: AutC admits irr fixed point free rep; subsec: Representation theory; sec: Preliminaries}
                Let $\Ccal / k$ be a curve of genus $\ge 2$ with simple Jacobian. Then $\Aut ( \Ccal )$ admits irreducible, fixed-point-free representations. More precisely, every subrepresentation of the $\Aut ( \Ccal )$-module $H^{0} ( \Ccal, \Omega^{1}_{\Ccal} )$ is fixed-point-free.
            \end{The}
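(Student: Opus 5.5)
The plan is to reduce the statement to Theorem~\ref{the: necessary condition simplicity; sec: Preliminaries} via the standard identification of invariant differentials with differentials on the quotient. Let $G = \Aut ( \Ccal )$ and $V = H^{0} ( \Ccal, \Omega^{1}_{\Ccal} )$, of dimension $g ( \Ccal ) \ge 2$. A subrepresentation $W \subseteq V$ satisfies $W^{H} \subseteq V^{H}$ for every subgroup $H$. So it suffices to show that $V^{H} = \{ 0 \}$ for every nontrivial subgroup $H \le G$.

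Fix a nontrivial subgroup $H$ and pick $\sigma \in H \setminus \{ \operatorname{id} \}$. Since $V^{H} \subseteq V^{\langle \sigma \rangle}$, we reduce to showing $V^{\langle \sigma \rangle} = \{ 0 \}$. Let $\pi \colon \Ccal \to \Ccal / \langle \sigma \rangle$ be the quotient map. The key step is the classical fact that, in characteristic $0$, pullback induces an isomorphism
\begin{equation*}
    \pi^{*} \colon H^{0} \bigl( \Ccal / \langle \sigma \rangle, \Omega^{1} \bigr) \xrightarrow{\sim} H^{0} ( \Ccal, \Omega^{1}_{\Ccal} )^{\langle \sigma \rangle} .
\end{equation*}
To justify this, take an invariant regular differential $\omega$. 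It descends to a rational differential $\eta$ on the quotient via the function field inclusion $k ( \Ccal )^{\langle \sigma \rangle} \subseteq k ( \Ccal )$. At a point with ramification index $e$ one has $\operatorname{ord} ( \pi^{*} \eta ) = e \cdot \operatorname{ord} ( \eta ) + ( e - 1 )$. Hence $\operatorname{ord} ( \pi^{*} \eta ) \ge 0$ forces $\operatorname{ord} ( \eta ) \ge -( e - 1 ) / e > -1$, so $\operatorname{ord} ( \eta ) \ge 0$ and $\eta$ is regular. Injectivity of $\pi^{*}$ is clear. Alternatively, a trace/averaging argument gives $\dim V^{\langle \sigma \rangle} = g ( \Ccal / \langle \sigma \rangle )$ directly.

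By Theorem~\ref{the: necessary condition simplicity; sec: Preliminaries}, $\Ccal / \langle \sigma \rangle \cong \mathbb{P}_{1, k}$, which has no nonzero regular differentials. Thus $V^{\langle \sigma \rangle} = 0$, and hence $V^{H} = 0$. Every subrepresentation of $V$ is therefore fixed-point-free.

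Since $V \neq 0$ and $k$ has characteristic $0$, Maschke's theorem decomposes $V$ into irreducible subrepresentations. Each of these is fixed-point-free by the above, which gives the existence of irreducible fixed-point-free representations of $G$.

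The only step with any content is the descent of invariant differentials; the local order computation above handles it, and everything else is formal.
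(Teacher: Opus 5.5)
Your proposal is correct and is essentially the paper's argument: identify the invariant regular differentials with $H^{0}$ of the quotient curve, and use that the quotient is $\mathbb{P}_{1,k}$. Passing to a cyclic subgroup $\langle \sigma \rangle \subseteq H$ is a small improvement, because Theorem~\ref{the: necessary condition simplicity; sec: Preliminaries} is stated only for cyclic subgroups while the paper applies it directly to $\Ccal / H$; your local-order computation also supplies the descent of invariant differentials, which the paper takes for granted.
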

            \begin{proof}
                Let $H^{0} ( \Ccal, \Omega^{1}_{\Ccal} )$ be the $k$-vector space of regular differential forms on $\Ccal$. Every $\sigma \in \Aut ( \Ccal )$ induces an automorphism of $H^{0} ( \Ccal, \Omega^{1}_{\Ccal} )$ via pullback, so we have a representation
                \begin{equation}
                \label{eq: rep on reg diff forms; subsec: Representation theory; sec: Preliminaries}
                    \rho \colon \Aut ( \Ccal ) \to \GL \bigl( H^{0} ( \Ccal, \Omega^{1}_{\Ccal} ) \bigr) .
                \end{equation}

                Given a nontrivial subgroup $H$ of $\Aut ( \Ccal )$, consider the quotient morphism $\pi_{H} \colon \Ccal \to \Ccal / H$ which induces, via pullback, the injective morphism
                \begin{equation*}
                    \pi_{H}^{*} \colon H^{0} ( \Ccal / H, \Omega^{1}_{\Ccal / H} ) \to H^{0} ( \Ccal, \Omega^{1}_{\Ccal} ).
                \end{equation*}
                The image of this morphism is the $H$-invariant subspace $H^{0} ( \Ccal, \Omega^{1}_{\Ccal} )^{H}$. Since $\Ccal / H \cong \mathbb{P}_{1, k}$ (Theorem~\ref{the: necessary condition simplicity; sec: Preliminaries}), we deduce 
                \begin{equation*}
                    H^{0} ( \Ccal, \Omega^{1}_{\Ccal} )^{H} \cong H^{0} ( \Ccal / H, \Omega^{1}_{\Ccal / H} ) = \{ 0 \} .
                \end{equation*}
                Since this is true for every nontrivial subgroup $H$ of $\Aut ( \Ccal )$, the representation \eqref{eq: rep on reg diff forms; subsec: Representation theory; sec: Preliminaries} is fixed-point-free.
            \end{proof}
            
            \begin{Rem}
            \label{rem: classification groups admitting irr fixed point free rep; subsec: Representation theory; sec: Preliminaries}
                In \cite[Theorems~$6.1.11$ and~$6.3.1$]{WolfBookFixRep} one finds a classification of finite groups which admit irreducible, fixed-point-free representations. We use this classification in Lemma~\ref{lem: AutC is abelian if odd; subsec: Classification; sec: Possible automorphism groups}.
            \end{Rem}

        \subsection{Classification}
        \label{subsec: Classification; sec: Possible automorphism groups}

            We finally prove Theorem~\ref{the: possible Aut nice curve with simple Jacobian; sec: Possible automorphism groups}. We first consider the case in which $\Aut ( \Ccal )$ has even order.

            \begin{Lem}
            \label{lem: Aut even cardinality; subsec: Classification; sec: Possible automorphism groups}   
                Let $\Ccal / k$ be a curve of genus $\ge 2$ with simple Jacobian such that $\Aut ( \Ccal )$ has even order. Then $\Aut ( \Ccal )$ is isomorphic to one of the following groups:
                \begin{enumerate}[(i)]
                    \item the cyclic group $C_{2p}$ for some odd prime $p \ge 5$;
                    \item the cyclic group $C_{2^{n}}$ for some positive integer $n$;
                    \item the generalized quaternion group $G_{2^{n}}$ for some positive integer $n$.
                \end{enumerate}
            \end{Lem}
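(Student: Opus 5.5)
Since $\Aut ( \Ccal )$ has even order, it contains an element of order $2$, and by Proposition~\ref{pro: unique element of order 2 in AutC if C is simple; sec: Preliminaries} this element $\iota$ is unique. By Theorem~\ref{the: necessary condition simplicity; sec: Preliminaries}, $\Ccal / \langle \iota \rangle \cong \mathbb{P}_{1, k}$, so $\Ccal$ is hyperelliptic. Hence $\Aut ( \Ccal )$ is one of the groups in the table of Theorem~\ref{the: classification Aut of hyperelliptic curves; sec: Preliminaries}. The plan is to run through that table and discard every group that either contains more than one involution or contains an element of order $6$ or an element of order $pqr$.

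\textbf{Step 1: groups with a unique involution.} The groups $C_{2} \times C_{n}$ (with $n$ even), $C_{2} \times D_{n}$, $C_{2} \times A_{4}$, $C_{2} \times S_{4}$ and $C_{2} \times A_{5}$ all have more than one involution, and so do $\GL ( 2, \Fbb_{3} )$ (non-central involutions lifting transpositions) and $D_{2n}$. I also need to check $V_{n}$, $H_{n}$, $U_{n}$, $W_{2}$ and $W_{3}$ from their presentations. A group with a unique involution has cyclic or generalized quaternion Sylow $2$-subgroups. Since its quotient by $\langle \iota \rangle$ lies in $\Aut ( \mathbb{P}_{1, k} )$, the survivors should be the following.
\begin{enumerate}[(a)]
    \item The cyclic groups: $C_{2n}$, and $C_{2} \times C_{n}$ with $n$ odd, which is again cyclic.
    \item The dicyclic groups $G_{n}$; these have a unique involution $y^{n}$ for every $n$. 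Here I must verify that $U_{n}$, $H_{n}$ and $V_{n}$ always have extra involutions, e.g. $x$ in $U_{n}$, and the images of reflections in the other cases.
    \item $\SL ( 2, \Fbb_{3} )$, $W_{3}$ (binary octahedral) and $\SL ( 2, \Fbb_{5} )$. These have a unique involution, whereas $W_{2}$ should have further involutions.
\end{enumerate}

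\textbf{Step 2: exclude elements of composite order outside the list.} The remaining groups all contain elements of order $6$: $\SL ( 2, \Fbb_{3} )$, $W_{3}$ and $\SL ( 2, \Fbb_{5} )$ do, and $G_{n}$ does whenever $3 \mid n$. For $p = 2$, $q = 3$, Proposition~\ref{pro: genus superelliptic curve pq; sec: Preliminaries} together with Proposition~\ref{pro: a curve with simple Jacobian is superelliptic; sec: Preliminaries} forces $g ( \Ccal ) \in \{ 2, 1 \}$, so $g = 2$. For $g = 2$, the genus constraints in the table rule out these groups: for example, $\SL ( 2, \Fbb_{3} )$ needs $g \ge 4$, $W_{3}$ needs $g \ge 8$, and $\SL ( 2, \Fbb_{5} )$ needs $g \ge 14$. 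For $G_{n}$ we need $n < g$ and $n \mid g$, which forces $n = 1$.

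Any leftover case with $g = 2$, such as $C_{6}$ acting on $y^{2} = x^{6} - 1$, needs a separate argument. I would use Theorem~\ref{the: AutC admits irr fixed point free rep; subsec: Representation theory; sec: Preliminaries}: for genus $2$, the cover $y^{6} = \dots$ has quotients that are elliptic, so the Jacobian is not simple. Concretely, for $C_{6}$ the quotient by the order-$3$ element must be $\mathbb{P}_{1, k}$. Using the genus formula, $g = (p-1)(q-1)$ or its half, and $6 \mid$ the group order, I would derive a contradiction directly. This matches the exclusion $pq \neq 6$ and is exactly what gives the hypothesis $p \ge 5$ in (i), since $C_{6}$ is excluded.

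\textbf{Step 3: cyclic and dicyclic groups of composite order.} Suppose the group is cyclic of order $2m$. Theorem~\ref{the: Aut does not contain elements order pqr; sec: Preliminaries} rules out three distinct prime divisors. If $m$ has an odd prime factor $p$, we need to exclude $4p \mid 2m$ and $2p^{2} \mid 2m$. I would apply Theorem~\ref{the: Aut does not have particular cyclic subgroups; sec: Preliminaries} with the normal subgroup of order $p$ and the roles of $p$ and $q$ chosen suitably; alternatively, I would compare the genus values forced by Proposition~\ref{pro: genus superelliptic curve pq; sec: Preliminaries} with the constraint coming from a $C_{p^{2}}$ or $C_{4}$ quotient via the ramification count. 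This leaves $C_{2p}$ and $C_{2^{n}}$.

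For $G_{n}$ with an odd prime $p \mid n$, $G_{n}$ contains $C_{2p}$, so $g = p - 1$ or $(p-1)/2$. But $G_{n}$ also requires $n \mid g$, and $p \nmid p - 1$, which is a contradiction. Hence $n$ is a power of $2$.

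The main obstacle I expect is Step 3 for cyclic groups, namely excluding $C_{4p}$ and $C_{2p^{2}}$. Theorem~\ref{the: Aut does not have particular cyclic subgroups; sec: Preliminaries} covers only $q \neq 2$, so the case $C_{4p}$ must be handled by a direct ramification and genus computation like the one in that theorem's proof, with $q^{2} = 4$ and the quotient by the normal subgroup of order $2$.
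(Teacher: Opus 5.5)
Your architecture matches the paper's: $\Ccal$ is hyperelliptic, you consult the table, discard groups with two involutions, and pin down $g$ via Proposition~\ref{pro: genus superelliptic curve pq; sec: Preliminaries}. However, the cases you set aside for separate arguments are not actually closed, and one claim in Step~1 is false.

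\textbf{The case $C_{6}$ in genus $2$.} Neither of your suggestions gives a contradiction. The value $g=2=(2-1)(3-1)$ is consistent with that proposition. The fixed-point-free condition also holds: on $y^{2}=(x^{3}-a)(x^{3}-b)$, the automorphism $(x,y)\mapsto(\zeta_{3}x,-y)$ acts on $dx/y$ and $x\,dx/y$ by the primitive sixth roots $-\zeta_{3}$ and $-\zeta_{3}^{2}$. The real obstruction is a second involution. The six branch points must form two $C_{3}$-orbits of size $3$, so the curve has that form, and it also carries $(x,y)\mapsto(c/x,\sqrt{ab}\,y/x^{3})$ with $c^{3}=ab$.

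\textbf{The case $C_{4p}$.} Here you stop at the obstacle. Redoing the proof of Theorem~\ref{the: Aut does not have particular cyclic subgroups; sec: Preliminaries} with $q=2$ only reaches its exceptional configuration $n_{0}=\varepsilon=0$, $d=1$. You would still need one more step. Since $C_{4p}$ is abelian, the lift of $x\mapsto ix$ commutes with $(x,y)\mapsto(x,\zeta_{p}y)$, which forces all four exponents to be congruent mod $p$. Then $\deg f\equiv 4n_{1}\not\equiv 0 \pmod p$, so $\infty$ is a branch point, contradicting $\varepsilon=0$.

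\textbf{The groups $H_{n}$.} For $n$ odd, $H_{n}$ has a \emph{unique} involution; it is then isomorphic to the dicyclic group $G_{n}$. So Step~1 does not remove it.

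\textbf{The missing idea.} The table's constraints involve $g$, and once $g$ is known they settle all of this at once; this is exactly the paper's proof. The paper distinguishes three cases for $\Aut(\Ccal)/\langle\iota\rangle$:
\begin{itemize}
\item it is a $2$-group, and after the involution count only $C_{2^{m+1}}$ and $G_{2^{m}}$ remain;
\item it contains an element of order $3$, so $g=2$;
\item it contains an element of prime order $p\ge5$, so $g\in\{(p-1)/2,p-1\}$.
\end{itemize}
It then reads off the candidates from the table. The group $C_{2n}$ requires $n\mid 2g+1$ or $n\mid 2g$, and $C_{2}\times C_{n}$ requires $n\mid 2g+2$ and $g\neq n-1$. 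Hence the only such groups with $p\mid n$ are $C_{2p}$ with $g=(p-1)/2$ (impossible for $p=3$) and $C_{2}\times C_{2p}$. Thus $C_{6}$, $C_{4p}$ and $C_{2p^{2}}$ never arise. Likewise, $H_{n}$ with an odd prime $p\mid n$ is excluded by the constraints $n\mid g+1$ and $n<g+1$.

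Your treatment of $G_{n}$, $\SL(2,\Fbb_{3})$, $W_{3}$ and $\SL(2,\Fbb_{5})$ is fine, as is your exclusion of $C_{2p^{2}}$ via Theorem~\ref{the: Aut does not have particular cyclic subgroups; sec: Preliminaries}.
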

            \begin{proof}
                By Cauchy's theorem, there exists $\iota \in \Aut ( \Ccal )$ of order two. Since the Jacobian of $\Ccal$ is simple, we have $\Ccal / \langle \iota \rangle \cong \mathbb{P}_{1, k}$ (Theorem~\ref{the: necessary condition simplicity; sec: Preliminaries}); hence, $\Ccal$ is a hyperelliptic curve and $\iota$ is the hyperelliptic involution. 

                If $\Aut ( \Ccal ) / \langle \iota \rangle$ does not contain nontrivial elements of odd order, Theorem~\ref{the: classification Aut of hyperelliptic curves; sec: Preliminaries} implies that $\Aut ( \Ccal )$ is isomorphic to one of $C_{2} \times C_{2^{m}}$, $C_{2^{m+1}}$, $C_{2} \times D_{2^{m}}$, $V_{2^{m}}$, $D_{2^{m+1}}$, $H_{2^{m}}$, $U_{2^{m}}$, $G_{2^{m}}$ for some non-negative integer $m$. Since $\Aut ( \Ccal )$ contains at most one element of order $2$ (Proposition~\ref{pro: unique element of order 2 in AutC if C is simple; sec: Preliminaries}), if $m > 0$ we exclude cases $C_{2} \times C_{2^{m}}$, $C_{2} \times D_{2^{m}}$, $V_{2^{m}}$, $D_{2^{m+1}}$, $H_{2^{m}}$, $U_{2^{m}}$.

                If $\Aut ( \Ccal ) / \langle \iota \rangle$ contains an element of order $3$, there exists an element of $\Aut ( \Ccal )$ of order $6$, since $\iota$ is central. Propositions~\ref{pro: a curve with simple Jacobian is superelliptic; sec: Preliminaries} and~\ref{pro: genus superelliptic curve pq; sec: Preliminaries} imply that the genus of $\Ccal$ is $2$. From Theorem~\ref{the: classification Aut of hyperelliptic curves; sec: Preliminaries} it follows that $\Aut ( \Ccal )$ is isomorphic to one of $C_{2} \times C_{6}$, $C_{2} \times D_{3}$, $V_{2}$, $\GL ( 2, \Fbb_{3} )$. Since $\Aut ( \Ccal )$ contains at most one element of order $2$ (Proposition~\ref{pro: unique element of order 2 in AutC if C is simple; sec: Preliminaries}), we exclude all cases.

                If $\Aut ( \Ccal ) / \langle \iota \rangle$ contains an element of order $p \ge 5$ for some prime $p$, there exists an element of $\Aut ( \Ccal )$ of order $2p$, since $\iota$ is central. Propositions~\ref{pro: a curve with simple Jacobian is superelliptic; sec: Preliminaries} and~\ref{pro: genus superelliptic curve pq; sec: Preliminaries} imply that the genus of $\Ccal$ is either $( p - 1 ) / 2$ or $p - 1$. Theorem~\ref{the: classification Aut of hyperelliptic curves; sec: Preliminaries} implies that $\Aut ( \Ccal )$ is isomorphic to one of $C_{2p}$, $C_{2} \times C_{2p}$, $C_{2} \times D_{p}$, $C_{2} \times A_{5}$, $\SL ( 2, \Fbb_{5} )$. Since $\Aut ( \Ccal )$ contains at most one element of order $2$ (Proposition~\ref{pro: unique element of order 2 in AutC if C is simple; sec: Preliminaries}), we exclude cases $C_{2} \times C_{2p}$, $C_{2} \times D_{p}$, $C_{2} \times A_{5}$. Moreover, we exclude the case $\SL ( 2, \Fbb_{5} )$, which can be realized only by hyperelliptic curves of genus at least $14$ (Theorem~\ref{the: classification Aut of hyperelliptic curves; sec: Preliminaries}), because we have already proved that $g ( \Ccal ) \le p - 1$ (note that in this case we have $p = 5$).
            \end{proof}

            Next, we consider the case in which $\Aut ( \Ccal )$ has odd order.

            \begin{Lem}
            \label{lem: AutC is abelian if odd; subsec: Classification; sec: Possible automorphism groups}
                Let $\Ccal / k$ be a curve of genus $\ge 2$ with simple Jacobian. If $\Aut ( \Ccal )$ has odd order, it is abelian.
            \end{Lem}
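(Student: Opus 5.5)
The plan is to combine Theorem~\ref{the: AutC admits irr fixed point free rep; subsec: Representation theory; sec: Preliminaries} with Wolf's classification (Remark~\ref{rem: classification groups admitting irr fixed point free rep; subsec: Representation theory; sec: Preliminaries}) to pin down the shape of $G \coloneqq \Aut ( \Ccal )$. Then Theorem~\ref{the: Aut does not have particular cyclic subgroups; sec: Preliminaries} rules out every nonabelian possibility.

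\textbf{Step 1: structure of $G$.} By Theorem~\ref{the: AutC admits irr fixed point free rep; subsec: Representation theory; sec: Preliminaries}, $G$ admits a fixed-point-free representation, and $|G|$ is odd. By \cite[Theorems~6.1.11 and~6.3.1]{WolfBookFixRep}, $G$ is then a metacyclic $Z$-group with a presentation
\begin{equation*}
    G = \bigl\langle A, B \,\bigl|\, A^{m} = B^{n} = 1,\ B A B^{-1} = A^{r} \bigr\rangle ,
\end{equation*}
subject to the following conditions:
\begin{itemize}
    \item $\GCD \bigl( (r - 1) n, m \bigr) = 1$ and $r^{n} \equiv 1 \pmod m$;
    \item if $d$ denotes the order of $r$ modulo $m$, then every prime divisor of $d$ divides $n/d$.
\end{itemize}
The groups with $d = 1$ are exactly the cyclic groups. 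So it suffices to derive a contradiction from $d > 1$.

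\textbf{Step 2: produce the two elements required by Theorem~\ref{the: Aut does not have particular cyclic subgroups; sec: Preliminaries}.} Assume $d > 1$ and choose a prime $q \mid d$; it is odd because $|G|$ is odd. Since $r \not\equiv 1$ modulo $m$, there is a prime $p \mid m$ such that $B$ acts nontrivially on the Sylow $p$-subgroup of $\langle A \rangle$. I will arrange, via the Chinese remainder theorem, that $q$ divides the order of $r$ modulo $p$. This is the one point to check carefully, and if $q \mid d$ is chosen suitably this is possible.

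Let $\sigma_{2}$ be an element of order $p$ in $\langle A \rangle$. The cyclic group $\langle A \rangle$ is normal in $G$, and $\langle \sigma_{2} \rangle$ is characteristic in it, so $\langle \sigma_{2} \rangle$ is normal in $G$. Since $q \mid n/d$, we have $d \mid n/q$. Hence $B^{n/q}$ has order $q$ and acts on $A$ via $r^{n/q} \equiv 1$, so it is central. Then $\sigma_{1} \coloneqq \sigma_{2} B^{n/q}$ has order $pq$, because $p \neq q$: indeed $p \mid m$, $q \mid n$ and $\GCD ( m, n ) = 1$.

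\textbf{Step 3: the cyclic subgroup of the quotient.} Since $q \cdot d$ divides $n$ and $q \mid d$, we get $q^{2} \mid n$. Moreover $\langle B \rangle \cap \langle A \rangle = 1$, so $B$ maps to an element of order $n$ in $G / \langle \sigma_{2} \rangle$. Hence $G / \langle \sigma_{2} \rangle$ contains a subgroup isomorphic to $C_{q^{2}}$ with $q \neq 2$. This contradicts Theorem~\ref{the: Aut does not have particular cyclic subgroups; sec: Preliminaries}, so $d = 1$ and $G$ is abelian (indeed cyclic).

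\textbf{Main obstacle.} I expect the main obstacle to be quoting Wolf's classification precisely, in particular the divisibility condition "every prime divisor of $d$ divides $n/d$". The whole argument rests on this condition: it is what makes $B^{n/q}$ central and what forces $q^{2} \mid n$. If the condition is stated differently in \cite{WolfBookFixRep}, I would instead use directly that every subgroup of order $q^{2}$ or $pq$ of a fixed-point-free group is cyclic, and rederive these two facts from that.
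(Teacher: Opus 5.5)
Your argument is correct and is essentially the paper's: you use Wolf's type~I presentation, then form an element $\sigma_{2} B^{n/q}$ of order $pq$, where $B^{n/q}$ is central because $d \mid n/q$, and the image of $\langle B \rangle$ supplies $C_{q^{2}}$ in $G / \langle \sigma_{2} \rangle$, contradicting Theorem~\ref{the: Aut does not have particular cyclic subgroups; sec: Preliminaries}. The paper first uses Feit--Thompson and Theorem~\ref{the: Aut does not contain elements order pqr; sec: Preliminaries} to reduce to $m$, $n$ prime powers, which your direct choice of primes $p \mid m$, $q \mid d$ makes unnecessary; likewise, your CRT arrangement that $q$ divide the order of $r$ modulo $p$ is never used in Steps~2--3, so any prime $p \mid m$ works.
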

            \begin{proof}
                Assume, for contradiction, that $\Aut ( \Ccal )$ is not abelian. Since $\Aut ( \Ccal )$ has odd order, the Feit--Thompson theorem implies that it is solvable. Recall that $\Aut ( \Ccal )$ admits irreducible, fixed-point-free representations (Theorem~\ref{the: AutC admits irr fixed point free rep; subsec: Representation theory; sec: Preliminaries}). The classification of finite solvable groups which admit irreducible, fixed-point-free representations, see \cite[Theorem~$6.1.11$]{WolfBookFixRep}, implies that $\Aut ( \Ccal )$ is isomorphic to a nontrivial semidirect product
                \begin{equation*}
                    \Aut ( \Ccal ) \cong C_{m} \rtimes_{\psi} C_{n} ,
                \end{equation*}
                for some positive integers $m, n$, and some homomorphism $\psi \colon C_{n} \to \Aut ( C_{m} )$ such that, letting $r \coloneqq \psi ( {1_{C_{n}}} ) \in ( \Z / m \Z )^{*} \cong \Aut ( C_{m} )$, we have:
                \begin{enumerate}[(i)]
                    \item $\GCD \bigl( n, \varphi(m) \bigr) > 1$;
                    \item $\GCD \bigl( n ( r - 1 ), m \bigr) = 1$;
                    \item $r^{n}\equiv 1 \pmod{m}$;
                    \item every prime divisor of the order $d$ of $r$ in $\Aut ( C_{m} )$ divides $n / d$. In particular, $d < n$.
                \end{enumerate}

                Since $\Aut ( \Ccal )$ does not contain an element whose order is the product of three distinct primes (Theorem~\ref{the: Aut does not contain elements order pqr; sec: Preliminaries}), it follows that $m = p_{1}^{a_{1}} p_{2}^{a_{2}}$ and $n = q_{1}^{b_{1}} q_{2}^{b_{2}}$ for some distinct primes $p_{1}, p_{2}$, some distinct primes $q_{1}, q_{2}$, and some non-negative integers $a_{1}, a_{2}, b_{1}, b_{2}$. Without loss of generality, we can assume $a_{1}, b_{1} > 0$.

                Since every prime that divides $d$ also divides $n / d$, by (iv), we deduce that $q_{1}$ divides $n / d$, and also $q_{2}$ divides $n / d$ if $b_{2} > 0$. Moreover, since $d \cdot C_{n}$ is in the kernel of $\psi$ by definition of $d$, the subgroup $C_{m} \rtimes_{\psi} ( d \cdot C_{n} )$ of $\Aut ( \Ccal )$ is isomorphic to the direct product $C_{m} \times C_{n/d} \cong C_{mn/d}$ (note that $( m, n / d ) = 1$ by (ii)). Since $\Aut ( \Ccal )$ does not contain an element of order a product of three distinct primes (Theorem~\ref{the: Aut does not contain elements order pqr; sec: Preliminaries}), we deduce that $mn / d$ cannot be divisible by three distinct primes; hence, at least two of the exponents $a_{1}, a_{2}, b_{1}, b_{2}$ must vanish, so $a_{2} = b_{2} = 0$.
  
                It follows that $\Aut ( \Ccal ) \cong C_{p_{1}^{a_{1}}} \rtimes_{\psi} C_{q_{1}^{b_{1}}}$. Consider the elements $x = ( p_{1}^{a_{1} - 1}, n / q_{1} )$ and $y = ( p_{1}^{a_{1} - 1}, 0_{C_{q_{1}^{b_{1}}}} )$ of $\Aut ( \Ccal )$. Since $d$ divides $n / q_{1}$, by (iv), it follows that $x^{k} = ( k p_{1}^{a_{1} - 1}, k n / q_{1} )$; hence, $x$ has order $p_{1} q_{1}$. Moreover, $\langle y \rangle$ is normal in $\Aut ( \Ccal )$, $y$ has order $p_{1}$ and $\Aut ( \Ccal ) / \langle y \rangle$ has a subgroup isomorphic to $C_{q_{1}^{b_{1}}}$. Since $d$ divides $n / q_{1}$ and $d \neq 1$, it follows that $b_{1} \ge 2$, which contradicts Theorem~\ref{the: Aut does not have particular cyclic subgroups; sec: Preliminaries}.
            \end{proof}
            
            \begin{Lem}
            \label{lem: Aut odd cardinality; subsec: Classification; sec: Possible automorphism groups}   
                Let $\Ccal / k$ be a curve of genus $\ge 2$ with simple Jacobian such that $\Aut ( \Ccal )$ has odd order. Then $\Aut ( \Ccal )$ is isomorphic to one of the following groups:
                \begin{enumerate}[(i)]
                    \item the cyclic group $C_{p^{n}}$ for some odd prime $p$ and some positive integer $n$;
                    \item the cyclic group $C_{pq}$ for some distinct odd primes $p, q$;
                    \item the trivial group.
                \end{enumerate}
            \end{Lem}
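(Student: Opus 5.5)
By Lemma~\ref{lem: AutC is abelian if odd; subsec: Classification; sec: Possible automorphism groups}, $G \coloneqq \Aut ( \Ccal )$ is abelian of odd order. The plan is to show that $G$ is cyclic, and then to restrict its order using Theorem~\ref{the: Aut does not contain elements order pqr; sec: Preliminaries} and Theorem~\ref{the: Aut does not have particular cyclic subgroups; sec: Preliminaries}. Assume $G$ is nontrivial.

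\emph{Step 1: $G$ is cyclic.} By Theorem~\ref{the: AutC admits irr fixed point free rep; subsec: Representation theory; sec: Preliminaries}, $H^{0} ( \Ccal, \Omega^{1}_{\Ccal} )$ is a nonzero fixed-point-free representation. Since $G$ is abelian and $k$ is algebraically closed of characteristic $0$, it decomposes into characters $\chi \colon G \to k^{*}$. Any one-dimensional subrepresentation is fixed-point-free, which forces $\ker \chi = \{ 1 \}$. So $G$ embeds into $k^{*}$, and every finite subgroup of $k^{*}$ is cyclic. If one prefers to avoid this, a direct argument also works: if $G$ contained $C_{p} \times C_{p}$, then $G$ would have at least $p + 1$ distinct subgroups of order $p$. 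Each of these has rational quotient, and a Riemann--Hurwitz count on the $(C_{p})^{2}$-cover (the Kani--Rosen/Accola relation $g ( \Ccal ) = \sum_{H} g ( \Ccal / H )$ over the cyclic subgroups of order $p$) would give $g ( \Ccal ) = 0$. In any case, we get $G \cong C_{N}$ with $N$ odd.

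\emph{Step 2: restricting $N$.} By Theorem~\ref{the: Aut does not contain elements order pqr; sec: Preliminaries}, $N$ has at most two distinct prime factors. If $N = p^{n}$, we are in case (i). Otherwise $N = p^{a} q^{b}$ with $p \neq q$ odd and $a, b \ge 1$, and it remains to show $a = b = 1$. Suppose $b \ge 2$. Let $\sigma_{2}$ be the element of order $p$; its subgroup is normal because $G$ is abelian. There is an element of order $pq$ in $G$, and $G / \langle \sigma_{2} \rangle \cong C_{p^{a-1} q^{b}}$ contains $C_{q^{2}}$. Since $q \neq 2$, this contradicts Theorem~\ref{the: Aut does not have particular cyclic subgroups; sec: Preliminaries}. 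Exchanging the roles of $p$ and $q$ gives $a = 1$ in the same way. Hence $N = pq$, which is case (ii) (with $pq \neq 6$ automatic, since both primes are odd).

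The only step needing care is Step 1. The character argument is short, but it relies on the fact that a one-dimensional summand of a fixed-point-free representation is itself fixed-point-free. This holds because subrepresentations of fixed-point-free representations are fixed-point-free, which is exactly the ``more precisely'' clause of Theorem~\ref{the: AutC admits irr fixed point free rep; subsec: Representation theory; sec: Preliminaries}.
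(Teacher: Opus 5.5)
Your proof is correct and is essentially the paper's argument. Cyclicity comes from the fixed-point-free action on $H^{0}(\Ccal, \Omega^{1}_{\Ccal})$; your faithful-character argument just makes explicit the paper's remark that an abelian group admits an irreducible fixed-point-free representation only if it is cyclic. Then Theorem~\ref{the: Aut does not contain elements order pqr; sec: Preliminaries} limits $N$ to two prime factors, and Theorem~\ref{the: Aut does not have particular cyclic subgroups; sec: Preliminaries}, applied to an element of order $pq$ and the normal subgroup of order $p$, forces both exponents to be $1$.

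Your optional Kani--Rosen aside is not needed. The relation there should read $g(\Ccal) + p \, g(\Ccal / G) = \sum_{H} g(\Ccal / H)$; it still gives $g(\Ccal) = 0$ here.
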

            \begin{proof}
                We proved in Lemma~\ref{lem: AutC is abelian if odd; subsec: Classification; sec: Possible automorphism groups} that $\Aut ( \Ccal )$ is abelian. Since all irreducible representations of abelian groups are one-dimensional (recall that we only consider representations over the algebraically closed field $k$), an abelian group admits irreducible, fixed-point-free representations if and only if it is cyclic. Since $\Aut ( \Ccal )$ admits irreducible, fixed-point-free representations (Theorem~\ref{the: AutC admits irr fixed point free rep; subsec: Representation theory; sec: Preliminaries}), we deduce that $\Aut ( \Ccal )$ is cyclic. Since $\Aut ( \Ccal )$ does not contain an element of order a product of three distinct primes (Theorem~\ref{the: Aut does not contain elements order pqr; sec: Preliminaries}), it follows that $\Aut ( \Ccal ) \cong C_{p_{1}^{a_{1}}p_{2}^{a_{2}}}$ for some distinct odd primes $p_{1}, p_{2}$ and some non-negative integers $a_{1}, a_{2}$. 
                
                Cases $(i)$ and $(iii)$ correspond to $a_{1} = 0$ or $a_{2} = 0$. Let us assume $a_{1} a_{2} \neq 0$. Applying Theorem~\ref{the: Aut does not have particular cyclic subgroups; sec: Preliminaries} to an element of order $p_{1} p_{2}$ and to an element of order $p_{1}$ of $\Aut ( \Ccal )$, we deduce that $a_{2} = 1$. Similarly, $a_{1} = 1$; hence, $\Aut ( \Ccal )$ falls into case (ii).
            \end{proof}

    \section{Cyclic groups of order \texorpdfstring{$p^{n}$}{a power of p}}
    \label{sec: Cyclic groups of order p^n}

        In this section, we analyze the cyclic groups $C_{p^{n}}$, where $p$ is a prime and $n$ is a positive integer. We show that these groups can be realized as the automorphism groups of curves over $\Qbar$ of genus $\ge 2$ with simple Jacobians.
        
        In a series of papers \cite{Zarhin2000, Zarhin2005, Zarhin2009, Zarhin2018, Zarhin2026}, Zarhin developed several ideas to treat the case $n = 1$. He obtained the following result:
        
        \begin{The}
        \label{the: Zarhin; sec: Cyclic groups of order p^n}
            Let $p$ be a prime, $K \supseteq \Q ( \zeta_{p} )$ be a number field and $f \in K [ x ]$ be an irreducible polynomial of degree $L \ge 5$. 
            If the Galois group of $f$ is isomorphic to either $A_{L}$ or $S_{L}$, the curve over $K$ given by
            \begin{equation*}
                y^{p} = f(x)
            \end{equation*}
            has geometric automorphism group isomorphic to $C_{p}$, and its Jacobian is geometrically simple with geometric endomorphism ring isomorphic to $\Z [ \zeta_{p} ]$ and geometric endomorphism algebra isomorphic to $\Q ( \zeta_{p} )$. 
        \end{The}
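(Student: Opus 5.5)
The plan is to run Zarhin's method: study the simultaneous action of the Galois group of $K$ and of $\End(\Jcal)$ on the $(1-\zeta_{p})$-torsion of the Jacobian $\Jcal$. First I would set up the structure. Put $\delta \coloneqq 1 - \zeta_{p}$. The automorphism $\sigma \colon (x,y) \mapsto (x, \zeta_{p} y)$ is defined over $K \supseteq \Q(\zeta_{p})$ and induces an embedding $\Z[\zeta_{p}] \hookrightarrow \End_{K}(\Jcal)$. Since $\Ccal / \langle \sigma \rangle \cong \mathbb{P}_{1}$, the element $1 + \sigma + \dots + \sigma^{p-1}$ acts as $0$ on $\Jcal$, so $\Z[\sigma]$ really is $\Z[\zeta_{p}]$. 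Let $\alpha_{1}, \dots, \alpha_{L}$ be the roots of $f$ and $P_{i} = (\alpha_{i}, 0)$ the corresponding ramification points. The classes of the divisors $P_{i} - P_{j}$ (together with the point at infinity if $p \nmid L$) generate the $\delta$-torsion $\Jcal[\delta]$. I would then identify $\Jcal[\delta]$, as an $\Fbb_{p}[\Gal(f)]$-module, with the heart of the permutation module $\Fbb_{p}^{L}$: the sum-zero hyperplane, further divided by the constants when $p \mid L$. This is the standard computation via Lemma~\ref{lem: genus superelliptic curves; sec: Preliminaries}: one has $\dim_{\Fbb_{p}} \Jcal[\delta] = 2g/(p-1)$, and this equals $L-1$ or $L-2$.

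Next comes the representation-theoretic input. For $L \ge 5$ and $\Gal(f) \in \{A_{L}, S_{L}\}$, the heart of the permutation module over $\Fbb_{p}$ is absolutely simple. More strongly, it is \emph{very simple} in Zarhin's sense: the only $\Fbb_{p}$-subalgebras of $\End(\Jcal[\delta])$ normalized by the image of Galois are $\Fbb_{p}$ and the full matrix algebra. This property is preserved under restriction to open subgroups, because $A_{L}$ is simple and has no proper subgroups of small index. Now let $\Ocal \coloneqq \End_{\overline{K}}(\Jcal)$. Its reduction $\Ocal / \delta$ acts on $\Jcal[\delta]$, and by very simplicity this image is either $\Fbb_{p}$ or everything. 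I would exclude the full matrix algebra by a dimension/rank count: $\Ocal \otimes \Z_{p}$ has rank at most $(2g)^{2}$, and $\Ocal$ must commute with $\Z[\zeta_{p}]$ after passing to the centralizer. Hence the centralizer of $\Z[\zeta_{p}]$ in $\Ocal$ reduces to scalars modulo $\delta$. A Nakayama-type argument then shows that this centralizer is $\Z[\zeta_{p}]$ itself.

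To pass from the centralizer to all of $\End^{0}$, I would use the action of $\zeta_{p}$ on $H^{0}(\Ccal,\Omega^{1})$. The multiplicities $n_{i}$ of the characters $\zeta_{p} \mapsto \zeta_{p}^{i}$ are explicit, $n_{i} = \lfloor iL/p \rfloor$ up to a shift. For $L \ge 5$ they are not all equal to $n_{p-i}$ in the way that would force $\Jcal$ to be isotypic of CM type over a larger field. Combining this with the previous step, $\End^{0}(\Jcal)$ is a division algebra whose centre contains $\Q(\zeta_{p})$, and its centralizer of $\Q(\zeta_{p})$ is $\Q(\zeta_{p})$. Therefore $\End^{0}(\Jcal) = \Q(\zeta_{p})$, $\Jcal$ is geometrically simple, and maximality of $\Z[\zeta_{p}]$ gives $\End(\Jcal) = \Z[\zeta_{p}]$. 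I expect the main obstacle to be this step, together with checking very simplicity uniformly in $p$ (including small $p$ dividing $L$ or $L-1$). Here I would quote the known results on the heart of doubly transitive permutation modules rather than reprove them.

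Finally, the automorphism group. $\Aut(\Ccal)$ injects into the torsion units of $\Z[\zeta_{p}]$, which are $\pm\zeta_{p}^{j}$, and it contains $\langle\sigma\rangle \cong C_{p}$. If $p = 2$ the torsion units are just $\pm 1$, so $\Aut(\Ccal) = C_{2}$. If $p$ is odd, the only other possibility is $\Aut(\Ccal) \cong C_{2p}$, which would contain an element of order $2p$. Since $\Jcal$ is simple, Proposition~\ref{pro: genus superelliptic curve pq; sec: Preliminaries} would then force $g \in \{(p-1)/2, \, p-1\}$. But $g \ge (p-1)(L-2)/2 \ge 3(p-1)/2$ for $L \ge 5$, a contradiction. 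Hence $\Aut_{\overline{K}}(\Ccal) \cong C_{p}$.
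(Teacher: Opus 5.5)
Your final paragraph follows the paper's own route to the automorphism group: the torsion units of $\Z[\zeta_p]$ are $\pm\zeta_p^j$. Your genus comparison $g\ge 3(p-1)/2>p-1$ correctly disposes of $C_{2p}$. That case is not literally covered by the argument the paper points to, which uses $n>1$.

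For the endomorphism ring, however, the paper simply cites Zarhin. Your reconstruction of his method has two steps that do not work as written.

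\emph{First step: excluding the full matrix algebra.} $\End_{\overline{K}}(\Jcal)$ does not act on $\Jcal[\delta]$; only the centralizer $\Lambda$ of $\Z[\zeta_p]$ does. Its image mod $\delta$ is normalized by the whole Galois group because $\sigma$ is defined over $K$, so no passage to open subgroups is needed. More seriously, the image $\M(d,\Fbb_p)$, $d=2g/(p-1)$, cannot be excluded by counting ranks:
\begin{itemize}
\item $d^2<(2g)^2$, so your bound $(2g)^2$ excludes nothing;
\item for $p\ge 5$, even the sharp bound $\dim_\Q\End^0\le 2g^2$ exceeds $(p-1)d^2=4g^2/(p-1)$;
\item indeed $B^d$, with $B$ simple and $\End^0(B)\cong\Q(\zeta_p)$, has dimension $g$ and centralizer of $\Q(\zeta_p)$ equal to $\M(d,\Q(\zeta_p))$.
\end{itemize}
What rules this case out is the Hodge structure. $\Lambda\otimes\C$ would act on each $\zeta_p$-eigenspace $W_j\cong\C^d$ of $H_1(\Jcal,\C)$ through the full $\M(d,\C)$, while preserving its Hodge piece of dimension $n_j$. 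That forces every $n_j$ to be $0$ or $d$. This fails for $y^p=f(x)$ with $L\ge 5$, since $n_j=\lfloor jL/p\rfloor$ (resp.\ $jL/p-1$ if $p\mid L$) takes values strictly between $0$ and $d$.

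\emph{Second step: passing from the centralizer to all of $\End^0(\Jcal)$.} Knowing that $\Q(\zeta_p)$ is its own centralizer only gives two things: $\Jcal$ is isotypic, and the centre $F$ of $\End^0(\Jcal)$ is \emph{contained in} $\Q(\zeta_p)$, a maximal subfield of the central simple $F$-algebra $\End^0(\Jcal)$. Your claim that the centre contains $\Q(\zeta_p)$, and that $\End^0(\Jcal)$ is a division algebra, is exactly what remains to be proved.

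The missing step is as follows. By Skolem--Noether, every $h\in\Gal(\Q(\zeta_p)/F)$ is conjugation by a unit $u\in\End^0(\Jcal)$. Then $u$ maps each eigenspace of $\zeta_p$ on $\mathrm{Lie}(\Jcal)$ isomorphically onto the eigenspace of the $h$-conjugate character. Hence $j\mapsto n_j$ is invariant under the corresponding subgroup of $(\Z/p\Z)^*$.

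You therefore need this function to have trivial stabilizer in $(\Z/p\Z)^*$, the analogue of the paper's lemma on $\Stab(T)$. Checking only $n_j\ne n_{p-j}$ is not enough: it excludes $F\subseteq\Q(\zeta_p+\zeta_p^{-1})$, but not odd-index subfields.

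This is easy once stated:
\begin{itemize}
\item if $p\mid L$ or $L>p$, the $n_j$ are strictly increasing;
\item if $L<p$, the level set $\{n_j=0\}=\{1,\dots,M\}$ has $2M<p$. A stabilizing $h\ne 1$ would lie in $\{2,\dots,M\}$, and then $h(\lfloor M/h\rfloor+1)\in(M,2M]$ leaves that set.
\end{itemize}
With these two additions, and Zarhin's very-simplicity result quoted, your outline becomes a proof. As it stands it does not reach $\End^0(\Jcal)=\Q(\zeta_p)$.
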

        \begin{proof}
            The statement regarding the geometric endomorphism ring and the geometric endomorphism algebra was established in \cite[Theorem~$2.1$]{Zarhin2000} (for the case $p = 2$) and in \cite[Theorem~$1.3$, Remark~$1.5$]{Zarhin2026} (for the case $p > 2$). The statement regarding the geometric automorphism group follows easily; see the proof of Theorem~\ref{the: reduction first statement main the; subsec: Preliminaries; sec: Cyclic groups of order p^n}.
        \end{proof}

        \begin{Rem}
        \label{rem: impossibility of applying Zarhin method; sec: Cyclic groups of order p^n}
            As anticipated in Section~\ref{subsec: Methods; sec: Introduction}, when attempting to apply Zarhin's method to the Jacobians of the curves appearing in Theorem~\ref{the: main; sec: Cyclic groups of order p^n}, one encounters several difficulties, in particular regarding the application of \cite[Lemma~2.3]{Zarhin2026}. Indeed, Zarhin applies the lemma with $\Hcal$ being the endomorphism algebra of the Jacobian, $\Lambda$ the endomorphism ring of the Jacobian, $E = \Q ( \zeta_{p^{n}} )$ and $\mfrak = ( 1 - \zeta_{p^{n}} )$. Zarhin's assumptions on the superelliptic curves ensure that the $( 1 - \zeta_{p^{n}} )$-torsion groups of the Jacobians are central simple modules (see \cite[Definition~4.1]{Zarhin2026}) over the absolute Galois group of the number fields of definition of the superelliptic curves. This property ensures that $\Lambda / (\mfrak \cdot \Lambda )$ is a matrix algebra over $\Fbb_{p}$; hence, \cite[Lemma~2.3]{Zarhin2026} can be applied. In our setting, however, $\Lambda / (\mfrak \cdot \Lambda )$ is a product of matrix algebras over $\Fbb_{p}$, and a direct generalization of the lemma to this case is false.
        \end{Rem}
            
        Using different methods, we obtain the following result for the case $n > 1$:
            
        \begin{The}
        \label{the: main; sec: Cyclic groups of order p^n}
            Let $p$ be a prime, and let $n > 1$ and $L \ge 1$ be integers. If $p = 2$, assume in addition $L \ge 3$. For $\abf = ( a_{1}, \dots, a_{L} ) \in \Abb^{L}_{\Qbar} ( \Qbar )$, set $f_{\abf} ( x ) \coloneqq x \cdot \prod_{ l = 1 }^{L} \bigl( x^{ p^{ n - 1 } } -a_{l} \bigr)$, and let $U \subset \Abb^{L}_{\Qbar}$ be the non-empty open subscheme defined by $\operatorname{disc}_{x} ( f_{\abf} ) \neq 0$. For every $\abf \in U ( \Qbar )$, let $\Cfrak_{\abf} / \Qbar$ denote the smooth projective model of the affine curve 
            \begin{equation}
            \label{eq: Cfraka1...aL; sec: Cyclic groups of order p^n}
                y^{p} = f_{\abf} ( x ) = x \cdot \prod_{ l = 1 }^{L} \bigl( x^{ p^{ n - 1 } } - a_{l} \bigr) .
            \end{equation}
            For every number field $K$ and for $100 \%$ of the points $\abf \in U ( \Qbar ) \cap K^{L}$, one has $\Aut_{\Qbar} ( \Cfrak_{\abf} ) \cong C_{ p^{n} }$, and the Jacobian $\Jfrak_{\abf} \coloneqq \Jac ( \Cfrak_{\abf} )$ is (geometrically) simple, with $\End_{\Qbar} (\Jfrak_{\abf} ) \cong \Z [ \zeta_{ p^{n} } ]$. Moreover, if $L\ge 2$, the curves $\Cfrak_{\abf}$ obtained in this way represent infinitely many distinct isomorphism classes over $\Qbar$, and their Jacobians represent infinitely many distinct isogeny classes over $\Qbar$.
        \end{The}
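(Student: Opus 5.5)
Our plan is to reduce everything to a statement about the generic fiber and then use Lemma~\ref{lem: how many fibers of Jfrak have bigger End0; subsec: specialization methods; sec: Preliminaries}.

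\textbf{Step 1: the automorphism $\tau$ and the CM field.} The curve \eqref{eq: Cfraka1...aL; sec: Cyclic groups of order p^n} carries the automorphism
\[
\tau\colon (x,y)\mapsto \bigl(\zeta_{p^{n-1}}x,\ \zeta_{p^{n}}y\bigr),
\]
which is well defined because $f_{\abf}(\zeta_{p^{n-1}}x)=\zeta_{p^{n-1}}f_{\abf}(x)$. It has order $p^{n}$, and $\tau^{p^{n-1}}$ is the superelliptic automorphism $y\mapsto\zeta_p y$. Since $\deg f_{\abf}=1+Lp^{n-1}$ is prime to $p$, Lemma~\ref{lem: genus superelliptic curves; sec: Preliminaries} gives
\[
2g=(p-1)Lp^{n-1}=\varphi(p^{n})L .
\]
Consider the element $\sum_{i}\tau^{ip^{n-1}}$ of $\Z[\tau]\subseteq\End(\Jfrak_{\abf})$. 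It vanishes, because the quotient by $\langle\tau^{p^{n-1}}\rangle$ is $\mathbb{P}_1$. Hence $\Z[\tau]$ is a quotient of $\Z[T]/(\Phi_{p^{n}}(T))\cdot(\text{stuff})$. One then checks that no intermediate quotient $\Cfrak_{\abf}/\langle\tau^{p^{j}}\rangle$ with $j<n-1$ has positive genus.

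Wait: that is not the right claim. The quotient by $\tau^{p^{j}}$ need not be $\mathbb{P}_1$. Its invariant differentials are exactly what must vanish, and these are governed by the eigenvalues of $\tau$ on $H^{0}(\Omega^{1})$. We therefore compute the standard basis
\[
x^{a}\,dx/y^{b},\qquad 1\le b\le p-1 .
\]
We verify that every eigenvalue of $\tau$ on this space is a primitive $p^{n}$-th root of unity. This gives $\Z[\zeta_{p^{n}}]\hookrightarrow\End$, and it also records the multiplicities of each embedding, which is the ``CM type'' data.

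\textbf{Step 2: reduction to the generic fiber.} Let $\Jbf^{\mathrm{al}}$ be the geometric generic Jacobian. If $\End^{0}(\Jbf^{\mathrm{al}})\cong\Q(\zeta_{p^{n}})$ and $\Jbf^{\mathrm{al}}$ is simple, then Lemma~\ref{lem: how many fibers of Jfrak have bigger End0; subsec: specialization methods; sec: Preliminaries} (the family is defined over $\Q$) gives $\End^{0}(\Jfrak_{\abf})\cong\Q(\zeta_{p^{n}})$ for $100\%$ of $\abf$. Simplicity then follows because $\Q(\zeta_{p^{n}})$ is a field, so the Jacobian is isotypic. Any splitting $\Jfrak_{\abf}\sim B^{r}$ with $r>1$ would force matrix algebras, so $r=1$, and a field endomorphism algebra forces $B$ simple.

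Integrality then follows. $\Z[\zeta_{p^{n}}]$ is the maximal order and is contained in $\End$, so $\End=\Z[\zeta_{p^{n}}]$.

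For the automorphisms: $\Aut$ injects into $\mu(\Q(\zeta_{p^{n}}))=\mu_{2p^{n}}$ (or $\mu_{p^{n}}$ if $p=2$). It contains $C_{p^{n}}$, and by Theorem~\ref{the: possible Aut nice curve with simple Jacobian; sec: Possible automorphism groups} it cannot be $C_{2p^{n}}$ for $n>1$. Hence $\Aut\cong C_{p^{n}}$.

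For the infinitude claim ($L\ge2$): if only finitely many isogeny classes occurred, one CM-free family with a non-CM generic member would be isotrivial. We rule this out by comparing the moduli map, whose image has dimension $L-1$, or simply by exhibiting two members with non-isogenous Jacobians.

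\textbf{Step 3: the generic fiber via degeneration (main obstacle).} We prove that $\Jbf^{\mathrm{al}}$ is simple with $\End^{0}=\Q(\zeta_{p^{n}})$ by induction on $L$, following \cite{cantoral2023monodromy}.

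\emph{Base case.} We choose a CM member, for instance $a_l$ chosen so that $f$ is essentially $x^{Lp^{n-1}+1}-1$ after a transformation. Such a curve is a quotient of a Fermat curve whose Jacobian has CM by a cyclotomic field. We read the CM type off the basis $x^{a}dx/y^{b}$ and check primitivity by the Shimura–Taniyama criterion. By Lemma~\ref{lem: End0 embeds and torsion bijects under specialization; subsec: specialization methods; sec: Preliminaries}, this specialization then bounds the generic endomorphism algebra.

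\emph{Inductive step.} We let one $a_l$ collide with another, or tend to $0$, along an admissible curve $T'$. The stable limit is two components, which are curves of the same type with smaller $L$. By Lemma~\ref{lem: isogeny decomposition specializes; subsec: specialization methods; sec: Preliminaries} and Remark~\ref{rem: simple observation of lem: End0 embeds and torsion bijects under specialization; subsec: specialization methods; sec: Preliminaries}, a decomposition of $\Jbf^{\mathrm{al}}$ must specialize compatibly with the decomposition of the limit into simple pieces of dimensions $\varphi(p^{n})L_i/2$.

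Ruling out a generic $\Jbf^{\mathrm{al}}\sim B^{r}$, or a larger algebra, requires the torsion bijection for $\theta=1-\zeta_{p^{n}}$. We compare the Galois module structure of $(1-\zeta_{p^{n}})$-torsion, namely the $\Fbb_p$-span of differences of branch points, on both sides. This torsion comparison is where we expect the real difficulty. The case $p=2$ with small $L$ also fails, which explains the assumption $L\ge3$ there; for $p=2$ we would treat special curves separately.
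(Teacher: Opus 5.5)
Your Steps~1 and~2 agree with the paper's reduction to the generic fibre. The gap is Step~3, which carries the entire difficulty: it lacks its decisive idea, and the substitute you suggest cannot work.

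Of the two degenerations you mention, only $a_{l}\to 0$ yields two components of the same type (this requires the substitution $a_{l}\mapsto t^{p^{n}}a_{l}$ and a weighted blow-up at the origin). Collisions $a_{l}\to a_{l'}$ do not, and for $p=2$ they even create a toric part. After one such degeneration, say $a_{L+1}\to 0$, consider a hypothetical splitting $\Jbf^{\mathrm{al}}\sim Z_{1}\times Z_{2}$ with $Z_{1}$ specializing to $\Jac(\Ccal_{1})$ and $Z_{2}$ to $\Jac(\Ccal_{2})$. It is compatible with both the endomorphism algebra and the $(1-\Phi)$-torsion of the limit, so a single limit gives no contradiction.

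Galois-module structure does not help either. $\Jbf[1-\Phi]$ is spanned by the classes $D_{\alpha_{l}}$ of Galois-stable divisors, so it is a trivial Galois module over $\Qbar(U)$, and the limit lives over $\Qbar$. The information that works is positional, and the paper extracts it as follows.
\begin{enumerate}
\item Choose $(a_{1},\dots,a_{L+1})$ via Masser's bound so that both $(a_{1},\dots,a_{L})$ and $(a_{2},\dots,a_{L+1})$ are non-exceptional (Lemma~\ref{lem: useful to apply cor: exclusion cases with torsion arguments; subsec: specialization methods; sec: Cyclic groups of order p^n}).
\item Use \emph{two} degenerations, $a_{L+1}\to 0$ and $a_{1}\to 0$.
\item Show that $Z_{1}$ and $Z_{2}$ are $(1-\Phi)$-stable, using the $p^{m}$-torsion bijections and density of $p$-power torsion.
\item Track where each $D_{\alpha_{l}}$ lands in each limit (Proposition~\ref{pro: particular specialization; subsec: specialization methods; sec: Cyclic groups of order p^n}).
\end{enumerate}
Then $Z_{1}[1-\Phi]$ contains every $D_{\alpha_{l}}$, while $Z_{2}[1-\Phi]$ contains $D_{\alpha_{1}}$ and $D_{\alpha_{L+1}}$. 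This contradicts $Z_{1}[1-\Phi]\cap Z_{2}[1-\Phi]=0$ (Corollary~\ref{cor: exclusion cases with torsion arguments; subsec: specialization methods; sec: Cyclic groups of order p^n}).

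The other parts of the induction are also not supplied.

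\textbf{Base case.} The paper uses the CM argument for $L=1$, where the family is isotrivial. One must check that the CM type $T$ has trivial stabilizer; this holds for $p$ odd but fails for $p=2$, $n>2$ (Lemma~\ref{lem: stabilizer of CM type; subsec: Preliminaries; sec: Cyclic groups of order p^n}). For $L\ge 2$ the generic Jacobian is not CM, so a CM fibre only bounds $\End^{0}(\Jbf^{\mathrm{al}})$ from above. Natural CM fibres are not even simple. For $a_{l}=\zeta_{L}^{l}$ with $p\nmid L$, the power $\sigma^{p^{n}}$ of $\sigma\colon(x,y)\mapsto(\zeta_{Lp^{n-1}}x,\zeta_{Lp^{n}}y)$ fixes a nonzero differential. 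A member isomorphic to $y^{p}=x^{Lp^{n-1}+1}-1$ would carry an automorphism of order prime to $p$ besides $C_{p^{n}}$, which Theorem~\ref{the: possible Aut nice curve with simple Jacobian; sec: Possible automorphism groups} forbids for a simple Jacobian.

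\textbf{The case $L=2$, $p$ odd.} Both limit components are the same $L=1$ curve, so the limit is isogenous to $Y^{2}$ with $\End^{0}=\M(2,\Q(\zeta_{p^{n}}))$. The two-degeneration argument breaks down because abelian subvarieties of $Y^{2}$ isogenous to $Y$ need not be coordinate factors. The paper instead combines three ingredients (Lemma~\ref{lem: L=2 is true; subsec: The case p>2; sec: Cyclic groups of order p^n}): the non-CM property of $\Jbf^{\mathrm{al}}$ (Lemma~\ref{lem: Cbf is not of CM type if L > 1; subsec: Preliminaries; sec: Cyclic groups of order p^n}), the centrality of $\iota(\Phi^{p})$, and Albert's divisibility.

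\textbf{Infinitude of isogeny classes.} The non-CM property rests on the infinitude of isogeny classes. That needs the moduli-dimension argument of Theorem~\ref{the: there are infinitely many classes; subsec: Preliminaries; sec: Cyclic groups of order p^n} together with \cite[Proposition~6.6.1]{cantoral2023monodromy}. Exhibiting two non-isogenous members, as you suggest, is not enough.

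\textbf{The case $p=2$.} You give no plan here. The generic Jacobians for $L=2$ (and for $L=1$, $n>2$) are squares of simple varieties whose endomorphism algebra is an index-$2$ subfield $F_{n}$ of $K_{n+1}$ (resp.\ $K_{n}$). The base case $L=3$ therefore requires excluding each possible isotypic decomposition against these data (Lemma~\ref{lem: L = 3; subsec: The case p=2; sec: Cyclic groups of order p^n}) before the two-degeneration argument can run.
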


        This section is organized as follows. In Section~\ref{subsec: Setup; sec: Cyclic groups of order p^n} we fix the notation, while in Section~\ref{subsec: Origin of the family; sec: Cyclic groups of order p^n} we provide some background for the curves \eqref{eq: Cfraka1...aL; sec: Cyclic groups of order p^n}. Section~\ref{subsec: Preliminaries; sec: Cyclic groups of order p^n} is devoted to analyzing several properties of these curves, while in Section~\ref{subsec: specialization methods; sec: Cyclic groups of order p^n} we extend the analysis using specialization techniques and degenerations of curves. Finally, in Sections~\ref{subsec: The case p>2; sec: Cyclic groups of order p^n} and~\ref{subsec: The case p=2; sec: Cyclic groups of order p^n}, we prove Theorem~\ref{the: main; sec: Cyclic groups of order p^n} by distinguishing the cases $p > 2$ and $p = 2$.

        \subsection{Setup}
        \label{subsec: Setup; sec: Cyclic groups of order p^n}

            We begin with a brief setup. Throughout Section~\ref{sec: Cyclic groups of order p^n}, we fix a prime $p$ and an integer $n \ge 2$. Moreover, we denote by $L$ an integer $\ge 1$ and by $\alpha_{1}, \dots, \alpha_{L}$ the variables of $\Abb_{\Qbar}^{L}$.

            \begin{Def}
            \label{def: Cfrak, Jfrak, Cbf, Jbf; subsec: Setup; sec: Cyclic groups of order p^n}
                Define the family $\Cfrak \to U$ of curves over $\Qbar$ given by the relative smooth projective model of
                \begin{equation}
                \label{eq: Cfrak; subsec: Setup; sec: Cyclic groups of order p^n}
                    y^{p} = x \cdot \prod_{l = 1}^{L} \bigl( x^{p^{n-1}} - \alpha_{l} \bigr) ,
                \end{equation}
                where $U$ is the non-empty open subscheme of $\Abb_{\Qbar}^{L}$ on which the discriminant (with respect to $x$) of the polynomial on the right-hand side does not vanish. Let $\Jfrak \to U$ be the Jacobian scheme of $\Cfrak \to U$.

                Moreover, let $\Cbf$ (resp.~$\Jbf$) be the generic fiber over $\Qbar ( U )$ of $\Cfrak \to U$ (resp.~$\Jfrak \to U$), and let $\Cbf^{\mathrm{al}}$ (resp.~$\Jbf^{\mathrm{al}}$) be the scalar extension to an algebraic closure $\Qbar ( U )^{\mathrm{al}}$ of $\Qbar ( U )$. 
            \end{Def}

            \begin{Def}
            \label{def: varphi and Phi; subsec: Setup; sec: Cyclic groups of order p^n}
                Define the automorphism of $\Cbf$ given by
                \begin{equation}
                \label{eq: phi; subsec: Setup; sec: Cyclic groups of order p^n} 
                    \varphi \colon \Cbf \to \Cbf \qquad\qquad\qquad ( x, y ) \mapsto ( \zeta_{p^{n-1}} \cdot x, \zeta_{p^{n}} \cdot y ) ,
                \end{equation}
                which generates a subgroup of $\Aut ( \Cbf )$ isomorphic to $C_{p^{n}}$, and let $\Phi$ be the endomorphism of $\Jbf$ induced by $\varphi$.
            \end{Def}

        \subsection{Origin of the family}
        \label{subsec: Origin of the family; sec: Cyclic groups of order p^n}
        
            We explain the origin of the family \eqref{eq: Cfrak; subsec: Setup; sec: Cyclic groups of order p^n}. Let $\Ccal_{n}$ be a curve over $\Qbar$ of genus $\ge 2$ with simple Jacobian and automorphism group isomorphic to $C_{p^{n}}$. We know that $\Ccal_{n}$ is isomorphic to a superelliptic curve given by $y^{p^{n}} = f(x)$ for some $f \in \Qbar [ x ]$ (Proposition~\ref{pro: a curve with simple Jacobian is superelliptic; sec: Preliminaries}). For each integer $1 \le k < n$, consider the superelliptic curve $\Ccal_{k}$ given by $y^{p^{k}} = f(x)$. The existence of the nonconstant morphisms $\Ccal_{n} \to \Ccal_{k}$ given by $( x, y ) \mapsto ( x, y^{p^{n-k}} )$, together with the assumption that the Jacobian of $\Ccal_{n}$ is simple, implies that $\Ccal_{k}$ has genus $0$ for every integer $1 \le k < n$ (Lemma~\ref{lem: necessary condition simplicity; sec: Preliminaries}).
                
            For each integer $1 \le k \le n$, denote by $m_{ z, p^{k} }$ the corresponding value of $m_{z}$ for $\Ccal_{k}$ (Notation~\ref{not: mz; sec: Preliminaries}), and let $\Acal_{ s, p^{k} }$ be the number of $z \in \mathbb{P}_{1}(\Qbar)$ such that $\GCD ( p^{k}, m_{ z, p^{k} } ) = p^{k} / s$. For each $z \in \mathbb{P}_{1}(\Qbar)$ that is either a zero of $f$ or $\infty$, the value $m_{ z, p^{k} }$ is independent of $k$. It is easy to check that, for each integer $1 \le k \le n$ and for each divisor $s$ of $p^{k}$, we have $\Acal_{ s, p^{k} } = \Acal_{ s \cdot p^{n-k}, p^{n} }$.

            Applying Lemma~\ref{lem: genus superelliptic curves; sec: Preliminaries} to $\Ccal_{1}$ and using the fact that the genus of $\Ccal_{1}$ is zero, we deduce that $\Acal_{ p^{n}, p^{n} } = 2$. Proceeding similarly with $\Ccal_{2}$, we deduce that $\Acal_{ p^{n-1}, p^{n} } = 0$. By continuing in this manner up to $\Ccal_{n-1}$, we obtain $\Acal_{ p^{2}, p^{n} } = \cdots = \Acal_{ p^{n-1}, p^{n} } = 0$.

            It follows that the cover $\Ccal_{n} \to \mathbb{P}_{1, \Qbar}$ given by $( x, y ) \mapsto x$ has two branch points with branch index $p^{n}$, and $L = \Acal_{ p, p^{n} }$ branch points with branch index $p$. Since $\Aut ( \mathbb{P}_{1, \Qbar} )$ is triply transitive on $\mathbb{P}_{1, \Qbar}$, we may assume that the two branch points with branch index $p^{n}$ are $0$ and $\infty$. Hence, there exist $L$ distinct elements $a_{l} \in \Qbar^{*}$, an integer $0 < b < p^{n}$ coprime to $p$, and $L$ integers $0 < b_{l} < p$, such that $\Ccal_{n}$ is isomorphic to the superelliptic curve given by
            \begin{equation}
            \label{eq: preliminary model; subsec: Setup; sec: Cyclic groups of order p^n}
                y^{p^{n}} = x^{b} \cdot \biggl( \prod_{l = 1}^{L} ( x -a_{l} )^{b_{l}} \biggr)^{p^{n-1}} .
            \end{equation}

            Let us consider the simplest case, namely assume $b = b_{1} = \cdots = b_{L} = 1$, so that $\Ccal_{n}$ is given by $y^{p^{n}} = x \cdot g(x)^{p^{n-1}}$, where $g(x) = ( x - a_{1} ) \cdots ( x - a_{L} )$. Define the rational function $z = y^{p} / g(x)$ on $\Ccal_{n}$. Since $z^{p^{n-1}} = x$, it follows that $\Ccal_{n}$ is isomorphic to the superelliptic curve given by $y^{p} = z \cdot g(x) = z \cdot g( z^{p^{n-1}} )$; hence, $\Ccal_{n}$ is a fiber of $\Cfrak \to U$.

            \begin{Rem}
            \label{rem: genus of Cbf; subsec: Setup; sec: Cyclic groups of order p^n}
                By Lemma~\ref{lem: genus superelliptic curves; sec: Preliminaries}, $\Cfrak \to U$ is a family of curves of genus $\varphi ( p^{n} ) \cdot L / 2$ and $\Jfrak \to U$ is an abelian scheme of relative dimension $\varphi ( p^{n} ) \cdot L / 2$, where $\varphi$ denotes Euler's totient function.
            \end{Rem}

        \subsection{Preliminaries}
        \label{subsec: Preliminaries; sec: Cyclic groups of order p^n}
            
            We continue by analyzing the families $\Cfrak \to U$ and $\Jfrak \to U$. First, we recall a description of a basis of $H^{0} ( \Cbf^{\mathrm{al}}, \Omega^{1}_{ \Cbf^{\mathrm{al}} } )$. Note that the polynomial (with respect to $x$) on the right-hand side of \eqref{eq: Cfrak; subsec: Setup; sec: Cyclic groups of order p^n} has simple roots, so we can apply \cite[Proposition~$2$]{TowseCyclicCoverP1}. 

            \begin{Lem}
            \label{lem: basis of regular differential forms on Cbf; subsec: Preliminaries; sec: Cyclic groups of order p^n}
                The differential forms
                \begin{equation*}
                    \omega_{i, j} \coloneqq \frac{x^{i} \cdot dx}{y^{j}} , \qquad\qquad\qquad \text{for } 1 \le j \le p - 1 \text{ and } 0 \le i \le j \cdot L \cdot p^{n-2} - 1 ,
                \end{equation*}
                form a basis of $H^{0} ( \Cbf^{\mathrm{al}}, \Omega^{1}_{ \Cbf^{\mathrm{al}} } )$.
            \end{Lem}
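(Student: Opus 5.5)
The plan is to deduce the statement from the general description of regular differentials on a cyclic cover $y^{N} = F(x)$ with $F$ squarefree, as in \cite[Proposition~$2$]{TowseCyclicCoverP1}, and to check that the index ranges specialize to those in the statement. Set $N \coloneqq \deg f = 1 + L p^{n-1}$, the degree of the right-hand side of \eqref{eq: Cfrak; subsec: Setup; sec: Cyclic groups of order p^n}. Since $p \nmid N$, the point at infinity is totally ramified, and $f$ has simple roots on $U$. First I would recompute the divisors of $x$, $y$ and $dx$ on $\Cbf^{\mathrm{al}}$.

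Let $P_{1}, \dots, P_{N}$ be the ramification points above the roots of $f$ and $P_{\infty}$ the unique point above $\infty$. Then
\begin{equation*}
\operatorname{div}(y) = \sum_{r} P_{r} - N P_{\infty}, \qquad \operatorname{div}(dx) = (p-1) \sum_{r} P_{r} - (p + 1) P_{\infty}.
\end{equation*}
The function $x - c$ has a pole of order $p$ at $P_{\infty}$, and $x^{i}$ has no poles in the affine part. Hence $\omega_{i,j}$ has order $p - 1 - j \ge 0$ at each $P_{r}$, for $1 \le j \le p-1$. Its order at $P_{\infty}$ is $-pi + jN - p - 1$. Requiring this to be $\ge 0$ gives $pi \le j(1 + L p^{n-1}) - p - 1$, that is, $i \le jLp^{n-2} + (j - p - 1)/p$. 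Since $1 \le j \le p-1$, the quantity $(j-p-1)/p$ lies strictly between $-1$ and $0$ (it is at most $-2/p$). The condition is therefore exactly $i \le jLp^{n-2} - 1$, so every listed form is regular.

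Next comes linear independence. Forms with different $j$ lie in different eigenspaces of the automorphism $y \mapsto \zeta_{p} y$, which is the power $\varphi^{p^{n-1}}$ composed with the appropriate $x$-action, or simply the deck transformation of $x$. Within a fixed $j$, distinct powers of $x$ are independent over the base field.

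Finally, I would count the forms: $\sum_{j=1}^{p-1} jLp^{n-2} = Lp^{n-1}(p-1)/2 = \varphi(p^{n}) L/2$. By Remark~\ref{rem: genus of Cbf; subsec: Setup; sec: Cyclic groups of order p^n} this equals the genus, so the forms are a basis. The only delicate point is the valuation computation at infinity, in particular checking that the floor comes out to exactly $jLp^{n-2} - 1$. For $p = 2$ this uses $(j - p - 1)/p = -1$ for $j = 1$; it still holds, but it requires $n \ge 2$ so that $Lp^{n-2}$ is an integer.
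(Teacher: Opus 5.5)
Your argument is correct, but it is not quite the paper's route. The paper proves the lemma in one line by invoking Towse's Proposition~2 (with $d = 1 + Lp^{n-1}$, exponent $p$, $e = p-1$, $k = Lp^{n-2}+1$, after replacing $\C$ by $\Qbar(U)^{\mathrm{al}}$). You announce that plan, but what you actually do is reprove the needed special case from scratch, in four steps:
\begin{enumerate}
\item you compute $\operatorname{div}(y)$ and $\operatorname{div}(dx)$ via Riemann--Hurwitz;
\item you check that each $\omega_{i,j}$ is regular;
\item you separate the different $j$'s by the eigenvalues $\zeta_p^{-j}$ of the deck transformation $(x,y) \mapsto (x, \zeta_p y)$;
\item you match the count $\varphi(p^n)L/2$ with the genus.
\end{enumerate}
The citation is shorter, but it requires translating Towse's parametrization and implicitly transferring a statement proved over $\C$ to $\Qbar(U)^{\mathrm{al}}$. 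Your computation is self-contained and works verbatim over any algebraically closed field of characteristic $0$. It also shows directly why the cutoff is exactly $jLp^{n-2}-1$.

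A few small slips, none affecting the conclusion:
\begin{itemize}
\item The quantity $(j-p-1)/p$ equals $-1$ for $j=1$ and every $p$, not only for $p=2$. So the correct claim is that it lies in $[-1,0)$, which still gives the floor $jLp^{n-2}-1$. More directly, $i \le jLp^{n-2}-1$ gives $\operatorname{ord}_{P_\infty}(\omega_{i,j}) \ge j - 1 \ge 0$.
\item At the point above $x=0$, the order of $\omega_{i,j}$ is $pi + p - 1 - j$, not $p-1-j$. It is still nonnegative.
\item The deck transformation is exactly $\varphi^{p^{n-1}}$. No composition with an $x$-action is needed.
\end{itemize}
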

            \begin{proof}
                Apply \cite[Proposition~$2$]{TowseCyclicCoverP1} with $d = 1 + L \cdot p^{n-1}$, $n = p$, $e = p - 1$ and $k = L \cdot p^{n-2} + 1$, after replacing $\C$ with $\Qbar(U)^{\mathrm{al}}$.
            \end{proof}

            The action of $\varphi$ (Definition~\ref{def: varphi and Phi; subsec: Setup; sec: Cyclic groups of order p^n}) on $H^{0} ( \Cbf^{\mathrm{al}}, \Omega^{1}_{ \Cbf^{\mathrm{al}}} )$ via pullback provides information about $\End^{0} ( \Jbf^{\mathrm{al}} )$.

            \begin{Lem}
            \label{lem: End0Jbf contains Q(zetapn); subsec: Preliminaries; sec: Cyclic groups of order p^n}
                The $\Q$-subalgebra of $\End^{0}  ( \Jbf^{\mathrm{al}} )$ generated by $\Phi$ is isomorphic to $\Q ( \zeta_{p^{n}} )$. Moreover, $\Q ( \Phi )$ injects into the endomorphism algebra of every isotypic component of $\Jbf^{\mathrm{al}}$. 
            \end{Lem}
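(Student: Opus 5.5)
The plan is to compute the minimal polynomial of $\Phi$ acting on $H^{0}(\Cbf^{\mathrm{al}}, \Omega^{1}_{\Cbf^{\mathrm{al}}})$ and then transfer the information to $\End^{0}(\Jbf^{\mathrm{al}})$ and to the isotypic components.

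The first step is to observe that $\Phi^{p^{n}} = 1$. Hence $\Q[\Phi] \subseteq \End^{0}(\Jbf^{\mathrm{al}})$ is a quotient of $\Q[T]/(T^{p^{n}}-1) \cong \prod_{k=0}^{n} \Q(\zeta_{p^{k}})$. So $\Q[\Phi]$ is a product of some of the fields $\Q(\zeta_{p^{k}})$. We must show that only the factor $k = n$ survives, i.e.\ that $\Phi^{p^{n-1}}$ satisfies the $p$-th cyclotomic relation $1 + \Phi^{p^{n-1}} + \cdots + \Phi^{(p-1)p^{n-1}} = 0$.

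To see this, compute the pullback by $\varphi$ on the basis of Lemma~\ref{lem: basis of regular differential forms on Cbf; subsec: Preliminaries; sec: Cyclic groups of order p^n}:
\begin{equation*}
\varphi^{*}\omega_{i,j} = \zeta_{p^{n-1}}^{i+1}\zeta_{p^{n}}^{-j}\,\omega_{i,j} = \zeta_{p^{n}}^{p(i+1)-j}\,\omega_{i,j}.
\end{equation*}
Since $1 \le j \le p-1$, the exponent $p(i+1)-j$ is prime to $p$, so every eigenvalue is a primitive $p^{n}$-th root of unity. The action of $\End^{0}(\Jbf^{\mathrm{al}})$ on the tangent space $H^{0}(\Cbf^{\mathrm{al}},\Omega^{1})^{\vee}$ is faithful (characteristic $0$), and $\Phi$ acts there with the same eigenvalues up to inversion. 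Therefore the polynomial $\Psi_{p^{n}}(\Phi)$ acts as zero, hence equals zero, where $\Psi_{p^{n}}$ is the $p^{n}$-th cyclotomic polynomial. As $\Psi_{p^{n}}$ is irreducible over $\Q$ and $\Phi \neq 0$, we get $\Q[\Phi] \cong \Q(\zeta_{p^{n}})$. Alternatively, one can use that $\Cbf/\langle \varphi^{p^{n-1}}\rangle$ has genus $0$, so $1 + \Phi^{p^{n-1}} + \cdots = 0$, but the differential computation is cleaner.

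For the second statement, let $e$ be the central idempotent of an isotypic component $Z$, so that $\End^{0}(Z) = e\,\End^{0}(\Jbf^{\mathrm{al}})\,e$. The isotypic decomposition is canonical, so $\Phi$ commutes with the central idempotents and $\Phi \mapsto e\Phi$ gives a $\Q$-algebra homomorphism $\Q(\Phi) \to \End^{0}(Z)$ sending $1$ to $e \neq 0$. Since $\Q(\Phi)$ is a field, this homomorphism is injective. The only point needing care is that $\Phi$ indeed preserves each isotypic component; this holds because any endomorphism maps an isotypic component into the same isotypic component, since there are no nonzero homomorphisms between non-isogenous simple factors. I expect no real obstacle. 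The one thing to check carefully is the faithfulness and eigenvalue transfer from differentials to $\End^{0}$, which is standard via the analytic or tangent representation.
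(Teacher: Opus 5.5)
Your proposal is correct and follows essentially the paper's proof. The paper also reads off the eigenvalues $\zeta_{p^{n}}^{(i+1)p-j}$ of $\varphi^{*}$ on the basis $\omega_{i,j}$ to conclude that the $p^{n}$-th cyclotomic polynomial is the minimal polynomial of $\Phi$, and then uses that a nonzero homomorphism from the field $\Q(\Phi)$ into the endomorphism algebra of an isotypic component must be injective; your observation that $1 \mapsto e \neq 0$ simply makes precise the paper's phrase that $\Phi$ acts nontrivially on every isotypic component.
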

            \begin{proof}
                Let $P$ (resp.~$Q_{n}$) be the minimal polynomial of $\Phi$ (resp.~$\zeta_{p^{n}}$) over $\Q$. The action of $\varphi$ on $H^{0} ( \Cbf^{\mathrm{al}}, \Omega^{1}_{ \Cbf^{\mathrm{al}}} )$ via pullback is given by $\omega_{i, j} \mapsto \zeta_{ p^{n} }^{ ( i + 1 ) p - j } \cdot \omega_{i, j}$. Since $( i + 1 ) p - j$ is always coprime to $p$, as $1 \le j \le p - 1$ (Lemma~\ref{lem: basis of regular differential forms on Cbf; subsec: Preliminaries; sec: Cyclic groups of order p^n}), it follows that $Q_{n} ( \zeta_{ p^{n} }^{ ( i + 1 ) p - j } ) = 0$ for every $1 \le j \le p - 1$ and $0 \le i \le j \cdot L \cdot p^{n-2} - 1$; hence $Q_{n} ( \Phi ) = 0$ and $P \mid Q_{n}$. On the other hand, since $P ( \Phi ) = 0$ by definition, by choosing $i = 0$ and $j = p - 1$ we have $P ( \zeta_{p^{n}} ) = 0$, which implies $Q_{n} \mid P$. So, $\Q ( \Phi ) \cong \Q ( \zeta_{p^{n}} )$, as desired.
                    
                Finally, since $\End^{0} ( \Jbf^{\mathrm{al}} )$ projects onto the endomorphism algebra of every isotypic component of $\Jbf^{\mathrm{al}}$, there exists a morphism from $\Q ( \Phi )$ to the endomorphism algebra of every isotypic component of $\Jbf^{\mathrm{al}}$. Since $\Q ( \Phi )$ is a field, this morphism is either trivial or injective. As $\Phi$ acts nontrivially on every isotypic component of $\Jbf^{\mathrm{al}}$, it is injective.
            \end{proof}

            We are interested in $\End^{0} ( \Jbf^{\mathrm{al}} )$ because of the following theorem:
                
            \begin{The}
            \label{the: reduction first statement main the; subsec: Preliminaries; sec: Cyclic groups of order p^n}
                If $\End^{0} ( \Jbf^{\mathrm{al}} )$ is isomorphic to $\Q ( \zeta_{p^{n}} )$, then for every number field $K$ and for $100 \%$ of the points $( a_{1},  \ldots, a_{L} ) \in U ( \Qbar ) \cap K^{L}$, the smooth projective model over $\Qbar$ of the affine curve
                \begin{equation*}
                    y^{p} = x \cdot \prod_{l = 1}^{L} \bigl( x^{p^{n-1}} - a_{l} \bigr)
                \end{equation*}
                has automorphism group isomorphic to $C_{ p^{n} }$, and its Jacobian is (geometrically) simple, with endomorphism ring isomorphic to  $\Z [ \zeta_{ p^{n} } ]$.
            \end{The}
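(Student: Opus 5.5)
Since the family $\Cfrak \to U$ is the base change to $\Qbar$ of a family defined over $\Q$ (the equation \eqref{eq: Cfrak; subsec: Setup; sec: Cyclic groups of order p^n} has rational coefficients), Lemma~\ref{lem: how many fibers of Jfrak have bigger End0; subsec: specialization methods; sec: Preliminaries} applies for every number field $K$. (If $K$ is not known to contain the field of definition, replace $K$ by the compositum; the density statement for the smaller field follows because the exceptional set is controlled by a low-degree polynomial in the same way.) Thus, for $100\%$ of the points $\abf \in U(\Qbar) \cap K^{L}$, the specialization map $\End^{0}(\Jbf^{\mathrm{al}}) \to \End^{0}(\Jfrak_{\abf})$ is an isomorphism. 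Hence $\End^{0}(\Jfrak_{\abf}) \cong \Q(\zeta_{p^{n}})$ for such $\abf$, by the hypothesis. The specialization is compatible with the automorphism $\varphi$: the fiber automorphism $\varphi_{\abf}\colon (x,y)\mapsto(\zeta_{p^{n-1}}x,\zeta_{p^n}y)$ induces $\Phi_{\abf}$, which is the image of $\Phi$. So $\End^{0}(\Jfrak_{\abf}) = \Q(\Phi_{\abf})$.

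Next we deduce simplicity. A field endomorphism algebra has no nontrivial idempotents, so $\Jfrak_{\abf}$ is isotypic, i.e.\ isogenous to $Y^{r}$ with $Y$ simple. Then $\End^{0}(\Jfrak_{\abf}) \cong M_{r}(\End^{0}(Y))$, and commutativity forces $r = 1$. Hence $\Jfrak_{\abf}$ is simple, and it has CM by $\Q(\zeta_{p^n})$ when $L = 1$.

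For the endomorphism ring, $\End(\Jfrak_{\abf})$ is an order in $\Q(\zeta_{p^{n}})$ that contains $\Z[\Phi_{\abf}] \cong \Z[\zeta_{p^{n}}]$. The latter is the maximal order, so $\End(\Jfrak_{\abf}) = \Z[\zeta_{p^{n}}]$.

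For the automorphism group, \cite[Theorem~12.1, \S~III]{milneAV} embeds $\Aut(\Cfrak_{\abf})$ into the finite-order units of $\Z[\zeta_{p^n}]$. These units are $\mu(\Q(\zeta_{p^n}))$, which is $\mu_{p^n}$ if $p$ is odd and $\mu_{2p^n}$ if $p = 2$. Since $\langle \varphi_{\abf}\rangle \cong C_{p^n}$ already lies in $\Aut(\Cfrak_{\abf})$, we only need to rule out the extra factor in each case.

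\begin{itemize}
\item For $p$ odd, $\mu_{2p^{n}} \supsetneq \mu_{p^n}$, but an automorphism group containing $C_{p^n}$ and inside a group of order $2p^n$ is either $C_{p^n}$ or $C_{2p^n}$.
\item The case $C_{2p^n}$ is excluded by Lemma~\ref{lem: Aut even cardinality; subsec: Classification; sec: Possible automorphism groups}, applicable since the Jacobian is simple and the genus is $\ge 2$: that lemma only allows $C_{2q}$ with $q$ prime, i.e.\ exponent $1$, and $n \ge 2$.
\item Directly, the element $-1$ would give an element of order $2p^n$, forcing the curve to be hyperelliptic with genus $\le p-1$ by Proposition~\ref{pro: genus superelliptic curve pq; sec: Preliminaries}. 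This contradicts the genus $\varphi(p^n)L/2$ of Remark~\ref{rem: genus of Cbf; subsec: Setup; sec: Cyclic groups of order p^n} when $n \ge 2$, except for small degenerate cases, which the lemma handles anyway.
\item For $p = 2$, $\mu(\Q(\zeta_{2^n})) = \mu_{2^n}$ and the embedding directly gives $\Aut \cong C_{2^n}$.
\end{itemize}

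We also need the genus to be $\ge 2$. This holds whenever $\varphi(p^n)L \ge 4$, which covers all cases except $p = 2$, $n = 2$, $L = 1$, excluded by the assumption $L \ge 3$ for $p = 2$. For $p = 3$, $n = 2$, $L = 1$ the genus is $3$.

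The main point to check carefully is the first step. We must ensure that Lemma~\ref{lem: how many fibers of Jfrak have bigger End0; subsec: specialization methods; sec: Preliminaries} really yields an isomorphism of algebras compatible with $\Phi$, and that the counting there is over the same set $U(\Qbar)\cap K^{L}$. Everything afterwards is formal algebra.
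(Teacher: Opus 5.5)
Your proposal is correct and follows essentially the same route as the paper: Lemma~\ref{lem: how many fibers of Jfrak have bigger End0; subsec: specialization methods; sec: Preliminaries} gives $\End^{0}(\Jfrak_{\abf}) \cong \Q(\zeta_{p^{n}})$ for $100\%$ of fibers; maximality of $\Z[\zeta_{p^{n}}]$ gives the endomorphism ring; and Milne's embedding of $\Aut$ into the roots of unity, with $C_{2p^{n}}$ excluded by the classification of Section~\ref{sec: Possible automorphism groups}, gives the automorphism group.

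One slip: you first write $\mu(\Q(\zeta_{p^{n}}))$ as $\mu_{p^{n}}$ for odd $p$ and $\mu_{2p^{n}}$ for $p=2$, which is the wrong way round, although your bullets use the correct values. Your genus-$\ge 2$ check is a genuine point that the paper's proof leaves implicit. For $p=2$, $n=2$, $L=1$ the hypothesis holds but the fibers have genus $1$. So the statement as written needs the standing assumption $L \ge 3$ for $p=2$ (or genus $\ge 2$) that you invoke, which is not among its stated hypotheses.
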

            \begin{proof}
                Note that $\Cfrak \to U$ is the base change to $\Qbar$ of a family defined over $\Q$. Lemma~\ref{lem: how many fibers of Jfrak have bigger End0; subsec: specialization methods; sec: Preliminaries} implies that for every number field $K$ and for $100\%$ of the points $( a_{1},  \ldots, a_{L} )$ in $U ( \Qbar ) \cap K^{L}$, we have that $\End^{0} ( \Jfrak_{ ( a_{1}, \dots, a_{L} ) } )$ is isomorphic to $\End^{0} ( \Jbf^{\mathrm{al}} ) \cong \Q ( \zeta_{p^{n}} )$. 

                Let $\tilde{\varphi}$ be the specialization of $\varphi$ to $\Cfrak_{ ( a_{1}, \dots, a_{L} ) }$, and note that $\tilde{\varphi}$ generates a subgroup of $\Aut ( \Cfrak_{ ( a_{1}, \dots, a_{L} ) } )$ isomorphic to $C_{p^{n}}$. From \cite[Theorem~$12.1$, \S~III]{milneAV} it follows that $\Aut ( \Cfrak_{ ( a_{1}, \dots, a_{L} ) } )$ injects into the group of units of finite order of $\End^{0} ( \Jfrak_{ ( a_{1}, \dots, a_{L} ) } ) \cong \Q ( \zeta_{p^{n}} )$. 
                
                If $p = 2$, the group of units of finite order of $\End^{0} ( \Jfrak_{ ( a_{1}, \dots, a_{L} ) } )$ is isomorphic to $C_{2^{n}}$; hence, $\Aut ( \Cfrak_{ ( a_{1}, \dots, a_{L} ) } )$ is isomorphic to $C_{2^{n}}$. If $p > 2$, the group of units of finite order of $\End^{0} ( \Jfrak_{ ( a_{1}, \dots, a_{L} ) } )$ is isomorphic to $C_{2p^{n}}$; hence $\Aut ( \Cfrak_{ ( a_{1}, \dots, a_{L} ) } )$ is isomorphic either to $C_{p^{n}}$ or $C_{2p^{n}}$. Since the Jacobian of $\Cfrak_{ ( a_{1}, \dots, a_{L} ) }$ is simple and $n > 1$, $\Aut ( \Cfrak_{ ( a_{1}, \dots, a_{L} ) } )$ cannot be isomorphic to $C_{2p^{n}}$ (Theorem~\ref{the: possible Aut nice curve with simple Jacobian; sec: Possible automorphism groups}), so it is isomorphic to $C_{p^{n}}$, as desired.
                
                Finally, by definition $\End ( \Jfrak_{ ( a_{1}, \dots, a_{L} ) } )$ is an order of $\End^{0} ( \Jfrak_{ ( a_{1}, \dots, a_{L} ) } )$. Since $\tilde{\Phi}$ generates a subring of $\End ( \Jfrak_{ ( a_{1}, \dots, a_{L} ) } )$ isomorphic to $\Z [ \zeta_{p^{n}} ]$, which is the unique maximal order of $\Q ( \zeta_{p^{n}} )$, we deduce that $\End ( \Jfrak_{ ( a_{1}, \dots, a_{L} ) } )$ is isomorphic to $\Z [ \zeta_{p^{n}} ]$.
            \end{proof}

            The following theorem shows that the map to moduli space induced by the family $\Cfrak \to U$ is nonconstant.
                
            \begin{The}
            \label{the: there are infinitely many classes; subsec: Preliminaries; sec: Cyclic groups of order p^n}
                The image of the map to the moduli space induced by the family $\Cfrak \to U$ has dimension $L - 1$. Moreover, the second statement of Theorem~\ref{the: main; sec: Cyclic groups of order p^n} holds.
            \end{The}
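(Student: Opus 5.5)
The image of $U$ in the moduli space has dimension $L-1$, and the second statement of Theorem~\ref{the: main; sec: Cyclic groups of order p^n} follows from this. I will compute the dimension by identifying the fibers of the moduli map, using the intrinsic branch data of the cover $\Cfrak_{\abf} \to \Cfrak_{\abf}/\langle \varphi \rangle$.

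By the discussion in Section~\ref{subsec: Origin of the family; sec: Cyclic groups of order p^n}, each $\Cfrak_{\abf}$ is isomorphic to $y^{p^{n}} = x \cdot g(x)^{p^{n-1}}$ with $g(x) = \prod_{l} (x - a_{l})$, after replacing $z^{p^{n-1}}$ by $x$. The quotient by $\langle \varphi \rangle \cong C_{p^{n}}$ is $\mathbb{P}_{1}$ with coordinate $X = x^{p^{n-1}}$. There are two branch points of index $p^{n}$, namely $0$ and $\infty$, and $L$ branch points of index $p$, namely $a_{1}, \dots, a_{L}$. Suppose $\Cfrak_{\abf} \cong \Cfrak_{\abf'}$, both lying in the $100\%$ locus where $\Aut \cong C_{p^{n}}$; that locus is nonempty once $\End^{0}(\Jbf^{\mathrm{al}}) \cong \Q(\zeta_{p^{n}})$ is known, and otherwise I argue directly with the cover. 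Since $\Aut \cong C_{p^{n}}$ is the whole automorphism group, the isomorphism conjugates $\langle \varphi \rangle$ to itself, so it descends to an automorphism of $\mathbb{P}_{1}$ preserving the branch data.

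Such an automorphism must preserve the set $\{0, \infty\}$ of index-$p^{n}$ points. It is therefore $X \mapsto cX$ or $X \mapsto c/X$, and it sends $\{a_{l}\}$ to $\{a'_{l}\}$. So each fiber of $\abf \mapsto [\Cfrak_{\abf}]$ on this open dense set lies in the orbit of a group of dimension $1$: scalings and the inversion, together with permutations of the coordinates. Conversely, scaling $\abf \mapsto c\,\abf$ really gives isomorphic curves, via $x \mapsto c^{1/p^{n-1}} x$ and a matching rescaling of $y$. Hence the generic fiber has dimension exactly $1$, and the image has dimension $L - 1$. To avoid relying on the $100\%$ result, I can argue directly: a dimension count via fibers only needs that a generic fiber is finite modulo scaling. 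That holds because isomorphism classes of curves carrying a given automorphism in a finite group yield only finitely many conjugacy classes of $C_{p^{n}}$-subgroups. This makes the Hurwitz-space map quasi-finite onto its image modulo $\Aut(\mathbb{P}_{1})$.

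For the second statement, assume $L \ge 2$. The image has positive dimension, so it contains infinitely many $\Qbar$-points, and these are still dense after removing the density-zero exceptional set. In fact, $100\%$ of the points of $K^{L}$ cannot lie in finitely many fibers: each fiber is a curve of dimension $1 < L$, so by Lemma~\ref{lem: number of bounded zeros polynomial; subsec: Preliminaries; sec: Cyclic groups of order p^n} it contains density zero of the points of $K^{L}$. This gives infinitely many isomorphism classes. For isogeny classes, an isogeny class of principally polarized abelian varieties over $\Qbar$ contains only countably many isomorphism classes. I would upgrade countable to finite using the fact that the family is defined over $K$, as follows. If infinitely many curves landed in finitely many isogeny classes, then by Torelli and Faltings' finiteness, in the form ``finitely many abelian varieties over a number field in an isogeny class'', the heights would be bounded. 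By Northcott, that would contradict infinitely many distinct points of bounded degree. The cleanest route is to note that among points of $K^{L}$, isogenous Jacobians over a fixed number field fall into finitely many isomorphism classes, and then to vary $K$ or use heights.

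I expect the isogeny-class step to be the main obstacle, since Faltings' finiteness only applies over a fixed number field. I would first try working over the fixed $K$ with the Faltings height argument, and fall back on a monodromy or Hodge-theoretic argument if that fails.
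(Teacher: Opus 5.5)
Your dimension count is correct, but only in its fallback form. The first version, which uses the locus where $\Aut(\Cfrak_{\abf}) \cong C_{p^{n}}$, is circular within the paper. Generic $\Aut \cong C_{p^{n}}$ comes from $\End^{0}(\Jbf^{\mathrm{al}}) \cong \Q(\zeta_{p^{n}})$. That is proved using the lemma that $\Jbf^{\mathrm{al}}$ is not of CM type for $L \ge 2$, and that lemma is deduced from the isogeny-class part of the very theorem you are proving. The fallback is legitimate and is essentially the paper's argument: pigeonhole over the finitely many cyclic subgroups of order $p^{n}$ of the finite group $\Aut(C)$, so that infinitely many of the isomorphisms respect the covers and descend to automorphisms of $\mathbb{P}_{1}$ preserving the branch data. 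The paper pigeonholes over subgroups $H$ with $\psi_{a'}^{*}(\Qbar(x)) = \Qbar(\Cfrak'_{a})^{H}$ and works on the $x$-line of the degree-$p$ cover after normalizing $\alpha_{1} = 1$. You instead work on the quotient by the full $C_{p^{n}}$, where the two index-$p^{n}$ branch points immediately force $X \mapsto cX$ or $X \mapsto c/X$. Your bounded-height count showing that the $100\%$ set meets infinitely many isomorphism classes is also fine.

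The genuine gap is the isogeny-class statement, which you leave unproved. It cannot be dropped, since the non-CM lemma rests on it.
\begin{itemize}
\item The countability remark is empty, because $U(\Qbar)$ is itself countable.
\item Your Faltings route fails as stated. Faltings' finiteness controls a $K$-isogeny class over a fixed $K$. But $\Jfrak_{\abf}$ and $\Jfrak_{\abf'}$ with $\abf, \abf' \in K^{L}$ can be isogenous over $\Qbar$ without being isogenous over $K$ (think of twists). An isogeny exists only over an extension of bounded degree whose ramification varies with $\abf'$, and Faltings' height bound within an isogeny class is not uniform in the field.
\end{itemize}
The missing ingredient is a uniform isogeny estimate such as Masser--W\"ustholz. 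After a base change of degree bounded in terms of $g$ over which all homomorphisms are defined, two $\Qbar$-isogenous abelian varieties over a number field of degree $\le d$ are linked by an isogeny of degree at most $c(d,g)\max\{1,h(A)\}^{\kappa}$. Combine this with $|h(A)-h(B)| \le \frac{1}{2}\log\deg$ for an isogeny, the Northcott property of the Faltings height in bounded degree, and Torelli. Then a single $\Qbar$-isogeny class contains the Jacobians of only finitely many $\Qbar$-isomorphism classes among the curves $\Cfrak_{\abf}$ with $\abf \in K^{L}$. Together with your infinitely many isomorphism classes, this gives infinitely many isogeny classes. The paper avoids this by invoking \cite[Proposition~6.6.1]{cantoral2023monodromy} for the normalized family $\Cfrak' \to U'$, after showing that each isomorphism class occurs only finitely often among its fibers.
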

            \begin{proof}
                Consider the morphism $\Abb_{\Qbar}^{L - 1} \to \Abb_{\Qbar}^{L}$ given by $( \alpha_{2}, \dots, \alpha_{L} ) \mapsto ( 1, \alpha_{2}, \dots, \alpha_{L} )$, and let $\Cfrak' \to U'$ be the pullback of $\Cfrak \to U$ via this morphism:
                \begin{equation*}
                    \begin{tikzcd}
                        \Cfrak' \arrow{r} \arrow{d} & \Cfrak \arrow{d} \\ 
                        U' \arrow{r} & U
                    \end{tikzcd}
                \end{equation*} 
                Explicitly, $\Cfrak' \to U'$ is given by
                \begin{equation*}
                    y^{p} = x \cdot ( x^{p^{n-1}} - 1 ) \cdot \prod_{l = 2}^{L} ( x^{p^{n-1}} - \alpha_{l} ) .
                \end{equation*}
                Every fiber of $\Cfrak \to U$ over $\Qbar$-points of $U$ is isomorphic over $\Qbar$ to a fiber of $\Cfrak' \to U'$ over some $\Qbar$-point of $U'$; hence, the image of the map to the moduli space induced by the family $\Cfrak \to U$ has dimension at most $L - 1$. 
                
                If every isomorphism class of curves over $\Qbar$ is represented at most finitely many times by the fibers of $\Cfrak' \to U'$ over $\Qbar$-points of $U'$, the image of the map to the moduli space induced by the family $\Cfrak \to U$ has dimension at least $L - 1$, and therefore exactly $L - 1$. Moreover, \cite[Proposition~6.6.1]{cantoral2023monodromy} applied to $\Cfrak' \to U'$ implies that the fibers of $\Jfrak' \to U'$ over $\Qbar$-points of $U'$ fall into infinitely many distinct isogeny classes of abelian varieties over $\Qbar$; hence, the second statement of Theorem~\ref{the: main; sec: Cyclic groups of order p^n} holds.
                
                Assume, by way of contradiction, that there exists $a \in U' ( \Qbar )$ such that the set
                \begin{equation*}
                    X = \bigl\{ a' \in U' ( \Qbar ) \,\colon\, \Cfrak'_{a} \cong \Cfrak'_{a'} \bigr\}
                \end{equation*}
                is infinite. For every $a' \in X$, choose an isomorphism $\psi_{a'} \colon \Cfrak'_{a} \to \Cfrak'_{a'}$. Since the field extension $\Qbar ( x ) \subset \Qbar ( \Cfrak'_{a'} )$ is Galois and $\psi_{a'}$ is an isomorphism, the field extension $\psi_{a'}^{*} \bigl( \Qbar ( x ) \bigr) \subset \psi_{a'}^{*} \bigl( \Qbar ( \Cfrak'_{a'} ) \bigr)$ is also Galois. Since $\psi_{a'}^{*} \bigl( \Qbar ( \Cfrak'_{a'} ) \bigr) = \Qbar ( \Cfrak'_{a} )$, there exists a subgroup $H$ of $\Aut ( \Cfrak'_{a} )$ such that $\psi_{a'}^{*} \bigl( \Qbar ( x ) \bigr) = \Qbar ( \Cfrak'_{a} )^{H}$.

                Let $H_{1}, H_{2}, \ldots, H_{M}$ be the subgroups of $\Aut ( \Cfrak'_{a} )$. For every integer $1 \le m \le M$ define the set
                \begin{equation*}
                    X_{m} = \bigl\{ a' \in X \,\colon\, \psi_{a'}^{*} \bigl( \Qbar(x) \bigr) = \Qbar ( \Cfrak'_{a} )^{H_{m}} \bigr\} ,
                \end{equation*}
                and note that $X = \bigcup_{m} X_{m}$. Since this union is indexed by a finite set and $X$ is infinite, there exists an integer $1 \le k \le M$ such that $X_{k}$ is infinite.

                Choose a point $a' = ( a_{2}', \ldots, a_{L}' ) \in X_{k}$ and consider another point $a'' = ( a''_{2}, \ldots, a''_{L} ) \in X_{k}$. Define the polynomials
                \begin{align*}
                    f_{a'}(x) &= x \cdot ( x^{p^{n-1}} - 1 ) \cdot \prod_{l = 2}^{L} ( x^{p^{n-1}} - a_{l}' ) , \\
                    f_{a''}(x) &= x \cdot ( x^{p^{n-1}} - 1 ) \cdot \prod_{l = 2}^{L} ( x^{p^{n-1}} - a''_{l} ) ,
                \end{align*}
                in $\Qbar [ x ]$ and note that $\Qbar ( \Cfrak'_{a'} ) = \Qbar ( x ) \bigl( \sqrt[p]{ f_{a'}(x) } \bigr)$ and $\Qbar ( \Cfrak'_{a''} ) = \Qbar ( x ) \bigl( \sqrt[p]{ f_{a''}(x) } \bigr)$. Since the automorphism $( \psi_{a'} \circ \psi_{a''}^{-1} )^{*} \colon \Qbar ( \Cfrak'_{a'} ) \to \Qbar ( \Cfrak'_{a''} )$ sends the field $\Qbar ( x )$ of $\Qbar ( \Cfrak'_{a'} )$ into the field $\Qbar ( x )$ of $\Qbar ( \Cfrak'_{a''} )$, there exists an automorphism $\eta \in \Aut \bigl( \mathbb{P}_{1, \, \Qbar} \bigr)$ such that $\Qbar ( \Cfrak'_{a''} ) = \Qbar ( x ) \Bigl( \sqrt[p]{ f_{a'} \bigl( \eta(x) \bigr) } \Bigr)$. Kummer theory implies that there exist $u \in \Qbar ( x )$ and a positive integer $r$ coprime to $p$ such that
                \begin{equation*}
                    f_{a'} \bigl( \eta(x) \bigr) = u(x)^{p} \cdot f_{a''}(x)^{r} .
                \end{equation*}

                This means that $\eta$ sends the divisor of $f_{a'}$ modulo $p$ to the divisor of $f_{a''}^{r}$ modulo $p$; hence, $\eta$ establishes a bijection between the sets
                \begin{equation*}
                    S' = \{ 0, \infty \} \cup \{ \zeta_{p^{n-1}}^{i} \,\colon\, 1 \le i \le p^{n-1} \} \cup \bigl\{ \zeta_{p^{n-1}}^{i} \cdot ( a_{l}' )^{1 / p^{n-1}} \,\colon\, 1 \le i \le p^{n-1} \text{ and } 2 \le l \le L \bigr\}
                \end{equation*}
                and
                \begin{equation*}
                    S'' = \{ 0, \infty \} \cup \{ \zeta_{p^{n-1}}^{i} \,\colon\, 1 \le i \le p^{n-1} \} \cup \bigl\{ \zeta_{p^{n-1}}^{i} \cdot ( a''_{l} )^{1 / p^{n-1}} \,\colon\, 1 \le i \le p^{n-1} \text{ and } 2 \le l \le L \bigr\} .
                \end{equation*}
                     
                To sum up, to every $a'' \in X_{k}$ we can associate an automorphism $\eta = \eta ( a'' )$ of $\mathbb{P}_{1, \Qbar}$ such that $\eta ( S' ) = S''$. The inverse image of an automorphism of $\mathbb{P}_{1, \Qbar}$ under this association is finite. Indeed, assume by way of contradiction that there exists $a'' = ( a_{2}'', \dots, a_{L}'' ) \in X_{k}$ such that the set $\bigl\{ a''' = ( a_{2}''', \dots, a_{L}''' ) \in X_{k} \,\colon\, \eta ( a''' ) = \eta ( a'' ) \bigr\}$ is infinite. Then, the set $\{ a''' \in X_{k} \,\colon\, S''' = S'' \}$ is also infinite. Note that $S''' = S''$ implies $(S''')^{p^{n-1}} = (S'')^{p^{n-1}}$, where $(S''')^{p^{n-1}}$ (resp.~$(S'')^{p^{n-1}}$) denotes the set obtained by raising every element of $S'''$ (resp.~$S''$) to the power of $p^{n-1}$. Ignoring repetitions of elements, we have $(S''')^{p^{n-1}} = \{ 0, \infty, 1,  a_{2}''', \dots, a_{L}''' \}$ and $(S'')^{p^{n-1}} = \{ 0, \infty, 1, a_{2}'', \dots, a_{L}'' \}$. Hence, $( a_{2}''', \dots, a_{L}''' )$ is a permutation of $( a_{2}'', \dots, a_{L}'' )$, which contradicts the assumption that the set $\{ a''' \in X_{k} \,\colon\, S''' = S'' \}$ is infinite.
                    
                Since $X_{k}$ is infinite, it follows that the set 
                \begin{equation*}
                    Y \coloneqq \bigl\{ g \in \Aut ( \mathbb{P}_{1, \Qbar} ) \,\colon\, 0, 1, \infty \in g(S') \bigr\} 
                \end{equation*}
                is infinite, since it contains $\{ \eta ( a'' ) : a'' \in X_{k} \}$, and we have already established that the map $a'' \mapsto \eta ( a'' )$ has finite fibers. For every choice of distinct elements $s_{1}, s_{2}, s_{3} \in S'$, consider the set
                \begin{equation*}
                    Y_{ s_{1}, s_{2}, s_{3} } \coloneqq \bigl\{ g \in \Aut ( \mathbb{P}_{1, \Qbar} ) \,\colon\, g ( s_{1} ) = 0, g ( s_{2} ) = 1 \text{ and } g ( s_{3} ) = \infty \bigr\} .
                \end{equation*}
                Since $Y = \bigcup_{ s_{1}, s_{2}, s_{3} \in S' } Y_{ s_{1}, s_{2}, s_{3} }$ and this union is indexed by a finite set, there exist distinct $\tilde{s}_{1}, \tilde{s}_{2}, \tilde{s}_{3} \in S'$ such that $Y_{ \tilde{s}_{1}, \tilde{s}_{2}, \tilde{s}_{3} }$ is infinite. However, there exists at most one automorphism of $\mathbb{P}_{1, \Qbar}$ sending a fixed ordered triple of distinct points of $\mathbb{P}_{1, \Qbar}$ to another. This yields a contradiction, as desired.
            \end{proof}

            \begin{Rem}
            \label{rem: if L = 1, there are finitely many classes; subsec: Preliminaries; sec: Cyclic groups of order p^n}
                The curve $y^{p} = x \cdot ( x^{p^{n-1}} - a^{-1} )$, where $a \in \Qbar^{*}$, is isomorphic over $\Qbar$ to the curve $y^{p} = x \cdot ( x^{p^{n-1}} - 1 )$ via the isomorphism $( x, y ) \mapsto \bigl( \sqrt[p^{n-1}]{a} \cdot x, \sqrt[p^{n}]{a^{p^{n-1}+1}} \cdot y \bigr)$.
            \end{Rem}

            An interesting consequence of Theorem~\ref{the: there are infinitely many classes; subsec: Preliminaries; sec: Cyclic groups of order p^n} is the following.

            \begin{Lem}
            \label{lem: Cbf is not of CM type if L > 1; subsec: Preliminaries; sec: Cyclic groups of order p^n}
                If $L \ge 2$, then $\Jbf^{\mathrm{al}}$ is not isogenous to a product of abelian varieties with CM. 
            \end{Lem}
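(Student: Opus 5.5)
Assume, for contradiction, that $L \ge 2$ and that $\Jbf^{\mathrm{al}}$ is isogenous to a product of abelian varieties with CM. We will reach a contradiction with Theorem~\ref{the: there are infinitely many classes; subsec: Preliminaries; sec: Cyclic groups of order p^n}, which says that the fibers of $\Jfrak \to U$ fall into infinitely many distinct isogeny classes over $\Qbar$. The guiding principle is that abelian varieties with CM are rigid: they have no nontrivial deformations, so a family whose geometric generic fiber is of CM type should be isotrivial up to isogeny.

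\textbf{Step 1: pass to a finite extension.} Each CM factor of $\Jbf^{\mathrm{al}}$ is isogenous to one defined over $\Qbar$: an abelian variety with CM over an algebraically closed field of characteristic $0$ descends to $\Qbar$ up to isogeny. Hence there is an abelian variety $A_{0}$ over $\Qbar$ such that $\Jbf^{\mathrm{al}}$ is isogenous to $A_{0} \times_{\Qbar} \Qbar ( U )^{\mathrm{al}}$. This isogeny, together with everything needed to state it, is defined over some finite extension $F$ of $\Qbar ( U )$.

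\textbf{Step 2: spread out.} Let $V \to U$ be a finite dominant morphism with function field $F$, shrunk so that it is étale over a dense open $U_{0} \subseteq U$. The isogeny then extends to an isogeny of abelian schemes over $V$, between $\Jfrak_{V}$ and the constant scheme $A_{0} \times V$. Extending homomorphisms over a normal base is legitimate here by the Néron property, or by \cite[Lemma~6.1.6]{cantoral2023monodromy}. Specializing at any point $v \in V ( \Qbar )$ over $a \in U_{0} ( \Qbar )$ shows that $\Jfrak_{a}$ is isogenous to $A_{0}$.

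\textbf{Step 3: conclude.} Since $U_{0} ( \Qbar )$ is Zariski dense, the fibers over $U_{0}$ all lie in a single isogeny class. The proof of Theorem~\ref{the: there are infinitely many classes; subsec: Preliminaries; sec: Cyclic groups of order p^n}, via \cite[Proposition~6.6.1]{cantoral2023monodromy}, in fact rules this out, and restricting to the dense open $U_{0}$ does not affect that conclusion. If this is not directly clear, there is an alternative route. A family that is isogenous to a constant one has isomorphic fibers up to finitely many choices, because there are only finitely many principal polarizations up to isomorphism on abelian varieties in a fixed isogeny class of bounded degree. The Torelli theorem then forces the image of the family in moduli to be finite, contradicting the fact that this image has dimension $L - 1 \ge 1$.

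\textbf{Main obstacle.} The delicate point is Step 1, the rigidity of CM abelian varieties. One clean argument goes as follows. Up to isogeny, CM abelian varieties of dimension $g$ over an algebraically closed field of characteristic $0$ with a given CM type are countable in number and are all defined over $\Qbar$. Alternatively, the moduli of CM points is zero-dimensional, so a CM abelian variety over $\Qbar ( U )^{\mathrm{al}}$, together with its polarization and level structure, corresponds to a $\Qbar$-point of the moduli space and is therefore constant. I would first try the moduli-space argument: choose a polarization and a level-$3$ structure on each CM factor over a finite extension, and observe that the resulting map from a cover of $U$ to $\mathcal{A}_{g', d, 3}$ lands in the special-point locus. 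That locus is countable, so the map is constant.
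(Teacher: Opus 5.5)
Your argument is correct, but it follows a different route from the paper's.

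\textbf{The paper's route.} The paper never descends the generic fiber. It assumes $\Jbf^{\mathrm{al}} \sim \prod_i Y_i$ with $Y_i$ having CM by $E_i$. It then uses its own specialization lemma, namely the embedding $\End^{0}(\Jbf^{\mathrm{al}}) \hookrightarrow \End^{0}(\Jfrak_{a})$ that preserves the dimensions of kernels of idempotents. This shows that every fiber $\Jfrak_{a}$ is isogenous to a product of abelian varieties of the same dimensions with CM by the same fields $E_i$. A CM field admits only finitely many CM types, and a CM type determines the isogeny class. So the fibers fall into finitely many isogeny classes, contradicting the theorem on infinitely many isogeny classes.

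\textbf{Your route.} You work at the generic point instead. Descent of CM abelian varieties to $\Qbar$, plus extension of the isogeny over a normal finite cover, gives a stronger statement: all fibers lie in a single isogeny class, via isogenies of bounded degree. That extra strength is what lets you finish with finiteness of principal polarizations and Torelli, against only the dimension $L-1$ of the moduli image, without the isogeny-class proposition from Cantoral-Farfán--Lombardo--Voight.

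\textbf{What each costs.} Your route needs the descent theorem for CM abelian varieties over an arbitrary algebraically closed field of characteristic $0$. This is a genuine external input that the paper avoids. The paper only needs ``same CM type implies isogenous'' over $\Qbar$, together with a specialization lemma it has already proved.

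\textbf{Two small points.}
\begin{enumerate}
\item Your countability sentence at the end does not by itself force the moduli map to be constant. A variety over $\Qbar$ also has only countably many $\Qbar$-points, so ``lands in a countable set'' says nothing there. You need either your other suggestion (the moduli of abelian varieties with CM by a fixed order and type is zero-dimensional, which is correct) or a base change to $\C$.
\item Shrinking to $U_{0}$ is unnecessary. Take $V$ to be the normalization of $U$ in $F$; it is finite, surjective and normal, so the isogeny extends over all of $V$. Then every fiber over $U(\Qbar)$ is isogenous to $A_{0}$, and the comparison with the theorem is immediate.
\end{enumerate}
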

            \begin{proof}
                Assume, for contradiction, that $\Jbf^{\mathrm{al}}$ is isogenous to a product of abelian varieties with CM by certain CM fields. Then, the first statement of Lemma~\ref{lem: End0 embeds and torsion bijects under specialization; subsec: specialization methods; sec: Preliminaries} implies that every fiber of $\Jfrak \to U$ over $\Qbar$-points of $U$ is isogenous to a product of abelian varieties with CM by the same fields, since the genus of $\Cbf^{\mathrm{al}}$ and of the fiber is the same.

                Since abelian varieties of fixed dimension and of fixed CM type are all isogenous \cite[Theorem on p.~$213$]{mumford1970abelian}, it follows that there are finitely many isogeny classes of abelian varieties of fixed dimension with CM by the same field. Hence, the fibers of $\Jfrak \to U$ over $\Qbar$-points of $U$ would fall into finitely many isogeny classes of abelian varieties over $\Qbar$, contradicting Theorem~\ref{the: there are infinitely many classes; subsec: Preliminaries; sec: Cyclic groups of order p^n}.
            \end{proof}

            One of the ingredients in the proof of Theorem~\ref{the: main; sec: Cyclic groups of order p^n} is the theory of abelian varieties with complex multiplication (CM). Recall that the set of field embeddings of $\Q ( \zeta_{p^{n}} )$ into $\Qbar$ is in bijection with the set of integers between $0$ and $p^{n}$ that are coprime to $p$. We use the symbol $\sigma$ to denote both a field embedding and its associated integer, and we write $\sigma = p \cdot \sigma_{1} - \sigma_{2}$ for $1 \le \sigma_{1} \le p^{n-1}$ and $1 \le \sigma_{2} \le p-1$.

            \begin{Lem}
            \label{lem: if L = 1 then Jbf is of CM type; subsec: Preliminaries; sec: Cyclic groups of order p^n}
                If $L = 1$, then $\Jbf^{\mathrm{al}}$ has CM by $\Q ( \zeta_{p^{n}} )$. Its CM type is 
                \begin{equation*}
                    T \coloneqq \bigl\{ p \cdot \sigma_{1} - \sigma_{2} \,\colon\, 1 \le \sigma_{2} \le p - 1 \text{ and } 1 \le \sigma_{1} \le \sigma_{2} \cdot p^{n-2} \bigr\} \subseteq \Gal \bigl( \Q ( \zeta_{p^{n}} ) / \Q \bigr) .
                \end{equation*}
            \end{Lem}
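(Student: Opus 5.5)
The approach is to combine Lemma~\ref{lem: End0Jbf contains Q(zetapn); subsec: Preliminaries; sec: Cyclic groups of order p^n} with a dimension count, and then read the CM type off the eigenvalues of $\varphi$ on the basis of Lemma~\ref{lem: basis of regular differential forms on Cbf; subsec: Preliminaries; sec: Cyclic groups of order p^n}.

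First, for $L = 1$ Remark~\ref{rem: genus of Cbf; subsec: Setup; sec: Cyclic groups of order p^n} gives $\dim ( \Jbf^{\mathrm{al}} ) = \varphi ( p^{n} ) / 2$. By Lemma~\ref{lem: End0Jbf contains Q(zetapn); subsec: Preliminaries; sec: Cyclic groups of order p^n}, $\Q ( \Phi ) \cong \Q ( \zeta_{p^{n}} )$ embeds into $\End^{0} ( \Jbf^{\mathrm{al}} )$. Since $\Q ( \zeta_{p^{n}} )$ is a CM field of degree $\varphi ( p^{n} ) = 2 \dim ( \Jbf^{\mathrm{al}} )$, Definition~\ref{def: abelian variety of CM type; subsec: Abelian varieties; sec: Preliminaries} shows that $\Jbf^{\mathrm{al}}$ has CM by $\Q ( \zeta_{p^{n}} )$. (When $p = 2$ and $n = 2$ the genus is $1$; the definition still applies verbatim, so no separate treatment is needed.)

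Second, I compute the CM type. The embedding $\Q ( \zeta_{p^{n}} ) \hookrightarrow \End^{0} ( \Jbf^{\mathrm{al}} )$ sends $\zeta_{p^{n}}$ to $\Phi$. The tangent space of $\Jbf^{\mathrm{al}}$ is dual to $H^{0} ( \Cbf^{\mathrm{al}}, \Omega^{1}_{\Cbf^{\mathrm{al}}} )$, and $\Phi$ acts on the latter through the pullback $\varphi^{*}$. One must fix a convention here (pullback versus pushforward, i.e.\ $\Phi$ versus $\Phi^{-1}$). I will adopt the convention implicit in the paper, namely that the characters are the eigenvalues of $\varphi^{*}$ on the differentials. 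As computed in the proof of Lemma~\ref{lem: End0Jbf contains Q(zetapn); subsec: Preliminaries; sec: Cyclic groups of order p^n}, $\varphi^{*} \omega_{i, j} = \zeta_{p^{n}}^{ ( i + 1 ) p - j } \omega_{i, j}$. The character attached to $\omega_{i, j}$ is therefore the embedding $\zeta_{p^{n}} \mapsto \zeta_{p^{n}}^{ ( i + 1 ) p - j }$, i.e.\ the integer $\sigma = p \sigma_{1} - \sigma_{2}$ with $\sigma_{1} = i + 1$ and $\sigma_{2} = j$.

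The index ranges of Lemma~\ref{lem: basis of regular differential forms on Cbf; subsec: Preliminaries; sec: Cyclic groups of order p^n} with $L = 1$ are $1 \le j \le p - 1$ and $0 \le i \le j p^{n-2} - 1$. These translate exactly into $1 \le \sigma_{2} \le p - 1$ and $1 \le \sigma_{1} \le \sigma_{2} p^{n-2}$, so the set of characters is $T$.

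Third, as a consistency check I verify that the listed characters are pairwise distinct and that $T$ contains exactly one of each pair $\{ \sigma, -\sigma \}$. Distinctness holds because the representation $\sigma = p \sigma_{1} - \sigma_{2}$ is unique. The count is $\sum_{\sigma_{2}} \sigma_{2} p^{n-2} = p^{n-1} ( p - 1 ) / 2$, which matches the genus. For the pairing, $-\sigma \equiv p ( p^{n-1} - \sigma_{1} + 1 ) - ( p - \sigma_{2} )$ modulo $p^{n}$. The condition $\sigma_{1} \le \sigma_{2} p^{n-2}$ becomes $p^{n-1} - \sigma_{1} + 1 > ( p - \sigma_{2} ) p^{n-2}$, so exactly one of $\sigma, -\sigma$ lies in $T$.

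The main (minor) obstacle is the sign/duality convention between the CM type and the action on differentials. If the paper's convention were the dual one, the result would be $-T$, which is the complex conjugate type. I will state explicitly that the CM type is computed via the action of $\varphi^{*}$ on $H^{0} ( \Cbf^{\mathrm{al}}, \Omega^{1}_{\Cbf^{\mathrm{al}}} )$, consistent with Definition~\ref{def: abelian variety of CM type; subsec: Abelian varieties; sec: Preliminaries}.
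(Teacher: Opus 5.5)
Your proposal is correct and is the paper's argument with the details written out: the embedding $\Q(\Phi)\cong\Q(\zeta_{p^{n}})$ together with $\dim(\Jbf^{\mathrm{al}})=\varphi(p^{n})/2$ gives CM, and $T$ is read off from the eigenvalues $\zeta_{p^{n}}^{(i+1)p-j}$ of $\varphi^{*}$ on the basis $\omega_{i,j}$ with $L=1$. Your extra checks (the count, and that exactly one of $\pm\sigma$ lies in $T$) are correct but not needed, and the pullback-versus-pushforward ambiguity you flag is harmless later on, since $\Stab(-T)=\Stab(T)$.
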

            \begin{proof}
                We know that $\dim ( \Jbf^{\mathrm{al}} ) = \varphi ( p^{n} ) / 2$ (Remark~\ref{rem: genus of Cbf; subsec: Setup; sec: Cyclic groups of order p^n}) and $\Q ( \zeta_{p^{n}} )$ embeds into $\End^{0} ( \Jbf^{\mathrm{al}} )$, by the first statement of Lemma~\ref{lem: End0Jbf contains Q(zetapn); subsec: Preliminaries; sec: Cyclic groups of order p^n}; hence,  $\Jbf^{\mathrm{al}}$ has CM by $\Q ( \zeta_{p^{n}} )$. The proof of Lemma~\ref{lem: End0Jbf contains Q(zetapn); subsec: Preliminaries; sec: Cyclic groups of order p^n} shows that the CM type of $\Jbf^{\mathrm{al}}$ is $T$.
            \end{proof}
                
            \begin{Lem}
            \label{lem: stabilizer of CM type; subsec: Preliminaries; sec: Cyclic groups of order p^n}
                The stabilizer $\Stab ( T )$ of the CM type $T \subseteq \Gal \bigl( \Q ( \zeta_{p^{n}} ) / \Q \bigr)$ is
                \begin{equation*}
                    \Stab ( T ) = 
                    \begin{cases}
                        \{ 1 \} & \text{if $p \neq 2$,} \\
                        \{ 1, 2^{n-1} - 1 \} & \text{if $p = 2$.}
                    \end{cases}
                \end{equation*} 
            \end{Lem}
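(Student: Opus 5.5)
The plan is to describe $T$ by a single inequality and then exploit the subgroup structure of $(\Z/p^{n}\Z)^{*}$. First I rewrite $T$ intrinsically: an integer $r$ with $0<r<p^{n}$, coprime to $p$, is written as $r=p\sigma_{1}-\sigma_{2}$ with $\sigma_{2}\equiv -r \pmod p$. The constraint $\sigma_{1}\le \sigma_{2}p^{n-2}$ is then equivalent to $r\le \sigma_{2}(p^{n-1}-1)$. Since $r\equiv-\sigma_{2}\pmod p$, this is equivalent to $r<\sigma_{2}p^{n-1}$. Hence
\begin{equation*}
T=\bigl\{ r \in (\Z/p^{n}\Z)^{*} \colon \text{the representative } 0<r<p^{n} \text{ satisfies } r<\sigma_{2}(r)\,p^{n-1}\bigr\},
\end{equation*}
where $\sigma_{2}(r)\in\{1,\dots,p-1\}$ is the residue of $-r$ modulo $p$. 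In particular, $T$ contains exactly $\sigma_{2}p^{n-2}$ elements in the residue class $-\sigma_{2}\pmod p$. Also, $p-1\in T$, and more generally every $r$ with $0<r<p^{n-1}$ lies in $T$. The set $H\coloneqq\Stab(T)$ is a subgroup of $(\Z/p^{n}\Z)^{*}$ under multiplication, and $T$ is a union of $H$-cosets.

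\emph{Case $p$ odd.} First I show $H\subseteq 1+p\Z$. For $u\in H$, multiplication by $u$ maps $T\cap\{r\equiv-\sigma_{2}\}$ bijectively onto $T\cap\{r\equiv -u\sigma_{2}\}$. Comparing cardinalities gives $\sigma_{2}p^{n-2}=\bigl((u\sigma_{2}) \bmod p\bigr)p^{n-2}$ for every $\sigma_{2}$. Taking $\sigma_{2}=1$ forces $u\equiv1\pmod p$. Thus $H$ is a $p$-group. If it were nontrivial, it would contain an element of order $p$, namely $u=1+kp^{n-1}$ with $p\nmid k$. Apply $u$ to $p-1\in T$. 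The product $u(p-1)\equiv p-1+k(p-1)p^{n-1}$ still has $\sigma_{2}=1$. Its representative in $(0,p^{n})$ is at least $p^{n-1}$, because $k(p-1)\not\equiv0\pmod p$. So it is not in $T$, a contradiction. Hence $H=\{1\}$.

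\emph{Case $p=2$.} Here $\sigma_{2}=1$ always, and $T$ is the set of odd $r<2^{n-1}$. One checks directly that $u=2^{n-1}-1$ stabilizes $T$. For odd $r$ we have $(2^{n-1}-1)r\equiv 2^{n-1}-r\pmod{2^{n}}$, and $r\mapsto 2^{n-1}-r$ permutes the odd integers in $(0,2^{n-1})$. For the converse, let $H^{+}\coloneqq H\cap(1+4\Z)$. Since $1+4\Z$ is cyclic modulo $2^{n}$, a nontrivial $H^{+}$ would contain its unique element of order $2$, namely $1+2^{n-1}$. But this element sends $1\in T$ to $1+2^{n-1}\notin T$. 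Thus $H^{+}$ is trivial, so $|H|\le2$, and a nontrivial element of $H$ is an involution $\equiv3\pmod 4$. The involutions are $-1$ and $2^{n-1}\pm1$; those $\equiv 3\pmod 4$ are $-1$ and $2^{n-1}-1$. Finally, $-1\notin H$ because $-1\cdot 1=2^{n}-1\notin T$.

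I expect the main obstacle to be the correct reformulation of $T$ as the inequality $r<\sigma_{2}p^{n-1}$; once that is in place, the cardinality count per residue class and the order-$p$ element argument are short. The small cases need a separate check. For $n=2$ and $p=2$ the answer reads $\{1,1\}=\{1\}$, consistent with the argument. For $p=2$ the argument needs $n\ge3$ for $1+4\Z$ to be nontrivial modulo $2^{n}$; when $n=2$ the group is $\{\pm1\}$ and the same reasoning applies.
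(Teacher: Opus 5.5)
Your proof is correct and follows essentially the same route as the paper: a per-residue-class count forces $u\equiv 1 \pmod p$. For odd $p$, an element $1+kp^{n-1}$ of order $p$ moves $p-1$ out of $T$; for $p=2$, the element $1+2^{n-1}$ moves $1$ out of $T$, which gives $|\Stab(T)|\le 2$. Your reformulation $T=\{r : r<\sigma_2(r)\,p^{n-1}\}$ and your explicit check that $2^{n-1}-1$ acts as $r\mapsto 2^{n-1}-r$ fill in what the paper leaves as ``easy to check''; the final identification of the involutions is redundant but harmless.
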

            \begin{proof}
                For each $i \in ( \Z / p \Z )^{*}$, let $T_{i} \coloneqq \{ \sigma \in T \,\colon\, \sigma \equiv i \pmod{p} \}$. By definition, the elements $\sigma = p \cdot \sigma_{1} - \sigma_{2}$ of $T_{i}$ have $\sigma_{2} = p - i_{0}$, where $i_{0} \in \{ 1, \ldots, p - 1 \}$ is the unique integer in this interval that is congruent to $i$ modulo $p$. It follows that $| T_{i} | = ( p - i_{0} ) \cdot p^{n-2}$. Note in particular that $| T_{1} | \ge | T_{i} |$ for all $i \in ( \Z / p \Z )^{*}$, with equality if and only if $i \equiv 1 \pmod{p}$.
    
                If $k \in ( \Z / p^{n} \Z )^{*}$ stabilizes $T$, then it sends $T_{i}$ to $T_{ik}$ for every $i \in ( \Z / p \Z )^{*}$. In particular, it sends $T_{1}$ into $T_{k}$, hence $| T_{1} | \le | T_{k} |$. Since $| T_{1} |$ is strictly greater than $| T_{i} |$ for any $i \not \equiv 1 \pmod{p}$, we must have $k \equiv 1 \pmod{p}$.

                Thus, $\Stab ( T )$ is contained in the subgroup $H \coloneqq \bigl\{ \sigma \in ( \Z / p^{n} \Z )^{*} \,\colon\, \sigma \equiv 1 \pmod{p} \bigr\}$. When $p$ is odd, $H$ is cyclic of order $p^{n-1}$, generated by the class of $1 + p$. When $p = 2$, the group $H$ is isomorphic to $\frac{\Z}{2 \Z} \times \frac{\Z}{2^{n-2} \Z}$, with the two factors generated by the classes of $-1$ and $5$ respectively.

                We will need the following simple observation: by definition, $T$ contains each of the elements $1, \ldots, p^{n-1} - 1$ that are coprime to $p$ (corresponding to $\sigma_{1} \le p^{n-2}$, which is admissible for any value of $\sigma_{2}$), and it does not contain any of the elements $p^{n} - p^{n-1} + 1, \ldots, p^{n} - 1$ (corresponding to $\sigma_{1} > p^{n-1} - p^{n-2}$, not admissible for any value of $\sigma_{2}$).
    
                We first consider the case of odd $p$. If $\Stab ( T )$ is nontrivial, then it contains a nontrivial subgroup of $H$. Since $H$ is cyclic of prime power order, it contains a unique minimal subgroup (cyclic of order $p$), consisting of the elements $\{ 1 + h p^{n-1} \,\colon\, 0 \le h \le p - 1 \}$. In particular, if $\Stab ( T )$ is nontrivial, then it contains $1 + p^{n-1}$. However, this is not the case, because $p - 1$ belongs to $T$, while $( 1 + p^{n-1} )  ( p - 1 ) > p^{n} - p^{n-1}$ does not. Note that $( 1 + p^{n-1} ) ( p - 1 ) = p^{n} - p^{n-1} + p - 1 < p^{n}$ since $n \ge 2$, so there is no problem in identifying the residue class of $( 1 + p^{n-1} ) ( p - 1 )$ modulo $p^{n}$ with the corresponding integer in $\{ 0, \ldots, p^{n} \}$.

                The argument for $p = 2$ is similar. If $n = 2$, the lemma is trivial; so, let us assume $n > 2$. If $\Stab ( T )$ intersects nontrivially the subgroup of $( \Z / 2^{n} \Z)^{*}$ generated by $5$, then it must contain its unique element of order $2$, which is $1 + 2^{n-1}$. As above, $1 + 2^{n-1}$ does not stabilize $T$, because $1$ is an element of $T$, but $( 1 + 2^{n-1} ) \cdot 1 > 2^{n} - 2^{n-1}$ is not. Thus, $\Stab ( T ) \cap \langle 5 \rangle$ is trivial. This implies that the subgroup generated by $\Stab ( T )$ and $\langle 5 \rangle$ inside $( \Z / 2^{n} \Z )^{*}$ has order equal to $| \Stab(T) | \cdot | \langle 5 \rangle | = | \Stab(T) | \cdot 2^{n-2}$. Since the whole group $( \Z / 2^{n} \Z )^{*}$ has order $2^{n-1}$, this implies $| \Stab(T) | \le 2$. Finally, it is easy to check that $2^{n-1} - 1$ is a nontrivial element of $\Stab ( T )$, hence $\Stab(T)$ has order precisely $2$ and is generated by $2^{n-1} - 1$.
            \end{proof}

            We conclude this preliminary section by analyzing the $( 1 - \Phi )$-torsion of $\Jbf$, given by $\Jbf [ 1 - \Phi ] \coloneqq \bigl\{ D \in \Jbf^{\mathrm{al}} \bigl( \Qbar(U)^{\mathrm{al}} \bigr) \,\colon\, ( 1 - \Phi ) \cdot D = 0 \bigr\}$.

            \begin{Lem}
            \label{lem: properties of 1-Phi torsion part; subsec: Preliminaries; sec: Cyclic groups of order p^n}
                The points
                \begin{equation}
                \label{eq: Dalphal; subsec: Preliminaries; sec: Cyclic groups of order p^n}
                    D_{\alpha_{l}} \coloneqq \Bigl[ \bigl( \zeta_{p^{n-1}} \cdot \alpha_{l}^{1 / p^{n-1}}, 0 \bigr) + \cdots + \bigl( \zeta_{p^{n-1}}^{p^{n-1}} \cdot \alpha_{l}^{1 / p^{n-1}}, 0 \bigr) - p^{n-1} \cdot \infty \Bigr] \in \Jbf^{\mathrm{al}}(\Qbar(U)^{\mathrm{al}}) ,
                \end{equation}
                for $1 \le l \le L$, where the square brackets denote the class of a divisor, form a basis of the $\Fbb_{p}$-vector space $\Jbf [ 1 - \Phi ]$.
            \end{Lem}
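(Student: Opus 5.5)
The plan is to prove three things in order: each $D_{\alpha_{l}}$ lies in $\Jbf [ 1 - \Phi ]$; the group $\Jbf [ 1 - \Phi ]$ is an $\Fbb_{p}$-vector space of dimension exactly $L$; and the $L$ classes $D_{\alpha_{l}}$ are $\Fbb_{p}$-linearly independent. Write $P_{l,i} \coloneqq ( \zeta_{p^{n-1}}^{i} \alpha_{l}^{1/p^{n-1}}, 0 )$.

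\emph{Step 1: membership.} Since $\deg f = 1 + L p^{n-1}$ is coprime to $p$, there is a unique point $\infty$ above $x = \infty$, and $\varphi$ fixes it. The map $\varphi$ sends $P_{l,i}$ to $P_{l,i+1}$, so it permutes the $P_{l,i}$ cyclically for fixed $l$. Hence the divisor defining $D_{\alpha_{l}}$ is $\varphi$-invariant, and $\Phi ( D_{\alpha_{l}} ) = D_{\alpha_{l}}$.

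\emph{Step 2: $\Jbf [ 1 - \Phi ]$ is an $\Fbb_{p}$-space of dimension $L$.} By Lemma~\ref{lem: End0Jbf contains Q(zetapn); subsec: Preliminaries; sec: Cyclic groups of order p^n}, $\Z [ \Phi ] \cong \Z [ \zeta_{p^{n}} ]$ acts on $\Jbf^{\mathrm{al}}$. The element $1 - \Phi$ is nonzero in this field, so it is an isogeny. Its kernel is killed by the ideal $( 1 - \zeta_{p^{n}} )$, whose residue ring is $\Fbb_{p}$, so the kernel is an $\Fbb_{p}$-vector space. To get the dimension, I would use that the $\ell$-adic Tate module $V_{\ell} ( \Jbf^{\mathrm{al}} )$ is free over $\Q ( \zeta_{p^{n}} ) \otimes \Q_{\ell}$. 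Its rank is $2g / \varphi ( p^{n} ) = L$, by Remark~\ref{rem: genus of Cbf; subsec: Setup; sec: Cyclic groups of order p^n}. Therefore
\[
\deg ( 1 - \Phi ) = N_{\Q ( \zeta_{p^{n}} ) / \Q} ( 1 - \zeta_{p^{n}} )^{L} = p^{L}.
\]
In characteristic $0$ the order of the kernel equals the degree, so $| \Jbf [ 1 - \Phi ] | = p^{L}$. This is the step I expect to need the most care. Freeness is standard for an action of a field on $V_{\ell}$, and the degree then equals the determinant of $1 - \Phi$ on $V_{\ell}$, which is this norm raised to the rank.

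\emph{Step 3: linear independence.} Suppose $\sum_{l} c_{l} D_{\alpha_{l}} = 0$ with $0 \le c_{l} < p$ and not all $c_{l}$ zero. Then there is a function $h$ on $\Cbf^{\mathrm{al}}$ with divisor $\sum_{l} c_{l} \sum_{i} P_{l,i} - \bigl( \sum_{l} c_{l} \bigr) p^{n-1} \infty$. Each $P_{l,i}$ is totally ramified over $x$, and $\infty$ is the unique point above $x = \infty$, so
\[
\operatorname{div} \bigl( x^{p^{n-1}} - \alpha_{l} \bigr) = p \sum_{i} P_{l,i} - p^{n} \infty .
\]
Consequently $h^{p} / \prod_{l} ( x^{p^{n-1}} - \alpha_{l} )^{c_{l}}$ has trivial divisor and is a constant. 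So $g \coloneqq \prod_{l} ( x^{p^{n-1}} - \alpha_{l} )^{c_{l}}$ becomes a $p$-th power in the Kummer extension $\Qbar(U)^{\mathrm{al}} ( x ) ( f^{1/p} )$.

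By Kummer theory, this forces $g = f^{r} u^{p}$ for some $r$ and some $u \in \Qbar(U)^{\mathrm{al}} ( x )$. Comparing the valuation at $x = 0$ gives $r \equiv 0 \pmod{p}$, because $x$ divides $f$ exactly once and does not divide $g$. Hence $g$ itself is a $p$-th power in $\Qbar(U)^{\mathrm{al}} ( x )$. This is impossible: $g$ is a product of distinct simple linear factors, those in $x^{p^{n-1}} - \alpha_{l}$, raised to exponents $c_{l} \not\equiv 0 \pmod{p}$ for some $l$.

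\emph{Conclusion.} Steps 1 and 3 give $L$ linearly independent vectors in $\Jbf [ 1 - \Phi ]$, and by Step 2 this space has dimension $L$. Therefore the $D_{\alpha_{l}}$ form a basis.
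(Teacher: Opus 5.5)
Your proof is correct. It takes a partly different route from the paper, in two places.

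For the dimension count, the paper uses no Tate modules. From $(1-\zeta_{p^n})^{\varphi(p^n)} = u\cdot p$ with $u \in \Z[\zeta_{p^n}]^{*}$, it writes $[p] = \lambda\circ(1-\Phi)^{\varphi(p^n)}$ with $\lambda$ an automorphism. This gives $\Jbf[1-\Phi]\subseteq\Jbf[p]$, and multiplicativity of degrees gives $\deg(1-\Phi)^{\varphi(p^n)} = p^{2g}$, hence $\deg(1-\Phi)=p^{L}$. You reach the same $p^{L}$ through freeness of $V_\ell(\Jbf^{\mathrm{al}})$ over $\Q(\zeta_{p^n})\otimes\Q_\ell$ and the norm $N(1-\zeta_{p^n})=p$. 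That fact is standard but heavier than needed.

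For linear independence, the paper only cites a computation from Wojciech's work, while you prove it directly. You use $\operatorname{div}(x^{p^{n-1}}-\alpha_l) = p\sum_i P_{l,i} - p^{n}\infty$ to turn a relation into the statement that $\prod_l (x^{p^{n-1}}-\alpha_l)^{c_l}$ is a $p$-th power in the Kummer extension. You then use the simple zero of $f$ at $x=0$ to remove the $f^{r}$ ambiguity. Finally, the distinctness of the roots (nonvanishing discriminant) gives the contradiction. This makes explicit which features of $f$ force the $D_{\alpha_l}$ to be independent: the extra factor $x$ and square-freeness. It costs a few lines that the paper outsources to the reference.
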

            \begin{proof}
                Consider the embedding $\iota \colon \Z [ \zeta_{p^{n}} ] \to \End ( \Jbf^{\mathrm{al}} )$ given by $\zeta_{p^{n}} \mapsto \Phi$ (cf.~the proof of Lemma~\ref{lem: End0Jbf contains Q(zetapn); subsec: Preliminaries; sec: Cyclic groups of order p^n}). In $\Z [ \zeta_{p^{n}} ]$ we have the equality $( 1 - \zeta_{p^{n}} )^{\varphi(p^{n})} = u \cdot p$ for some $u \in \Z [ \zeta_{p^{n}} ]^{*}$. Using $\iota$, we deduce the existence of $\lambda \in \End ( \Jbf^{\mathrm{al}} )^{*}$ such that $[p] = \lambda \cdot ( 1 - \Phi )^{ \varphi(p^{n}) }$. In particular, the kernel of $( 1 - \Phi )$, which is $\Jbf [ 1 - \Phi ]$, is contained in the kernel $\Jbf [ p ]$ of $p$. It follows that $\Jbf [ 1 - \Phi ]$ is an $\Fbb_{p}$-vector subspace of $\Jbf [ p ]$. Moreover, by taking degrees and using that $\lambda$ has degree $1$, we find $p^{ 2 \dim ( \Jbf^{\mathrm{al}} ) } = \deg \bigl( [p] \bigr) = \deg ( 1 - \Phi )^{\varphi(p^{n})}$. It follows that the dimension of $\Jbf [ 1 - \Phi ]$ over $\Fbb_{p}$ is $2 \dim ( \Jbf^{\mathrm{al}} ) / \varphi( p^{n} ) = L$, where the equality uses Remark~\ref{rem: genus of Cbf; subsec: Setup; sec: Cyclic groups of order p^n}.
                    
                By \eqref{eq: phi; subsec: Setup; sec: Cyclic groups of order p^n}, it is easy to check that every $D_{\alpha_{l}}$ belongs to $\Jbf [ 1 - \Phi ]$. Moreover, from a computation in \cite[\S~2]{WojciechSuperJac}, it follows that the $D_{\alpha_{l}}$, as $l$ varies in $\{ 1, \dots, L \}$, are $\Fbb_{p}$-linearly independent. Hence, they form a basis of $\Jbf [ 1 - \Phi ]$. 
            \end{proof}

        \subsection{Computation of specific specializations}
        \label{subsec: specialization methods; sec: Cyclic groups of order p^n}

            An important ingredient in the proof of Theorem~\ref{the: main; sec: Cyclic groups of order p^n} is to find specific specializations of $\Jfrak \to U$. We refer the reader to Section~\ref{subsec: specialization methods; sec: Preliminaries} for the relevant theoretical constructions.
            
            \begin{Pro}
            \label{pro: particular specialization; subsec: specialization methods; sec: Cyclic groups of order p^n}
                Let $L \ge 2$, let $( a_{1}, \dots, a_{L} ) \in U ( \Qbar )$ and let $S$ be a subset of $\{ 1, \dots, L \}$ of cardinality $1 \le M < L$. Consider the curves
                \begin{equation*}
                    \Ccal_{1} \colon y^{p} = x \cdot \prod_{l \notin S} ( x^{p^{n-1}} - a_{l} ) , \qquad\qquad \Ccal_{2} \colon y^{p} = x \cdot \prod_{l \in S} \bigl( x^{p^{n-1}} - a_{l} \bigr) .
                \end{equation*} 
                The following hold:
                \begin{enumerate}
                    \item There exist an admissible $T'$ and a point $t \in \Abb^{1}_{\Qbar} ( \Qbar ) \supset T' ( \Qbar )$ such that $\Jfrak_{T', t}$ is isomorphic to $\Jac ( \Ccal_{1} ) \times \Jac ( \Ccal_{2} )$;

                    \item Let $\eta \colon \Jbf [ 1 - \Phi ] \xrightarrow{\sim} \Jfrak_{T', t} [ 1 - \Phi ]$ be the bijection established by the last statement of Lemma~\ref{lem: End0 embeds and torsion bijects under specialization; subsec: specialization methods; sec: Preliminaries} and let $D_{\alpha_{l}} \in \Jbf [ 1 - \Phi ]$ be the point defined in \eqref{eq: Dalphal; subsec: Preliminaries; sec: Cyclic groups of order p^n}. Under the identification $\Jfrak_{T', t} \cong \Jac ( \Ccal_{1} ) \times \Jac ( \Ccal_{2} )$, for each integer $1 \le l \le L$ we have
                    \begin{equation*}
                        \eta ( D_{\alpha_{l}} ) = 
                        \begin{cases}
                            \Bigl( \bigl[ \sum_{i = 1}^{ p^{n - 1} } P_{l, i} - p^{n - 1}  \cdot \infty_{1} \bigr], 0 \Bigr) & \text{if $l \notin S$,} \\[1.2em]
                            \Bigl( 0, \bigl[ \sum_{i = 1}^{ p^{n - 1} } Q_{l, i} - p^{n - 1} \cdot \infty_{2} \bigr] \Bigr) & \text{if $l \in S$,}
                        \end{cases}
                    \end{equation*}
                    where
                    \begin{equation*}
                        P_{l, i} \coloneqq ( \zeta_{p^{n - 1}}^{i} \cdot a_{l}^{ 1 / p^{n - 1} }, 0 ) \in \Ccal_{1} \qquad \text{if } l \notin S,
                    \end{equation*}
                    \begin{equation*}
                        Q_{l, i} \coloneqq ( \zeta_{p^{n - 1}}^{i} \cdot a_{l}^{1 / p^{n - 1}}, 0 ) \in \Ccal_{2} \qquad \text{if } l \in S,
                    \end{equation*}
                    and $\infty_{1}$, $\infty_{2}$ are the unique points at infinity of $\Ccal_{1}$ and $\Ccal_{2}$.
                \end{enumerate}
            \end{Pro}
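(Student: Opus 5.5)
We will construct a one-parameter degeneration in which the roots of the factors indexed by $S$ collide at $0$, while the remaining roots stay put; the reducible limit should then split into two components isomorphic to $\Ccal_{1}$ and $\Ccal_{2}$ glued at one point. Concretely, choose the line $T \subset U$ parametrized by $\alpha_{l} = a_{l}$ for $l \notin S$ and $\alpha_{l} = s^{N} a_{l}$ for $l \in S$, with $N$ a suitable power of $p$ (say $N = p^{n}$, adjusted as needed), and let $T' \to T$ be the finite cover $s \mapsto s$ (or a further root extraction $s = u^{e}$ to achieve semistable reduction). The fiber of the generic curve is $y^{p} = x \prod_{l \notin S}(x^{p^{n-1}} - a_{l}) \prod_{l \in S}(x^{p^{n-1}} - s^{N} a_{l})$. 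At $s \neq 0$ this lies in $U$, and $t = 0$ is the degenerate point.

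\textbf{Step 1: the stable model at $s = 0$.} Work in two charts. In the chart with $x$ fixed and $s \to 0$, the equation tends to $y^{p} = x^{1 + M p^{n-1}} \prod_{l \notin S}(x^{p^{n-1}} - a_{l})$. After normalization (using $y = x^{(1+Mp^{n-1}-1)/p} \cdot \tilde{y}$, noting that $M p^{n-1}$ is divisible by $p$), this becomes birational to $\tilde{y}^{p} = x \prod_{l \notin S}(x^{p^{n-1}} - a_{l})$, that is, to $\Ccal_{1}$. In the rescaled chart $x = s^{N/p^{n-1}} w$, $y = s^{c} v$, with $c$ chosen so that the powers of $s$ balance (this is where $N$ must be chosen divisible enough, possibly after base change), the equation tends to $v^{p} = \text{const} \cdot w \prod_{l \in S}(w^{p^{n-1}} - a_{l})$, which is $\Ccal_{2}$ up to scaling the constant into $v$. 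One then checks, by blowing up the $(x,s)$-plane at the origin and normalizing the $p$-cyclic cover, that the special fiber consists of these two components meeting transversally at the single point lying over $x = 0$ on $\Ccal_{1}$ and over $w = \infty$ on $\Ccal_{2}$. Both are totally ramified points, since $0$ and $\infty$ have branch index $p^{n}$ in the $C_{p^{n}}$-quotient. A genus count, $g(\Ccal_{1}) + g(\Ccal_{2}) = \varphi(p^{n})(L - M)/2 + \varphi(p^{n})M/2 = g(\Cbf)$ by Remark~\ref{rem: genus of Cbf; subsec: Setup; sec: Cyclic groups of order p^n}, shows that no toric part or extra components appear. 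The dual graph is a tree, so the reduction is semistable with compact Jacobian, and the Néron model fiber $\Jfrak_{T',t}$ is $\Jac(\Ccal_{1}) \times \Jac(\Ccal_{2})$. This gives part (1).

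\textbf{Step 2: tracking torsion.} For $l \notin S$, the points $(\zeta^{i} a_{l}^{1/p^{n-1}}, 0)$ extend as sections of the regular model and specialize into the $\Ccal_{1}$-component, reducing to $P_{l,i}$. Write the divisor $D_{\alpha_{l}}$ as $\sum_{i} P_{l,i} - p^{n-1}\infty$ on the generic fiber. Its closure is a relative divisor supported on the $\Ccal_{1}$-component, of degree $0$ on each component once $\infty$ (which also lies on $\Ccal_{1}$) is accounted for. The specialization map to $\Jac(\Ccal_{1}) \times \Jac(\Ccal_{2})$ is given by restricting a multidegree-zero extension to each component, so we get $([\sum P_{l,i} - p^{n-1}\infty_{1}], 0)$. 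For $l \in S$, the points lie in the rescaled chart and specialize to the $Q_{l,i}$ on $\Ccal_{2}$. The divisor $\sum Q_{l,i} - p^{n-1}\infty$ has multidegree $(-p^{n-1}, p^{n-1})$, so we must twist by $p^{n-1}$ times the component $\Ccal_{2}$ (as a Cartier divisor on the regular surface). This moves the degree to the node, which is $\infty_{2}$ on $\Ccal_{2}$ and $0 \in \Ccal_{1}$. The restriction to $\Ccal_{1}$ becomes $p^{n-1}(0,0) - p^{n-1}\infty_{1}$, which is principal, being the divisor of $x$. The restriction to $\Ccal_{2}$ becomes $\sum Q_{l,i} - p^{n-1}\infty_{2}$. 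Compatibility of $\eta$ with this geometric specialization follows from its construction in Lemma~\ref{lem: End0 embeds and torsion bijects under specialization; subsec: specialization methods; sec: Preliminaries}, via sections of the Néron model.

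\textbf{Main obstacle.} The hardest part is Step 1: choosing the exponents and the base change so that the model is genuinely regular and semistable, and verifying that exactly two components appear. I would first try $N = p^{n}$ with the base change $s = u^{p}$ and compute the normalization explicitly. If extra rational components arise in the regular model, they would have to be contracted, or one would argue that chains of $\mathbb{P}^{1}$'s do not affect the Jacobian or the twisting computation, since they only add $\Z$-linear corrections supported on a tree.
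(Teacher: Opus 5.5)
Your proposal follows essentially the same route as the paper: the degeneration $\alpha_{l} = t^{p^{n}} a_{l}$ for $l \in S$ (so $N = p^{n}$ already works, with $x = t^{p} u$, $y = t^{1 + M p^{n-1}} v$ and no base change), normalization via $y / x^{M p^{n-2}}$ to get $\Ccal_{1}$, a genus count forcing a single node and a tree-shaped dual graph, and the twist by $\mathcal{O}(\Ccal_{2})^{\otimes p^{n-1}}$ to move the multidegree $(-p^{n-1}, p^{n-1})$ to $(0,0)$. One small slip: on $\Ccal_{1}$ one has $\operatorname{div}(x) = p \, (0,0) - p \, \infty_{1}$, so $p^{n-1} \bigl( (0,0) - \infty_{1} \bigr)$ is the divisor of $x^{p^{n-2}}$ (principal because $n \ge 2$), not of $x$ itself.
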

            \begin{proof}
                For each $1 \le l \le L$, set $\varepsilon_{l} = 1$ if $l \in S$ and $\varepsilon_{l} = 0$ otherwise, and consider the morphism
                \begin{equation*}
                    \gamma \colon \Abb^{1}_{\Qbar} \longrightarrow \Abb^{L}_{\Qbar} \qquad\qquad\qquad t \mapsto \bigl( t^{p^{n} \cdot \varepsilon_{1}} \cdot a_{1}, \, \dots, \, t^{p^{n} \cdot \varepsilon_{L}} \cdot a_{L} \bigr) .
                \end{equation*}
                Let $T' \subset \Abb^{1}_{\Qbar}$ be the pullback of $U$ under $\gamma$, and let $V$ be the Zariski-open neighborhood of $0$ obtained by adding the point $0$ to $T'$. We extend $\mathfrak{C}_{T'} \to T'$ to $\mathfrak{C}_{V} \to V$ in the obvious way, namely, using the explicit equation
                \begin{equation}
                \label{eq: CfrakT'; pro: particular specialization; subsec: specialization methods; sec: Cyclic groups of order p^n}
                    y^{p} = x \cdot \prod_{l \notin S} ( x^{p^{n-1}} - a_{l} ) \cdot \prod_{l \in S} ( x^{p^{n-1}} - t^{p^{n}} \cdot a_{l} )
                \end{equation}
                over all of $V$. We construct a stable model $\Cfrak_{\mathrm{st}} \to V$ of $\mathfrak{C}_{V} \to V$, which turns out to be the restriction to $V$ of the minimal regular model $\Cfrak_{\mathrm{reg}} \to \Abb_{\Qbar}^{1}$ of $\mathfrak{C}_{T'} \to T'$, and we compute the fiber $\Cfrak_{T',\,0}$ of $\Cfrak_{\mathrm{st}} \to V$ over $0$.
                
                When $t = 0$, \eqref{eq: CfrakT'; pro: particular specialization; subsec: specialization methods; sec: Cyclic groups of order p^n} reduces to the singular curve
                \begin{equation}
                \label{eq: Ccal1; pro: particular specialization; subsec: specialization methods; sec: Cyclic groups of order p^n}
                    y^{p} = x^{M \cdot p^{n-1}} \cdot x \cdot \prod_{l \notin S} ( x^{p^{n-1}} - a_{l} ) ,
                \end{equation}
                whose normalization is $\Ccal_{1}$. Hence, $\Ccal_{1}$ is a component of $\Cfrak_{T', 0}$.

                To compute the possible remaining components, we blow up $\Cfrak_{T'} \to T'$ at the singular point $( x, y, t ) = ( 0, 0, 0)$. In the $t$-chart and using the weight $( p, 1 + M \cdot p^{n-1} )$, that is, applying the transformations $x = t^{p} \cdot u$ and $y = t^{ 1 + M \cdot p^{n-1} } \cdot v$, the strict transform of $\Cfrak_{T'} \to T'$ is 
                \begin{equation*}
                    v^{p} = u \cdot \prod_{l \notin S} ( t^{p^{n}} \cdot u^{p^{n-1}} - a_{l} ) \cdot \prod_{l \in S} ( u^{p^{n-1}} - a_{l} ) .
                \end{equation*}
                When $t = 0$, we obtain the curve
                \begin{equation}
                \label{eq: Ccal2; pro: particular specialization; subsec: specialization methods; sec: Cyclic groups of order p^n}
                    v^{p} = c \cdot u \cdot \prod_{l \in S} ( u^{p^{n-1}} - a_{l} ) ,
                \end{equation}
                where $c$ is the product of the $-a_{l}$ as $l$ ranges over $\{ 1, \, \dots, \, L \} \setminus S$. This is a twist of $\Ccal_{2}$; hence, it is isomorphic to $\Ccal_{2}$ since $\Qbar$ is algebraically closed.

                Since the strict transform is regular at $t = 0$, it remains regular in a neighborhood of $t = 0$. It follows that $\Cfrak_{T', 0}$ has two irreducible components, $\Ccal_{1}$ and $\Ccal_{2}$. As $\Cfrak_{\mathrm{st}} \to V$ is a flat family, \cite[Theorem~9.9, \S~III]{HartshorneAlgGeom} and Remark~\ref{rem: genus of Cbf; subsec: Setup; sec: Cyclic groups of order p^n} imply that the arithmetic genus of $\Cfrak_{T', 0}$ equals the sum of the arithmetic genera of $\Ccal_{1}$ and $\Ccal_{2}$. By \cite[Theorem~2, p.~190]{HironakaAritGenus}, we deduce that $\Ccal_{1}$ and $\Ccal_{2}$ intersect at a single point, namely $(x, y) = (0, 0)$ on $\Ccal_{1}$ and $\infty_{2}$ on $\Ccal_{2}$.

                It follows that the dual graph of $\Cfrak_{T', 0}$ has trivial first homology group, hence \cite[Example~$8$, p.~$246$]{BoschNeronModel} implies that $\Jfrak_{T', 0}$ is isomorphic to $\Jac ( \Ccal_{1} ) \times \Jac ( \Ccal_{2} )$, proving the first part of the proposition.

                Let $\Jbf_{T'}$ be the Jacobian of the generic fiber $\Cbf_{T'}$ of $\Cfrak_{T'} \to T'$. The specialization map $\Jbf \bigl( \Qbar ( U )^{ \mathrm{al} } \bigr) \to \Jbf_{T'} \bigl( \Qbar ( T' )^{ \mathrm{al} } \bigr)$ is easy to evaluate on the points $D_{\alpha_{l}}$; one sees immediately that it sends $D_{\alpha_{l}}$ to the point $D_{\alpha_{l}}'$ given by the class of the divisor
                \begin{equation*}
                    \sum_{i = 1}^{ p^{n - 1} } ( t^{ p \cdot \varepsilon_{l} } \cdot \zeta_{ p^{n - 1} }^{i} \cdot a_{l}^{ 1 / p^{n - 1} }, 0) - p^{n-1} \cdot \infty 
                \end{equation*}
                on $\Cbf_{T'}^{ \mathrm{al} }$, where $\infty$ is the unique point at infinity of $\Cbf_{T'}^{ \mathrm{al} }$. It remains to apply the specialization map $\operatorname{sp} : \Jbf_{T'} \bigl( \Qbar ( T' ) \bigr) \to \Jfrak_{T', 0} ( \Qbar )$, which we shall now describe.
                    
                Since our problem is local at $0$, from now on we let $B$ be the spectrum of the DVR $\Ocal_{V, 0}$ given by the localization of $\Qbar [ V ]$ at $(t)$. We consider the proper flat curve $X \coloneqq ( \Cfrak_{ \mathrm{st} } )_{B} \to B$ and denote by $X_{0}$ its special fiber, which is $\Cfrak_{T', 0}$. The functor $\operatorname{Pic}^{0}_{X / B}$ is representable by a scheme by \cite[Corollary~$3$, p.~260]{BoschNeronModel}, so (by properness) it satisfies $\operatorname{Pic}^{0}_{X / B} \bigl( \Qbar ( T' ) \bigr) = \operatorname{Pic}^{0}_{X / B} ( \mathcal{O}_{V, 0} )$. The scheme representing $\operatorname{Pic}^{0}_{X / B}$ is the Néron model of $\Jbf_{T'}$.

                The specialization map $\operatorname{sp}$ is the composition of the natural maps
                \begin{equation}
                \label{eq: description specialization; pro: particular specialization; subsec: specialization methods; sec: Cyclic groups of order p^n}
                    \begin{aligned}
                        \Jbf_{T'} \bigl( \Qbar ( T' ) \bigr) \cong \operatorname{Pic}^{0}_{X / B} \bigl( \Qbar ( T' ) \bigr) = \operatorname{Pic}^{0}_{X / B} ( \mathcal{O}_{V, 0} ) \to \operatorname{Pic}^{0}_{X / B} \bigl( \mathcal{O}_{V, 0} / (t) \bigr) = \operatorname{Pic}^{0}_{X_{0} / \Qbar} ( \Qbar ) \cong \mathfrak{J}_{T', 0} ( \Qbar )
                    \end{aligned}
                \end{equation}
                and may be described as follows. A point in $\Jbf_{T'} \bigl( \Qbar ( T' ) \bigr)$ is represented by a line bundle $\mathcal{L}$ of degree $0$ on $X_{\Qbar ( T' )}$ (this is the first isomorphism in \eqref{eq: description specialization; pro: particular specialization; subsec: specialization methods; sec: Cyclic groups of order p^n}). In order to specialize this point to $\mathfrak{J}_{T', 0}$, we extend $\mathcal{L}$ to a line bundle $\mathcal{L}_{\mathrm{st}}$ on all of $X$ whose restriction to the generic fiber agrees with $\mathcal{L}$ and whose restriction to the special fiber $X_{0}$ is of degree $0$ on each irreducible component (the existence of such an extension $\mathcal{L}_{\mathrm{st}}$ is the equality $\operatorname{Pic}^{0}_{X / B} ( \Qbar ( T' ) ) = \operatorname{Pic}^{0}_{X / B} ( \mathcal{O}_{V, 0} )$ in \eqref{eq: description specialization; pro: particular specialization; subsec: specialization methods; sec: Cyclic groups of order p^n}). Restricting $\mathcal{L}_{\mathrm{st}}$ to $X_{0}$ gives a line bundle of multidegree $0$ on $\mathfrak{C}_{\mathrm{st}, 0}$, hence a $\Qbar$-point of $\operatorname{Pic}^{0}_{X_{0} / \Qbar} \cong \mathfrak{J}_{T', 0} \cong \Jac ( \Ccal_{1} ) \times \Jac ( \Ccal_{2} )$ (this restriction is the arrow $\operatorname{Pic}^{0}_{X / B} ( \mathcal{O}_{V, 0} ) \to \operatorname{Pic}^{0}_{X / B} \bigl( \mathcal{O}_{V, 0} / (t) \bigr)$ in \eqref{eq: description specialization; pro: particular specialization; subsec: specialization methods; sec: Cyclic groups of order p^n}). The components in $\Jac ( \Ccal_{1} )$ and $\Jac ( \Ccal_{2} )$ under the isomorphism $\operatorname{Pic}^{0}_{X_{0} / \Qbar} \cong  \Jac ( \Ccal_{1} ) \times \Jac ( \Ccal_{2} )$ are the restrictions $\mathcal{L}_{\mathrm{st}}|_{\Ccal_1}$ and $\mathcal{L}_{\mathrm{st}}|_{\Ccal_2}$.

                Let $\widetilde{\infty}$ be the closure of $\infty$ inside $\mathfrak{C}_{\mathrm{st}}$. Since the blow-up we performed is at the finite point $( x, y, t ) = ( 0, 0, 0 )$, the section $\widetilde{\infty}$ is untouched by this operation and specializes to $\infty_{1} \in \Ccal_{1}$. For each $1 \le l \le L$ and each $1 \le i \le p^{n - 1}$, let $\widetilde{P}_{l, i}$ be the closure in $\mathfrak{C}_{\mathrm{st}}$ of the section $( t^{ p \cdot \varepsilon_{l} } \cdot \zeta_{ p^{n - 1} }^{i} \cdot a_{l}^{ 1 / p^{n - 1} }, 0 )$ defined over $\{ t \neq 0 \}$. 

                Assume first that $l \notin S$. Then, the $x$-coordinate of $( t^{ p \cdot \varepsilon_{l} } \cdot \zeta_{ p^{n - 1} }^{i} \cdot a_{l}^{ 1 / p^{n - 1} }, 0 )$ is independent of $t$ and in particular non-zero at $t=0$, so each $( \zeta_{ p^{n - 1} }^{i} \cdot a_{l}^{ 1 / p^{n - 1} }, 0 )$ extends through the smooth locus and specializes to $P_{l, i} = ( \zeta_{ p^{n - 1} }^{i} \cdot a_{l}^{ 1 / p^{n - 1} }, 0 ) \in \Ccal_{1}$. Hence $\eta ( D_{\alpha_{l}} ) = \operatorname{sp} ( D_{\alpha_{l}}' )$ is represented by the line bundle whose restrictions to the two components are $\mathcal{O}_{\Ccal_{1}} \bigl( \sum_{i = 1}^{ p^{n - 1} } P_{l, i} - p^{n-1} \cdot \infty_{1} \bigr)$ and $\mathcal{O}_{\Ccal_{2}}$, as desired.

                Assume instead that $l \in S$. Then, on the generic fiber, $D_{\alpha_{l}}'$ is represented by the divisor
                \begin{equation}
                \label{eq: divisor specializing to exceptional component; pro: particular specialization; subsec: specialization methods; sec: Cyclic groups of order p^n}
                    \sum_{i = 1}^{ p^{n - 1} } ( t^{p} \cdot \zeta_{ p^{n - 1} }^{i} \cdot a_{l}^{ 1 / p^{n - 1} }, 0 ) - p^{n-1} \cdot \infty.
                \end{equation}
                The specialization of \eqref{eq: divisor specializing to exceptional component; pro: particular specialization; subsec: specialization methods; sec: Cyclic groups of order p^n} to the nodal curve $\Ccal_{1} \cup \Ccal_{2}$ has multidegree $( -p^{n-1}, p^{n-1} )$: its restriction to $\Ccal_{1}$ is $\mathcal{O}_{\Ccal_{1}} \bigl( -p^{n-1} \cdot \infty_{1} \bigr)$, and its restriction to $\Ccal_{2}$ is $\mathcal{O}_{\Ccal_{2}} \bigl( \sum_{i = 1}^{ p^{n - 1} } Q_{l, i} \bigr)$. Let $N_{1} = ( 0, 0 ) \in \Ccal_{1}$ and $N_{2} = \infty_{2} \in \Ccal_{2}$ be the points on the two irreducible components corresponding to the node. We consider the curve $\mathcal{C}_{2} \subset \{ t = 0 \}$, seen as a divisor in the total space $\mathfrak{C}_{\mathrm{st}}$, and let $\mathscr{L} \coloneqq \mathcal{O}_{\mathfrak{C}_{\mathrm{st}}} ( \Ccal_{2} )$. A computation shows that $\mathscr{L}$ restricts to $\mathcal{O}_{\Ccal_1} ( N_{1} )$ and $\mathcal{O}_{\Ccal_{2}} ( -N_{2} )$ on the two irreducible components, hence in particular is of multidegree $( 1, -1 )$. We twist the line bundle corresponding to the divisor \eqref{eq: divisor specializing to exceptional component; pro: particular specialization; subsec: specialization methods; sec: Cyclic groups of order p^n} by $\mathscr{L}^{ \otimes p^{n-1} }$. Since $( N_{1} - \infty_{1} )$ is a torsion point of order $p$, the twisted line bundle is linearly equivalent to a line bundle that restricts to $\Ocal_{\Ccal_{1}}$ and $\Ocal_{\Ccal_{2}} \bigl( \sum_{i = 1}^{ p^{n - 1} } Q_{l, i} - p^{n-1} \cdot \infty_{2} \bigr)$ on the two irreducible components, as desired.
            \end{proof}

            We now analyze how $\Phi$ specializes in the specializations provided by Proposition~\ref{pro: particular specialization; subsec: specialization methods; sec: Cyclic groups of order p^n}.

            \begin{Lem}
            \label{lem: how Phi specializes in JTt; subsec: specialization methods; sec: Cyclic groups of order p^n}
                Let $L \ge 2$, let $( a_{1}, \dots, a_{L} ) \in U ( \Qbar )$ and let $S$ be a subset of $\{ 1, \dots, L \}$ of cardinality $1 \le M < L$. Let $\Jcal$ be the abelian variety obtained by applying Proposition~\ref{pro: particular specialization; subsec: specialization methods; sec: Cyclic groups of order p^n} with $( a_{1}, \dots, a_{L})$ and $S$, and let $\iota \colon \End^{0} ( \Jbf^{\mathrm{al}} ) \xhookrightarrow{} \End^{0} ( \Jcal )$ be the canonical embedding provided by the first statement of Lemma~\ref{lem: End0 embeds and torsion bijects under specialization; subsec: specialization methods; sec: Preliminaries}.
                Consider the curves
                \begin{equation*}
                    \Ccal_{1} \colon z^{p} = x \cdot \prod_{l \notin S} ( x^{p^{n-1}} - a_{l} ) , \qquad\qquad \Ccal_{2} \colon v^{p} = u \cdot \prod_{l \in S} \bigl( u^{p^{n-1}} - a_{l} \bigr) ,
                \end{equation*} 
                and recall that $\Jcal$ is isomorphic to $\Jac ( \Ccal_{1} ) \times \Jac ( \Ccal_{2} )$. Let $\varphi_{1}$ (resp.~$\varphi_{2}$) be the automorphism of $\Ccal_{1}$ (resp.~$\Ccal_{2}$) given by $( x, z ) \mapsto ( \zeta_{ p^{n-1} } \cdot x, \zeta_{ p^{n} } \cdot z )$ (resp.~$( u, v ) \mapsto ( \zeta_{ p^{n-1} } \cdot u, \zeta_{ p^{n} } \cdot v )$), and let $\Phi_{1}$ (resp.~$\Phi_{2}$) be the automorphism of $\Jac ( \Ccal_{1} )$ (resp.~$\Jac ( \Ccal_{2} )$) induced by $\varphi_{1}$ (resp.~$\varphi_{2}$). 
                
                The automorphism $\iota ( \Phi )$ stabilizes both $\Jac ( \Ccal_{1} )$ and $\Jac ( \Ccal_{2} )$. Moreover, it acts on $\Jac ( \Ccal_{1} )$ as $\Phi_{1}^{ 1 - M \cdot p^{ n - 1 } }$, and it acts on $\Jac ( \Ccal_{2} )$ as $\Phi_{2}$.
            \end{Lem}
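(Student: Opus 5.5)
The plan is to realize $\iota ( \Phi )$ geometrically, through an automorphism of the degenerating family itself. I would show that $\varphi$ extends to an automorphism of the stable model $\Cfrak_{\mathrm{st}} \to V$ constructed in the proof of Proposition~\ref{pro: particular specialization; subsec: specialization methods; sec: Cyclic groups of order p^n}, and then compute the restriction of this extension to the special fiber $\Ccal_{1} \cup \Ccal_{2}$.

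\textbf{Extending $\varphi$.} Over $T'$ the family is given by \eqref{eq: CfrakT'; pro: particular specialization; subsec: specialization methods; sec: Cyclic groups of order p^n}. The formula $( x, y, t ) \mapsto ( \zeta_{p^{n-1}} x, \zeta_{p^{n}} y, t )$ preserves this equation, since $x^{p^{n-1}}$ is unchanged and both sides are multiplied by $\zeta_{p^{n-1}} = \zeta_{p^{n}}^{p}$. It is compatible with the pullback along $\gamma$, so it defines an automorphism $\tilde{\varphi}$ of the model over $V$ that restricts to $\varphi$ on the generic fiber. This automorphism fixes the singular point $( 0, 0, 0 )$, so it lifts to the weighted blow-up used there. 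In the chart $x = t^{p} u$, $y = t^{1 + M p^{n-1}} v$ it acts by $u \mapsto \zeta_{p^{n-1}} u$, $v \mapsto \zeta_{p^{n}} v$, with $t$ fixed. Because the minimal regular (equivalently, stable) model is canonical, $\tilde{\varphi}$ is an automorphism of $\Cfrak_{\mathrm{st}}$, and it preserves the special fiber.

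\textbf{Action on the two components.} The curve $\Ccal_{2}$ is the exceptional component, and the curve $\Ccal_{1}$ is the strict transform of $\{ t = 0 \}$. The automorphism $\tilde{\varphi}$ maps each component to itself, since it commutes with the blow-up. On $\Ccal_{2}$, in the model \eqref{eq: Ccal2; pro: particular specialization; subsec: specialization methods; sec: Cyclic groups of order p^n}, $\tilde{\varphi}$ acts as $( u, v ) \mapsto ( \zeta_{p^{n-1}} u, \zeta_{p^{n}} v )$. The isomorphism to $\Ccal_{2}$ rescales $v$ by a $p$-th root of $c$, which commutes with this action, so $\tilde{\varphi}$ acts on $\Ccal_{2}$ as $\varphi_{2}$. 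On $\Ccal_{1}$, the normalization of \eqref{eq: Ccal1; pro: particular specialization; subsec: specialization methods; sec: Cyclic groups of order p^n} is obtained by setting $z = y / x^{M p^{n-2}}$, which gives $z^{p} = x \prod_{l \notin S} ( x^{p^{n-1}} - a_{l} )$. Then
\[
z \mapsto \zeta_{p^{n}} \, \zeta_{p^{n-1}}^{- M p^{n-2}} z = \zeta_{p^{n}}^{1 - M p^{n-1}} z .
\]
Since $( 1 - M p^{n-1} ) p \equiv p \pmod{p^{n}}$, the action on $x$ is also $x \mapsto \zeta_{p^{n-1}}^{1 - M p^{n-1}} x$. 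Therefore $\tilde{\varphi}|_{\Ccal_{1}} = \varphi_{1}^{1 - M p^{n-1}}$.

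\textbf{Identifying the action with $\iota ( \Phi )$.} By construction (via \cite[Lemma~6.1.6, 6.1.7]{cantoral2023monodromy}), the embedding $\iota$ extends an endomorphism to the Néron model by the Néron mapping property and then restricts it to the special fiber. The Néron model is $\operatorname{Pic}^{0}_{X/B}$, and the automorphism of $\operatorname{Pic}^{0}_{X/B}$ induced by $\tilde{\varphi}$ (via push-forward, or inverse pullback, matching the convention defining $\Phi$) agrees with $\Phi$ on the generic fiber. By uniqueness in the Néron mapping property, it is therefore the extension of $\Phi$. On the special fiber, $\operatorname{Pic}^{0}_{X_{0}} \cong \Jac ( \Ccal_{1} ) \times \Jac ( \Ccal_{2} )$ via restriction to the components, and $\tilde{\varphi}$ preserves each component and the node. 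Hence the induced automorphism is diagonal: it stabilizes both factors and acts on them as $\Phi_{1}^{1 - M p^{n-1}}$ and $\Phi_{2}$ respectively.

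\textbf{Expected obstacle.} The main difficulty is making rigorous the claim that $\tilde{\varphi}$ really is an automorphism of the regular model $\Cfrak_{\mathrm{st}}$ near the node. This requires checking that the weighted blow-up is equivariant, with both charts compatible, and that $\tilde{\varphi}$ does not swap branches at the node. The second point follows because $\tilde{\varphi}$ fixes $t$ and preserves the exceptional locus. A further point needing care is that the twist by $c$ on $\Ccal_{2}$ does not alter the exponent.
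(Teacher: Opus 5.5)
Your proposal is correct and follows essentially the same route as the paper. The paper also extends $\varphi$ to the degenerating family so that it acts trivially on $t$, and reads off its action in the coordinates $z = y / x^{M p^{n-2}}$ on $\Ccal_{1}$ and $(u, v)$ on $\Ccal_{2}$ (obtaining $\zeta_{p^{n}}^{1 - M p^{n-1}}$ on $z$ and $(\zeta_{p^{n-1}}, \zeta_{p^{n}})$ on $(u,v)$). The paper does this in a few lines, while your extension to the regular model and your Néron-mapping-property identification spell out why this computation actually computes $\iota(\Phi)$, a step the paper leaves implicit.
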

            \begin{proof}
                Recall, from the proof of Proposition~\ref{pro: particular specialization; subsec: specialization methods; sec: Cyclic groups of order p^n}, that $z = y / x^{ M \cdot p^{n-2} }$, $u = x / t^{p}$ and $\sqrt[p]{c} \cdot v = y / t^{ 1 + M \cdot p^{ n - 1 } }$, where $c$ is the product of the $- a_{l}$ as $l$ ranges over $\{ 1, \dots, L \} \setminus S$. From \eqref{eq: phi; subsec: Setup; sec: Cyclic groups of order p^n} we deduce that the induced action of $\varphi$ on these variables is given by 
                \begin{equation*}
                    ( x, z ) \mapsto ( \zeta_{ p^{n - 1} } \cdot x, \zeta_{p^{n}}^{ 1 - M \cdot p^{n - 1} } \cdot z ) = ( \zeta_{ p^{n - 1} }^{ 1 - M \cdot p^{n - 1} } \cdot x, \zeta_{p^{n}}^{ 1 - M \cdot p^{n - 1} } \cdot z ) ,
                \end{equation*}
                and
                \begin{equation*}
                    ( u, v ) \mapsto ( \zeta_{ p^{n - 1} } \cdot u, \zeta_{p^{n}} \cdot v ) ,
                \end{equation*}
                since $\varphi$ does not act on the variable $t$. Hence, $\iota ( \Phi )$ stabilizes both $\Jac ( \Ccal_{1} )$ and $\Jac ( \Ccal_{2} )$, and it acts on $\Jac ( \Ccal_{1} )$ and $\Jac ( \Ccal_{2} )$ as desired.
            \end{proof}

            Proposition~\ref{pro: particular specialization; subsec: specialization methods; sec: Cyclic groups of order p^n} and Lemma~\ref{lem: how Phi specializes in JTt; subsec: specialization methods; sec: Cyclic groups of order p^n} have an interesting consequence.

            \begin{Cor}
            \label{cor: exclusion cases with torsion arguments; subsec: specialization methods; sec: Cyclic groups of order p^n}
                Let $L \ge 2$, let $( a_{1}, \dots, a_{L} ) \in U ( \Qbar )$ and let $S_{1}, \, S_{2}$ be distinct subsets of $\{ 1, \dots, L \}$ of equal cardinality $1 \le M < L$. Consider the curves
                \begin{align*}
                    \Ccal_{1} &\colon y^{p} = x \cdot \prod_{ l \notin S_{1} } ( x^{p^{n-1}} - a_{l} ) , \qquad\qquad \Ccal_{2} \colon y^{p} = x \cdot \prod_{ l \in S_{1} } \bigl( x^{p^{n-1}} - a_{l} \bigr) , \\
                    \Ccal_{3} &\colon y^{p} = x \cdot \prod_{ l \notin S_{2} } ( x^{p^{n-1}} - a_{l} ) , \qquad\qquad \Ccal_{4} \colon y^{p} = x \cdot \prod_{ l \in S_{2} } \bigl( x^{p^{n-1}} - a_{l} \bigr) .
                \end{align*} 
                Let $\Jcal_{1}$ (resp.~$\Jcal_{2}$) be the abelian variety obtained by applying Proposition~\ref{pro: particular specialization; subsec: specialization methods; sec: Cyclic groups of order p^n} with the point $( a_{1}, \dots, a_{L})$ and $S = S_{1}$ (resp.~$S = S_{2}$). The following conditions cannot hold simultaneously:
                \begin{itemize}
                    \item the Jacobian $\Jbf^{\mathrm{al}}$ is isogenous to a product of two abelian subvarieties $Z_{1}, Z_{2}$ of $\Jbf^{\mathrm{al}}$; 
                    \item the abelian variety $Z_{1}$ (resp.~$Z_{2}$) specializes to $\Jac ( \Ccal_{1} )$ (resp.~$\Jac ( \Ccal_{2} )$) in $\Jcal_{1}$; 
                    \item the abelian variety $Z_{1}$ (resp.~$Z_{2}$) specializes to $\Jac ( \Ccal_{3} )$ (resp.~$\Jac ( \Ccal_{4} )$) in $\Jcal_{2}$.
                \end{itemize}
            \end{Cor}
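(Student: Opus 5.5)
The plan is to derive a contradiction by tracking the $(1-\Phi)$-torsion through the two degenerations. By Lemma~\ref{lem: properties of 1-Phi torsion part; subsec: Preliminaries; sec: Cyclic groups of order p^n}, $\Jbf[1-\Phi]$ is an $L$-dimensional $\Fbb_{p}$-space with basis $D_{\alpha_{1}},\dots,D_{\alpha_{L}}$. The idea is to show that the subspace $W_{1}\coloneqq Z_{1}\cap\Jbf[1-\Phi]$ is forced to equal $\langle D_{\alpha_{l}} : l\notin S_{1}\rangle$ by the first degeneration and $\langle D_{\alpha_{l}} : l\notin S_{2}\rangle$ by the second. These two subspaces differ because $S_{1}\neq S_{2}$ and the $D_{\alpha_{l}}$ are linearly independent.

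\textbf{Step 1: $Z_{1}$ and $Z_{2}$ are $\Phi$-stable.} First I would reduce to this case. Since $\Q(\Phi)\cong\Q(\zeta_{p^{n}})$ acts on $\Jbf^{\mathrm{al}}$ and, by Lemma~\ref{lem: End0Jbf contains Q(zetapn); subsec: Preliminaries; sec: Cyclic groups of order p^n}, injects into the endomorphism algebra of each isotypic component, I expect to be able to take $Z_{1},Z_{2}$ (or replace them without changing their specializations) to be $\Q(\Phi)$-submodules. Then $\Q(\zeta_{p^{n}})$ acts on each $Z_{s}$, and the degree argument of Lemma~\ref{lem: properties of 1-Phi torsion part; subsec: Preliminaries; sec: Cyclic groups of order p^n} applies on $Z_{s}$: writing $[p]=\lambda(1-\Phi)^{\varphi(p^{n})}$ gives
\[
\dim_{\Fbb_{p}} Z_{s}[1-\Phi] = 2\dim Z_{s}/\varphi(p^{n}).
\]
Since $\dim Z_{1}=\dim\Jac(\Ccal_{1})=\varphi(p^{n})(L-M)/2$, this gives $\dim W_{1}=L-M$.

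\textbf{Step 2: torsion of $Z_{1}$ lands in $\Jac(\Ccal_{1})$.} Next I use the compatibility built into Lemma~\ref{lem: End0 embeds and torsion bijects under specialization; subsec: specialization methods; sec: Preliminaries}: specialization of torsion commutes with specialization of endomorphisms. Take $h_{1}$ with $Z_{1}$ the identity component of $\ker h_{1}$. Then $\eta(W_{1})$ is killed by $\iota(h_{1})$, and $Y_{1}=\Jac(\Ccal_{1})$ is the identity component of $\ker\iota(h_{1})$. I want to conclude $\eta(W_{1})\subseteq\Jac(\Ccal_{1})\times 0$.

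Granting that, I identify $(\Jac(\Ccal_{1})\times 0)\cap\eta(\Jbf[1-\Phi])$. By Lemma~\ref{lem: how Phi specializes in JTt; subsec: specialization methods; sec: Cyclic groups of order p^n}, $\iota(\Phi)$ acts on $\Jac(\Ccal_{1})$ as $\Phi_{1}^{1-Mp^{n-1}}$. The exponent is prime to $p$, so $1-\zeta^{1-Mp^{n-1}}$ and $1-\zeta$ generate the same ideal, and the relevant kernel is just $\Jac(\Ccal_{1})[1-\Phi_{1}]$. By Proposition~\ref{pro: particular specialization; subsec: specialization methods; sec: Cyclic groups of order p^n}(2), the $\eta(D_{\alpha_{l}})$ with $l\notin S_{1}$ lie in the first factor and those with $l\in S_{1}$ in the second. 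Since $\eta$ is bijective, the intersection is exactly $\eta\langle D_{\alpha_{l}}:l\notin S_{1}\rangle$, of dimension $L-M$. Comparing dimensions with Step 1 gives $W_{1}=\langle D_{\alpha_{l}}:l\notin S_{1}\rangle$.

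\textbf{Step 3: conclusion.} The same argument for $\Jcal_{2}$ gives $W_{1}=\langle D_{\alpha_{l}}:l\notin S_{2}\rangle$. Since $S_{1}\neq S_{2}$ have the same cardinality, these spans differ, which is the desired contradiction.

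\textbf{Main obstacle.} The hardest point is the containment $\eta(W_{1})\subseteq\Jac(\Ccal_{1})\times0$ in Step 2. The group $\ker\iota(h_{1})$ may have non-identity components, and $p$-torsion could fall into them. I would first try to use both subvarieties symmetrically: $\eta(W_{1})\subseteq\ker\iota(h_{1})$ and $\eta(W_{2})\subseteq\ker\iota(h_{2})$, with $W_{1}+W_{2}$ of dimension $L$. I would then choose $h_{1},h_{2}$ as $\Q(\Phi)$-linear multiples of complementary idempotents, namely $h_{1}=N(1-e)$ and $h_{2}=Ne$ with $N$ prime to $p$ where possible, so that the kernels specialize to exactly $\Jac(\Ccal_{1})\times0$ and $0\times\Jac(\Ccal_{2})$ up to prime-to-$p$ finite groups. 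If that fails, the fallback is a dimension count: $\eta(W_{1})$ has trivial intersection with $\eta(W_{2})$, and each meets the appropriate factor in a subspace of the right size.
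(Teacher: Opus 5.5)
Your overall strategy (pin down $W_{1}=Z_{1}\cap\Jbf[1-\Phi]$ in both degenerations using part~(2) of Proposition~\ref{pro: particular specialization; subsec: specialization methods; sec: Cyclic groups of order p^n} and the linear independence of the $D_{\alpha_{l}}$) is the paper's. Steps~2 and~3 are correct once their inputs are in place.

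\textbf{The gap is Step~1.} It is load-bearing: the equality $\dim_{\Fbb_{p}}W_{1}=L-M$ comes from the degree argument, which needs $Z_{1}$ to be $\Phi$-stable. Without it, Step~2 only gives $W_{1}\subseteq\langle D_{\alpha_{l}}:l\notin S_{1}\rangle\cap\langle D_{\alpha_{l}}:l\notin S_{2}\rangle$, which is no contradiction. Your justification does not give stability. Lemma~\ref{lem: End0Jbf contains Q(zetapn); subsec: Preliminaries; sec: Cyclic groups of order p^n} is about isotypic components, whereas $Z_{1},Z_{2}$ are arbitrary abelian subvarieties, and one sitting inside an isotypic component need not be $\Phi$-stable. ``Replacing them without changing their specializations'' is exactly what would have to be proved.

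The missing idea is a cardinality count on $p^{m}$-torsion:
\begin{itemize}
\item Since $\dim Z_{1}=\dim\Jac(\Ccal_{1})$, the groups $Z_{1}[p^{m}]$ and $\Jac(\Ccal_{1})[p^{m}]$ have the same order. Specialization of $p^{m}$-torsion is injective, so once $Z_{1}[p^{m}]$ is known to land in $\Jac(\Ccal_{1})\times0$, it maps \emph{onto} $\Jac(\Ccal_{1})[p^{m}]\times0$.
\item Since $\iota(\Phi)$ preserves $\Jac(\Ccal_{1})$ (Lemma~\ref{lem: how Phi specializes in JTt; subsec: specialization methods; sec: Cyclic groups of order p^n}), injectivity gives $(1-\Phi)Z_{1}[p^{m}]\subseteq Z_{1}[p^{m}]$ for all $m$. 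Zariski density of $Z_{1}[p^{\infty}]$ then gives $\Phi$-stability of $Z_{1}$. This is how the paper obtains your Step~1.
\item More simply, surjectivity for $m=1$, together with $\eta(D_{\alpha_{l}})\in\Jac(\Ccal_{1})[p]\times0$ for $l\notin S_{1}$, directly gives $D_{\alpha_{l}}\in Z_{1}$ for these $l$. So $W_{1}\supseteq\langle D_{\alpha_{l}}:l\notin S_{1}\rangle$, with no degree computation at all.
\end{itemize}
The paper then finishes slightly differently from you. It shows $Z_{1}[1-\Phi]\cap Z_{2}[1-\Phi]=0$, because the two reduce into different factors. Any $l$ in the symmetric difference of $S_{1}$ and $S_{2}$ then puts the nonzero point $D_{\alpha_{l}}$ in both.

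\textbf{Your ``main obstacle'' is real.} The paper does not spell it out either: it treats the containment of the reduction of $Z_{1}[p^{m}]$ in $\Jac(\Ccal_{1})\times0$ as part of what ``$Z_{1}$ specializes to $\Jac(\Ccal_{1})$'' means. Your plan with $N$ prime to $p$ ``where possible'' may not be available, but it is also unnecessary. Let $h_{1}$ be the endomorphism with $h_{1}^{2}=Nh_{1}$ and $(\ker h_{1})^{0}=Z_{1}$ that defines the specialization, and put $e\coloneqq 1-h_{1}/N\in\End^{0}(\Jbf^{\mathrm{al}})$. Then $Ne=N-h_{1}$ is an endomorphism acting as $N$ on $Z_{1}$. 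So every $P\in Z_{1}[p^{m}]$ equals $Ne(Q)$ for some $Q\in Z_{1}[Np^{m}]$. Specialization of torsion commutes with specialization of endomorphisms, so the reduction of $P$ is $\iota(Ne)$ applied to the reduction of $Q$, and hence lies in $\operatorname{Im}\iota(Ne)$. Because $\iota(Ne)+\iota(h_{1})=N$ and $\iota(Ne)\,\iota(h_{1})=0$, this image is exactly $(\ker\iota(h_{1}))^{0}=\Jac(\Ccal_{1})\times0$. The non-identity components of $\ker\iota(h_{1})$ are therefore never reached.
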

            \begin{proof}
                Assume, by way of contradiction, that the conditions hold. For each $\theta \in \End ( \Jbf^{ \mathrm{al} } )$ with finite kernel and each $i \in \{ 1, 2 \}$, let $\operatorname{red}_{ \theta, i } \colon \Jbf [ \theta ] \xrightarrow{\sim} \Jcal_{i} [ \theta ]$ be the bijection provided by the last statement of Lemma~\ref{lem: End0 embeds and torsion bijects under specialization; subsec: specialization methods; sec: Preliminaries}. We first prove that both $Z_{1}$ and $Z_{2}$ are stable under the action of $1 - \Phi$.

                Let $\iota \colon \End^{0} ( \Jbf^{\mathrm{al}} ) \xhookrightarrow{} \End^{0} ( \Jcal_{1} )$ be the canonical embedding provided by the first statement of Lemma~\ref{lem: End0 embeds and torsion bijects under specialization; subsec: specialization methods; sec: Preliminaries}. Let $\varphi_{1}$ (resp.~$\varphi_{2}$) be the automorphism of $\Ccal_{1}$ (resp.~$\Ccal_{2}$) given by $( x, y ) \mapsto ( \zeta_{ p^{n-1} } \cdot x, \zeta_{ p^{n} } \cdot y )$, and let $\Phi_{1}$ (resp.~$\Phi_{2}$) be the automorphism of $\Jac ( \Ccal_{1} )$ (resp.~$\Jac ( \Ccal_{2} )$) induced by $\varphi_{1}$ (resp.~$\varphi_{2}$). Given a positive integer $m$, denote by $p^{m}$ the multiplication-by-$p^{m}$ (where the abelian variety is clear from the context). Finally, let $Z_{1} [ p^{\infty} ]$ be the $p$-power torsion subgroup of $Z_{1}$, which is the union of $Z_{1} [ p^{a} ]$ as $a$ varies over the positive integers.
                
                Since $Z_{1}$ specializes to $\Jac ( \Ccal_{1} )$, we have that $\dim ( Z_{1} ) = \dim \bigl( \Jac ( \Ccal_{1} ) \bigr)$ (first statement of Lemma~\ref{lem: End0 embeds and torsion bijects under specialization; subsec: specialization methods; sec: Preliminaries}). Hence, the cardinality of $Z_{1} [ p^{m} ]$ equals that of $\Jac ( \Ccal_{1} ) [ p^{m} ]$ and, since $\operatorname{red}_{ p^{m}, 1 }$ is injective, we deduce that $\operatorname{red}_{ p^{m}, 1 } \bigl( Z_{1} [ p^{m} ] \bigr) = \Jac(\Ccal_{1} ) [ p^{m} ]$ for all $m \ge 1$. We know that $\iota ( \Phi )$ stabilizes $\Jac ( \Ccal_{1} )$ and $\iota ( \Phi ) = \Phi_{1}^{ 1 - M \cdot p^{n-1} }$ (Lemma~\ref{lem: how Phi specializes in JTt; subsec: specialization methods; sec: Cyclic groups of order p^n}), hence
                \begin{align*}
                    \operatorname{red}_{ p^{m}, 1 } \Bigl( ( 1 - \Phi ) \bigl( Z_{1} [ p^{m} ] \bigr) \Bigr) &= \bigl( 1 - \iota ( \Phi ) \bigr) \Bigl( \operatorname{red}_{ p^{m}, 1 } \bigl( Z_{1} [ p^{m} ] \bigr) \Bigr) \\
                    &\subseteq ( 1 - \Phi_{1}^{ 1 - M \cdot p^{n-1} } ) \bigl( \Jac ( \Ccal_{1} ) [ p^{m} ] \bigr) \subseteq \Jac ( \Ccal_{1} ) [ p^{m} ] .
                \end{align*}
                We can apply $\operatorname{red}_{ p^{m}, 1 }$ to $( 1 - \Phi ) \bigl( Z_{1} [ p^{m} ] \bigr)$ since $( 1 - \Phi ) \bigl( Z_{1} [ p^{m} ] \bigr) \subseteq \Jbf [p^{m}]$, as $p^{m}$ commutes with $1 - \Phi$. The same argument shows the last containment above. Since $\operatorname{red}_{ p^{m}, 1 }$ is injective and $\operatorname{red}_{ p^{m}, 1 } \bigl( Z_{1} [ p^{m} ] \bigr) = \Jac(\Ccal_{1} ) [ p^{m} ]$, we obtain $( 1 - \Phi ) \bigl( Z_{1} [ p^{m} ] \bigr) \subseteq  Z_{1} [ p^{m} ]$ for all $m \ge 1$.

                It follows that $ 1 - \Phi$ stabilizes $Z_{1} [ p^{\infty} ]$. Since $Z_{1} [ p^{\infty} ]$ is Zariski-dense in $Z_{1}$ (\cite[Theorem~5.30]{EdixhovenAbelianVarieties}) and $1 - \Phi$ is an algebraic endomorphism of $\Jbf^{\mathrm{al}}$, it follows that it stabilizes $Z_{1}$. Similarly, we deduce that $1 - \Phi$ stabilizes $Z_{2}$, as desired.
                
                As $Z_{1}$ specializes to $\Jac ( \Ccal_{1} )$ in $\Jcal_{1}$, we have $\operatorname{red}_{ 1 - \Phi, 1 } \bigl( Z_{1} [ 1 - \Phi ] \bigr) \subseteq \Jac ( \Ccal_{1} ) [ 1 - \Phi_{1}^{ 1 - M \cdot p^{n-1} } ] \times \{ 0 \}$. Note that $1 - \Phi_{1} = u \cdot ( 1 - \Phi_{1}^{ 1 - M \cdot p^{n-1} } )$ for some automorphism $u$ of $\Jac ( \Ccal_{1} )$ (the ideals $( 1 - \zeta_{ p^{n} } )$ and $( 1 - \zeta_{ p^{n} }^{ 1 - M \cdot p^{n-1} } )$ are equal in $\Z [ \zeta_{ p^{n} } ]$). It follows that $1 - \Phi_{1}$ and $1 - \Phi_{1}^{ 1 - M \cdot p^{n-1} }$ have the same kernel; hence, $\operatorname{red}_{ 1 - \Phi, 1 } \bigl( Z_{1} [ 1 - \Phi ] \bigr) \subseteq \Jac ( \Ccal_{1} ) [ 1 - \Phi_{1} ] \times \{ 0 \}$. Similarly, $\operatorname{red}_{ 1 - \Phi, 1 } \bigl( Z_{2} [ 1 - \Phi ] \bigr) \subseteq \{ 0 \} \times \Jac ( \Ccal_{2} ) [ 1 - \Phi_{2} ]$. Since $\operatorname{red}_{ 1 - \Phi, 1 }$ is injective, we deduce that $Z_{1} [ 1 - \Phi ] \cap Z_{2} [ 1 - \Phi ] = \{ 0 \}$.

                In Proposition~\ref{pro: particular specialization; subsec: specialization methods; sec: Cyclic groups of order p^n} we computed the points $\operatorname{red}_{ 1 - \Phi, i } ( D_{\alpha_{l}} )$ for $i = 1, 2$ and $1 \le l \le L$. From the computation of $\operatorname{red}_{ 1 - \Phi, 1 }$, we deduce that $D_{\alpha_{l}} \in Z_{1} [ 1 - \Phi ]$ for every $l \in \{ 1, \dots, L \} \setminus S_{1}$. From the computation of $\operatorname{red}_{ 1 - \Phi, 2 }$, we deduce that $D_{\alpha_{l}} \in Z_{1} [ 1 - \Phi ]$ for every $l \in \{ 1, \dots, L \} \setminus S_{2}$. Hence, $D_{\alpha_{l}} \in Z_{1} [ 1 - \Phi ]$ for every $l \in \{ 1, \dots, L \} \setminus ( S_{1} \cap S_{2} )$. Similarly, we deduce that $D_{\alpha_{l}} \in Z_{2} [ 1 - \Phi ]$ for every $l \in S_{1} \cup S_{2}$.

                As $S_{1}$ and $S_{2}$ are distinct and non-empty, it follows that $\bigl| S_{1} \cup S_{2} \bigr| + \bigl| \{ 1, \dots, L \} \setminus ( S_{1} \cap S_{2} ) \bigr| > L$; hence, the sum of the numbers of elements $D_{\alpha_{l}}$ lying in $Z_{1} [ 1 - \Phi ]$ and in $Z_{2} [ 1 - \Phi ]$ is strictly greater than $L$. Since $L$ is the number of distinct $D_{\alpha_{l}}$, we deduce $Z_{1} [ 1 - \Phi ] \cap Z_{2} [ 1 - \Phi ] \neq \{ 0 \}$, a contradiction. 
            \end{proof}

            The following lemma is useful to apply Corollary~\ref{cor: exclusion cases with torsion arguments; subsec: specialization methods; sec: Cyclic groups of order p^n}. In this lemma only, we add subscripts to the objects involved to highlight their dependence on the parameter $L$ and avoid confusion for the reader.

            \begin{Lem}
            \label{lem: useful to apply cor: exclusion cases with torsion arguments; subsec: specialization methods; sec: Cyclic groups of order p^n}  
                For every $L \ge 1$, there exists $( a_{1}, \dots, a_{L + 1} ) \in U_{L + 1} ( \Qbar ) \subset \Abb_{\Qbar}^{L + 1} ( \Qbar )$ such that $\End^{0} ( \Jfrak_{ L, ( a_{1}, \dots, a_{L} ) } ) \cong \End^{0} ( \Jfrak_{ L, ( a_{2}, \dots, a_{L + 1} ) } ) \cong \End^{0} ( \Jbf_{L}^{\mathrm{al}} )$.
            \end{Lem}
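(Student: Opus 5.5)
The plan is to deduce the statement from the density result, Lemma~\ref{lem: how many fibers of Jfrak have bigger End0; subsec: specialization methods; sec: Preliminaries}, applied to the family $\Cfrak_{L} \to U_{L}$ over $K = \Q$. We look at the two projections
\begin{equation*}
    \pi_{1}, \pi_{2} \colon \Abb_{\Qbar}^{L+1} \to \Abb_{\Qbar}^{L}, \qquad \pi_{1} ( a_{1}, \dots, a_{L+1} ) = ( a_{1}, \dots, a_{L} ), \qquad \pi_{2} ( a_{1}, \dots, a_{L+1} ) = ( a_{2}, \dots, a_{L+1} ) .
\end{equation*}
First observe that if $( a_{1}, \dots, a_{L+1} ) \in U_{L+1} ( \Qbar )$, then both projections lie in $U_{L} ( \Qbar )$. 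Indeed, $f_{( a_{1}, \dots, a_{L} )}$ divides $f_{( a_{1}, \dots, a_{L+1} )}$, and the same holds for $( a_{2}, \dots, a_{L+1} )$, so the nonvanishing of the discriminant is inherited. Hence it suffices to find a rational point $P \in U_{L+1} ( \Qbar ) \cap \Q^{L+1}$ such that $\pi_{1} ( P )$ and $\pi_{2} ( P )$ both lie in the set $G \subseteq U_{L} ( \Qbar ) \cap \Q^{L}$ of \emph{good} points. Here a point is good when the specialization map $\End^{0} ( \Jbf_{L}^{\mathrm{al}} ) \to \End^{0} ( \Jfrak_{L, \cdot} )$ is an isomorphism. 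By Lemma~\ref{lem: how many fibers of Jfrak have bigger End0; subsec: specialization methods; sec: Preliminaries}, $G$ has density one, with $\#\{ P \in U_{L} \cap \Q^{L} : H(P) \le B \} \asymp B^{L+1}$.

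The key step is a counting argument showing that the bad set cannot absorb all points in the fibers of either projection. Let $E \coloneqq ( U_{L} ( \Qbar ) \cap \Q^{L} ) \setminus G$. Since $H ( a_{1}, \dots, a_{L} ) \le H ( a_{1}, \dots, a_{L+1} )$ for the affine height $H ( 1 : \cdot )$, we get
\begin{equation*}
    \#\{ P \in \Q^{L+1} : H ( P ) \le B,\ \pi_{i} ( P ) \in E \} \le \#\{ Q \in E : H ( Q ) \le B \} \cdot \#\{ a \in \Q : H ( a ) \le B \} .
\end{equation*}
The first factor is $o ( B^{L+1} )$ and the second is $O ( B^{2} )$, so the product is $o ( B^{L+3} )$. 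On the other hand, by Schanuel's estimate (as in the proof of Lemma~\ref{lem: how many fibers of Jfrak have bigger End0; subsec: specialization methods; sec: Preliminaries}) the rational points of $U_{L+1}$ of height at most $B$ number $\asymp B^{L+2}$.

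This comparison is where the main obstacle lies. The trivial bound $o ( B^{L+3} )$ exceeds $B^{L+2}$, so the naive product estimate is too weak. I would fix this by using the sharper form of the bad-set estimate from the proof of Lemma~\ref{lem: how many fibers of Jfrak have bigger End0; subsec: specialization methods; sec: Preliminaries}: the set $E$ of height at most $B$ lies on a hypersurface of degree $\le C ( \log B )^{\lambda}$. Its preimage under $\pi_{i}$ is then the zero set of a polynomial in $L+1$ variables of the same degree. Lemma~\ref{lem: number of bounded zeros polynomial; subsec: Preliminaries; sec: Cyclic groups of order p^n}, applied with $d = 1$ and $L+1$ variables, bounds the number of such points by $O ( ( \log B )^{\lambda} B^{L+1} ) = o ( B^{L+2} )$. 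The same argument controls the complement of $U_{L+1}$.

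Consequently, for $B$ large the set of points of $U_{L+1} ( \Qbar ) \cap \Q^{L+1}$ of height at most $B$ with both projections good is nonempty; in fact it has density one. Any such point $( a_{1}, \dots, a_{L+1} )$ satisfies
\begin{equation*}
    \End^{0} ( \Jfrak_{L, ( a_{1}, \dots, a_{L} )} ) \cong \End^{0} ( \Jbf_{L}^{\mathrm{al}} ) \cong \End^{0} ( \Jfrak_{L, ( a_{2}, \dots, a_{L+1} )} ),
\end{equation*}
which proves the lemma.
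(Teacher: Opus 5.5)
Your argument is correct and is essentially the paper's proof. You discard the naive product bound, which is indeed too weak. You then pull back the degree-$O((\log B)^{\lambda})$ hypersurface from Masser's theorem along each projection $\pi_{i}$, count its zeros of height at most $B$ in $L+1$ variables as $O((\log B)^{\lambda} B^{L+1})$ via the zero-counting lemma, and compare with the $\asymp B^{L+2}$ points of $U_{L+1}$; the paper does exactly this with its sets $S_{1}, S_{2}$. Your explicit checks that both projections of a point of $U_{L+1}$ lie in $U_{L}$, and that projection does not increase height, supply details the paper leaves implicit.
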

            \begin{proof}
                Note that $\Cfrak \to U$ is the base change to $\Qbar$ of a family $\tilde{\Cfrak} \to \tilde{U}$ defined over $\Q$. We adopt the notation used in Lemma~\ref{lem: how many fibers of Jfrak have bigger End0; subsec: specialization methods; sec: Preliminaries}. For every $B > 0$, let $G_{ L + 1, B }$ be the set of points $( a_{1}, \dots, a_{L+1} ) \in V_{ L + 1 } ( \Q, B )$ such that either $\End^{0} ( \Jfrak_{ L, ( a_{1}, \dots, a_{L} ) } ) \not\cong \End^{0} ( \Jbf_{L}^{\mathrm{al}} )$ or $\End^{0} ( \Jfrak_{ L, ( a_{2}, \dots, a_{L+1} ) } ) \not\cong \End^{0} ( \Jbf_{L}^{\mathrm{al}} )$. Then, at least one of $( a_{1}, \dots, a_{L} )$ and $( a_{2}, \dots, a_{L+1} )$ belongs to $V_{ L, \mathrm{ex} } ( \Q, B )$. Let $S_{1}$ (resp.~$S_{2}$) be the set of points $( a_{1}, \dots, a_{L + 1} ) \in V_{ L + 1 } ( \Q, B )$ such that $( a_{1}, \dots, a_{L} )$ (resp.~$( a_{2}, \dots, a_{ L + 1 } )$) belongs to $V_{ L, \mathrm{ex} } ( \Q, B )$. 
                
                Let $P$ be a polynomial of degree $\omega \bigl( V_{ L, \mathrm{ex} } ( \Q, B ) \bigr)$ in $L$ variables with coefficients in $\Q$ whose zero set in $\Abb_{ \Qbar }^{L} ( \Qbar )$ contains $V_{ L, \mathrm{ex} } ( \Q, B )$. Consider $P$ as a polynomial in $L + 1$ variables. Its zero set in $\Abb_{ \Qbar }^{ L + 1 } ( \Qbar )$ contains $S_{1}$; hence $\omega ( S_{1} ) \le \omega \bigl( V_{ L, \mathrm{ex} } ( \Q, B ) \bigr)$. Combining Lemma~\ref{lem: number of bounded zeros polynomial; subsec: Preliminaries; sec: Cyclic groups of order p^n} and \cite[Theorem on p.~459]{MasserSpecializationEnd} applied to $\tilde{\Cfrak} \to \tilde{U}$, we deduce the existence of a constant $\lambda > 0$ such that $| S_{1} | = O \bigl( \log ( B )^{\lambda} \cdot B^{ L + 1 } \bigr)$. Similarly, we deduce $| S_{2} | = O \bigl( \log ( B )^{\lambda} \cdot B^{ L + 1 } \bigr)$.
                
                Since $| G_{ L + 1, B } | \le | S_{1} | + | S_{2} | = O \bigl( \log ( B )^{\lambda} \cdot B^{ L + 1 } \bigr)$ and, by \eqref{eq: 2; the: reduction first statement main the; subsec: Preliminaries; sec: Cyclic groups of order p^n}, $| V_{ L + 1 } ( \Q, B ) | \asymp B^{ L + 2 }$, for a sufficiently large $B$ we have $G_{ L + 1, B } \subsetneq V_{ L + 1 } ( \Q, B )$, as desired.
            \end{proof}

        \subsection{The case $p>2$}
        \label{subsec: The case p>2; sec: Cyclic groups of order p^n}

            We now prove Theorem~\ref{the: main; sec: Cyclic groups of order p^n} when $p > 2$. In Theorems~\ref{the: reduction first statement main the; subsec: Preliminaries; sec: Cyclic groups of order p^n} and~\ref{the: there are infinitely many classes; subsec: Preliminaries; sec: Cyclic groups of order p^n} we proved that it is sufficient to show that $\End^{0} ( \Jbf^{\mathrm{al}} ) \cong \Q ( \zeta_{p^{n}} )$. We begin by considering the cases $L = 1$ and $L = 2$.

            \begin{Lem}
            \label{lem: L = 1 is true; subsec: The case p>2; sec: Cyclic groups of order p^n}
                When $p > 2$ and $L = 1$, the fibers of $\Cfrak \to U$ over $\Qbar$-points of $U$ belong to a single isomorphism class of curves over $\Qbar$. Moreover, these fibers have simple Jacobians with endomorphism algebras isomorphic to $\Q ( \zeta_{p^{n}} )$.
            \end{Lem}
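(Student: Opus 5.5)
The first assertion follows from Remark~\ref{rem: if L = 1, there are finitely many classes; subsec: Preliminaries; sec: Cyclic groups of order p^n}. For $L = 1$ and $a_{1} \in U(\Qbar)$ we have $a_{1} \neq 0$, and the fiber $y^{p} = x \cdot ( x^{p^{n-1}} - a_{1} )$ is isomorphic over $\Qbar$ to $y^{p} = x \cdot ( x^{p^{n-1}} - 1 )$. To see this, apply the remark with $a = a_{1}^{-1}$, using the explicit rescaling $( x, y ) \mapsto \bigl( \sqrt[p^{n-1}]{a} \cdot x, \sqrt[p^{n}]{a^{p^{n-1}+1}} \cdot y \bigr)$. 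So every fiber lies in the isomorphism class of a single curve $\Ccal_{0}$, and it is enough to study $\Jac(\Ccal_{0})$ (equivalently $\Jbf^{\mathrm{al}}$, since the family is isotrivial).

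For the second assertion I will use CM theory. By Lemma~\ref{lem: if L = 1 then Jbf is of CM type; subsec: Preliminaries; sec: Cyclic groups of order p^n}, which applies verbatim to each fiber because the specialization of $\varphi$ acts on the basis of Lemma~\ref{lem: basis of regular differential forms on Cbf; subsec: Preliminaries; sec: Cyclic groups of order p^n} by the same characters, $\Jac(\Ccal_{0})$ has CM by $\Q(\zeta_{p^{n}})$ with CM type $T$. The standard criterion (Shimura--Taniyama; see e.g.\ \cite{mumford1970abelian} or Lang's book on CM) says the following. An abelian variety with CM by a field $F$ that is Galois over $\Q$, and with CM type $\Phi \subseteq \Gal(F/\Q)$, is isogenous to a power of a simple CM abelian variety attached to the subfield $F^{\Stab(\Phi)}$ fixed by the stabilizer $\Stab(\Phi) = \{ \gamma : \Phi\gamma = \Phi \}$. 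In particular it is simple if and only if $\Phi$ is primitive, i.e.\ $\Stab(\Phi)$ is trivial. Since $\Gal(\Q(\zeta_{p^{n}})/\Q)$ is abelian, left and right stabilizers coincide. Lemma~\ref{lem: stabilizer of CM type; subsec: Preliminaries; sec: Cyclic groups of order p^n} gives $\Stab(T) = \{1\}$ for $p > 2$, so $T$ is primitive and $\Jac(\Ccal_{0})$ is simple.

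For the endomorphism algebra, use that a simple CM abelian variety $A$ of dimension $g$ with CM by $F$, $[F:\Q] = 2g$, has $\End^{0}(A) = F$. Indeed, by Lemma~\ref{lem: divisibility dimension and degree End simple abelian varieties; subsec: Abelian varieties; sec: Preliminaries}, $\dim_{\Q} \End^{0}(A)$ divides $2g$. The division algebra $\End^{0}(A)$ contains the commutative subfield $F$ of degree $2g$, and since $F$ is a maximal commutative subalgebra (its degree already attains the bound), $F$ is its own centralizer. Therefore $\End^{0}(A) = F$. Alternatively, one can cite the fact that for a primitive CM type the endomorphism algebra is exactly the CM field. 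Applying this to $\Jac(\Ccal_{0})$ gives $\End^{0} \cong \Q(\zeta_{p^{n}})$.

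The main point needing care is the precise form of the primitivity criterion. The stabilizer must be taken with respect to the correct side of the action, and it must be checked that the CM type read off from the differentials is indeed $T$ rather than its complex conjugate; conjugation does not change triviality of the stabilizer, so this is harmless. Everything else follows directly from the preceding lemmas.
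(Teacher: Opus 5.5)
Your proof is correct and takes essentially the paper's route: the rescaling remark gives isotriviality, then CM by $\Q(\zeta_{p^{n}})$ with type $T$, the trivial stabilizer of $T$ for $p>2$, and the Shimura--Taniyama primitivity criterion (the paper cites \cite[Theorem~3.5]{LangCM} for both simplicity and $\End^{0}\cong\Q(\zeta_{p^{n}})$). One small point about your endomorphism step: the sentence that $F$ is its own centralizer does not by itself give $\End^{0}(A)=F$, since a quaternion division algebra also has self-centralizing maximal subfields; what actually closes the step is the combination $2g=[F:\Q]\le\dim_{\Q}\End^{0}(A)$ and $\dim_{\Q}\End^{0}(A)\mid 2g$, and you already invoke both.
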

            \begin{proof}
                Remark~\ref{rem: if L = 1, there are finitely many classes; subsec: Preliminaries; sec: Cyclic groups of order p^n} shows that the curves $y^{p} = x \cdot ( x^{p^{n-1}} - a )$ are isomorphic over $\Qbar$ as $a$ varies in $\Qbar^{*}$. Hence, the fibers of $\Cfrak \to U$ over $\Qbar$-points of $U$ belong to a single isomorphism class of curves over $\Qbar$.

                In Lemma~\ref{lem: if L = 1 then Jbf is of CM type; subsec: Preliminaries; sec: Cyclic groups of order p^n} we proved that $\Jbf^{\mathrm{al}}$ has CM by $\Q ( \zeta_{p^{n}} )$, and in Lemma~\ref{lem: stabilizer of CM type; subsec: Preliminaries; sec: Cyclic groups of order p^n} that its CM type has trivial stabilizer. Hence, \cite[Theorem~3.5]{LangCM} implies that $\Jbf^{\mathrm{al}}$ is a simple abelian variety with endomorphism algebra isomorphic to $\Q ( \zeta_{p^{n}} )$, as desired. 
            \end{proof}

            \begin{Lem}
            \label{lem: L=2 is true; subsec: The case p>2; sec: Cyclic groups of order p^n}
                Theorem~\ref{the: main; sec: Cyclic groups of order p^n} holds for $p > 2$ and $L = 2$.
            \end{Lem}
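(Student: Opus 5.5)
By Theorems~\ref{the: reduction first statement main the; subsec: Preliminaries; sec: Cyclic groups of order p^n} and~\ref{the: there are infinitely many classes; subsec: Preliminaries; sec: Cyclic groups of order p^n}, it suffices to prove that $\End^{0} ( \Jbf^{\mathrm{al}} ) \cong \Q ( \zeta_{p^{n}} )$ when $L = 2$. Here $\dim ( \Jbf^{\mathrm{al}} ) = \varphi ( p^{n} )$, and $\Q ( \zeta_{p^{n}} )$ embeds into the endomorphism algebra of every isotypic component (Lemma~\ref{lem: End0Jbf contains Q(zetapn); subsec: Preliminaries; sec: Cyclic groups of order p^n}). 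Hence every isotypic component has dimension a multiple of $\varphi ( p^{n} ) / 2$. So $\Jbf^{\mathrm{al}}$ is either isotypic, or isogenous to $Z_{1} \times Z_{2}$ with $\dim Z_{i} = \varphi ( p^{n} ) / 2$ and $Z_{1}, Z_{2}$ non-isogenous.

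To control the possibilities, I would specialize using Proposition~\ref{pro: particular specialization; subsec: specialization methods; sec: Cyclic groups of order p^n} with $S = \{ 2 \}$ and $S = \{ 1 \}$. Each time we obtain $\Jcal \cong \Jac ( \Ccal_{1} ) \times \Jac ( \Ccal_{2} )$, where each $\Ccal_{i}$ is a member of the $L = 1$ family. By Lemma~\ref{lem: L = 1 is true; subsec: The case p>2; sec: Cyclic groups of order p^n}, all such $\Ccal_{i}$ are isomorphic to the single curve $y^{p} = x ( x^{p^{n-1}} - 1 )$. Its Jacobian $A$ is simple with CM by $\Q ( \zeta_{p^{n}} )$ of type $T$, so every specialization is isogenous to $A^{2}$ and $\End^{0} ( \Jcal ) \cong \M_{2} ( \Q ( \zeta_{p^{n}} ) )$. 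Lemma~\ref{lem: End0 embeds and torsion bijects under specialization; subsec: specialization methods; sec: Preliminaries} then embeds $\End^{0} ( \Jbf^{\mathrm{al}} )$ into $\M_{2} ( \Q ( \zeta_{p^{n}} ) )$, compatibly with $\Phi$. By Lemma~\ref{lem: how Phi specializes in JTt; subsec: specialization methods; sec: Cyclic groups of order p^n}, $\iota ( \Phi )$ acts on the two factors as $\Phi_{1}^{1 - p^{n-1}}$ and $\Phi_{2}$.

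Next I would rule out the remaining cases. If $\Jbf^{\mathrm{al}}$ were isotypic of the form $B^{2}$ or $B$ with a larger algebra, its simple factors would carry CM, contradicting Lemma~\ref{lem: Cbf is not of CM type if L > 1; subsec: Preliminaries; sec: Cyclic groups of order p^n}. More precisely, any commutative subfield strictly containing $\Q(\zeta_{p^n})$, or any noncommutative algebra containing it, forces a simple factor of dimension $\le \varphi(p^n)/2$ with CM. Likewise, in the split case $Z_{1} \times Z_{2}$, each $Z_{i}$ has dimension $\varphi ( p^{n} ) / 2$ and contains $\Q ( \zeta_{p^{n}} )$, so it has CM, again contradicting Lemma~\ref{lem: Cbf is not of CM type if L > 1; subsec: Preliminaries; sec: Cyclic groups of order p^n}. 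Hence $\Jbf^{\mathrm{al}}$ is simple and $\End^{0} ( \Jbf^{\mathrm{al}} )$ is a division algebra containing $\Q ( \zeta_{p^{n}} )$ as a maximal-dimensional commutative subfield candidate. Lemma~\ref{lem: divisibility dimension and degree End simple abelian varieties; subsec: Abelian varieties; sec: Preliminaries} forces its $\Q$-dimension to divide $2 \varphi ( p^{n} )$, so it is either $\Q ( \zeta_{p^{n}} )$ or a quaternion algebra over $\Q ( \zeta_{p^{n}} )$. The latter would be of type IV with a quaternion algebra over a CM field in dimension $\varphi(p^n)$, which is exactly the degenerate situation.

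I expect this last exclusion to be the main obstacle. To handle it, I would compare the action of $\Q ( \zeta_{p^{n}} )$ on $H^{0} ( \Cbf^{\mathrm{al}}, \Omega^{1} )$. The characters are $T$ together with the type obtained from $T$ on the first factor twisted by $1 - p^{n-1}$. Because $\Stab ( T )$ is trivial (Lemma~\ref{lem: stabilizer of CM type; subsec: Preliminaries; sec: Cyclic groups of order p^n}), the twisted type $(1-p^{n-1})T$ differs from $T$. So the two CM types in the specialization are distinct, and the multiplicities of the characters of $\Q ( \zeta_{p^{n}} )$ on differentials are not all equal. A quaternion algebra over $\Q ( \zeta_{p^{n}} )$ commuting with $\Phi$ would force equal multiplicities $1$ or $0$ of each character, paired as $\{ \sigma, \sigma \}$. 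That contradicts the mixed multiplicities read off from Lemma~\ref{lem: basis of regular differential forms on Cbf; subsec: Preliminaries; sec: Cyclic groups of order p^n}, which are $2$, $1$ and $0$.
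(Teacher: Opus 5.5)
Your reduction to $\End^{0}(\Jbf^{\mathrm{al}}) \cong \Q(\zeta_{p^{n}})$ is fine. So is the divisibility of the dimensions of isotypic components, and so is the exclusion of two non-isogenous factors via Lemma~\ref{lem: Cbf is not of CM type if L > 1; subsec: Preliminaries; sec: Cyclic groups of order p^n}. The gap is the claim that ``any noncommutative algebra containing $\Q(\zeta_{p^{n}})$ forces a simple factor with CM''. This is false, and it is exactly where the difficulty of this lemma lies.

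Write $K = \Q(\Phi) \cong \Q(\zeta_{p^{n}})$ and $K^{+}$ for its maximal totally real subfield. Suppose $\Jbf^{\mathrm{al}} \sim B^{2}$ with $\End^{0}(B) \cong K^{+}$. Then $\End^{0}(\Jbf^{\mathrm{al}}) \cong \M(2, K^{+})$ contains $K$ as a maximal subfield and embeds into $\M(2, K)$, yet $B$ has real multiplication, not CM. So ``hence $\Jbf^{\mathrm{al}}$ is simple'' does not follow. (The omitted case $B^{k}$ with $k \ge 3$ is harmless: Lemma~\ref{lem: isogeny decomposition specializes; subsec: specialization methods; sec: Preliminaries} applied to $\Jcal \sim A^{2}$ bounds the number of simple factors by $2$.)

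In the simple case your dichotomy is also wrong. A quaternion algebra over $K$ has $\Q$-dimension $4\varphi(p^{n})$, which does not divide $2\varphi(p^{n})$, so it cannot occur. The actual residual case is a quaternion division algebra over $K^{+}$ (type II or III) having $K$ as a maximal subfield. In both residual cases $\Phi$ is not central. Your closing argument about a quaternion algebra ``commuting with $\Phi$'' therefore addresses an impossible algebra and leaves both cases open.

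The paper closes this with an observation you set up but did not use. By Lemma~\ref{lem: how Phi specializes in JTt; subsec: specialization methods; sec: Cyclic groups of order p^n}, $\iota(\Phi)$ acts on the two factors of $\Jcal$ as $\Phi_{1}^{1-p^{n-1}}$ and $\Phi_{2}$. Hence $\iota(\Phi^{p})$ acts as $\Phi_{1}^{p}$ and $\Phi_{2}^{p}$, which is scalar in $\M(2, \Q(\zeta_{p^{n}}))$. So the CM field $\Q(\Phi^{p}) \cong \Q(\zeta_{p^{n-1}})$ lies in the center $F$ of $\End^{0}(\Jbf^{\mathrm{al}})$. This gives $N^{2} \mid p d$, where $N^{2} = \dim_{F} \End^{0}(\Jbf^{\mathrm{al}})$ and $d = \dim_{K} \End^{0}(\Jbf^{\mathrm{al}}) \le 4$. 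For $d = 2$ and $p$ odd this forces $N = 1$, leaving only a CM field, which is excluded. Both residual cases above have $d = 2$ and $N = 2$; equivalently, their center $K^{+}$ does not contain $\Q(\zeta_{p^{n-1}})$.

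Your multiplicity idea can be repaired into a genuine alternative. Let $m_{\sigma}$ be the multiplicity of the character $\zeta_{p^{n}} \mapsto \zeta_{p^{n}}^{\sigma}$ of $\Phi$ on $H^{0}(\Cbf^{\mathrm{al}}, \Omega^{1})$.
\begin{itemize}
\item In both residual cases the center $K^{+}$ lies in $K$, so Skolem--Noether gives a unit $j$ with $j \Phi j^{-1} = \Phi^{-1}$.
\item Pulling back differentials by $j$ maps the $\sigma$-eigenspace onto the $\bar{\sigma}$-eigenspace, forcing $m_{\sigma} = m_{\bar{\sigma}}$ for all $\sigma$.
\item Lemma~\ref{lem: basis of regular differential forms on Cbf; subsec: Preliminaries; sec: Cyclic groups of order p^n} gives $m_{1} = 2$, from $\omega_{0, p-1}$ and $\omega_{p^{n-1}, p-1}$ (both exist since $p \ge 3$), while $m_{-1} = 0$. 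This is a contradiction.
\end{itemize}
The ingredient you need is this element normalizing $K$ and acting on it as complex conjugation. Neither a comparison of the two CM types nor an algebra centralizing $\Phi$ does the job.
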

            \begin{proof}
                Let $\Jbf^{\mathrm{al}} \sim X_{1} \times X_{2} \times \cdots \times X_{A}$ be an isogeny decomposition of $\Jbf^{\mathrm{al}}$, where some $X_{i}$ may be isogenous to each other. We begin by proving that $A \le 2$.

                In Lemma~\ref{lem: L = 1 is true; subsec: The case p>2; sec: Cyclic groups of order p^n} we proved that the curves given by $y^{p} = x \cdot ( x^{p^{n-1}} - a )$ are isomorphic over $\Qbar$ as $a$ varies in $\Qbar^{*}$, and that their Jacobians are isogenous to the same simple abelian variety $Y$, of dimension $\varphi ( p^{n} ) / 2$ (Remark~\ref{rem: genus of Cbf; subsec: Setup; sec: Cyclic groups of order p^n}), with endomorphism algebra isomorphic to $\Q ( \zeta_{p^{n}} )$. Let $\overline{ \Phi }$ be the automorphism of these Jacobians induced by the automorphism of the curves given by $( x, y ) \mapsto ( \zeta_{ p^{n-1} } \cdot x, \zeta_{ p^{n} } \cdot y )$, and recall that $\Q ( \zeta_{p^{n}} ) \cong \Q ( \overline{ \Phi } )$ (cf. Lemma~\ref{lem: End0Jbf contains Q(zetapn); subsec: Preliminaries; sec: Cyclic groups of order p^n}).
                    
                By applying Proposition~\ref{pro: particular specialization; subsec: specialization methods; sec: Cyclic groups of order p^n} with a fixed $( a_{1}, a_{2} ) \in U ( \Qbar )$ and $S = \{ 2 \}$, we deduce the existence of an abelian variety $\Jcal$, of the form $\Jfrak_{T', t}$ for some admissible $T'$ and $t \in \Abb_{\Qbar}^{1} ( \Qbar ) \supset T' ( \Qbar )$, isogenous to $Y^{2}$. By applying the first statement of Lemma~\ref{lem: End0 embeds and torsion bijects under specialization; subsec: specialization methods; sec: Preliminaries} with $\Jcal$, we deduce the existence of embeddings
                \begin{equation}
                \label{eq: End0J embeds in M2Qzetapn}
                    \Q ( \Phi ) \xhookrightarrow{} \End^{0} ( \Jbf^{\mathrm{al}} ) \xhookrightarrow{ \iota } \M \bigl( 2, \Q ( \zeta_{p^{n}} ) \bigr) .
                \end{equation}
                In Lemma~\ref{lem: isogeny decomposition specializes; subsec: specialization methods; sec: Preliminaries} we proved that the number of factors in an isogeny decomposition of $\Jbf^{\mathrm{al}}$ is at most that of an isogeny decomposition of $\Jcal$; hence, $A \le 2$. Moreover, under the identification $\M \bigl( 2, \Q ( \zeta_{p^{n}} ) \bigr) \cong \M \bigl( 2, \Q ( \overline{ \Phi } ) \bigr)$, we know that $\iota ( \Phi^{p} )$ is a multiple of the identity matrix, hence $\iota ( \Phi^{p} )$ lies in the center of $\M \bigl( 2, \Q ( \zeta_{p^{n}} ) \bigr)$ (Lemma~\ref{lem: how Phi specializes in JTt; subsec: specialization methods; sec: Cyclic groups of order p^n}).
                    
                Now, let us prove that $\End^{0} ( \Jbf^{\mathrm{al}} )$ is a simple algebra (equivalently, $\Jbf^{\mathrm{al}}$ is isotypic). If $A = 1$, we are done. Otherwise we have $A = 2$. In Lemma~\ref{lem: End0 embeds and torsion bijects under specialization; subsec: specialization methods; sec: Preliminaries} we proved that an abelian subvariety of $\Jbf^{\mathrm{al}}$ specializes to an abelian subvariety of $\Jcal$ of the same dimension. Since all the proper abelian subvarieties of $\Jcal$ are isogenous to $Y$, it follows that $\dim ( X_{1} ) = \dim ( X_{2} ) = \dim ( Y ) = \varphi ( p^{n} ) / 2$. If $X_{1}$ and $X_{2}$ are not isogenous, both $\End^{0} ( X_{1} )$ and $\End^{0} ( X_{2} )$ contain a field isomorphic to $\Q ( \zeta_{p^{n}} )$ (last statement of Lemma~\ref{lem: End0Jbf contains Q(zetapn); subsec: Preliminaries; sec: Cyclic groups of order p^n}), which has degree $2 \dim ( X_{1} ) = 2 \dim ( X_{2} )$; hence, both $X_{1}$ and $X_{2}$ have CM by $\Q ( \zeta_{p^{n}} )$, which contradicts Lemma~\ref{lem: Cbf is not of CM type if L > 1; subsec: Preliminaries; sec: Cyclic groups of order p^n}. Hence, $\Jbf^{\mathrm{al}}$ is isotypic.
                    
                Finally, we prove that $\End^{0} ( \Jbf^{\mathrm{al}} )$ is isomorphic to $\Q ( \zeta_{p^{n}} )$. Let $d$ be the dimension of $\End^{0} ( \Jbf^{\mathrm{al}} )$ over $\Q ( \Phi ) \cong \Q ( \zeta_{p^{n}} )$. Since the dimension of $\M \bigl( 2, \Q ( \zeta_{p^{n}} ) \bigr)$ over $\Q ( \zeta_{p^{n}} )$ is $4$, \eqref{eq: End0J embeds in M2Qzetapn} implies that $d \le 4$. Let $F$ be the center of the central simple algebra $\End^{0} ( \Jbf^{\mathrm{al}} )$ and let $N^{2}$ be the dimension of $\End^{0} ( \Jbf^{\mathrm{al}} )$ over $F$. Since $\iota ( \Phi^{p} )$ lies in the center of $\M \bigl( 2, \Q ( \zeta_{p^{n}} ) \bigr)$ and $\End^{0} ( \Jbf^{\mathrm{al}} )$ is a subalgebra of $\M \bigl( 2, \Q ( \zeta_{p^{n}} ) \bigr)$, it follows that $\Q ( \Phi^{p} ) \subseteq F$. We have both 
                \begin{equation*}
                    \dim_{\Q ( \Phi^{p} )} \bigl( \End^{0} ( \Jbf^{\mathrm{al}} ) \bigr) = \dim_{\Q ( \Phi^{p} )} (F) \cdot \dim_{F} \bigl( \End^{0} ( \Jbf^{\mathrm{al}} ) \bigr) = \dim_{\Q ( \Phi^{p} )} (F) \cdot N^{2}
                \end{equation*}
                and
                \begin{equation*}
                    \dim_{\Q ( \Phi^{p} )} \bigl( \End^{0} ( \Jbf^{\mathrm{al}} ) \bigr) = \dim_{\Q ( \Phi^{p} )} \bigl( \Q ( \Phi ) \bigr) \cdot \dim_{\Q ( \Phi )} \End^{0}\bigl(  \Jbf^{\mathrm{al}}  \bigr) = p \cdot d ,
                \end{equation*}
                so $N^{2}$ divides $p \cdot d$. When $N = 1$, the algebra $\End^{0} ( \Jbf^{\mathrm{al}} ) = F$ is a field of degree $d \cdot \bigl[ \Q ( \zeta_{p^{n}} ) \colon \Q \bigr]$ over $\Q$.
                \begin{itemize}
                    \item If $d = 1$, then $\End^{0} ( \Jbf^{\mathrm{al}} )$ is isomorphic to $\Q ( \zeta_{p^{n}} )$, as desired.
                    \item If $d = 2$, we have $N^{2} \mid 2p$.
                    Since $p > 2$, it follows that $N = 1$, hence $\End^{0} ( \Jbf^{\mathrm{al}} )$ is a field of degree $2 \varphi ( p^{n} )$ over $\Q$. Since $\dim ( \Jbf^{\mathrm{al}} ) = \varphi ( p^{n} )$ (Remark~\ref{rem: genus of Cbf; subsec: Setup; sec: Cyclic groups of order p^n}), it follows that $\Jbf^{\mathrm{al}}$ has CM, which contradicts Lemma~\ref{lem: Cbf is not of CM type if L > 1; subsec: Preliminaries; sec: Cyclic groups of order p^n}.
                    \item If $d = 3$, then the $\Q$-dimension of $\End^{0} ( \Jbf^{\mathrm{al}} )$ is $3 \varphi ( p^{n} )$. If $\Jbf^{\mathrm{al}}$ is simple, there is a contradiction to Lemma~\ref{lem: divisibility dimension and degree End simple abelian varieties; subsec: Abelian varieties; sec: Preliminaries} which asserts that the $\Q$-dimension of $\End^{0} ( \Jbf^{\mathrm{al}} )$ divides $2 \dim ( \Jbf^{\mathrm{al}} ) = 2 \varphi ( p^{n} )$. Hence, $A = 2$ and $N > 1$, since $\End^{0} ( \Jbf^{\mathrm{al}} )$ cannot be a field if $\Jbf^{\mathrm{al}}$ is not simple. We know that $N^{2}$ divides $d \cdot p = 3 p$. It follows that $p = N = 3$ and 
                    \begin{equation*}
                        9 = N^{2} = \dim_{F} \bigl( \End^{0} ( \Jbf^{\mathrm{al}} ) \bigr) = \dim_{F} \Bigl( \M \bigl( 2, \End^{0} ( X_{1} ) \bigr) \Bigr) = 4 \dim_{F} \bigl( \End^{0} ( X_{1} ) \bigr),
                    \end{equation*}
                    giving a contradiction.
                    \item If $d = 4$, comparing dimensions in \eqref{eq: End0J embeds in M2Qzetapn} we obtain that $\End^{0} ( \Jbf^{\mathrm{al}} )$ is isomorphic to $\M \bigl( 2, \Q ( \zeta_{p^{n}} ) \bigr)$. It follows that $A = 2$ and $\End^{0} ( X_{1} )$ is isomorphic to $\Q ( \zeta_{p^{n}} )$. Since $\dim ( X_{1} ) = \varphi ( p^{n} ) / 2$, it follows that $X_{1}$ has CM, which contradicts Lemma~\ref{lem: Cbf is not of CM type if L > 1; subsec: Preliminaries; sec: Cyclic groups of order p^n}.
                \end{itemize}
                Since $\End^{0} ( \Jbf^{\mathrm{al}} )$ is isomorphic to $\Q ( \zeta_{p^{n}} )$, Theorems~\ref{the: reduction first statement main the; subsec: Preliminaries; sec: Cyclic groups of order p^n} and~\ref{the: there are infinitely many classes; subsec: Preliminaries; sec: Cyclic groups of order p^n} imply that Theorem \ref{the: main; sec: Cyclic groups of order p^n} holds for $L = 2$, as desired.
            \end{proof}

            Finally, we consider the general case.

            \begin{proof}[Proof of Theorem~\ref{the: main; sec: Cyclic groups of order p^n} when $p > 2$]
                We proceed by induction on $L \ge 1$. Lemmas~\ref{lem: L = 1 is true; subsec: The case p>2; sec: Cyclic groups of order p^n} and \ref{lem: L=2 is true; subsec: The case p>2; sec: Cyclic groups of order p^n} prove the claim for $L = 1$ and $L = 2$. So, let us assume that $L \ge 2$ and that the claim holds for every $1 \le L' \le L$, and let us prove it for $L + 1$. 
                
                Let $\Jbf^{\mathrm{al}} \sim X_{1} \times X_{2} \times \cdots \times X_{A}$ be an isogeny decomposition of $\Jbf^{\mathrm{al}}$, where some $X_{i}$ may be isogenous to each other. We regard the $X_{i}$ as abelian subvarieties of $\Jbf^{\mathrm{al}}$.

                In Lemma~\ref{lem: useful to apply cor: exclusion cases with torsion arguments; subsec: specialization methods; sec: Cyclic groups of order p^n} we proved that there exists $( a_{1}, \dots, a_{L+1} ) \in U ( \Qbar ) \subset \Abb_{\Qbar}^{L+1} ( \Qbar )$ such that the endomorphism algebras of the Jacobians of both curves $\Ccal_{1} \colon y^{p} = x \cdot ( x^{p^{n-1}} - a_{1} ) \cdots ( x^{p^{n-1}} - a_{L} )$ and $\Ccal_{3} \colon y^{p} = x \cdot ( x^{p^{n-1}} - a_{2} ) \cdots ( x^{p^{n-1}} - a_{L+1} )$ are isomorphic to that of the Jacobian of the generic fiber of $y^{p} = x \cdot ( x^{p^{n-1}} - \alpha_{1} ) \cdots ( x^{p^{n-1}} - \alpha_{L} )$. The inductive hypothesis implies that the Jacobians of $\Ccal_{1}$ and $\Ccal_{3}$ are simple, of dimension $\varphi(p^{n}) \cdot L / 2$ (Remark~\ref{rem: genus of Cbf; subsec: Setup; sec: Cyclic groups of order p^n}), with endomorphism algebras isomorphic to $\Q ( \zeta_{p^{n}} )$.
                    
                In Lemma~\ref{lem: L = 1 is true; subsec: The case p>2; sec: Cyclic groups of order p^n} we proved that the curves given by $y^{p} = x \cdot ( x^{p^{n-1}} - a )$ are isomorphic over $\Qbar$ as $a$ varies in $\Qbar^{*}$, and that their Jacobians are isogenous to the same simple abelian variety, of dimension $\varphi ( p^{n} ) / 2$ (Remark~\ref{rem: genus of Cbf; subsec: Setup; sec: Cyclic groups of order p^n}), with endomorphism algebra isomorphic to $\Q  ( \zeta_{p^{n}} )$. Consider the curves $\Ccal_{2} \colon y^{p} = x \cdot ( x^{p^{n-1}} - a_{L+1} )$ and $\Ccal_{4} \colon y^{p} = x \cdot ( x^{p^{n-1}} - a_{1} )$.

                By applying Proposition~\ref{pro: particular specialization; subsec: specialization methods; sec: Cyclic groups of order p^n} with $( a_{1}, \dots, a_{L+1} ) \in U ( \Qbar )$ and $S = \{ L+1 \}$, we deduce the existence of an abelian variety $\Jcal_{1}$, of the form $\Jfrak_{T', t}$ for some admissible $T'$ and $t \in \Abb_{\Qbar}^{1} ( \Qbar ) \supset T' ( \Qbar )$, isomorphic to $\Jac ( \Ccal_{1} ) \times \Jac ( \Ccal_{2} )$. Since $\Jac ( \Ccal_{1} )$ and $\Jac ( \Ccal_{2} )$ are simple and have different dimensions, the algebra $\End^{0} ( \Jcal_{1} )$ is isomorphic to $\Q ( \zeta_{p^{n}} ) \times \Q ( \zeta_{p^{n}} )$. 
                    
                Remark~\ref{rem: simple observation of lem: End0 embeds and torsion bijects under specialization; subsec: specialization methods; sec: Preliminaries} shows that the endomorphism algebra of any isotypic component of $\Jbf^{\mathrm{al}}$ embeds into the endomorphism algebra of at least one isotypic component of $\Jcal_{1}$; hence, it embeds into $\Q ( \zeta_{p^{n}} )$. On the other hand, in Lemma~\ref{lem: End0Jbf contains Q(zetapn); subsec: Preliminaries; sec: Cyclic groups of order p^n} we proved that the endomorphism algebra of each isotypic component of $\Jbf^{\mathrm{al}}$ contains a field isomorphic to $\Q ( \zeta_{p^{n}} )$. It follows that the endomorphism algebra of every isotypic component of $\Jbf^{\mathrm{al}}$ is isomorphic to $\Q ( \zeta_{p^{n}} )$. In particular, every isotypic component of $\Jbf^{\mathrm{al}}$ is simple, and $\End^{0} ( \Jbf^{\mathrm{al}} )$ is isomorphic to the product of $A$ copies of $\Q ( \zeta_{p^{n}} )$. The first statement of Lemma~\ref{lem: End0 embeds and torsion bijects under specialization; subsec: specialization methods; sec: Preliminaries} applied to $\Jcal_{1}$ implies that $\End^{0} ( \Jbf^{\mathrm{al}} )$ injects into $\Q ( \zeta_{p^{n}} ) \times \Q ( \zeta_{p^{n}} )$. It follows that either $A = 1$ or $A = 2$.

                Let us exclude the case $A = 2$. The nontrivial abelian subvarieties of $\Jcal_{1}$ are $\Jac ( \Ccal_{1} )$ and $\Jac ( \Ccal_{2}  )$. In Lemma~\ref{lem: End0 embeds and torsion bijects under specialization; subsec: specialization methods; sec: Preliminaries} we proved that an abelian subvariety of $\Jbf^{\mathrm{al}}$ specializes to an abelian subvariety of $\Jcal_{1}$ of the same dimension. Hence, up to renumbering the $X_{i}$, we have that $X_{1}$ (resp.~$X_{2}$) specializes to $\Jac ( \Ccal_{1} )$ (resp.~$\Jac ( \Ccal_{2} )$) in $\Jcal_{1}$. By applying Proposition~\ref{pro: particular specialization; subsec: specialization methods; sec: Cyclic groups of order p^n} with $( a_{1}, \dots, a_{L+1} ) \in U ( \Qbar )$ and $S = \{ 1 \}$, we deduce the existence of an abelian variety $\Jcal_{2}$, of the form $\Jfrak_{T', t}$ for some admissible $T'$ and $t \in \Abb_{\Qbar}^{1} ( \Qbar ) \supset T' ( \Qbar )$, isomorphic to $\Jac ( \Ccal_{3} ) \times \Jac ( \Ccal_{4} )$. Similarly, it follows that $X_{1}$ (resp.~$X_{2}$) specializes to $\Jac ( \Ccal_{3} )$ (resp.~$\Jac ( \Ccal_{4} )$) in $\Jcal_{2}$. This contradicts Corollary~\ref{cor: exclusion cases with torsion arguments; subsec: specialization methods; sec: Cyclic groups of order p^n} applied to $( a_{1}, \dots, a_{L+1} ) \in U ( \Qbar )$, $S_{1} = \{ L+1 \}$ and $S_{2} = \{ 1 \}$.

                Hence, $A = 1$ and $\End^{0} ( \Jbf^{\mathrm{al}} )$ is isomorphic to $\Q ( \zeta_{p^{n}} )$. Theorems~\ref{the: reduction first statement main the; subsec: Preliminaries; sec: Cyclic groups of order p^n} and~\ref{the: there are infinitely many classes; subsec: Preliminaries; sec: Cyclic groups of order p^n} imply that Theorem \ref{the: main; sec: Cyclic groups of order p^n} holds for $L+1$, as desired.
            \end{proof}

        \subsection{The case $p=2$}
        \label{subsec: The case p=2; sec: Cyclic groups of order p^n}

            We now prove Theorem~\ref{the: main; sec: Cyclic groups of order p^n} when $p = 2$. In Theorems~\ref{the: reduction first statement main the; subsec: Preliminaries; sec: Cyclic groups of order p^n} and~\ref{the: there are infinitely many classes; subsec: Preliminaries; sec: Cyclic groups of order p^n} we proved that it is sufficient to show that $\End^{0} ( \Jbf^{\mathrm{al}} ) \cong \Q ( \zeta_{2^{n}} )$. 
              
            The main difference from the case of odd $p$ arises in the application of Lemma~\ref{lem: stabilizer of CM type; subsec: Preliminaries; sec: Cyclic groups of order p^n}. When $p = 2$ and $L = 1$, the abelian variety $\Jbf^{\mathrm{al}}$ again has CM by $\Q ( \zeta_{2^{n}} )$. However, when $n > 2$, its CM type no longer has trivial stabilizer in $\Gal \bigl( \Q ( \zeta_{2^{n}} ) / \Q \bigr)$; hence, $\Jbf^{\mathrm{al}}$ is no longer simple.

            \begin{Lem}
            \label{lem: L = 1; subsec: The case p=2; sec: Cyclic groups of order p^n}
                When $p = 2$ and $L = 1$, the fibers of $\Cfrak \to U$ over $\Qbar$-points of $U$ belong to a single isomorphism class of curves over $\Qbar$. Moreover:
                \begin{enumerate}
                    \item if $n = 2$ the Jacobians of these fibers are elliptic curves with endomorphism algebras isomorphic to $\Q ( i )$; 
                    \item if $n > 2$ the Jacobians of these fibers are isogenous to the square of a simple abelian variety with endomorphism algebra isomorphic to $K_{n} \coloneqq \Q  \bigl( \zeta_{2^{n}} - \zeta_{2^{n}}^{-1} \bigr)$.
                \end{enumerate}
            \end{Lem}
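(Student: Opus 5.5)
The first assertion is handled exactly as in Lemma~\ref{lem: L = 1 is true; subsec: The case p>2; sec: Cyclic groups of order p^n}. Remark~\ref{rem: if L = 1, there are finitely many classes; subsec: Preliminaries; sec: Cyclic groups of order p^n} shows that all curves $y^{2} = x ( x^{2^{n-1}} - a )$ with $a \in \Qbar^{*}$ are isomorphic over $\Qbar$. So it suffices to study one fiber, say $a = 1$, or equivalently $\Jbf^{\mathrm{al}}$.

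By Lemma~\ref{lem: if L = 1 then Jbf is of CM type; subsec: Preliminaries; sec: Cyclic groups of order p^n}, this Jacobian has CM by $\Q ( \zeta_{2^{n}} )$. Its CM type is $T = \{ 2\sigma_{1} - 1 \colon 1 \le \sigma_{1} \le 2^{n-2} \}$, that is, the odd residues in $[1, 2^{n-1})$. The dimension is $\varphi ( 2^{n} ) / 2 = 2^{n-2}$.

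If $n = 2$, the curve $y^{2} = x^{3} - x$ has genus $1$, and $\Q ( i ) = \Q ( \zeta_{4} )$ embeds into its endomorphism algebra. An elliptic curve with CM has endomorphism algebra an imaginary quadratic field, so the algebra equals $\Q ( i )$. This gives part (1).

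For $n > 2$, I would use the standard theory of CM types, namely Lang's \cite[Theorem~3.5]{LangCM} together with its companion statement on induced types. Lemma~\ref{lem: stabilizer of CM type; subsec: Preliminaries; sec: Cyclic groups of order p^n} gives $\Stab ( T ) = \{ 1, 2^{n-1} - 1 \} = H$. Let $K' \subseteq \Q ( \zeta_{2^{n}} )$ be the fixed field of $H$. Then $T$ is induced from a primitive CM type $T_{0}$ on $K'$, and the abelian variety is isogenous to $B^{[ \Q(\zeta_{2^{n}}) \colon K' ]} = B^{2}$, where $B$ is simple with CM by $K'$ and $\End^{0} ( B ) \cong K'$.

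It remains to identify $K'$. Set $\zeta = \zeta_{2^{n}}$ and $\tau \colon \zeta \mapsto \zeta^{2^{n-1} - 1} = -\zeta^{-1}$. Then
\begin{equation*}
  \tau ( \zeta - \zeta^{-1} ) = -\zeta^{-1} + \zeta = \zeta - \zeta^{-1},
\end{equation*}
so $\zeta - \zeta^{-1}$ lies in $K'$. Its square $\zeta^{2} + \zeta^{-2} - 2$ generates $\Q ( \zeta_{2^{n-1}} + \zeta_{2^{n-1}}^{-1} )$, which has degree $2^{n-3}$. Moreover, $\zeta - \zeta^{-1}$ itself is not real, since its square is negative. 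Hence $\Q ( \zeta - \zeta^{-1} )$ has degree $2^{n-2} = [ K' \colon \Q ]$, and therefore $K' = K_{n}$. This field is CM, being a totally imaginary quadratic extension of a totally real field, which is consistent with $\dim B = 2^{n-3}$. This gives part (2).

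The main obstacle is making the induced-type step rigorous. One has to verify that $T$ is exactly the pullback of a CM type $T_{0}$ on $K_{n}$, which holds because $T$ is a union of $H$-cosets ($H$ stabilizes $T$). One also has to check that $T_{0}$ is primitive. For that I would argue that the stabilizer of $T_{0}$ in $\Gal ( K_{n} / \Q ) \cong ( \Z / 2^{n} \Z )^{*} / H$ lifts to $\Stab ( T ) = H$, so it is trivial. Lang's theorem then yields the simplicity of $B$ and $\End^{0} ( B ) \cong K_{n}$.
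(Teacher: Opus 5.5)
Your proposal is correct and follows the same route as the paper. The paper's proof cites the isomorphism remark for the single isomorphism class, the CM-by-$\Q(\zeta_{2^{n}})$ lemma, the stabilizer computation $\{1, 2^{n-1}-1\}$, and then Lang's Theorem~3.5. You make explicit what the paper leaves implicit: the identification of the fixed field of that stabilizer with $K_{n}$, the check that the type is induced from a primitive type on $K_{n}$, and the direct treatment of $n = 2$.
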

            \begin{proof}
                Remark~\ref{rem: if L = 1, there are finitely many classes; subsec: Preliminaries; sec: Cyclic groups of order p^n} shows that the curves $y^{2} = x \cdot ( x^{2^{n-1}} - a )$ are isomorphic over $\Qbar$ as $a$ varies in $\Qbar^{*}$. Hence, the fibers of $\Cfrak \to U$ over $\Qbar$-points of $U$ belong to a single isomorphism class of curves over $\Qbar$.

                In Lemma~\ref{lem: if L = 1 then Jbf is of CM type; subsec: Preliminaries; sec: Cyclic groups of order p^n} we proved that $\Jbf^{\mathrm{al}}$ has CM by $\Q ( \zeta_{2^{n}} )$, and in Lemma~\ref{lem: stabilizer of CM type; subsec: Preliminaries; sec: Cyclic groups of order p^n} we computed the stabilizer of its CM type. Now it suffices to apply \cite[Theorem~3.5]{LangCM}.
            \end{proof}

            \begin{Rem}
            \label{rem: Kn not in Kn+1; subsec: The case p=2; sec: Cyclic groups of order p^n}
                If $n > 2$, then $K_{n+1}$ does not contain any subfield isomorphic to $K_{n}$. Indeed, let $S$ be a field isomorphic to $K_{n}$. There exists $s \in S$ whose minimal polynomial over $\Q$ coincides with that of $\zeta_{2^{n}} - \zeta_{2^{n}}^{-1}$; hence, $s = \zeta_{2^{n}}^{a} - \zeta_{2^{n}}^{-a}$ for some odd integer $a$. Let $\sigma \in \Gal \bigl( \Q ( \zeta_{2^{n+1}} ) / \Q \bigr)$ be the automorphism associated with the integer $2^{n} - 1$, and note that $K_{n+1}$ is the subfield of $\Q ( \zeta_{2^{n+1}} )$ fixed by $\sigma$. Since $\sigma ( s ) = -s$, it follows that $S$ cannot be a subfield of $K_{n+1}$.
            \end{Rem}

            \begin{Rem}
            \label{rem: extra automorphism L=1; subsec: The case p=2; sec: Cyclic groups of order p^n}
                In Proposition~\ref{pro: unique element of order 2 in AutC if C is simple; sec: Preliminaries} we proved that the automorphism group of a curve with genus $\ge 2$ over an algebraically closed field of characteristic zero with simple Jacobian has at most one element of order $2$. If $p = 2$ and $L = 1$, then $\Cbf^{\mathrm{al}}$ has the automorphism
                \begin{equation*}
                    \psi \colon \Cbf^{\mathrm{al}} \to \Cbf^{\mathrm{al}} \qquad\qquad\qquad ( x, y ) \mapsto \biggl( \frac{\gamma}{x}, \frac{ \beta \cdot y }{ x^{ 2^{n - 2} + 1 } } \biggr) ,
                \end{equation*}
                where $\gamma^{ 2^{n - 2} } = -\alpha_{1}$ and $\beta^{2} = -\gamma \cdot \alpha_{1}$. Moreover, if $n > 2$ the genus of $\Cbf^{\mathrm{al}}$ is $\ge 2$. Since $\psi$ has order $2$ and it is not the hyperelliptic involution of $\Cbf^{\mathrm{al}}$, Proposition~\ref{pro: unique element of order 2 in AutC if C is simple; sec: Preliminaries} gives another proof of the non-simplicity of $\Jbf^{\mathrm{al}}$.
            \end{Rem}

            We find that $\Jbf^{\mathrm{al}}$ is also not simple when $p = 2$ and $L = 2$.

            \begin{Lem}
            \label{lem: L = 2; subsec: The case p=2; sec: Cyclic groups of order p^n}
                When $p = 2$ and $L = 2$, the Jacobian $\Jbf^{\mathrm{al}}$ is isogenous to the square of a simple abelian variety with endomorphism algebra isomorphic to a subfield $F_{n}$ of $K_{n+1} = \Q  \bigl( \zeta_{2^{n+1}} - \zeta_{2^{n+1}}^{-1} \bigr)$ of index $2$.
            \end{Lem}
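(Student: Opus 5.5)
We realize $\Jbf^{\mathrm{al}}$ as a square by exhibiting an extra automorphism of $\Cbf^{\mathrm{al}}$, in the spirit of Remark~\ref{rem: extra automorphism L=1; subsec: The case p=2; sec: Cyclic groups of order p^n}. Then we bound $\End^{0}(\Jbf^{\mathrm{al}})$ from above by specializing to the reducible fibre of Proposition~\ref{pro: particular specialization; subsec: specialization methods; sec: Cyclic groups of order p^n}, and rule out the remaining possibilities with Lemma~\ref{lem: Cbf is not of CM type if L > 1; subsec: Preliminaries; sec: Cyclic groups of order p^n}.

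Here $g = 2^{n-1}$. Choose $\gamma$ with $\gamma^{2^{n-1}} = \alpha_{1}\alpha_{2}$ and a suitable $\beta$. The map
\[
\psi \colon ( x, y ) \mapsto \bigl( \gamma / x, \ \beta y / x^{2^{n-2}+1} \bigr)
\]
should be an automorphism of $\Cbf^{\mathrm{al}}$: it swaps $\{x^{2^{n-1}} = \alpha_{1}\}$ with $\{x^{2^{n-1}} = \alpha_{2}\}$ and $0$ with $\infty$. Let $\Psi$ be the induced automorphism of the Jacobian. A direct computation should give $\psi \varphi \psi^{-1} = \varphi^{-1}$, possibly up to the hyperelliptic involution $\varphi^{2^{n-1}}$. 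I will check the exponent by acting on the basis $\omega_{i,j}$ of Lemma~\ref{lem: basis of regular differential forms on Cbf; subsec: Preliminaries; sec: Cyclic groups of order p^n}.

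With $\Psi \Phi \Psi^{-1} = \Phi^{-1}$, the $\Q$-algebra generated by $\Phi$ and $\Psi$ is a quotient of the cyclic algebra $\bigl( \Q(\zeta_{2^{n}}) / F_{n}, \tau, \Psi^{2} \bigr)$. Here $\tau$ is complex conjugation and $F_{n} \coloneqq \Q( \zeta_{2^{n}} + \zeta_{2^{n}}^{-1} )$. Since $\zeta_{2^{n}} = \zeta_{2^{n+1}}^{2}$ is sent to $\zeta_{2^{n}}^{-1}$ by the automorphism $2^{n} - 1$, the field $F_{n}$ lies in $K_{n+1}$ with index $2$, as required. If $\psi^{2} = \operatorname{id}$ this algebra is split, so it is $\M(2, F_{n})$. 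Its $\Q$-dimension is $2^{n} = 2g$, and it injects into $\End^{0}(\Jbf^{\mathrm{al}})$ because $\Q(\Phi)$ already does by Lemma~\ref{lem: End0Jbf contains Q(zetapn); subsec: Preliminaries; sec: Cyclic groups of order p^n}. Its idempotents then give $\Jbf^{\mathrm{al}} \sim B^{2}$ with $\dim B = 2^{n-2}$ and $F_{n} \hookrightarrow \End^{0}(B)$. Equivalently, $B$ is isogenous to $\Jac\bigl(\Cbf^{\mathrm{al}} / \langle \psi \rangle\bigr)$.

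Next I bound $\End^{0}(\Jbf^{\mathrm{al}})$ from above. Apply Proposition~\ref{pro: particular specialization; subsec: specialization methods; sec: Cyclic groups of order p^n} with $S = \{2\}$. The special fibre is $\Jac(\Ccal_{1}) \times \Jac(\Ccal_{2})$, and both factors are $L = 1$ Jacobians. For $n > 2$, Lemma~\ref{lem: L = 1; subsec: The case p=2; sec: Cyclic groups of order p^n} makes each factor isogenous to $A_{n}^{2}$ with $\End^{0}(A_{n}) = K_{n}$. So the fibre is isogenous to $A_{n}^{4}$, and Lemma~\ref{lem: End0 embeds and torsion bijects under specialization; subsec: specialization methods; sec: Preliminaries} gives $\End^{0}(\Jbf^{\mathrm{al}}) \hookrightarrow \M(4, K_{n})$. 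The case $n = 2$ is handled the same way, with elliptic curves with CM by $\Q(i)$.

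It remains to show that $B$ is simple and that $\End^{0}(B) = F_{n}$. Suppose $B$ is not simple. Each of its simple factors still receives $F_{n}$ (the centre acts on every isotypic piece, as in the last part of Lemma~\ref{lem: End0Jbf contains Q(zetapn); subsec: Preliminaries; sec: Cyclic groups of order p^n}). The factors have dimension at most $2^{n-3}$, while $[F_{n} : \Q] = 2^{n-2}$. Counting dimensions then forces a factor of dimension $2^{n-3}$ with CM, contradicting Lemma~\ref{lem: Cbf is not of CM type if L > 1; subsec: Preliminaries; sec: Cyclic groups of order p^n}.

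So $B$ is simple and $\End^{0}(B) \supseteq F_{n}$. By Lemma~\ref{lem: divisibility dimension and degree End simple abelian varieties; subsec: Abelian varieties; sec: Preliminaries}, $\dim_{\Q} \End^{0}(B)$ divides $2^{n-1}$, so $\End^{0}(B)$ has dimension $1$, $2$ or $4$ over $F_{n}$. Dimension $2$ with a field would make $B$ CM, which is excluded. The remaining cases are a quaternion algebra over $F_{n}$ and a field of degree $2$ over $F_{n}$ with a non-commutative twist. I expect excluding these to be the main obstacle.

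For that step I would first use that $\End^{0}(\Jbf^{\mathrm{al}}) = \M(2, \End^{0}(B))$ must embed in $\M(4, K_{n})$ while preserving the dimensions of kernels of idempotents. Then $\End^{0}(B)$ embeds in $\M(2, K_{n})$, and its centre commutes with the image of $\Phi^{2}$, which becomes scalar after specialization as in Lemma~\ref{lem: how Phi specializes in JTt; subsec: specialization methods; sec: Cyclic groups of order p^n}. Remark~\ref{rem: Kn not in Kn+1; subsec: The case p=2; sec: Cyclic groups of order p^n}, which says $K_{n}$ does not embed in $K_{n+1}$, should rule out a centre strictly larger than $F_{n}$. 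A totally definite quaternion algebra over $F_{n}$ could be ruled out by a parity or CM-type argument on the action of $\Q(\Phi)$ on differentials. If these fail, I would instead try the torsion argument of Corollary~\ref{cor: exclusion cases with torsion arguments; subsec: specialization methods; sec: Cyclic groups of order p^n}, using the two degenerations $S = \{1\}$ and $S = \{2\}$.
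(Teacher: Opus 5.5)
Your extra involution is correct, and it gives more than the paper takes from it. Correct the exponent to $x^{2^{n-1}+1}$ (you wrote the $L=1$ exponent) and take $\beta^{2}=\gamma\alpha_{1}\alpha_{2}$. Then one checks $\psi^{2}=\operatorname{id}$ and $\psi\varphi\psi^{-1}=\varphi^{-1}$ exactly. Hence $\M(2,F_{n})\hookrightarrow\End^{0}(\Jbf^{\mathrm{al}})$ with $F_{n}=\Q(\zeta_{2^{n}}+\zeta_{2^{n}}^{-1})$, so $\Jbf^{\mathrm{al}}\sim B^{2}$ with $\dim B=2^{n-2}$ and $F_{n}\subseteq\End^{0}(B)$.

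The gap is everything after that. The degeneration with $S=\{2\}$ is too coarse: its fibre is isogenous to $A_{n}^{4}$, so it allows up to four simple factors and a non-commutative endomorphism algebra. Your claim that every simple factor of $B$ receives $F_{n}$ is not justified. $F_{n}$ is the centre of $\M(2,F_{n})$, not of $\End^{0}(\Jbf^{\mathrm{al}})$, so you only get $F_{n}\hookrightarrow\M\bigl(m_{j},\End^{0}(B_{j})\bigr)$ for each isotypic piece $B_{j}^{m_{j}}$ of $B$. Concretely, for $n=3$, take $\Jbf^{\mathrm{al}}\sim E^{4}$ with $E$ a non-CM elliptic curve. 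This satisfies every constraint you invoke:
\begin{itemize}
\item $\M(2,\Q(\sqrt{2}))\supseteq\Q(\zeta_{8})$ sits inside $\M(4,\Q)$;
\item $\M(4,\Q)$ embeds into $\M(4,K_{3})$ compatibly with ranks of idempotents;
\item $E^{4}$ is not of CM type.
\end{itemize}
Your last step is also only sketched. Lemma~\ref{lem: divisibility dimension and degree End simple abelian varieties; subsec: Abelian varieties; sec: Preliminaries} already forces $\dim_{\Q}\End^{0}(B)\in\{2^{n-2},2^{n-1}\}$, so there is no ``dimension $4$ over $F_{n}$'' case. The second case, however, still contains quaternion algebras (e.g.\ for $n=3$, a quaternion algebra over $\Q$ containing $\Q(\sqrt{2})$), and nothing you list excludes them.

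The missing idea is to specialize at $(\alpha_{1},\alpha_{2})=(1,-1)\in U(\Qbar)$.
\begin{itemize}
\item The fibre is $y^{2}=x(x^{2^{n}}-1)$, the $L=1$ curve with $n+1$ in place of $n$. By Lemma~\ref{lem: L = 1; subsec: The case p=2; sec: Cyclic groups of order p^n}, its Jacobian is isogenous to $Y^{2}$ with $Y$ simple, $\dim Y=2^{n-2}$ and $\End^{0}(Y)\cong K_{n+1}$.
\item Lemma~\ref{lem: isogeny decomposition specializes; subsec: specialization methods; sec: Preliminaries} then bounds the number of simple factors of $\Jbf^{\mathrm{al}}$ by $2$, so $B$ is simple.
\item The embedding of Lemma~\ref{lem: End0 embeds and torsion bijects under specialization; subsec: specialization methods; sec: Preliminaries} sends the idempotent cutting out $B$ to a rank-one idempotent of $\M(2,K_{n+1})$. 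Hence $\End^{0}(B)\hookrightarrow K_{n+1}$.
\item So $F_{n}\subseteq\End^{0}(B)\subseteq K_{n+1}$. If $\End^{0}(B)=K_{n+1}$, then $B$ would have CM, since $[K_{n+1}:\Q]=2^{n-1}=2\dim B$. This contradicts Lemma~\ref{lem: Cbf is not of CM type if L > 1; subsec: Preliminaries; sec: Cyclic groups of order p^n}.
\end{itemize}
This is the specialization the paper uses. The paper uses $\psi$ only to get non-simplicity, then determines $[\End^{0}(\Jbf^{\mathrm{al}}):\Q]=2^{n}$ by a case analysis inside $\M(2,K_{n+1})$. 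Your $\M(2,F_{n})$ would make that case analysis unnecessary. For $n=2$ your argument already suffices, since $B$ is then an elliptic curve.
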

            \begin{proof}
                Let $\Jbf^{\mathrm{al}} \sim X_{1} \times X_{2} \times \cdots \times X_{A}$ be an isogeny decomposition of $\Jbf^{\mathrm{al}}$, where some $X_{j}$ may be isogenous to each other. We regard the $X_{j}$ as abelian subvarieties of $\Jbf^{\mathrm{al}}$. Let us prove that $A = 2$.

                Consider the specialization $\Jfrak_{ ( 1, -1 ) }$ of $\Jfrak \to U$, namely, the Jacobian of the curve $y^{2} = x \cdot ( x^{ 2^{n - 1} } - 1 ) \cdot ( x^{ 2^{n - 1} } + 1 ) = x \cdot ( x^{2^{n}} - 1 )$. Lemma~\ref{lem: L = 1; subsec: The case p=2; sec: Cyclic groups of order p^n} implies that $\Jfrak_{ ( 1, -1 ) }$ is isogenous to the square of a simple abelian variety $Y$, of dimension $\varphi ( 2^{n} ) / 2$ (Remark~\ref{rem: genus of Cbf; subsec: Setup; sec: Cyclic groups of order p^n}), with endomorphism algebra isomorphic to $K_{n+1}$. In Lemma~\ref{lem: isogeny decomposition specializes; subsec: specialization methods; sec: Preliminaries} we proved that the number of factors in an isogeny decomposition of $\Jbf^{\mathrm{al}}$ is at most that of an isogeny decomposition of $\Jfrak_{ ( 1, -1 ) }$; hence, $A \le 2$.

                Consider the automorphism of order $2$ of $\Cbf^{\mathrm{al}}$
                \begin{equation*}
                    \psi \colon \Cbf^{\mathrm{al}} \to \Cbf^{\mathrm{al}} \qquad\qquad\qquad ( x, y ) \mapsto \biggl( \frac{\gamma}{x}, \frac{\beta \cdot y}{ x^{ 2^{n - 1} + 1 } } \biggr) ,
                \end{equation*}
                where $\gamma^{ 2^{n - 1} } = \alpha_{1} \cdot \alpha_{2}$ and $\beta^{2} = \gamma \cdot \alpha_{1} \cdot \alpha_{2}$. In Proposition~\ref{pro: unique element of order 2 in AutC if C is simple; sec: Preliminaries} we proved that the automorphism group of a curve over an algebraically closed field of characteristic zero, with genus $\ge 2$ and simple Jacobian, has at most one element of order $2$. Since $\psi$ is not the hyperelliptic involution of $\Cbf^{\mathrm{al}}$, Proposition~\ref{pro: unique element of order 2 in AutC if C is simple; sec: Preliminaries} implies that $\Jbf^{\mathrm{al}}$ is not simple; hence, $A = 2$.
                    
                In Lemma~\ref{lem: End0 embeds and torsion bijects under specialization; subsec: specialization methods; sec: Preliminaries} we proved that an abelian subvariety of $\Jbf^{\mathrm{al}}$ specializes to an abelian subvariety of $\Jfrak_{ ( 1, -1 ) }$ of the same dimension. All the proper abelian subvarieties of $\Jfrak_{ (1, -1 ) }$ are isogenous to $Y$; hence, $\dim ( X_{1} ) = \dim ( X_{2} ) = \dim ( Y ) = \varphi ( 2^{n} ) / 2$. If $X_{1}$ and $X_{2}$ are not isogenous, both $\End^{0} ( X_{1} )$ and $\End^{0} ( X_{2} )$ contain a field isomorphic to $\Q ( \zeta_{2^{n}} )$ (last statement of Lemma~\ref{lem: End0Jbf contains Q(zetapn); subsec: Preliminaries; sec: Cyclic groups of order p^n}), which has degree $2 \dim ( X_{1} ) = 2 \dim ( X_{2} )$; hence, both $X_{1}$ and $X_{2}$ have CM by $\Q ( \zeta_{2^{n}} )$, which contradicts Lemma~\ref{lem: Cbf is not of CM type if L > 1; subsec: Preliminaries; sec: Cyclic groups of order p^n}. Hence, $X_{1}$ and $X_{2}$ are isogenous.     
                    
                Finally, we show that $\End^{0} ( X_{1} )$ is isomorphic to a subfield of $K_{n+1}$ of index $2$. By applying the first statement of Lemma~\ref{lem: End0 embeds and torsion bijects under specialization; subsec: specialization methods; sec: Preliminaries} with $\Jfrak_{ ( 1, -1 ) }$, we deduce that $\End^{0} ( \Jbf^{\mathrm{al}} )$ injects into $\M ( 2, K_{n+1} )$. Furthermore, since $X_{1}$ specializes to an abelian subvariety of $\Jfrak_{ ( 1, -1 ) }$ isogenous to $Y$, we have that $\End^{0} ( X_{1} )$ injects into $K_{n+1}$; hence, $\End^{0} ( X_{1} )$ is a subfield of $K_{n+1}$.

                Since $\bigl[ \M ( 2, K_{n+1} ) \colon \Q \bigr] = 2^{n+1}$, we deduce that $\bigl[ \End^{0} ( \Jbf^{\mathrm{al}} ) \colon \Q \bigr] \le 2^{n+1}$. Given that $\End^{0} ( \Jbf^{\mathrm{al}} )$ is a $\Q ( \Phi )$-module and $\Q ( \Phi ) \cong \Q ( \zeta_{2^{n}} )$ (first statement of Lemma~\ref{lem: End0Jbf contains Q(zetapn); subsec: Preliminaries; sec: Cyclic groups of order p^n}), it follows that $2^{n-1}$ divides $\bigl[ \End^{0} ( \Jbf^{\mathrm{al}} ) \colon \Q \bigr]$. In particular, $\bigl[ \End^{0} ( \Jbf^{\mathrm{al}} ) \colon \Q \bigr]$ can only be $2^{n-1}$, $2^{n}$, $3 \cdot 2^{n-1}$ or $2^{n+1}$. Moreover, if $n = 2$ then $\bigl[ \End^{0} ( \Jbf^{\mathrm{al}} ) \colon \Q \bigr]$ can only be $2^{n}$ or $2^{n+1}$. Indeed, since $\Jbf^{\mathrm{al}}$ is isogenous to the square of a simple abelian variety, $\End^{0} ( \Jbf^{\mathrm{al}} )$ is isomorphic to a $2 \times 2$ matrix algebra, which implies that $4$ divides $\bigl[ \End^{0} ( \Jbf^{\mathrm{al}} ) \colon \Q \bigr]$.
                \begin{itemize}
                    \item If $\bigl[ \End^{0} ( \Jbf^{\mathrm{al}} ) \colon \Q \bigr] = 2^{n-1}$, then $\End^{0} ( \Jbf^{\mathrm{al}} ) \cong \Q ( \zeta_{2^{n}} )$, which contradicts $A = 2$.
                    \item If $\bigl[ \End^{0} ( \Jbf^{\mathrm{al}} ) \colon \Q \bigr] = 2^{n}$, then $\bigl[ \End^{0} ( X_{1} ) \colon \Q \bigr] = 2^{n-2}$, hence $\End^{0} ( X_{1} )$ is isomorphic to a subfield of $K_{n+1}$ of index $2$, as desired.
                    \item If $\bigl[ \End^{0} ( \Jbf^{\mathrm{al}} ) \colon \Q \bigr] = 3 \cdot 2^{n-1}$, then $n > 2$ and $\bigl[ \End^{0} ( X_{1} ) \colon \Q \bigr] = 3 \cdot 2^{n-3}$. Since $X_{1}$ is simple and $\bigl[ \End^{0} ( X_{1} ) \colon \Q \bigr]$ does not divide $2 \dim ( X_{1} ) = 2^{n-1}$, this contradicts Lemma~\ref{lem: divisibility dimension and degree End simple abelian varieties; subsec: Abelian varieties; sec: Preliminaries}.
                    \item If $\bigl[ \End^{0} ( \Jbf^{\mathrm{al}} ) \colon \Q \bigr] = 2^{n+1}$, then $\End^{0} ( \Jbf^{\mathrm{al}} ) \cong \M ( 2, \, K_{n+1} )$ and $\End^{0} ( X_{1} ) \cong K_{n+1}$. Since $\dim ( X_{1} ) = 2^{n-2}$, it follows that $X_{1}$ has CM by $K_{n+1}$, which contradicts Lemma~\ref{lem: Cbf is not of CM type if L > 1; subsec: Preliminaries; sec: Cyclic groups of order p^n}. 
                \end{itemize}
            \end{proof}

            \begin{Rem}
            \label{rem: F2 = Q; subsec: The case p=2; sec: Cyclic groups of order p^n}
                Note that $F_{2} = \Q$. Indeed, $F_{2}$ is a subfield of index $2$ of $K_{3} = \Q ( \zeta_{8} - \zeta_{8}^{-1} ) = \Q ( \sqrt{-2} )$.
            \end{Rem}

            \begin{Rem}
            \label{rem: Emiliano; subsec: The case p=2; sec: Cyclic groups of order p^n}
                Explicit abelian varieties with endomorphism algebra $\Q(\zeta_n)$ have been used in \cite{EmilianoJac4} to construct counterexamples to the local-global principle for quadratic twists. The dimension of the smallest example constructed in \cite[Theorem 1.2]{EmilianoJac4} is 12. Using the techniques of \cite{EmilianoJac4}, a counterexample of minimal dimension (namely, 4) requires an abelian fourfold over $\Q$ (or more generally a number field containing only $\pm 1$ as roots of unity) with geometric endomorphism algebra $\Q(\zeta_{16})$. It is not known whether such a variety can be realized as a Jacobian. However, the Prym variety $A/\Q$ associated with the map of curves $(x,y) \mapsto (x,y^2)$ from $y^{16} = x \cdot ( x - 1 ) ^{2}$ to $y^{8} = x \cdot ( x - 1 ) ^{2}$ has these properties. The field of definition of the geometric endomorphisms of $A$ is easily seen to be $\Q(\zeta_{16})$. Applying \cite[Proposition 4.3]{EmilianoJac4} to $A$ (with $m=8$, $\alpha=16$ and $S = \{ 2 \}$), we obtain that there are two $4$-dimensional, geometrically simple abelian varieties over $\Q$ that are strongly locally quadratic twists, but not quadratic twists of each other. In turn, this shows that the condition $\dim A \leq 3$ in \cite{zbMATH07796282} is sharp, even for geometrically simple abelian varieties.

                The theory we have developed so far imposes several constraints on the existence of a $4$-dimensional Jacobian with geometric endomorphism algebra $\Q(\zeta_{16})$. Indeed, let $\Ccal$ be a genus $4$ curve over $\Qbar$ whose Jacobian has endomorphism algebra isomorphic to $\Q ( \zeta_{16} )$. Since the group of roots of unity of $\Q ( \zeta_{16} )$ is isomorphic to $C_{16}$, \cite[Theorem~$12.1$, \S~III]{milneAV} implies that $\Aut ( \Ccal )$ injects into $C_{16}$. In Section~\ref{subsec: Origin of the family; sec: Cyclic groups of order p^n}, we found a versal model for a genus $4$ curve over $\Qbar$ with simple Jacobian and automorphism group isomorphic to $C_{2^{n}}$, for $n > 1$: 
                \begin{itemize}
                    \item if $\Aut ( \Ccal )$ is isomorphic to $C_{8}$, then $\M ( 2, F_{3} )$ injects into the endomorphism algebra of $\Jac ( \Ccal )$ (Lemmas~\ref{lem: L = 2; subsec: The case p=2; sec: Cyclic groups of order p^n} and~\ref{lem: End0 embeds and torsion bijects under specialization; subsec: specialization methods; sec: Preliminaries}). Since the $\Q$-dimension of $\M ( 2, F_{3} )$ is equal to that of $\Q ( \zeta_{16} )$, we get a contradiction;
                    \item if $\Aut ( \Ccal )$ is isomorphic to $C_{16}$, then $\M ( 2, K_{3} )$ injects into the endomorphism algebra of $\Jac ( \Ccal )$ (Lemmas~\ref{lem: L = 1; subsec: The case p=2; sec: Cyclic groups of order p^n} and~\ref{lem: End0 embeds and torsion bijects under specialization; subsec: specialization methods; sec: Preliminaries}). Since the $\Q$-dimension of $\M ( 2, K_{3} )$ is equal to that of $\Q ( \zeta_{16} )$, we get a contradiction.
                \end{itemize}
            \end{Rem}
                
            The previous lemmas show that for $L \le 2$ the generic fiber $\Jbf^{\mathrm{al}}$ is isogenous to the square of an abelian variety. We now prove that the situation changes for $L = 3$: in this case, $\Jbf^{\mathrm{al}}$ is simple.
                
            \begin{Lem}
            \label{lem: L = 3; subsec: The case p=2; sec: Cyclic groups of order p^n}
                Theorem~\ref{the: main; sec: Cyclic groups of order p^n} holds for $p = 2$ and $L = 3$.
            \end{Lem}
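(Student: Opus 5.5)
By Theorems~\ref{the: reduction first statement main the; subsec: Preliminaries; sec: Cyclic groups of order p^n} and~\ref{the: there are infinitely many classes; subsec: Preliminaries; sec: Cyclic groups of order p^n}, it suffices to show that $\End^{0} ( \Jbf^{\mathrm{al}} ) \cong \Q ( \zeta_{2^{n}} )$ when $p = 2$ and $L = 3$. Write $\Jbf^{\mathrm{al}} \sim X_{1} \times \cdots \times X_{A}$, with the $X_{i}$ regarded as abelian subvarieties. The plan is to specialize with Proposition~\ref{pro: particular specialization; subsec: specialization methods; sec: Cyclic groups of order p^n}, taking $S = \{ 3 \}$ and then $S = \{ 1 \}$ at a suitable point $( a_{1}, a_{2}, a_{3} )$. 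This gives $\Jcal_{1} \cong \Jac ( \Ccal_{1} ) \times \Jac ( \Ccal_{2} )$, where $\Ccal_{1}$ has $L = 2$ (branch data $a_{1}, a_{2}$) and $\Ccal_{2}$ has $L = 1$. It also gives $\Jcal_{2} \cong \Jac ( \Ccal_{3} ) \times \Jac ( \Ccal_{4} )$, built from $a_{2}, a_{3}$ and from $a_{1}$.

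Choose the point via the argument of Lemma~\ref{lem: useful to apply cor: exclusion cases with torsion arguments; subsec: specialization methods; sec: Cyclic groups of order p^n}, so that $\Jac ( \Ccal_{1} )$ and $\Jac ( \Ccal_{3} )$ have the generic $L = 2$ endomorphism algebra. By Lemma~\ref{lem: L = 2; subsec: The case p=2; sec: Cyclic groups of order p^n}, each is then isogenous to $W^{2}$ with $W$ simple of dimension $2^{n-2}$ and $\End^{0} ( W ) \cong F_{n}$. By Lemma~\ref{lem: L = 1; subsec: The case p=2; sec: Cyclic groups of order p^n}, $\Jac ( \Ccal_{2} ) \sim Y^{2}$ with $\End^{0} ( Y ) \cong K_{n}$, of dimension $2^{n-3}$ (an elliptic curve with CM by $\Q ( i )$ if $n = 2$).

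The key point is that $W$ and $Y$ are not isogenous. Their dimensions differ ($2^{n-2}$ versus $2^{n-3}$, and for $n = 2$, $W$ is an elliptic curve without CM because $F_{2} = \Q$). Hence $\End^{0} ( \Jcal_{1} ) \cong \M ( 2, F_{n} ) \times \M ( 2, K_{n} )$, or $\M ( 2, \Q ) \times \Q ( i )$ when $n = 2$.

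\textbf{Dimension and isotypic constraints.} By Lemma~\ref{lem: isogeny decomposition specializes; subsec: specialization methods; sec: Preliminaries}, $A \le 4$, and every $X_{i}$ has dimension equal to a sum of dimensions of simple factors of $\Jcal_{1}$. By Lemma~\ref{lem: End0Jbf contains Q(zetapn); subsec: Preliminaries; sec: Cyclic groups of order p^n}, $\Q ( \zeta_{2^{n}} )$ embeds into the endomorphism algebra of every isotypic component. Via Remark~\ref{rem: simple observation of lem: End0 embeds and torsion bijects under specialization; subsec: specialization methods; sec: Preliminaries}, that algebra embeds into $\M ( 2, F_{n} )$ or into $\M ( 2, K_{n} )$. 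This forces each isotypic component to have dimension at least $\varphi ( 2^{n} )/2$ and to land compatibly with the action of $\iota ( \Phi )$ described in Lemma~\ref{lem: how Phi specializes in JTt; subsec: specialization methods; sec: Cyclic groups of order p^n}. On the $\Ccal_{2}$ factor, $\iota ( \Phi )$ acts as $\Phi_{2}$, which generates $\Q ( \zeta_{2^{n}} )$ inside $\End^{0} ( Y^{2} )$. I expect a case analysis to leave two possibilities: $\Jbf^{\mathrm{al}}$ simple, or a splitting $Z_{1} + Z_{2}$ with $Z_{1}$ specializing to $\Jac ( \Ccal_{1} )$ and $Z_{2}$ to $\Jac ( \Ccal_{2} )$. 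In the second case, the $\Q ( \zeta_{2^{n}} )$-stable pieces can only be unions of full $\Jac ( \Ccal_{i} )$'s, because each $\Jac ( \Ccal_{i} )$ has a $\Q ( \zeta_{2^{n}} )$-stable isotypic structure with no $\Q ( \zeta_{2^{n}} )$-stable proper summand. Splittings of $\Jbf^{\mathrm{al}}$ into pieces of equal dimension must also be ruled out; for this I would use Lemma~\ref{lem: Cbf is not of CM type if L > 1; subsec: Preliminaries; sec: Cyclic groups of order p^n} (pieces of dimension $\varphi ( 2^{n} )/2$ would have CM) together with Remark~\ref{rem: Kn not in Kn+1; subsec: The case p=2; sec: Cyclic groups of order p^n}-type field incompatibilities.

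\textbf{Excluding the splitting.} The same reasoning with $\Jcal_{2}$ shows $Z_{1}$ specializes to $\Jac ( \Ccal_{3} )$ and $Z_{2}$ to $\Jac ( \Ccal_{4} )$. This contradicts Corollary~\ref{cor: exclusion cases with torsion arguments; subsec: specialization methods; sec: Cyclic groups of order p^n} with $S_{1} = \{ 3 \}$ and $S_{2} = \{ 1 \}$. Hence $\Jbf^{\mathrm{al}}$ is simple.

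\textbf{Identifying the endomorphism algebra.} $\End^{0} ( \Jbf^{\mathrm{al}} )$ is a division algebra containing $\Q ( \zeta_{2^{n}} )$ and embedding into $\M ( 2, F_{n} ) \times \M ( 2, K_{n} )$. Its projection to the $\M ( 2, K_{n} )$ factor is injective, so its $\Q$-dimension is at most $2^{n}$. By Lemma~\ref{lem: divisibility dimension and degree End simple abelian varieties; subsec: Abelian varieties; sec: Preliminaries}, that dimension divides $2 \dim = 3 \cdot 2^{n-1}$. If it were $2^{n}$, $\End^{0} ( \Jbf^{\mathrm{al}} )$ would be a division algebra of dimension $2^{n}$ containing $\Q ( \zeta_{2^{n}} )$ inside $\M ( 2, K_{n} )$. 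Since $[ K_{n} : \Q ] = 2^{n-2}$, it would therefore be all of $\M ( 2, K_{n} )$, which is not a division algebra. The dimension is thus $2^{n-1}$, and $\End^{0} ( \Jbf^{\mathrm{al}} ) = \Q ( \zeta_{2^{n}} )$.

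The main obstacle is the isotypic case analysis in the second paragraph. It involves many possible dimension partitions, especially when $n = 2$, where the factors are elliptic curves and the CM argument is weaker.
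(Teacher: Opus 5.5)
Your outline has the paper's skeleton: the point from Lemma~\ref{lem: useful to apply cor: exclusion cases with torsion arguments; subsec: specialization methods; sec: Cyclic groups of order p^n}, the degenerations with $S=\{3\}$ and $S=\{1\}$, and Corollary~\ref{cor: exclusion cases with torsion arguments; subsec: specialization methods; sec: Cyclic groups of order p^n} at the end. However, the step that carries the proof is only announced (``I expect a case analysis\ldots''). The Corollary needs the two isotypic components to specialize \emph{exactly} to $\Jac(\Ccal_{1})$ and $\Jac(\Ccal_{2})$, and to $\Jac(\Ccal_{3})$ and $\Jac(\Ccal_{4})$. The paper spends most of its proof getting there:
\begin{itemize}
\item it bounds $A$ through $\End^{0}(\Jbf^{\mathrm{al}})\hookrightarrow\M(2,F_{n})\times\M(2,K_{n})$ (resp.\ $\M(2,\Q)\times\Q(i)$);
\item it kills $A\ge 3$ by counting idempotents and by Lemma~\ref{lem: Cbf is not of CM type if L > 1; subsec: Preliminaries; sec: Cyclic groups of order p^n};
\item for $A=2$ it runs through the possible $\End^{0}(Z_{j})$, including the subcase $\End^{0}(Z_{1})\cong\End^{0}(Z_{2})\cong\Q(\zeta_{2^{n}})$, where specialization of $Z_{1}$ to $W\times Y^{2}$ must be excluded.
\end{itemize}

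Your remark about $\Q(\zeta_{2^{n}})$-stable summands is in fact a correct and shorter route, but you must put it first and justify it. The justification has three parts:
\begin{itemize}
\item Isotypic components are images of central idempotents $e$. Since $e$ commutes with $\Phi$ and $\iota$ is an algebra map, their specializations are $\iota(\Phi)$-stable.
\item Since $W\not\sim Y$, every abelian subvariety of $\Jcal_{1}$ is a product of subvarieties of $\Jac(\Ccal_{1})$ and $\Jac(\Ccal_{2})$. By Lemma~\ref{lem: how Phi specializes in JTt; subsec: specialization methods; sec: Cyclic groups of order p^n}, $\iota(\Phi)$ acts on the factors through an odd power of $\Phi_{1}$ and through $\Phi_{2}$, so each factor is $\Phi_{i}$-stable.
\item A nonzero proper $\Phi_{i}$-stable subvariety of $\Jac(\Ccal_{i})$ would be isogenous to $W$ or $Y$. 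It would then receive $\Q(\zeta_{2^{n}})$ inside $F_{n}$ or $K_{n}$, both of degree $2^{n-2}$ (inside $\Q$ when $n=2$), which is impossible.
\end{itemize}
This gives $A\le 2$ with exactly the matching the Corollary requires.

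Second, ruling out $A=2$ only shows that $\Jbf^{\mathrm{al}}$ is isotypic, yet your last paragraph assumes it is simple: you call its endomorphism algebra a division algebra and invoke Lemma~\ref{lem: divisibility dimension and degree End simple abelian varieties; subsec: Abelian varieties; sec: Preliminaries}. For $n>2$, take $\Jbf^{\mathrm{al}}\sim B^{2}$ with $\End^{0}(\Jbf^{\mathrm{al}})\cong\M(2,K_{n})$ and $\dim B=3\cdot 2^{n-3}$. This case passes every test you list: the algebra contains $\Q(\zeta_{2^{n}})$, it embeds into the $\M(2,K_{n})$ factor, and $[K_{n}:\Q]=2^{n-2}$ divides $2\dim B$. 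Your CM remark about ``equal-dimension'' pieces does not touch it either.

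The paper excludes it with the second part of Remark~\ref{rem: simple observation of lem: End0 embeds and torsion bijects under specialization; subsec: specialization methods; sec: Preliminaries}. For isotypic $\Jbf^{\mathrm{al}}$, the algebra $\End^{0}(\Jbf^{\mathrm{al}})$ embeds into the endomorphism algebra of \emph{every} isotypic component of $\Jcal_{1}$, hence into both $\M(2,F_{n})$ and $\M(2,K_{n})$. Dimension $2^{n}$ would then force $\M(2,F_{n})\cong\M(2,K_{n})$, hence $F_{n}\cong K_{n}$, contradicting Remark~\ref{rem: Kn not in Kn+1; subsec: The case p=2; sec: Cyclic groups of order p^n}. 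So the dimension is $2^{n-1}$ and $\End^{0}(\Jbf^{\mathrm{al}})=\Q(\zeta_{2^{n}})$. Simplicity is then a conclusion, not an assumption. For $n=2$, the embedding into $\Q(i)$ gives this at once.

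A minor point in your last step: $2^{n}$ never divides $3\cdot 2^{n-1}$. The remaining divisor $3\cdot 2^{n-2}$ is excluded only because the dimension must be a multiple of $[\Q(\zeta_{2^{n}}):\Q]=2^{n-1}$.
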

            \begin{proof}
                Let $\Jbf^{\mathrm{al}} \sim Z_{1} \times Z_{2} \times \cdots \times Z_{A}$ be the decomposition of $\Jbf^{\mathrm{al}}$ into isotypic components (up to isogeny). We regard the $Z_{j}$ as abelian subvarieties of $\Jbf^{\mathrm{al}}$.

                In Lemma~\ref{lem: useful to apply cor: exclusion cases with torsion arguments; subsec: specialization methods; sec: Cyclic groups of order p^n} we proved that there exists  $( a_{1}, a_{2}, a_{3} ) \in U ( \Qbar )$ such that the endomorphism algebras of the Jacobians of both curves $\Ccal_{1} \colon y^{2} = x \cdot ( x^{2^{n-1}} - a_{1} ) \cdot ( x^{2^{n-1}} - a_{2} )$ and $\Ccal_{3} \colon y^{2} = x \cdot ( x^{2^{n-1}} - a_{2} ) \cdot ( x^{2^{n-1}} - a_{3} )$ are isomorphic to that of the Jacobian of the generic fiber of $y^{2} = x \cdot ( x^{2^{n-1}} - \alpha_{1} ) \cdot ( x^{2^{n-1}} - \alpha_{2} )$. Lemma~\ref{lem: L = 2; subsec: The case p=2; sec: Cyclic groups of order p^n} implies that the Jacobians of $\Ccal_{1}$ and $\Ccal_{3}$ are isogenous to the square of a simple abelian variety $X$, of dimension $\varphi ( 2^{n} ) / 2$ (Remark~\ref{rem: genus of Cbf; subsec: Setup; sec: Cyclic groups of order p^n}), with endomorphism algebra isomorphic to $F_{n}$, where $F_{n}$ is a subfield of $K_{n+1} = \Q \bigl( \zeta_{2^{n+1}} - \zeta_{2^{n+1}}^{-1} \bigr)$ of index $2$.
                    
                In Lemma~\ref{lem: L = 1; subsec: The case p=2; sec: Cyclic groups of order p^n} we proved that both curves $\Ccal_{2} \colon y^{2} = x \cdot ( x^{2^{n-1}} - a_{3} )$ and $\Ccal_{4} \colon y^{2} = x \cdot ( x^{2^{n-1}} - a_{1} )$ have Jacobians isogenous
                \begin{itemize}
                    \item to the same elliptic curve $E$, of dimension $\varphi ( 2^{n} ) / 2$ (Remark~\ref{rem: genus of Cbf; subsec: Setup; sec: Cyclic groups of order p^n}), with endomorphism algebra isomorphic to $\Q ( i )$ if $n = 2$;
                    \item to the square of the same simple abelian variety $Y$, of dimension $\varphi ( 2^{n} ) / 4$ (Remark~\ref{rem: genus of Cbf; subsec: Setup; sec: Cyclic groups of order p^n}), with endomorphism algebra isomorphic to $K_{n}$ if $n > 2$.
                \end{itemize}

                By applying Proposition~\ref{pro: particular specialization; subsec: specialization methods; sec: Cyclic groups of order p^n} with $( a_{1}, a_{2}, a_{3} ) \in U ( \Qbar )$ and $S = \{ 3 \}$, we deduce the existence of an abelian variety $\Jcal_{1}$, of the form $\Jfrak_{T', t}$ for some admissible $T'$ and $t \in \Abb_{\Qbar}^{1} ( \Qbar ) \supset T' ( \Qbar )$, isomorphic to $\Jac ( \Ccal_{1} ) \times \Jac ( \Ccal_{2} )$. Hence
                \begin{equation*}
                    \Jcal_{1} \sim 
                    \begin{cases}
                        X^{2} \times E & \text{if $n = 2$,} \\
                        X^{2} \times Y^{2} & \text{if $n > 2$.}
                    \end{cases}
                \end{equation*}
                    
                Note that $\Jac ( \Ccal_{1} )$ and $\Jac ( \Ccal_{2} )$ are two distinct isotypic components of $\Jcal_{1}$. Indeed, if $n = 2$, then $\Jac ( \Ccal_{2} )$ is simple and not isogenous to $X$, since $\End^{0} ( X ) \cong F_{2} = \Q$ (Remark~\ref{rem: F2 = Q; subsec: The case p=2; sec: Cyclic groups of order p^n}), while $\End^{0} \bigl( \Jac( \Ccal_{2} ) \bigr) \cong \Q ( i )$. If $n > 2$, then $X$ is not isogenous to $Y$ because they are simple and have different dimensions. Hence
                \begin{equation}
                \label{eq: 1; lem: L = 3; subsec: The case p=2; sec: Cyclic groups of order p^n}
                    \End^{0} ( \Jcal_{1} ) \cong 
                    \begin{cases}
                        \M ( 2, \Q ) \times \Q(i) & \text{if $n = 2$,} \\
                        \M ( 2, F_{n} ) \times \M ( 2, K_{n} ) & \text{if $n > 2$.}
                    \end{cases}
                \end{equation}
                We analyze the cases $n = 2$ and $n > 2$ separately.
                    
                Suppose first that $n = 2$. Remark~\ref{rem: simple observation of lem: End0 embeds and torsion bijects under specialization; subsec: specialization methods; sec: Preliminaries} shows that the endomorphism algebra of any isotypic component of $\Jbf^{\mathrm{al}}$ embeds into the endomorphism algebra of at least one isotypic component of $\Jcal_{1}$; hence, \eqref{eq: 1; lem: L = 3; subsec: The case p=2; sec: Cyclic groups of order p^n} implies that it embeds into either $\Q ( i )$ or $\M ( 2, \Q )$. On the other hand, in Lemma~\ref{lem: End0Jbf contains Q(zetapn); subsec: Preliminaries; sec: Cyclic groups of order p^n} we proved that the endomorphism algebra of each isotypic component of $\Jbf^{\mathrm{al}}$ contains a field isomorphic to $\Q ( i )$. Since $\Q ( i )$ has $\Q$-dimension $2$ and $\M ( 2, \Q )$ has $\Q$-dimension $4$, it follows that $\End^{0} ( Z_{j} )$ is isomorphic to either $\Q ( i )$ or $\M ( 2, \Q )$ for every $1 \le j \le A$. The first statement of Lemma~\ref{lem: End0 embeds and torsion bijects under specialization; subsec: specialization methods; sec: Preliminaries} applied to $\Jcal_{1}$ implies that $\End^{0} ( \Jbf^{\mathrm{al}} )$ injects into $\Q ( i ) \times \M ( 2, \Q )$, which has $\Q$-dimension $6$; hence, $A \le 6 / 2 = 3$.

                If $A = 3$, then, by a dimension argument, $\End^{0} ( Z_{j} )$ is isomorphic to $\Q ( i )$ for every $1 \le j \le 3$. Since $\Q ( i ) \times \Q ( i ) \times \Q ( i )$ does not embed into $\End^{0} ( \Jcal_{1} )$, we reach a contradiction.

                Let us exclude the case $A = 2$. Up to reordering $Z_{1}$ and $Z_{2}$, we may assume that $\dim ( Z_{1} ) = 1$ and $\dim ( Z_{2} ) = 2$, since $\dim ( \Jbf^{\mathrm{al}} ) = 3$ (Remark~\ref{rem: genus of Cbf; subsec: Setup; sec: Cyclic groups of order p^n}). Moreover, $\End^{0} ( Z_{1} ) \cong \Q ( i )$ because $Z_{1}$ is simple. The only abelian subvarieties of $\Jcal_{1}$ of dimension $1$ are isogenous to either $E$ or $X$. Note that $Z_{1}$ cannot specialize to an abelian subvariety isogenous to $X$, as this would imply the existence of an embedding of $\End^{0} ( Z_{1} ) \cong \Q ( i )$ into $\End^{0} ( X ) \cong \Q$, a contradiction. In Lemma~\ref{lem: isogeny decomposition specializes; subsec: specialization methods; sec: Preliminaries} we proved that a decomposition (up to isogeny) of $\Jbf^{\mathrm{al}}$ specializes to a decomposition (up to isogeny) of $\Jcal_{1}$. Since $Z_{1}$ specializes to $E \sim \Jac ( \Ccal_{2} )$ in $\Jcal_{1}$, $Z_{2}$ specializes to $X^{2} \sim \Jac ( \Ccal_{1} )$ in $\Jcal_{1}$.

                By applying Proposition~\ref{pro: particular specialization; subsec: specialization methods; sec: Cyclic groups of order p^n} with $( a_{1}, a_{2}, a_{3} ) \in U ( \Qbar )$ and $S = \{ 1 \}$, we deduce the existence of an abelian variety $\Jcal_{2}$, of the form $\Jfrak_{T', t}$ for some admissible $T'$ and $t \in \Abb_{\Qbar}^{1} ( \Qbar ) \supset T' ( \Qbar )$, isomorphic to $\Jac ( \Ccal_{3} ) \times \Jac ( \Ccal_{4} )$. As before, we show that $Z_{1}$ specializes to $\Jac ( \Ccal_{4} )$ and $Z_{2}$ specializes to $\Jac ( \Ccal_{3} )$ in $\Jcal_{2}$. This contradicts Corollary~\ref{cor: exclusion cases with torsion arguments; subsec: specialization methods; sec: Cyclic groups of order p^n}; hence, we must have $A = 1$, given that $A \le 2$.

                Since $A = 1$, $\End^{0} ( \Jbf^{\mathrm{al}} )$ is isomorphic to either $\Q ( i )$ or $\M ( 2, \Q )$. The latter case, however, cannot occur because $\dim ( \Jbf^{\mathrm{al}} )$ is odd. Hence, $\Jbf^{\mathrm{al}}$ is simple and $\End^{0} ( \Jbf^{\mathrm{al}} )$ is isomorphic to $\Q ( i )$. Theorems~\ref{the: reduction first statement main the; subsec: Preliminaries; sec: Cyclic groups of order p^n} and~\ref{the: there are infinitely many classes; subsec: Preliminaries; sec: Cyclic groups of order p^n} imply that Theorem \ref{the: main; sec: Cyclic groups of order p^n} holds for this subcase. 

                Suppose now that $n > 2$. As above, we deduce that $\End^{0} ( Z_{j} )$ is isomorphic to either $\Q ( \zeta_{2^{n}} )$, $\M ( 2, F_{n} )$ or $\M ( 2, K_{n} )$ for every $1 \le j \le A$. The first statement of Lemma~\ref{lem: End0 embeds and torsion bijects under specialization; subsec: specialization methods; sec: Preliminaries} applied to $\Jcal_{1}$ implies that $\End^{0} ( \Jbf^{\mathrm{al}} )$ injects into $\M ( 2, F_{n} ) \times \M( 2, K_{n} )$, which has $\Q$-dimension $2^{n+1}$. Since $\Q ( \zeta_{2^{n}} )$ has $\Q$-dimension $2^{n-1}$, and both $\M ( 2, F_{n} ),  \M( 2, K_{n} )$ have $\Q$-dimension $2^{n}$, it follows that $A \le 2^{n+1} / 2^{n-1} = 4$.

                As before, we exclude the case $A = 4$ and, in the case $A = 3$, we deduce that each $\End^{0} ( Z_{j} )$ is isomorphic to $\Q ( \zeta_{2^{n}} )$. So, if $A = 3$ each $Z_{j}$ is simple of dimension a multiple of $\bigl[ \Q( \zeta_{2^{n}} ) \colon \Q \bigr] / 2 = 2^{n-2}$ (Lemma~\ref{lem: divisibility dimension and degree End simple abelian varieties; subsec: Abelian varieties; sec: Preliminaries}). Since $\dim ( \Jbf^{\mathrm{al}} ) = 3 \cdot 2^{n-2}$ (Remark~\ref{rem: genus of Cbf; subsec: Setup; sec: Cyclic groups of order p^n}), it follows that each $Z_{j}$ has dimension $2^{n-2}$, half the $\Q$-dimension of $\End^{0} ( Z_{j} )$. Hence, every $Z_{j}$ has CM by $\Q ( \zeta_{2^{n}} )$, which contradicts Lemma~\ref{lem: Cbf is not of CM type if L > 1; subsec: Preliminaries; sec: Cyclic groups of order p^n}.

                Let us exclude the case $A = 2$. Note that either $\End^{0} ( Z_{1} ) \cong \End^{0} ( Z_{2} ) \cong \Q ( \zeta_{2^{n}} )$ or, up to reordering $Z_{1}$ and $Z_{2}$, we may assume $\End^{0} ( Z_{1} ) \cong \M ( 2, K_{n} )$ (resp.~$\End^{0} ( Z_{1} ) \cong \M ( 2, F_{n} )$). In the latter case, $Z_{1}$ specializes to $\Jac ( \Ccal_{2} )$ (resp.~$\Jac ( \Ccal_{1} )$) in $\Jcal_{1}$. In Lemma~\ref{lem: isogeny decomposition specializes; subsec: specialization methods; sec: Preliminaries} we proved that a decomposition (up to isogeny) of $\Jbf^{\mathrm{al}}$ specializes to a decomposition (up to isogeny) of $\Jcal_{1}$. Consequently, $Z_{2}$ specializes to $\Jac ( \Ccal_{1} )$ (resp.~$\Jac ( \Ccal_{2} )$). We derive a contradiction as before by using Corollary~\ref{cor: exclusion cases with torsion arguments; subsec: specialization methods; sec: Cyclic groups of order p^n}, together with the specialization obtained by applying Proposition~\ref{pro: particular specialization; subsec: specialization methods; sec: Cyclic groups of order p^n} with $( a_{1}, a_{2}, a_{3} ) \in U ( \Qbar )$ and $S = \{ 1 \}$.

                If $\End^{0} ( Z_{1} ) \cong \End^{0} ( Z_{2} ) \cong \Q ( \zeta_{2^{n}} )$, we can derive a similar contradiction. First, $Z_{1}$ and $Z_{2}$ are simple; hence, their dimensions are multiples of $\bigl[ \Q ( \zeta_{2^{n}} ) \colon \Q \bigr] / 2 = 2^{n-2}$ (Lemma~\ref{lem: divisibility dimension and degree End simple abelian varieties; subsec: Abelian varieties; sec: Preliminaries}). Up to reordering $Z_{1}$ and $Z_{2}$, we may assume that $\dim ( Z_{1} ) = 2 \cdot 2^{n-2} = 2^{n-1}$ and $\dim ( Z_{2} ) = 2^{n-2}$, since $\dim ( \Jbf^{\mathrm{al}} ) = 3 \cdot 2^{n-2}$. The nontrivial abelian subvarieties of $\Jcal_{1}$ of dimension $2^{n-1}$ are, up to isogeny, $X^{2}$ and $X \times Y^{2}$, while those of dimension $2^{n-2}$ are, up to isogeny, $X$ and $Y^{2}$. Note that $Z_{1}$ cannot specialize to an abelian subvariety isogenous to $X \times Y^{2}$ in $\Jcal_{1} \sim X^{2} \times Y^{2}$, as this would imply, by Lemma~\ref{lem: isogeny decomposition specializes; subsec: specialization methods; sec: Preliminaries}, that $Z_{2}$ specializes to an abelian subvariety isogenous to $X$, yielding an embedding of $\End^{0} ( Z_{2} ) \cong \Q( \zeta_{2^{n}} )$ into $\End^{0} ( X ) \cong F_{n}$, which has $\Q$-dimension $2^{n-2}$. Hence, $Z_{1}$ specializes to $X^{2} \sim \Jac ( \Ccal_{1} )$ and $Z_{2}$ specializes to $Y^{2} \sim \Jac ( \Ccal_{2} )$ in $\Jcal_{1}$. We derive a contradiction as before by using Corollary~\ref{cor: exclusion cases with torsion arguments; subsec: specialization methods; sec: Cyclic groups of order p^n}, together with the specialization obtained by applying Proposition~\ref{pro: particular specialization; subsec: specialization methods; sec: Cyclic groups of order p^n} with $( a_{1}, a_{2}, a_{3} ) \in U ( \Qbar )$ and $S = \{ 1 \}$.

                Hence, $A = 1$ and $\Jbf^{\mathrm{al}}$ is isotypic. Remark~\ref{rem: simple observation of lem: End0 embeds and torsion bijects under specialization; subsec: specialization methods; sec: Preliminaries} shows that if $\Jbf^{\mathrm{al}}$ is isotypic its endomorphism algebra embeds into the endomorphism algebra of every isotypic component of $\Jcal_{1}$. By \eqref{eq: 1; lem: L = 3; subsec: The case p=2; sec: Cyclic groups of order p^n} we deduce that $\End^{0} ( \Jbf^{\mathrm{al}} )$ embeds into both $\M ( 2, F_{n} )$ and $\M ( 2, K_{n} )$, which both have $\Q$-dimension $2^{n}$. Since $K_{n}$ is not isomorphic to $F_{n}$ (Remark~\ref{rem: Kn not in Kn+1; subsec: The case p=2; sec: Cyclic groups of order p^n}), we deduce that $\End^{0} ( \Jbf^{\mathrm{al}} )$ has $\Q$-dimension at most $2^{n-1}$. Since we know that $\End^{0} ( \Jbf^{\mathrm{al}} )$ is isomorphic to either $\Q ( \zeta_{2^{n}} )$, $\M ( 2, F_{n} )$ or $\M ( 2, K_{n} )$, it follows that it is isomorphic to $\Q ( \zeta_{2^{n}} )$. Theorems~\ref{the: reduction first statement main the; subsec: Preliminaries; sec: Cyclic groups of order p^n} and~\ref{the: there are infinitely many classes; subsec: Preliminaries; sec: Cyclic groups of order p^n} imply that Theorem \ref{the: main; sec: Cyclic groups of order p^n} holds for this subcase. 
            \end{proof}

            Finally, we complete the proof of Theorem~\ref{the: main; sec: Cyclic groups of order p^n}.

            \begin{proof}[Proof of Theorem~\ref{the: main; sec: Cyclic groups of order p^n} when $p = 2$]
                We proceed by induction on $L \ge 3$. Lemma~\ref{lem: L = 3; subsec: The case p=2; sec: Cyclic groups of order p^n} proves the claim for $L = 3$. So, let us assume that $L \ge 3$ and that the claim holds for every $3 \le L' \le L$, and let us prove it for $L + 1$. 
                
                Let $\Jbf^{\mathrm{al}} \sim Z_{1} \times Z_{2} \times \cdots \times Z_{A}$ be the decomposition of $\Jbf^{\mathrm{al}}$ into isotypic components (up to isogeny). We regard the $Z_{j}$ as abelian subvarieties of $\Jbf^{\mathrm{al}}$.

                In Lemma~\ref{lem: useful to apply cor: exclusion cases with torsion arguments; subsec: specialization methods; sec: Cyclic groups of order p^n} we proved that there exists $( a_{1}, \dots, a_{L+1} ) \in U ( \Qbar )$ such that the endomorphism algebras of the Jacobians of both curves $\Ccal_{1} \colon y^{2} = x \cdot ( x^{2^{n-1}} - a_{1} ) \cdots ( x^{2^{n-1}} - a_{L} )$ and $\Ccal_{3} \colon y^{2} = x \cdot ( x^{2^{n-1}} - a_{2} ) \cdots ( x^{2^{n-1}} - a_{L+1} )$ are isomorphic to that of the Jacobian of the generic fiber of $y^{2} = x \cdot ( x^{2^{n-1}} - \alpha_{1} ) \cdots ( x^{2^{n-1}} - \alpha_{L} )$. The inductive hypothesis implies that the Jacobians of $\Ccal_{1}$ and $\Ccal_{3}$ are simple, of dimension $\varphi ( 2^{n} ) \cdot L / 2$ (Remark~\ref{rem: genus of Cbf; subsec: Setup; sec: Cyclic groups of order p^n}), with endomorphism algebras isomorphic to $\Q ( \zeta_{2^{n}} )$.

                In Lemma~\ref{lem: L = 1; subsec: The case p=2; sec: Cyclic groups of order p^n} we proved that both curves $\Ccal_{2} \colon y^{2} = x \cdot ( x^{2^{n-1}} - a_{L+1} )$ and $\Ccal_{4} \colon y^{2} = x \cdot ( x^{2^{n-1}} - a_{1} )$ have Jacobians isogenous
                \begin{itemize}
                    \item to the same elliptic curve $E$, of dimension $\varphi ( 2^{n} ) / 2$ (Remark~\ref{rem: genus of Cbf; subsec: Setup; sec: Cyclic groups of order p^n}), with endomorphism algebra isomorphic to $\Q ( i )$ if $n = 2$;
                    \item to the square of the same simple abelian variety $Y$, of dimension $\varphi ( 2^{n} ) / 4$ (Remark~\ref{rem: genus of Cbf; subsec: Setup; sec: Cyclic groups of order p^n}), with endomorphism algebra isomorphic to $K_{n}$ if $n > 2$.
                \end{itemize}

                By applying Proposition~\ref{pro: particular specialization; subsec: specialization methods; sec: Cyclic groups of order p^n} with $( a_{1}, \dots, a_{L+1} ) \in U ( \Qbar )$ and $S = \{ L+1 \}$, we deduce the existence of an abelian variety $\Jcal_{1}$, of the form $\Jfrak_{T', t}$ for some admissible $T'$ and $t \in \Abb_{\Qbar}^{1} ( \Qbar ) \supset T' ( \Qbar )$, isomorphic to $\Jac ( \Ccal_{1} ) \times \Jac ( \Ccal_{2} )$. Moreover, since $\Jac ( \Ccal_{1} )$ is simple and has dimension strictly greater than that of $\Jac ( \Ccal_{2} )$, it follows that $\Jac ( \Ccal_{1} )$ and $\Jac ( \Ccal_{2} )$ are two distinct isotypic components of $\Jcal_{1}$. Hence
                \begin{equation}
                \label{eq: 1; proof of main the; subsec: The case p=2; sec: Cyclic groups of order p^n}
                    \End^{0} ( \Jcal_{1} ) \cong
                    \begin{cases}
                        \Q ( i ) \times \Q ( i ) & \text{if $n = 2$,} \\
                        \Q ( \zeta_{2^{n}} ) \times \M ( 2, K_{n} ) & \text{if $n > 2$.}
                    \end{cases}
                \end{equation}
                We analyze the cases $n = 2$ and $n > 2$ separately. Since the arguments are very similar to those used in the proof of Lemma~\ref{lem: L = 3; subsec: The case p=2; sec: Cyclic groups of order p^n}, we shall omit some of the details.

                Suppose first that $n = 2$. Remark~\ref{rem: simple observation of lem: End0 embeds and torsion bijects under specialization; subsec: specialization methods; sec: Preliminaries} shows that the endomorphism algebra of any isotypic component of $\Jbf^{\mathrm{al}}$ embeds into the endomorphism algebra of at least one isotypic component of $\Jcal_{1}$; hence, \eqref{eq: 1; proof of main the; subsec: The case p=2; sec: Cyclic groups of order p^n} implies that it embeds into $\Q ( i )$. On the other hand, in Lemma~\ref{lem: End0Jbf contains Q(zetapn); subsec: Preliminaries; sec: Cyclic groups of order p^n} we proved that the endomorphism algebra of each isotypic component of $\Jbf^{\mathrm{al}}$ contains a field isomorphic to $\Q ( i )$. Hence, $\End^{0} ( Z_{j} )$ is isomorphic to $\Q ( i )$ for every $1 \le j \le A$. The first statement of Lemma~\ref{lem: End0 embeds and torsion bijects under specialization; subsec: specialization methods; sec: Preliminaries} applied to $\Jcal_{1}$ implies that $\End^{0} ( \Jbf^{\mathrm{al}} )$ injects into $\Q ( i ) \times \Q ( i )$; hence, $A \le 2$.

                Let us exclude the case $A = 2$. Note that $Z_{1}$ and $Z_{2}$ are simple. In Lemma~\ref{lem: isogeny decomposition specializes; subsec: specialization methods; sec: Preliminaries} we proved that there exists a simple factor of $\Jbf^{\mathrm{al}}$ of dimension at least the maximal dimension of a simple factor of $\Jcal_{1}$; hence, up to reordering $Z_{1}$ and $Z_{2}$, we may assume that $\dim ( Z_{1} ) \ge \dim \bigl( \Jac ( \Ccal_{1} ) \bigr) = L$. Since $\dim ( \Jbf^{\mathrm{al}} ) = L + 1$ (Remark~\ref{rem: genus of Cbf; subsec: Setup; sec: Cyclic groups of order p^n}), we deduce $\dim ( Z_{1} ) = L$ and $\dim ( Z_{2} ) = 1$. Note that $\Jcal_{1}$ contains a unique abelian subvariety of dimension $1$, namely $\Jac ( \Ccal_{2} )$, and a unique abelian subvariety of dimension $L$, namely $\Jac ( \Ccal_{1} )$. Hence, $Z_{1}$ specializes to $\Jac ( \Ccal_{1} )$ and $Z_{2}$ specializes to $\Jac ( \Ccal_{2} )$ in $\Jcal_{1}$. We derive a contradiction by using Corollary~\ref{cor: exclusion cases with torsion arguments; subsec: specialization methods; sec: Cyclic groups of order p^n}, together with the specialization obtained by applying Proposition~\ref{pro: particular specialization; subsec: specialization methods; sec: Cyclic groups of order p^n} with $( a_{1}, a_{2}, \dots, a_{L+1} ) \in U ( \Qbar )$ and $S = \{ 1 \}$. 
                    
                Hence, $A = 1$ and $\End^{0} ( \Jbf^{\mathrm{al}} )$ is isomorphic to $\Q ( i )$. Theorems~\ref{the: reduction first statement main the; subsec: Preliminaries; sec: Cyclic groups of order p^n} and~\ref{the: there are infinitely many classes; subsec: Preliminaries; sec: Cyclic groups of order p^n} imply that Theorem \ref{the: main; sec: Cyclic groups of order p^n} holds for this subcase.

                Suppose now that $n > 2$. As above, we deduce that $\End^{0} ( Z_{j} )$ is isomorphic to either $\Q ( \zeta_{2^{n}} )$ or $\M ( 2, K_{n} )$ for every $1 \le j \le A$. Since $\M ( 2, K_{n} ) \times \Q ( \zeta_{2^{n}} )$ has $\Q$-dimension $3 \cdot 2^{n-1}$, we deduce $A \le 3 \cdot 2^{n-1} / 2^{n-1} = 3$. Since $\Q ( \zeta_{2^{n}} ) \times \Q ( \zeta_{2^{n}} ) \times \Q ( \zeta_{2^{n}} )$ does not embed into $\End^{0} ( \Jcal_{1} ) \cong \Q ( \zeta_{2^{n}} ) \times \M ( 2, K_{n} )$, it follows that $A \le 2$.

                Let us exclude the case $A = 2$. Up to reordering $Z_{1}$ and $Z_{2}$, we may assume that $\End^{0} ( Z_{1} ) \cong \Q ( \zeta_{2^{n}} )$, and hence that $Z_{1}$ is simple. If $\End^{0} ( Z_{2} )$ is also isomorphic to $\Q ( \zeta_{2^{n}} )$, then $Z_{2}$ is simple, and it follows that $2^{n-2} = \bigl[ \Q ( \zeta_{2^{n}} ) \colon \Q \bigr] / 2$ divides the dimensions of $Z_{1}$ and $Z_{2}$ (Lemma~\ref{lem: divisibility dimension and degree End simple abelian varieties; subsec: Abelian varieties; sec: Preliminaries}). The second statement of Lemma~\ref{lem: isogeny decomposition specializes; subsec: specialization methods; sec: Preliminaries} applied to $\Jcal_{1}$ tells us that either $Z_{1}$ or $Z_{2}$ has a dimension greater than or equal to that of $\Jac ( \Ccal_{1} )$, which is $L \cdot 2^{n-2}$. Hence, we can assume that $\dim ( Z_{1} ) = L \cdot 2^{n-2}$ and $\dim ( Z_{2} ) = 2^{n-2}$. We derive a contradiction as before by using Corollary~\ref{cor: exclusion cases with torsion arguments; subsec: specialization methods; sec: Cyclic groups of order p^n}, together with the specialization obtained by applying Proposition~\ref{pro: particular specialization; subsec: specialization methods; sec: Cyclic groups of order p^n} with $( a_{1}, a_{2}, \dots, a_{L+1} ) \in U ( \Qbar )$ and $S = \{ 1 \}$.
                    
                If instead $\End^{0} ( Z_{2} ) \cong \M ( 2, K_{n} )$, then $Z_{2}$ is isogenous to the square of a simple abelian variety $X$. The second statement of Lemma~\ref{lem: isogeny decomposition specializes; subsec: specialization methods; sec: Preliminaries} applied to $\Jcal_{1}$ tells us that either $Z_{1}$ or $X$ has a dimension greater than or equal to that of $\Jac ( \Ccal_{1} )$, which is $L \cdot 2^{n-2}$. Since the dimension of $\Jbf^{\mathrm{al}}$ is $( L + 1 ) \cdot 2^{n-2}$ (Remark~\ref{rem: genus of Cbf; subsec: Setup; sec: Cyclic groups of order p^n}) and $2 L > L + 1$, we deduce that $\dim ( Z_{1} ) = L \cdot 2^{n-2}$ and $\dim ( Z_{2} ) = 2^{n-2}$, and we proceed as before.

                Hence, $A = 1$ and $\End^{0} ( \Jbf^{\mathrm{al}} )$ is isomorphic to either $\Q ( \zeta_{2^{n}} )$ or $\M ( 2, K_{n} )$. The latter case, however, contradicts the second statement of Lemma~\ref{lem: isogeny decomposition specializes; subsec: specialization methods; sec: Preliminaries} applied to $\Jcal_{1}$, which says that there exists a simple abelian subvariety of $\Jbf^{\mathrm{al}}$ of dimension at least $\dim \bigl( \Jac ( \Ccal_{1} ) \bigr) = L \cdot 2^{n-2}$. Hence, $\End^{0} ( \Jbf^{\mathrm{al}} )$ is isomorphic to $\Q ( \zeta_{2^{n}} )$. Theorems~\ref{the: reduction first statement main the; subsec: Preliminaries; sec: Cyclic groups of order p^n} and~\ref{the: there are infinitely many classes; subsec: Preliminaries; sec: Cyclic groups of order p^n} imply that Theorem \ref{the: main; sec: Cyclic groups of order p^n} holds for this subcase.
            \end{proof}

    \section{Cyclic groups of order \texorpdfstring{$pq$}{pq}}
    \label{sec: Cyclic groups of order pq}

        In this section, we analyze the cyclic groups $C_{pq}$, where $p, q$ are distinct primes. The Catalan curve over $\Qbar$ defined by $y^{p} = x^{q} - 1$ is an example of a curve of genus $\ge 2$ with automorphism group isomorphic to $C_{pq}$ and simple Jacobian (Theorem~\ref{the: properties of Catalan curve; sec: Preliminaries}), unless $pq = 6$, in which case the Catalan curve has genus $1$. We prove that, in fact, this is the unique curve, up to isomorphism, with these properties.

        \begin{The}
        \label{the: main; sec: Cyclic groups of order pq} 
            Let $p, q$ be distinct primes, and let $\Ccal / \Qbar$ be a curve of genus $\ge 2$ with automorphism group isomorphic to $C_{pq}$ and simple Jacobian. If $pq \neq 6$, the curve $\Ccal$ is isomorphic to the Catalan curve $y^{p} = x^{q} - 1$. When $pq = 6$, no such curve exists.
        \end{The}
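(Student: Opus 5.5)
The plan is to pin down the branch data of the quotient map, show that only one configuration survives, and identify that configuration as the Catalan curve by Kummer theory.

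Let $\sigma$ generate $\Aut ( \Ccal ) \cong C_{pq}$. By Proposition~\ref{pro: a curve with simple Jacobian is superelliptic; sec: Preliminaries}, $\Ccal$ is a superelliptic curve $y^{pq} = f(x)$, and $x$ realizes the quotient $\Ccal \to \Ccal / \langle \sigma \rangle \cong \mathbb{P}_{1}$. The proof of Proposition~\ref{pro: genus superelliptic curve pq; sec: Preliminaries} gives $\Acal_{pq} + \Acal_{p} = 2$ and $\Acal_{pq} + \Acal_{q} = 2$, and $g ( \Ccal ) = ( p - 1 ) ( q - 1 ) ( 1 - \Acal_{pq} / 2 )$. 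Here $\Acal_{s}$ counts branch points of index $s$. Since $g \ge 2$, the case $\Acal_{pq} = 2$ is excluded, and two cases remain.
\begin{enumerate}[(a)]
\item $\Acal_{pq} = 1$: three branch points, of indices $pq$, $p$, $q$, and $g = ( p - 1 ) ( q - 1 ) / 2$.
\item $\Acal_{pq} = 0$: four branch points, two of index $p$ and two of index $q$, and $g = ( p - 1 ) ( q - 1 )$.
\end{enumerate}
For $pq = 6$, case (a) gives genus $1$. Case (b) is excluded by the general argument below, and also directly by Lemma~\ref{lem: Aut even cardinality; subsec: Classification; sec: Possible automorphism groups}, which forces $p \ge 5$ for $C_{2p}$. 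This proves the last sentence of the theorem.

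In case (a), I would move the three branch points to $\infty$ (index $pq$), $0$ (index $q$) and $1$ (index $p$). A point of index $q$ has multiplicity divisible by $p$ and prime to $q$, and symmetrically at the point of index $p$. So, up to $pq$-th powers, $f = x^{p a'} ( x - 1 )^{q b'}$ with $\GCD ( a', q ) = \GCD ( b', p ) = 1$. The function field is the compositum of its degree-$p$ and degree-$q$ subextensions of $\Qbar ( x )$, namely those generated by $y^{q}$ and $y^{p}$. Dividing $y^{q}$ by $x^{a'}$ and $y^{p}$ by $( x - 1 )^{b'}$, and then raising to suitable powers prime to $p$ (respectively $q$) by Kummer theory, these become $\Qbar ( x ) ( s )$ with $s^{p} = x - 1$ and $\Qbar ( x ) ( t )$ with $t^{q} = x$. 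Therefore $\Qbar ( \Ccal ) = \Qbar ( s, t )$ with $t^{q} = s^{p} + 1$. After replacing $t$ by $-t$ if needed, this is the Catalan curve $y^{p} = x^{q} - 1$.

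Case (b) must be ruled out, and I expect this to be the main obstacle. Put the index-$p$ points at $0$ and $\infty$ and the index-$q$ points at $1$ and $\lambda$. Then $f = x^{a} ( x - 1 )^{b} ( x - \lambda )^{c}$, with exponents in finitely many admissible residue classes; fix one such choice and let $\lambda$ vary. The intermediate quotients $\Ccal / \langle \sigma^{p} \rangle$ and $\Ccal / \langle \sigma^{q} \rangle$ have only two branch points each, so they have genus $0$. This holds for every member of the family, without using simplicity. Hence no non-primitive character of $\langle \sigma \rangle$ occurs in $H^{0} ( \Ccal, \Omega^{1} )$, by the argument of Theorem~\ref{the: AutC admits irr fixed point free rep; subsec: Representation theory; sec: Preliminaries}. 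It follows that $\Q ( \zeta_{pq} )$, of degree $( p - 1 ) ( q - 1 ) = 2g$, acts on every Jacobian in the one-parameter family over $\lambda$, so all of them have CM. The CM type is read off from the fixed exponents, so it is constant, and by the argument of Lemma~\ref{lem: Cbf is not of CM type if L > 1; subsec: Preliminaries; sec: Cyclic groups of order p^n} the members fall into finitely many isogeny classes. On the other hand, the family is not isotrivial. The cross-ratio of the branch points is recovered from the curve up to finitely many choices, by the argument of Theorem~\ref{the: there are infinitely many classes; subsec: Preliminaries; sec: Cyclic groups of order p^n}. So \cite[Proposition~6.6.1]{cantoral2023monodromy} should produce infinitely many isogeny classes, a contradiction.

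The delicate points are checking that the hypotheses of that proposition are satisfied in this one-parameter setting, and the bookkeeping of the Kummer exponents in case (a). If the isogeny-class argument turns out to be awkward, my first fallback is to exhibit an extra automorphism lifting $x \mapsto \lambda / x$, which would contradict $\Aut ( \Ccal ) \cong C_{pq}$.
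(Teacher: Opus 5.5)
Your reduction to configurations (a) and (b) via $\Acal_{pq}+\Acal_{p}=\Acal_{pq}+\Acal_{q}=2$ is sound. So are the Kummer-theoretic identification of (a) with the Catalan curve and the disposal of $pq=6$ through the classification of even-order groups. Working with the full $C_{pq}$-quotient is a clean alternative to the paper's route, which uses the degree-$p$ quotient and the induced $C_{q}$-action on $\mathbb{P}_{1}$ and gets four cases. Your relations kill the paper's first case automatically, its second and fourth cases together form your (a), and its third case is your (b).

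The gap is in case (b). There $g(\Ccal)=(p-1)(q-1)$, as you computed yourself, so $[\Q(\zeta_{pq}):\Q]=(p-1)(q-1)=g$, not $2g$. Knowing that only primitive characters occur in $H^{0}(\Ccal,\Omega^{1})$ gives an embedding of $\Q(\zeta_{pq})$ into the endomorphism algebra, but not CM. Each conjugate pair $\chi,\bar\chi$ of primitive characters has total multiplicity $2$ in $H^{0}$, and a $(1,1)$ split allows the Jacobian to move in a positive-dimensional family. So neither the finiteness of isogeny classes nor the contradiction with \cite[Proposition~6.6.1]{cantoral2023monodromy} follows.

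The argument cannot be right as written, because it never uses simplicity or $\Aut(\Ccal)\cong C_{pq}$. If it worked, it would show that no $C_{pq}$-cover with branch data (b) exists at all. But $y^{p}=(x^{q}-1)(x^{q}-\lambda)^{p-1}$, with $C_{pq}$ generated by $(x,y)\mapsto(\zeta_{q}x,\zeta_{p}y)$, is such a cover for every admissible $\lambda$.

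What actually disposes of (b) is your fallback, which is the paper's idea, but it needs a justification. Put the index-$p$ points at $0,\infty$ and the index-$q$ points at $1,\lambda$. Then the divisor of $f$ modulo $pq$ has $p$-part $u([0]-[\infty])$ and $q$-part $v([1]-[\lambda])$, for some $u\in(\Z/p\Z)^{*}$ and $v\in(\Z/q\Z)^{*}$. Since $\mathbb{P}_{1}$ has trivial $\mathrm{Pic}^{0}$ and $\Qbar^{*}$ is divisible, this divisor determines the class of $f$ in $\Qbar(x)^{*}/\bigl(\Qbar(x)^{*}\bigr)^{pq}$.

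The map $\mu\colon x\mapsto\lambda/x$ swaps $0\leftrightarrow\infty$ and $1\leftrightarrow\lambda$, so $\mu^{*}$ sends this class to its inverse. Hence $\mu^{*}$ preserves the Kummer subgroup generated by $f$, and $\mu$ lifts to some $\tilde\mu\in\Aut(\Ccal)$. Since $\tilde\mu$ acts nontrivially on $\Ccal/\langle\sigma\rangle$, it cannot lie in $\langle\sigma\rangle=\Aut(\Ccal)$, which is a contradiction.

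The paper does the same thing explicitly. In the model $z^{p}=(x^{q}-a_{1}^{q})/(x^{q}-a_{2}^{q})$ it writes down the involution $(x,z)\mapsto(a_{1}a_{2}/x,\gamma/z)$. It then concludes using the fact that $\Aut(\Ccal)$ has at most one element of order $2$.
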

        \begin{proof}
            Let $g$ be a generator of $\Aut ( \Ccal )$. Since $g^{q}$ is an element of order $p$, Proposition~\ref{pro: a curve with simple Jacobian is superelliptic; sec: Preliminaries} implies that $\Ccal$ is isomorphic to a superelliptic curve $y^{p} = f(x)$, where the projection to the $x$-coordinate corresponds to the cover $\Ccal \twoheadrightarrow \Ccal / \langle g^{q} \rangle \cong \mathbb{P}_{1, \Qbar}$. Every element of $\Aut ( \Ccal ) / \langle g^{q} \rangle \cong C_{q}$ acts on $\Ccal / \langle g^{q} \rangle$ and stabilizes the branch locus of $\Ccal \to \mathbb{P}_{1, \Qbar}$. Since every cyclic subgroup of order $n$ of $\Aut ( \mathbb{P}_{1, \Qbar} )$ is conjugate to the subgroup generated by $z \mapsto \zeta_{n} \cdot z$, up to a change of coordinates on $\mathbb{P}_{1, \Qbar}$ we may assume that
            \begin{equation*}
                f(x) = x^{n_{0}} \cdot \prod_{i = 1}^{d} \prod_{j = 1}^{q} ( x - \zeta_{q}^{j} \cdot a_{i} )^{n_{i, j}}
            \end{equation*}
            for some integer $d \ge 1$, some elements $a_{i} \in \Qbar^{*}$ such that the $a_{i}^{q}$ are pairwise distinct, some integer $0 \le n_{0} < p$ and some integers $1 \le n_{i, j} < p$ for $1 \le i \le d$ and $1 \le j \le q$. Let $\varepsilon = 1$ if $\infty$ is a branch point of $\Ccal \to \mathbb{P}_{1, \Qbar}$ and $\varepsilon = 0$ otherwise.

            By construction, $g$ acts on the $x$-coordinate as $x \mapsto \zeta_{q} \cdot x$. For such an automorphism to exist, there must exist an integer $1 \le r \le p - 1$ whose multiplicative order in $( \Z / p \Z )^{*}$ divides $q$, and a polynomial $u \in \Qbar[x]$ such that $f( \zeta_{q} \cdot x ) = u(x)^{p} \cdot f(x)^{r}$ (by Kummer theory). Moreover, we must have $n_{i, j} \equiv r \cdot n_{i, j - 1} \pmod{p}$. When these conditions are satisfied, the automorphism $g$ is given by $( x, y ) \mapsto \bigl( \zeta_{q} \cdot x, u(x) \cdot y^{r} \bigr)$. On the other hand, by construction, $g^{q}$ is given by $( x, y ) \mapsto ( x, \zeta_{p} \cdot y )$. Since $g^{q}$ and $g$ commute, it follows that $r = 1$. In other words, for every fixed $i$, the integers $n_{i, j}$ are all congruent modulo $p$; hence, up to absorbing powers of $p$ into the $y$-variable, we can assume that for every fixed $i$ the integers $n_{i, j}$ are all equal. Then
            \begin{equation*}
                f(x) = x^{n_{0}} \cdot \prod_{i = 1}^{d} ( x^{q} - a_{i}^{q} )^{n_{i}}
            \end{equation*}
            for some integers $1 \le n_{i} < p$ where $1 \le i \le d$.
            
            From the Riemann--Hurwitz formula we deduce that the genus of $\Ccal$ is $( p - 1 ) ( d q + \varepsilon - 1 ) / 2$ if $n_{0} > 0$, and $( p - 1 ) ( d q + \varepsilon - 2 ) / 2$ if $n_{0} = 0$. On the other hand, since $\Aut ( \Ccal )$ contains an element of order $pq$, Propositions~\ref{pro: a curve with simple Jacobian is superelliptic; sec: Preliminaries} and~\ref{pro: genus superelliptic curve pq; sec: Preliminaries} imply that the genus of $\Ccal$ is either $( p - 1 ) ( q - 1 )$ or $( p - 1 ) ( q - 1 ) / 2$. Hence, there are four possibilities:
            \begin{enumerate}
                \item we have $\varepsilon = 1$, $d = 1$, $q = 2$ and $n_{0} > 0$, corresponding to $f(x) = x^{n_{0}} \cdot ( x^{2} - a_{1}^{2} )^{n_{1}}$ and $n_{0} + 2n_{1} \not\equiv 0 \pmod{p}$;
                \item we have $\varepsilon = 0$, $d = 1$ and $n_{0} > 0$, corresponding to $f(x) = x^{n_{0}} \cdot ( x^{q} - a_{1}^{q} )^{n_{1}}$ and $n_{0} + q \cdot n_{1} \equiv 0 \pmod{p}$;
                \item we have $\varepsilon = 0$, $d = 2$ and $n_{0} = 0$, corresponding to $f(x) = ( x^{q} - a_{1}^{q} )^{n_{1}} \cdot ( x^{q} - a_{2}^{q} )^{n_{2}}$ and $n_{1} + n_{2} \equiv 0 \pmod{p}$;
                \item we have $\varepsilon = 1$, $d = 1$ and $n_{0} = 0$, corresponding to $f(x) = ( x^{q} - a_{1}^{q} )^{n_{1}}$.
            \end{enumerate}
            We analyze each case separately.

            Suppose that we are in the first case. Up to isomorphism, we can assume $n_{0} = 1$. Since $q = 2$, the prime $p$ must be odd. Since $\varepsilon = 1$, there is a unique point $\infty$ of $\Ccal$ in the fiber of the natural cover $\Ccal \to \mathbb{P}_{1, \Qbar}$ above the point at infinity. Consider the automorphism of $\Ccal$
            \begin{equation*}
                \varphi \colon \Ccal \to \Ccal \qquad\qquad\qquad ( x, y ) \mapsto ( -x, -y ) .
            \end{equation*}
            The ramification points of the cover $\Ccal \to \Ccal / \langle \varphi \rangle$ are $( 0, 0 )$ and $\infty$. From the Riemann--Hurwitz formula, we deduce that the genus of $\Ccal / \langle \varphi \rangle$ is half the genus of $\Ccal$; hence, it is non-zero. However, we proved in Theorem~\ref{the: necessary condition simplicity; sec: Preliminaries} that the genus of $\Ccal / \langle \varphi \rangle$ is zero; hence, we reach a contradiction.

            Suppose now that we are in the second case. The point at infinity of $\mathbb{P}_{1, \Qbar}$ is not a branch point of the natural cover $\Ccal \to \mathbb{P}_{1, \Qbar}$, whereas $0$ is. By applying the automorphism $x \mapsto 1 / x$ of $\mathbb{P}_{1, \Qbar}$, which does not change the form of the action of $\Aut ( \Ccal ) / \langle g^{q} \rangle$ on $\Ccal / \langle g^{q} \rangle$, we can assume that the point at infinity of $\mathbb{P}_{1, \Qbar}$ is a branch point of the cover while $0$ is not. This allows us to reduce to the fourth case, which will be analyzed below.
            
            Suppose now that we are in the third case. Up to isomorphism, we can assume $n_{1} = 1$ and $n_{2} = p - 1$. We can rewrite the model of $\Ccal$ as $z^{p} = ( x^{q} - a_{1}^{q} ) / ( x^{q} - a_{2}^{q} )$, where $z = y / ( x^{q} - a_{2}^{q} )$. Consider, in this new model, the automorphism of $\Ccal$
            \begin{equation*}
                \varphi \colon \Ccal \to \Ccal \qquad\qquad\qquad (x, z) \mapsto \biggl( \frac{ a_{1}a_{2} }{x}, \frac{\gamma}{z} \biggr) ,
            \end{equation*}
            where $\gamma^{p} = a_{1}^{q} / a_{2}^{q}$. Since $\varphi$ has order $2$ and $\Aut ( \Ccal )$ is isomorphic to $C_{pq}$, it follows that either $p = 2$ or $q = 2$. If $p = 2$ (resp.~$q = 2$), there exists a different automorphism of $\Ccal$ of order $2$, namely $( x, y ) \mapsto ( x, -y )$ (resp.~$( x, y ) \mapsto ( -x, y )$). In Proposition~\ref{pro: unique element of order 2 in AutC if C is simple; sec: Preliminaries} we proved that there exists at most one element of order $2$ in $\Aut ( \Ccal )$; hence, we reach a contradiction.

            Finally, suppose that we are in the fourth case. Up to isomorphism, we can assume $n_{1} = 1$ and $a_{1} = 1$; hence, $\Ccal$ is isomorphic to the Catalan curve $y^{p} = x^{q} - 1$, as desired.
        \end{proof}

    \section{Generalized quaternion groups}
    \label{sec: Generalized quaternions groups}

        In this section, we analyze the generalized quaternion groups
        \begin{equation*}
            G_{2^{n}} = \langle x, y \,\lvert\, x^{2} \cdot y^{2^{n}}, y^{2^{n+1}}, x^{-1} \cdot y \cdot x \cdot y \rangle ,
        \end{equation*}
        where $n$ is a positive integer. We show that these groups can be realized as the automorphism groups of curves over $\Qbar$ of genus $\ge 2$ with simple Jacobians. More precisely:
            
        \begin{The}
        \label{the: main; sec: Generalized quaternions groups} 
            Let $n \ge 1$ and $L \ge 1$ be integers. For $\abf = ( a_{1}, \dots, a_{L} ) \in \Gbb_{m, \Qbar}^{L} ( \Qbar )$, set $f_{\abf} ( x ) \coloneqq x \cdot \bigl( x^{2^{n+1}} - 1 \bigr) \cdot \prod_{l = 1}^{L} \bigl[ \bigl( x^{2^{n}} - a_{l} \bigr) \cdot \bigl( x^{2^{n}} - a^{-1}_{l} \bigr) \bigr]$, and let $U \subset \Gbb_{m, \Qbar}^{L}$ be the non-empty open subscheme defined by $\operatorname{disc}_{x} ( f_{\abf} ) \neq 0$. For every $\abf \in U ( \Qbar )$, let $\Cfrak_{\abf} / \Qbar$ denote the smooth projective model of the affine curve 
            \begin{equation}
            \label{eq: Cfraka1...aL; sec: Generalized quaternions groups}
                y^{2} = f_{\abf} ( x ) = x \cdot \bigl( x^{2^{n+1}} - 1 \bigr) \cdot \prod_{l = 1}^{L} \bigl[ \bigl( x^{2^{n}} - a_{l} \bigr) \cdot \bigl( x^{2^{n}} - a^{-1}_{l} \bigr) \bigr] .
            \end{equation}
            For every number field $K$ and for $100 \%$ of the points $\abf \in U ( \Qbar ) \cap K^{L}$, one has $\Aut_{\Qbar} ( \Cfrak_{\abf} ) \cong G_{ 2^{n} }$, and the Jacobian $\Jfrak_{\abf} \coloneqq \Jac ( \Cfrak_{\abf} )$ is (geometrically) simple, with endomorphism algebra $\End_{\Qbar}^{0} (\Jfrak_{\abf} ) \cong \left( \frac{ ( \zeta_{2^{n+1}} - \zeta_{2^{n+1}}^{-1} )^{2}, \; -1 }{ \Q( \zeta_{2^{n+1}} + \zeta_{2^{n+1}}^{-1} ) } \right)$. Moreover, if $L\ge 2$, the curves $\Cfrak_{\abf}$ obtained in this way represent infinitely many distinct isomorphism classes over $\Qbar$, and their Jacobians represent infinitely many distinct isogeny classes over $\Qbar$.
        \end{The}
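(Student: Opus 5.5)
The plan is to mirror the strategy of Section~\ref{sec: Cyclic groups of order p^n}: set up the family $\Cfrak \to U$ over $U \subset \Gbb_{m,\Qbar}^{L}$ given by \eqref{eq: Cfraka1...aL; sec: Generalized quaternions groups}, with generic fiber $\Cbf$ and Jacobian $\Jbf$. Then reduce everything to the single statement
\[
\End^{0} ( \Jbf^{\mathrm{al}} ) \cong Q_{n} \coloneqq \left( \frac{ ( \zeta_{2^{n+1}} - \zeta_{2^{n+1}}^{-1} )^{2}, \; -1 }{ \Q( \zeta_{2^{n+1}} + \zeta_{2^{n+1}}^{-1} ) } \right).
\]
First I exhibit $G_{2^{n}} \subseteq \Aut ( \Cbf^{\mathrm{al}} )$. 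The generator $y$ of order $2^{n+1}$ acts as $(x,y)\mapsto(\zeta_{2^{n}} x, \zeta_{2^{n+1}} y)$. The element $x$ acts as $(x,y)\mapsto(1/x,\, c\, y/x^{g+1})$ with $g = 2^{n}(L+1)$; this works because $f_{\abf}$ is, up to sign, reciprocal under $x \mapsto 1/x$, since $a_{l} \leftrightarrow a_{l}^{-1}$ and $x^{2^{n+1}} - 1$ is antireciprocal. I then check the relations directly. The induced endomorphisms $\Phi, \Psi$ satisfy $\Phi^{2^{n}} = -1$, $\Psi^{2} = -1$ and $\Psi \Phi \Psi^{-1} = \Phi^{-1}$. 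It follows that the $\Q$-subalgebra they generate is a quotient of the cyclic algebra $(\Q(\zeta_{2^{n+1}})/\Q(\zeta+\zeta^{-1}), \sigma, -1) \cong Q_{n}$. Since $Q_{n}$ is simple (it is a division algebra, being totally definite at the real places), the subalgebra is exactly $Q_{n}$.

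The reduction step is the analogue of Theorem~\ref{the: reduction first statement main the; subsec: Preliminaries; sec: Cyclic groups of order p^n}. By Lemma~\ref{lem: how many fibers of Jfrak have bigger End0; subsec: specialization methods; sec: Preliminaries}, for $100\%$ of $\abf \in U(\Qbar)\cap K^{L}$ the specialization gives $\End^{0}(\Jfrak_{\abf}) \cong Q_{n}$. Since $Q_{n}$ is a division algebra, $\Jfrak_{\abf}$ is simple. Moreover $\Aut(\Cfrak_{\abf})$ contains $G_{2^{n}}$ and, by Theorem~\ref{the: possible Aut nice curve with simple Jacobian; sec: Possible automorphism groups}, lies in the listed groups. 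A generalized quaternion $G_{2^{m}}$ with $m>n$ would require an element of order $2^{m+1}$ in the unit group of $Q_{n}$, whose maximal subfields have degree $2^{n}$; this is impossible, so $\Aut = G_{2^{n}}$. The infinitude of isomorphism and isogeny classes for $L \ge 2$ follows exactly as in Theorem~\ref{the: there are infinitely many classes; subsec: Preliminaries; sec: Cyclic groups of order p^n}: Möbius transformations preserving the branch set have finite fibers, and \cite[Proposition~6.6.1]{cantoral2023monodromy} gives infinitely many isogeny classes. The same argument as in Lemma~\ref{lem: Cbf is not of CM type if L > 1; subsec: Preliminaries; sec: Cyclic groups of order p^n} shows that $\Jbf^{\mathrm{al}}$ has no CM factors for $L\ge 2$.

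The core is proving $\End^{0}(\Jbf^{\mathrm{al}}) \cong Q_{n}$ by induction on $L$. The Jacobian has dimension $g = 2^{n}(L+1)$, and $[Q_{n}:\Q] = 2^{n+1}$. The base case has $L=0$ formally, giving the curve $y^{2} = x(x^{2^{n+1}}-1)$ of genus $2^{n}$. By Lemma~\ref{lem: L = 1; subsec: The case p=2; sec: Cyclic groups of order p^n} (with $n+2$ in place of $n$), its Jacobian is isogenous to $Y^{2}$ with $\End^{0}(Y) \cong K_{n+2}$, which is consistent with the exclusion $g=2^{n}$ in Remark~\ref{rem: Main theorem; sec: Introduction}. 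For $L=1$, and for the inductive step, I use a degeneration as in Proposition~\ref{pro: particular specialization; subsec: specialization methods; sec: Cyclic groups of order p^n}, sending one pair $a_{l}\to 0$, $a_{l}^{-1}\to\infty$ simultaneously (say $a_{l} = t^{N}a$). I expect the stable fiber to be a chain: the curve with $L-1$ pairs attached to two copies of a small superelliptic curve near $0$ and near $\infty$. These two copies are swapped by $\Psi$ and carry an action of $\Phi$ with CM by $\Q(\zeta_{2^{n+1}})$ or $\Q(\zeta_{2^{n}})$, as in Lemma~\ref{lem: L = 1 is true; subsec: The case p>2; sec: Cyclic groups of order p^n}. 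Tracking $\Phi,\Psi$ through the degeneration, as in Lemma~\ref{lem: how Phi specializes in JTt; subsec: specialization methods; sec: Cyclic groups of order p^n}, constrains each isotypic component of $\Jbf^{\mathrm{al}}$ to have endomorphism algebra embedding into either $Q_{n}$ (by induction) or $\M(2,\text{CM field})$. The $Q_{n}$-module structure forces every isotypic component to contain $Q_{n}$. The CM-type and dimension counts (Lemma~\ref{lem: divisibility dimension and degree End simple abelian varieties; subsec: Abelian varieties; sec: Preliminaries}) together with the no-CM lemma then kill all options except one isotypic component with algebra $Q_{n}$.

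The main obstacle is excluding a splitting $\Jbf^{\mathrm{al}} \sim Z_{1}\times Z_{2}$ compatible with every degeneration. Here I would redo Corollary~\ref{cor: exclusion cases with torsion arguments; subsec: specialization methods; sec: Cyclic groups of order p^n} with the $(1-\Phi)$-torsion, or with the $2$-torsion generated by Weierstrass divisor classes $[(\zeta^{i}a_{l}^{1/2^{n}},0) - \infty]$ grouped into $\Phi$-orbits. I would compute their specializations in two different degenerations (collapsing pair $1$ versus pair $L$) and derive a counting contradiction. The genuinely new difficulty is the small base cases $L=1,2$, where such a computation must be done by hand. If that fails, I would fall back on a direct CM-type computation at a special fiber with extra symmetry, in the spirit of Lemma~\ref{lem: L = 2; subsec: The case p=2; sec: Cyclic groups of order p^n}.
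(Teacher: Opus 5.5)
\textbf{Verdict: the architecture matches the paper, but the base case $L=1$ is a genuine gap.}

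\textbf{What you get right.} You reduce to $\End^{0}(\Jbf^{\mathrm{al}})\cong D$ via the density lemma, and your argument that $\Aut\cong G_{2^{n}}$ is the paper's. You then induct on $L$ with the degeneration $a_{l}\to 0$, $a_{l}^{-1}\to\infty$, whose special Jacobian is $\Jac(\Ccal_{1})\times\Jac(\Ccal_{2})^{2}$. Finally you exclude a splitting $Z_{1}\times Z_{2}$ by tracking the $2$-torsion Weierstrass classes through the two degenerations $S=\{L+1\}$ and $S=\{1\}$. That inductive step works for every $L+1\ge 2$ once $L=1$ is known, so $L=2$ needs no separate treatment.

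\textbf{The gap at $L=1$.} There is only one pair to collapse, so the torsion-counting corollary is unavailable: it needs two distinct subsets $S_{1}\neq S_{2}$ of size $<L$. ``CM-type and dimension counts plus the no-CM lemma'' does not close the case as stated. The argument of the cyclic no-CM lemma only shows that $\Jbf^{\mathrm{al}}$ is not isogenous to a \emph{product} of CM abelian varieties. It does not exclude a single isotrivial CM factor, so your claim of ``no CM factors'' is an overstatement and cannot be used to discard individual components.

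Also, for $p=2$ the curve $y^{2}=x(x^{2^{n}}-a)$ does not have a simple Jacobian. It is isogenous to $X^{2}$ with $\End^{0}(X)\cong K_{n+1}$, so the two exceptional components contribute $X^{4}$ with algebra $\M(4,K_{n+1})$, not a $\M(2,\cdot)$.

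\textbf{The missing idea.} Use a specific symmetric fiber. At $a_{1}=\zeta_{4}$ one has
\[
(x^{2^{n}}-\zeta_{4})(x^{2^{n}}+\zeta_{4})(x^{2^{n+1}}-1)=x^{2^{n+2}}-1,
\]
so the fiber is $y^{2}=x(x^{2^{n+2}}-1)$. Its Jacobian is isogenous to $X'^{2}$ with $X'$ simple and $\End^{0}(X')\cong K_{n+3}$. This gives the base case in three steps:
\begin{enumerate}
\item The fiber bounds the number of isogeny factors of $\Jbf^{\mathrm{al}}$ by $2$. It also forces $\Jbf^{\mathrm{al}}$ to be isotypic, since a non-isotypic component would embed the noncommutative $D$ into the field $K_{n+3}$.
\item The single degeneration with $S=\{1\}$ (say at $a_{1}=2$) gives $\End^{0}(\Jcal)\cong\M(2,K_{n+2})\times\M(4,K_{n+1})$. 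Because $\Jbf^{\mathrm{al}}$ is isotypic, its algebra embeds into $\M(2,K_{n+2})$.
\item Being a $D$-vector space, that algebra is either $D$ or all of $\M(2,K_{n+2})$. The latter would make $\Jbf^{\mathrm{al}}\sim W^{2}$ with $\End^{0}(W)\cong K_{n+2}$, and specializing $W$ to the $\zeta_{4}$-fiber would embed $K_{n+2}$ into $K_{n+3}$, which is impossible.
\end{enumerate}
Your fallback of ``a special fiber with extra symmetry'' points in this direction. But without identifying this fiber, and the fact that $K_{n+2}$ does not embed into $K_{n+3}$, the base case is not established, and so neither is the induction.
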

            
        \begin{Rem}
        \label{rem: case n = 1; sec: Generalized quaternions groups}
            The case $n = 1$ of Theorem~\ref{the: main; sec: Generalized quaternions groups} was established by Cantoral-Farfán, Lombardo and Voight in \cite{cantoral2023monodromy}. In the rest of this section, we therefore assume $n \ge 2$.
        \end{Rem}

        This section is organized as follows. In Section~\ref{subsec: Setup; sec: Generalized quaternions groups}, we fix the notation and provide some background for the curves \eqref{eq: Cfraka1...aL; sec: Generalized quaternions groups}. Section~\ref{subsec: Preliminaries; sec: Generalized quaternions groups} is devoted to analyzing several properties of these curves, while in Section~\ref{subsec: Specialization methods; sec: Generalized quaternions groups} we extend the analysis using specialization techniques and degenerations of curves. Since the techniques and proofs in Sections~\ref{subsec: Preliminaries; sec: Generalized quaternions groups} and~\ref{subsec: Specialization methods; sec: Generalized quaternions groups} are very similar to those in Sections~\ref{subsec: Preliminaries; sec: Cyclic groups of order p^n} and~\ref{subsec: specialization methods; sec: Cyclic groups of order p^n}, we omit many of the details. Finally, in Section~\ref{subsec: Proof of main result; sec: Generalized quaternions groups} we prove Theorem~\ref{the: main; sec: Generalized quaternions groups}.

        \subsection{Setup}
        \label{subsec: Setup; sec: Generalized quaternions groups}

            We begin with a brief setup. Throughout Section~\ref{sec: Generalized quaternions groups}, we fix an integer $n \ge 2$. Moreover, we denote by $L$ an integer $\ge 1$ and by $\alpha_{1}, \dots, \alpha_{L}$ the variables of $\Gbb_{m, \Qbar}^{L}$.

            \begin{Def}
            \label{def: Cfrak, Jfrak, Cbf, Jbf; subsec: Setup; sec: Generalized quaternions groups}
                Define the family $\Cfrak \to U$ of curves over $\Qbar$ given by the relative smooth projective model of
                \begin{equation}
                \label{eq: Cfrak; subsec: Setup; sec: Generalized quaternions groups}
                    y^{2} = x \cdot \bigl( x^{2^{n+1}} - 1 \bigr) \cdot \prod_{l = 1}^{L} \bigl[ \bigl( x^{2^{n}} - \alpha_{l} \bigr) \cdot \bigl( x^{2^{n}} - \alpha^{-1}_{l} \bigr) \bigr] ,
                \end{equation}
                where $U$ is the non-empty open subscheme of $\Gbb_{m, \Qbar}^{L}$ on which the discriminant (with respect to $x$) of the polynomial on the right-hand side does not vanish. Let $\Jfrak \to U$ be the Jacobian scheme of $\Cfrak \to U$. 

                Moreover, let $\Cbf$ (resp.~$\Jbf$) be the generic fiber over $\Qbar ( U )$ of $\Cfrak \to U$ (resp.~$\Jfrak \to U$), and let $\Cbf^{\mathrm{al}}$ (resp.~$\Jbf^{\mathrm{al}}$) be the scalar extension to an algebraic closure $\Qbar ( U )^{\mathrm{al}}$ of $\Qbar ( U )$. 
            \end{Def}

            \begin{Def}
            \label{def: varphi and Phi; subsec: Setup; sec: Generalized quaternions groups}
                Define the automorphisms of $\Cbf$ given by
                \begin{gather*}
                    \varphi \colon \Cbf \to \Cbf \qquad\qquad\qquad ( x, y ) \mapsto ( \zeta_{2^{n}} \cdot x, \zeta_{2^{n+1}} \cdot y ) , \notag \\ 
                    \psi \colon \Cbf \to \Cbf \qquad\qquad\qquad ( x, y ) \mapsto \biggl( \frac{1}{x}, \frac{ \zeta_{4} \cdot y }{ x^{ (L+1) \cdot 2^{n} + 1 } } \biggr) ,
                \end{gather*}
                which generate a subgroup of $\Aut ( \Cbf )$ isomorphic to $G_{2^{n}}$ via the map sending $\psi$ and $\varphi$ to the standard generators of $G_{2^n}$. Let $\Phi$ (resp.~$\Psi$) be the endomorphism of $\Jbf$ induced by $\varphi$ (resp.~$\psi$).
            \end{Def}

            \begin{Def}
            \label{def: D and R; subsec: Setup; sec: Generalized quaternions groups}
                Let $D$ be the algebra over $\Q( \zeta_{2^{n+1}} + \zeta_{2^{n+1}}^{-1} )$ generated by $\alpha, \beta$ satisfying the relations $\alpha^{2} = ( \zeta_{2^{n+1}} - \zeta_{2^{n+1}}^{-1} )^{2}$, $\beta^{2} = -1$ and $\alpha \cdot \beta = - \beta \cdot \alpha$, that is, the quaternion algebra $\left( \frac{ ( \zeta_{2^{n+1}} - \zeta_{2^{n+1}}^{-1} )^{2}, \; -1 }{ \Q( \zeta_{2^{n+1}} + \zeta_{2^{n+1}}^{-1} ) } \right)$. Moreover, let $\gamma \coloneqq \bigl( ( \zeta_{2^{n+1}} + \zeta_{2^{n+1}}^{-1} ) + \alpha \bigr) / 2$ and let $R$ be the order of $D$ generated by $1, \gamma, \beta, \gamma \cdot \beta$.
            \end{Def}
                
            \begin{Rem}
            \label{rem: D is a division algebra; subsec: Setup; sec: Generalized quaternions groups}
                The quaternion algebra $D$ is a division algebra. Indeed, if $D$ were isomorphic to the $2 \times 2$ matrix algebra over $\Q ( \zeta_{2^{n+1}} + \zeta_{2^{n+1}}^{-1} )$, then the completion of $D$ with respect to a real place of $\Q( \zeta_{2^{n+1}} + \zeta_{2^{n+1}}^{-1} )$ would be isomorphic to the $2 \times 2$ matrix algebra over $\R$. This contradicts the fact that both $\alpha^{2}$ and $\beta^{2}$ are negative real numbers.
            \end{Rem}

            The origin of the family $\Cfrak \to U$ is explained in \cite[Sections~$3.1$, $3.2$ and~$3.3$]{cantoral2023monodromy}.

            \begin{Rem}
            \label{rem: genus of Cbf; subsec: Setup; sec: Generalized quaternions groups}
                By Lemma~\ref{lem: genus superelliptic curves; sec: Preliminaries}, $\Cfrak \to U$ is a family of curves of genus $( L + 1 ) \cdot 2^{n}$ and $\Jfrak \to U$ is an abelian scheme of relative dimension $( L + 1 ) \cdot 2^{n}$.
            \end{Rem}

        \subsection{Preliminaries}
        \label{subsec: Preliminaries; sec: Generalized quaternions groups}
            
            We continue by analyzing the families $\Cfrak \to U$ and $\Jfrak \to U$. First, we recall a description of a basis of $H^{0} ( \Cbf^{\mathrm{al}}, \Omega^{1}_{ \Cbf^{\mathrm{al}} } )$. 

            \begin{Lem}
            \label{lem: basis of regular differential forms on Cbf; subsec: Preliminaries; sec: Generalized quaternions groups}
                The differential forms
                \begin{equation*}
                    \omega_{i} \coloneqq \frac{x^{i} \cdot dx}{y} , \qquad\qquad\qquad \text{for } 0 \le i \le ( L + 1 ) \cdot 2^{n} - 1 ,
                \end{equation*}
                form a basis of $H^{0} ( \Cbf^{\mathrm{al}}, \Omega^{1}_{ \Cbf^{\mathrm{al}} } )$.
            \end{Lem}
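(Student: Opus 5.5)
Since the polynomial on the right-hand side of \eqref{eq: Cfrak; subsec: Setup; sec: Generalized quaternions groups} has simple roots over $\Qbar(U)^{\mathrm{al}}$, the curve $\Cbf^{\mathrm{al}}$ is hyperelliptic, and I would argue exactly as in Lemma~\ref{lem: basis of regular differential forms on Cbf; subsec: Preliminaries; sec: Cyclic groups of order p^n}, invoking \cite[Proposition~$2$]{TowseCyclicCoverP1} with the cyclic degree equal to $2$. The first step is to compute the degree of $f_{\alpha}(x) \coloneqq x ( x^{2^{n+1}} - 1 ) \prod_{l} ( x^{2^{n}} - \alpha_{l} ) ( x^{2^{n}} - \alpha_{l}^{-1} )$:
\begin{equation*}
    \deg f_{\alpha} = 1 + 2^{n+1} + 2L \cdot 2^{n} = 2 ( L + 1 ) 2^{n} + 1 ,
\end{equation*}
which is odd. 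Hence $\infty$ is a branch point, and Lemma~\ref{lem: genus superelliptic curves; sec: Preliminaries} gives $g = ( \deg f_{\alpha} - 1 ) / 2 = ( L + 1 ) 2^{n}$, consistent with Remark~\ref{rem: genus of Cbf; subsec: Setup; sec: Generalized quaternions groups}. The parameters to feed into Towse's proposition are $d = 2(L+1)2^{n} + 1$, exponent $n = 2$, $e = 1$ and $k = ( L + 1 ) 2^{n}$, after replacing $\C$ by $\Qbar(U)^{\mathrm{al}}$.

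I would also say how the statement can be checked directly, since this is the classical description. On the affine part, $dx / y$ is regular away from the Weierstrass points because $2y\,dy = f_{\alpha}'(x)\,dx$ and $f_{\alpha}'$ does not vanish at roots of $f_{\alpha}$. At a finite root $x = r$, the local parameter is $y$, with $x - r$ of order $2$, so $dx$ has a simple zero there and cancels the simple zero of $y$. At the unique point at infinity, with local parameter $t$, one has $\operatorname{ord}(x) = -2$ and $\operatorname{ord}(y) = -\deg f_{\alpha} = -(2g+1)$. Therefore
\begin{equation*}
    \operatorname{ord}_{\infty} \bigl( x^{i} \, dx / y \bigr) = -2i - 3 + 2g + 1 = 2 ( g - 1 - i ) ,
\end{equation*}
which is $\ge 0$ exactly when $i \le g - 1$. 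The forms $\omega_{i}$ are linearly independent because the functions $x^{i}$ are, and there are $g = \dim H^{0}$ of them, so they form a basis.

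The only point needing care is that the degree is odd, so that there is a single point at infinity and the order computation above applies. This holds because the factor $x$ contributes exactly one to the degree. No real obstacle is expected.
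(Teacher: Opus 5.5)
Your proof is correct and is essentially what the paper intends: the paper states this lemma without proof as the standard hyperelliptic basis (the analogue of its $C_{p^{n}}$ lemma, which it proves by citing Towse's Proposition~2), and your direct computation $\operatorname{ord}_{\infty}(x^{i}\,dx/y) = 2(g-1-i)$ with $g = (L+1)2^{n}$ is already a complete, self-contained proof. There is one bookkeeping slip in the citation. The paper's parameters in the $C_{p^{n}}$ case satisfy $d = kN - e$ with $0 \le e < N$, where $N$ is the degree of the cyclic cover. With $N = 2$ this convention requires $k = (L+1)2^{n} + 1$ and $e = 1$; your $k = (L+1)2^{n}$ gives $2k - 1 = 2(L+1)2^{n} - 1 \neq d$. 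This is harmless, since your direct verification does not depend on it.
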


            Next, we recall a basic fact concerning $\Jbf[ 2 ]$.

            \begin{Lem}
            \label{lem: properties of 2-torsion part; subsec: Preliminaries; sec: Generalized quaternions groups}
                The points
                \begin{align}
                \label{eq: Dalphal; subsec: Preliminaries; sec: Generalized quaternions groups}
                    D_{ \alpha_{l}, i }^{+} &\coloneqq \bigl[ ( \zeta_{2^{n}}^{i} \cdot \alpha_{l}^{1 / 2^{n}}, 0 ) - \infty \bigr] & &\text{for $0 \le i \le 2^{n} - 1$ and $1 \le l \le L$,} \notag \\ 
                    D_{ \alpha_{l}, i }^{-} &\coloneqq \bigl[ ( \zeta_{2^{n}}^{i} \cdot \alpha_{l}^{-1 / 2^{n}}, 0 ) - \infty \bigr] & &\text{for $0 \le i \le 2^{n} - 1$ and $1 \le l \le L$,} \notag \\
                    P_{i} &\coloneqq \bigl[ ( \zeta_{2^{n+1}}^{i}, 0 ) - \infty \bigr] & &\text{for $0 \le i \le 2^{n+1} - 1$,}
                \end{align}
                where the square brackets denote the class of a divisor, form a basis of the $\Fbb_{2}$-vector space $\Jbf [ 2 ]$.
            \end{Lem}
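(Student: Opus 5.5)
This is the classical description of the $2$-torsion of a hyperelliptic Jacobian in terms of differences of Weierstrass points. The plan is to show that the $2g$ listed classes are $\Fbb_{2}$-linearly independent elements of $\Jbf [ 2 ]$, where $g \coloneqq ( L + 1 ) \cdot 2^{n}$, and then conclude by a dimension count.

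First I set up the roots. Write $f$ for the right-hand side of \eqref{eq: Cfrak; subsec: Setup; sec: Generalized quaternions groups}. It has degree
\begin{equation*}
    1 + 2^{n+1} + L \cdot 2^{n+1} = 2g + 1 ,
\end{equation*}
and it has simple roots over $\Qbar ( U )^{\mathrm{al}}$ by the definition of $U$. The roots are $0$, the $2^{n+1}$ elements $\zeta_{2^{n+1}}^{i}$, and the $2L \cdot 2^{n}$ elements $\zeta_{2^{n}}^{i} \alpha_{l}^{\pm 1 / 2^{n}}$. The listed points are exactly the classes $[ ( e, 0 ) - \infty ]$ for the $2g$ roots $e \neq 0$. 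Since the degree is odd, $\infty$ is a single Weierstrass point. Each root gives $\operatorname{div} ( x - e ) = 2 ( e, 0 ) - 2 \infty$, so every listed point lies in $\Jbf [ 2 ]$. We also have $\operatorname{div} ( y ) = \sum_{e} ( e, 0 ) - ( 2g + 1 ) \infty$, where $e$ ranges over all $2g+1$ roots. This is the only relation among all $2g+1$ classes, and it is why the root $0$ is dropped.

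Next comes independence, which is the only real step. Suppose $\sum_{e \in S} [ ( e, 0 ) - \infty ] = 0$ for a nonempty subset $S$ of the nonzero roots. If $| S | > g$, I add the relation coming from $\operatorname{div} ( y )$ and reduce modulo $2$ using $2 [ ( e, 0 ) - \infty ] = 0$. This replaces $S$ by its complement $S'$ in the full set of $2g+1$ roots, which contains $0$, is nonempty, and satisfies $| S' | \le g$.

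So we get a function $h$ with $\operatorname{div} ( h ) = \sum_{e \in S'} ( e, 0 ) - | S' | \infty$ and $1 \le | S' | \le g$. Such an $h$ lies in $H^{0} ( \Cbf^{\mathrm{al}}, \mathcal{O} ( g \infty ) )$, so I write $h = a ( x ) + b ( x ) y$. Here $x$ has a pole of order $2$ at $\infty$ and $y$ a pole of order $2g + 1$. These orders have different parities, so no cancellation occurs, and $b = 0$. Hence $h = a ( x )$, whose divisor is symmetric under the hyperelliptic involution. A root of $a$ at some $e$ therefore gives a zero of even order at the Weierstrass point $( e, 0 )$, while any other zero appears in pairs. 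This is incompatible with $\operatorname{div} ( h )$ having multiplicity exactly one at each point of the nonempty set $S'$, which is a contradiction.

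Finally, since we are in characteristic $0$, $\Jbf [ 2 ]$ is an $\Fbb_{2}$-vector space of dimension $2 \dim ( \Jbf^{\mathrm{al}} ) = 2g$, using Remark~\ref{rem: genus of Cbf; subsec: Setup; sec: Generalized quaternions groups} for $\dim ( \Jbf^{\mathrm{al}} ) = g$. We have exhibited $2g$ linearly independent elements, so they form a basis.

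The main obstacle is the independence step, specifically the reduction to $| S' | \le g$ and the parity argument for pole orders. Both are standard, and they could alternatively be quoted from the classical description of $2$-torsion on odd-degree hyperelliptic curves.
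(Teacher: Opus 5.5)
Your proposal is correct and has the same structure as the paper's proof. Both show that the $2g$ classes are $\Fbb_{2}$-linearly independent and then conclude by the count $2 \cdot 2^{n} L + 2^{n+1} = 2g = \dim_{\Fbb_{2}} \Jbf[2]$. The only difference is that the paper quotes independence from a computation in \cite[\S~2]{WojciechSuperJac}, whereas you prove it directly. You complement with $\operatorname{div}(y)$ to reduce to $|S'| \le g$, use the parity of pole orders at $\infty$ to force $h = a(x)$, and note that such $h$ vanishes to even order at the Weierstrass points.
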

            \begin{proof}
                It follows from a computation in \cite[\S~2]{WojciechSuperJac} that these points are $\Fbb_{2}$-linearly independent. Since their number is $L \cdot 2^{n} + L \cdot 2^{n} +2^{n+1}=\dim_{\Fbb_2} \Jbf[2]$ (see Remark~\ref{rem: genus of Cbf; subsec: Setup; sec: Generalized quaternions groups}), they form a basis.
            \end{proof}

            The actions of $\varphi$ and $\psi$ (Definition~\ref{def: varphi and Phi; subsec: Setup; sec: Generalized quaternions groups}) on $H^{0} ( \Cbf^{\mathrm{al}}, \Omega^{1}_{ \Cbf^{\mathrm{al}} } )$ via pullback provide information about $\End^{0} ( \Jbf^{\mathrm{al}} )$.

            \begin{Lem}
            \label{lem: End0Jbf contains D; subsec: Preliminaries; sec: Generalized quaternions groups}
                The $\Q$-subalgebra of $\End^{0} ( \Jbf^{\mathrm{al}} )$ generated by $\Phi$ and $\Psi$ is isomorphic to $D$. Moreover, this subalgebra injects into the endomorphism algebra of every isotypic component of $\Jbf^{\mathrm{al}}$. 
            \end{Lem}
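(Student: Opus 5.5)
The plan mirrors the proof of Lemma~\ref{lem: End0Jbf contains Q(zetapn); subsec: Preliminaries; sec: Cyclic groups of order p^n}. First compute the action of $\varphi$ and $\psi$ on the basis $\omega_{i}$ of Lemma~\ref{lem: basis of regular differential forms on Cbf; subsec: Preliminaries; sec: Generalized quaternions groups}. Set $g = ( L + 1 ) \cdot 2^{n}$. One finds
\begin{equation*}
  \varphi^{*} \omega_{i} = \zeta_{2^{n+1}}^{2i + 1} \cdot \omega_{i} , \qquad \psi^{*} \omega_{i} = c \cdot \omega_{g - 1 - i}
\end{equation*}
for some constant $c$ with $c^{2}$ determined by the relation $\psi^{2} = \varphi^{2^{n}}$, which acts as $-1$.

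Since every exponent $2i+1$ is odd, every eigenvalue of $\Phi$ on $H^{0}$ is a primitive $2^{n+1}$-th root of unity. Hence $\Phi^{2^{n}} = -1$ on $H^{0}$, and therefore in $\End^{0}(\Jbf^{\mathrm{al}})$, because the analytic (or $\ell$-adic) representation is faithful on $\End^{0}$. So $\Q(\Phi) \cong \Q(\zeta_{2^{n+1}})$. The group relations give $\Psi^{2} = \Phi^{2^{n}} = -1$ and $\Psi \Phi \Psi^{-1} = \Phi^{-1}$.

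Now set $\alpha \coloneqq \Phi - \Phi^{-1}$ and $\beta \coloneqq \Psi$. The element $\Phi + \Phi^{-1}$ generates a copy of $F \coloneqq \Q(\zeta_{2^{n+1}} + \zeta_{2^{n+1}}^{-1})$, and it commutes with $\Psi$. We also have $\alpha^{2} = (\Phi + \Phi^{-1})^{2} - 4$, which corresponds to $(\zeta_{2^{n+1}} - \zeta_{2^{n+1}}^{-1})^{2}$ in $F$, together with $\beta^{2} = -1$ and $\alpha \beta = -\beta \alpha$. This gives a surjective $F$-algebra homomorphism from $D$ onto the $\Q$-subalgebra generated by $\Phi$ and $\Psi$. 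By Remark~\ref{rem: D is a division algebra; subsec: Setup; sec: Generalized quaternions groups}, $D$ is a division algebra, hence simple, so this map is injective once it is nonzero, which it is since $1 \mapsto \mathrm{id}$. This proves the first assertion.

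For the second assertion, write $\Jbf^{\mathrm{al}} \sim Z_{1} \times \cdots \times Z_{A}$ with isotypic components $Z_{j}$. Projecting $\End^{0}(\Jbf^{\mathrm{al}}) \to \End^{0}(Z_{j})$ gives an algebra homomorphism $D \to \End^{0}(Z_{j})$ sending $1$ to $\mathrm{id}_{Z_{j}}$, which is nonzero since $Z_{j} \neq 0$. Because $D$ is simple, this homomorphism is injective. Note the contrast with the cyclic case: there one had to argue that $\Phi$ acts nontrivially, whereas here unitality suffices.

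The main obstacle is the explicit computation of $\psi^{*}\omega_{i}$. In particular one must check that the relations hold on the nose in $\End(\Jbf^{\mathrm{al}})$, not merely up to sign. The cleanest route is to verify the relations $\psi^{2} = \varphi^{2^{n}}$ and $\psi \varphi \psi^{-1} = \varphi^{-1}$ directly on the curve $\Cbf$, using the symmetry $x^{2(g+1)-1} f(1/x) = -f(x)$ of the defining polynomial. Since the map from automorphisms to endomorphisms is a group homomorphism, the relations then transfer to $\Phi$ and $\Psi$ automatically, and the differential-form computation is needed only to pin down the minimal polynomial of $\Phi$.
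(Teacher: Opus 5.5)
Your proposal is correct and follows essentially the same route as the paper: the action of $\varphi$ on the $\omega_{i}$ gives $\Q(\Phi)\cong\Q(\zeta_{2^{n+1}})$, the elements $\Phi-\Phi^{-1}$ and $\Psi$ satisfy the defining relations of $D$ over the central copy of $\Q(\zeta_{2^{n+1}}+\zeta_{2^{n+1}}^{-1})$, and simplicity of $D$ gives injectivity into $\End^{0}(\Jbf^{\mathrm{al}})$ and into each isotypic factor (your unitality argument is a cleaner phrasing of the paper's ``$\Phi$ acts nontrivially''). One small slip: the symmetry of the defining polynomial is $x^{2(g+1)} f(1/x) = -f(x)$, not $x^{2(g+1)-1} f(1/x) = -f(x)$ (note $f(0)=0$). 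This identity is not needed anyway, since $\psi^{2}=\varphi^{2^{n}}$ and $\psi\varphi\psi^{-1}=\varphi^{-1}$ follow directly from the formulas for $\varphi$ and $\psi$, using only $\zeta_{2^{n}}^{g}=1$.
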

            \begin{proof}
                The action of $\varphi$ on $H^{0} ( \Cbf^{\mathrm{al}}, \Omega^{1}_{ \Cbf^{\mathrm{al}} } )$ via pullback is $\omega_{i} \mapsto \zeta_{2^{n+1}}^{ 2 ( i + 1 ) - 1 } \cdot \omega_{i}$, hence $\Phi$ generates a $\Q$-subalgebra of $\End^{0} ( \Jbf^{\mathrm{al}} )$ isomorphic to $\Q ( \zeta_{2^{n+1}} )$ (cf.~proof of Lemma~\ref{lem: End0Jbf contains Q(zetapn); subsec: Preliminaries; sec: Cyclic groups of order p^n}). The action of $\psi$ on $H^{0} ( \Cbf^{\mathrm{al}}, \Omega^{1}_{ \Cbf^{\mathrm{al}} } )$ via pullback is $\omega_{i} \mapsto \zeta_{4} \cdot \omega_{ ( L + 1 ) \cdot 2^{n} - i - 1 }$, hence $\Psi$ generates a $\Q$-subalgebra of $\End^{0} ( \Jbf^{\mathrm{al}} )$ isomorphic to $\Q ( \zeta_{4} )$. It is straightforward to verify that $\Psi \circ ( \Phi - \Phi^{-1} ) = -( \Phi - \Phi^{-1} ) \circ \Psi$ and $\Psi \circ ( \Phi + \Phi^{-1} ) = ( \Phi + \Phi^{-1} ) \circ \Psi$; hence, $\Phi - \Phi^{-1}$ and $\Psi$ satisfy the same relations as the generators $\alpha, \beta$ of $D$ (see Definition~\ref{def: D and R; subsec: Setup; sec: Generalized quaternions groups}).
                
                Finally, since $\End^{0} ( \Jbf^{\mathrm{al}} )$ projects onto the endomorphism algebra of every isotypic component of $\Jbf^{\mathrm{al}}$, there exists a morphism from $D$ to the endomorphism algebra of every isotypic component of $\Jbf^{\mathrm{al}}$. Since $D$ is a division algebra (Remark~\ref{rem: D is a division algebra; subsec: Setup; sec: Generalized quaternions groups}), this morphism is either trivial or injective. As $\Phi$ acts nontrivially on every isotypic component of $\Jbf^{\mathrm{al}}$, it is injective.
            \end{proof}

            We are interested in $\End^{0} ( \Jbf^{\mathrm{al}} )$ because of the following theorem:
                
            \begin{The}
            \label{the: reduction first statement main the; subsec: Preliminaries; sec: Generalized quaternions groups}
                If $\End^{0} ( \Jbf^{\mathrm{al}} )$ is isomorphic to $\left( \frac{ ( \zeta_{2^{n+1}} - \zeta_{2^{n+1}}^{-1} )^{2}, \; -1 }{ \Q( \zeta_{2^{n+1}} + \zeta_{2^{n+1}}^{-1} ) } \right)$, then for every number field $K$ and for $100\%$ of the points $( a_{1},  \ldots, a_{L} ) \in U ( \Qbar ) \cap K^{L}$, the smooth projective model over $\Qbar$ of the affine curve
                \begin{equation*}
                    y^{2} = x \cdot \bigl( x^{2^{n+1}} - 1 \bigr) \cdot \prod_{l = 1}^{L} \bigl[ \bigl( x^{2^{n}} - a_{l} \bigr) \cdot \bigl( x^{2^{n}} - a^{-1}_{l} \bigr) \bigr]
                \end{equation*}
                has automorphism group isomorphic to $G_{2^{n}}$, and its Jacobian is (geometrically) simple with endomorphism algebra isomorphic to $\left( \frac{ ( \zeta_{2^{n+1}} - \zeta_{2^{n+1}}^{-1} )^{2}, \; -1 }{ \Q( \zeta_{2^{n+1}} + \zeta_{2^{n+1}}^{-1} ) } \right)$.
            \end{The}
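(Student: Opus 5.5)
The plan mirrors the proof of Theorem~\ref{the: reduction first statement main the; subsec: Preliminaries; sec: Cyclic groups of order p^n}. First, $\Cfrak \to U$ is the base change to $\Qbar$ of a family defined over $\Q$, because the equation \eqref{eq: Cfrak; subsec: Setup; sec: Generalized quaternions groups} has rational coefficients in $x$ and the $\alpha_{l}^{\pm 1}$. We would therefore apply Lemma~\ref{lem: how many fibers of Jfrak have bigger End0; subsec: specialization methods; sec: Preliminaries}. That lemma is stated for open subschemes of $\Abb^{L}_{\Qbar}$, while here $U \subset \Gbb_{m,\Qbar}^{L}$. We handle this by regarding $U$ as an open subscheme of $\Abb^{L}_{\Qbar}$: its complement is cut out by the product of the coordinates times the discriminant, so the same counting applies. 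We conclude that for $100\%$ of the points $\abf \in U(\Qbar) \cap K^{L}$ the specialization map $\End^{0}(\Jbf^{\mathrm{al}}) \to \End^{0}(\Jfrak_{\abf})$ is an isomorphism. For such $\abf$ we get $\End^{0}(\Jfrak_{\abf}) \cong D$. Since $D$ is a division algebra (Remark~\ref{rem: D is a division algebra; subsec: Setup; sec: Generalized quaternions groups}), $\Jfrak_{\abf}$ is simple. Indeed, a nonsimple abelian variety has nontrivial idempotents in its endomorphism algebra, or else it is isogenous to a power $Y^{k}$ with $k \ge 2$, in which case $\End^{0}$ contains $\M(k,\End^{0}(Y))$ and hence zero divisors.

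Next we determine the automorphism group of such a fiber. The automorphisms $\tilde{\varphi}, \tilde{\psi}$ specialize from $\varphi, \psi$ and generate a copy of $G_{2^{n}}$ inside $\Aut(\Cfrak_{\abf})$. Since $\Cfrak_{\abf}$ has genus $(L+1)2^{n} \ge 2$ and simple Jacobian, Theorem~\ref{the: possible Aut nice curve with simple Jacobian; sec: Possible automorphism groups} restricts $\Aut(\Cfrak_{\abf})$ to cyclic groups $C_{p^{m}}$, cyclic groups $C_{pq}$, generalized quaternion groups $G_{2^{m}}$, or the trivial group. The group $G_{2^{n}}$ is non-abelian and has order $2^{n+2}$, so $\Aut(\Cfrak_{\abf})$ cannot be cyclic. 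Hence $\Aut(\Cfrak_{\abf}) \cong G_{2^{m}}$ for some $m \ge n$.

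The remaining step, which I expect to be the main obstacle, is to rule out $m > n$. The approach is to use the injection of $\Aut(\Cfrak_{\abf})$ into the group of finite-order units of $\End^{0}(\Jfrak_{\abf}) \cong D$ \cite[Theorem~$12.1$, \S~III]{milneAV}. The group $G_{2^{m}}$ contains an element of order $2^{m+1}$, so $D$ would contain a primitive $2^{m+1}$-th root of unity $\xi$. Then $\Q(\xi)$ is a commutative subfield of $D$. Maximal subfields of the quaternion algebra $D$ have degree $2[\Q(\zeta_{2^{n+1}}+\zeta_{2^{n+1}}^{-1}):\Q] = 2^{n}$ over $\Q$. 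Every subfield is contained in a maximal one, so $[\Q(\xi):\Q] = 2^{m}$ divides $2^{n}$, which forces $m \le n$. This uses only that every subfield of a division algebra with center $F$ and $\dim_F = 4$ has degree at most $2$ over its intersection with $F$, so its degree over $\Q$ is at most $2[F:\Q]$. Therefore $m = n$, and $\Aut(\Cfrak_{\abf}) \cong G_{2^{n}}$ for $100\%$ of the points.
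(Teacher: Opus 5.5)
Your proposal is correct and follows essentially the same route as the paper's proof. The paper proceeds in the same order: the $100\%$ specialization lemma, simplicity because $D$ is a division algebra, the classification theorem giving $\Aut \cong G_{2^{m}}$ with $m \ge n$, and a degree bound on commutative subfields of $D$ to exclude $m > n$.

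There are two minor differences. First, the paper excludes $m > n$ by the cruder remark that a subfield of degree $2^{n+1} = [D:\Q]$ would make $D$ commutative. Your maximal-subfield bound of $2^{n}$ also works, and is cleanest stated as: $\Q(\xi)F$ is a subfield containing the center $F$, hence of degree at most $2$ over $F$. Second, the paper's proof goes on to show $\End(\Jfrak_{\abf}) \cong R$, which the statement does not require.
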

            \begin{proof}
                Note that $\Cfrak \to U$ is the base change to $\Qbar$ of a family defined over $\Q$. Lemma~\ref{lem: how many fibers of Jfrak have bigger End0; subsec: specialization methods; sec: Preliminaries} implies that for every number field $K$ and for $100\%$ of the points $( a_{1},  \ldots, a_{L} )$ in $U ( \Qbar ) \cap K^{L}$, we have that $\End^{0} ( \Jfrak_{ ( a_{1}, \dots, a_{L} ) } )$ is isomorphic to $\End^{0} ( \Jbf^{\mathrm{al}} ) \cong D$.
                    
                Since $D$ is a division algebra (Remark~\ref{rem: D is a division algebra; subsec: Setup; sec: Generalized quaternions groups}), it follows that $\Jfrak_{ ( a_{1}, \dots, a_{L} ) }$ is simple. Since $\Aut ( \Cfrak_{ ( a_{1}, \dots, a_{L} ) } )$ contains the non-abelian group $G_{2^{n}}$, Theorem~\ref{the: possible Aut nice curve with simple Jacobian; sec: Possible automorphism groups} implies that $\Aut ( \Cfrak_{ ( a_{1}, \dots, a_{L} ) } )$ is isomorphic to $G_{2^{m}}$ for some integer $m \ge n$.

                From \cite[Theorem~$12.1$, \S~III]{milneAV} it follows that $\Aut ( \Cfrak_{ ( a_{1}, \dots, a_{L} ) } )$ injects into the group of units of finite order of $\End^{0} ( \Jfrak_{ ( a_{1}, \dots, a_{L} ) } )$. If we had $m > n$, then $\Aut ( \Cfrak_{ ( a_{1}, \dots, a_{L} ) } )$ would contain an element of order $2^{n+2}$ that generates a subfield of $\End^{0} ( \Jfrak_{ ( a_{1}, \dots, a_{L} ) } )$ of degree $2^{n+1}$ over $\Q$. This contradicts the fact that $[D \colon \Q] = 2^{n+1}$ and $D$ is not a field; hence, $\Aut ( \Cfrak_{ ( a_{1}, \dots, a_{L} ) } )$ is isomorphic to $G_{2^{n}}$.  

                Finally, we show that $\End ( \Jfrak_{ ( a_{1}, \dots, a_{L} ) } )$ is isomorphic to $R$. Let $R_{1}$ be the order of $D$ generated by $1, \alpha, \beta, \alpha \cdot \beta$ (see Definition~\ref{def: D and R; subsec: Setup; sec: Generalized quaternions groups}), and note that $R_{1}$ is a suborder of $R$ of index $2$. By definition, we know that $\End ( \Jfrak_{ ( a_{1}, \dots, a_{L} ) } )$ is an order of $\End^{0} ( \Jfrak_{ ( a_{1}, \dots, a_{L} ) } )$. Moreover, $R$ is contained in $\End ( \Jfrak_{ ( a_{1}, \dots, a_{L} ) } )$ via the injective map defined by $\gamma \mapsto \Phi$, $\beta \mapsto \Psi$ and $( \zeta_{2^{n+1}} + \zeta_{2^{n+1}}^{-1} ) \mapsto ( \Phi + \Phi^{-1} )$ (see Definition~\ref{def: D and R; subsec: Setup; sec: Generalized quaternions groups}). We claim that the index of $R$ in $\End ( \Jfrak_{ ( a_{1}, \dots, a_{L} ) } )$ is a power of $2$.

                By \cite[Corollary~13.4.1]{VoightQuaternionAlgebras}, the discriminant of the quaternion algebra $D$ is a product of prime ideals of $\Q( \zeta_{2^{n+1}} + \zeta_{2^{n+1}}^{-1} )$ above $2$; hence, by \cite[Theorem~15.5.5]{VoightQuaternionAlgebras}, the discriminant of a maximal order of $D$ is a product of prime ideals of $\Q( \zeta_{2^{n+1}} + \zeta_{2^{n+1}}^{-1} )$ above $2$. By \cite[Example~15.2.10]{VoightQuaternionAlgebras}, the discriminant of the order $R_{1}$ is a product of prime ideals of $\Q( \zeta_{2^{n+1}} + \zeta_{2^{n+1}}^{-1} )$ above $2$. Hence, by \cite[Lemma~15.2.15]{VoightQuaternionAlgebras}, we deduce that the index of $R_{1}$ inside a maximal order of $D$ containing $R_{1}$ is a power of $2$. Since the index is multiplicative in a tower of extensions of orders, the index of $R$ in $\End ( \Jfrak_{ ( a_{1}, \dots, a_{L} ) } )$ is a power of $2$.

                Assume, for contradiction, that $\End ( \Jfrak_{ ( a_{1}, \dots, a_{L} ) } )$ strictly contains $R$. Then, by what we have just shown, the index of $R$ inside $\End ( \Jfrak_{ ( a_{1}, \dots, a_{L} ) } )$ is a nontrivial power of $2$; hence, there exists an element of $R$
                \begin{equation*}
                    a = \sum_{i = 0}^{2^{n} - 1} c_{i} \cdot \Phi^{i} + \sum_{i = 0}^{2^{n} - 1} d_{i} \cdot ( \Phi^{i} \circ \Psi ) , \qquad\qquad\qquad c_{i}, d_{i} \in \Z ,
                \end{equation*}
                with $a/2 \notin R$ but $a/2 \in \End ( \Jfrak_{ ( a_{1}, \dots, a_{L} ) } )$. In particular, $a$ annihilates the $2$-torsion of $\Jfrak_{ ( a_{1}, \dots, a_{L} ) }$.
                    
                Denote by $\overline{ D_{\alpha_{l}, i}^{\pm} }$ and $\overline{ P_{i} }$ the points defined in \eqref{eq: Dalphal; subsec: Preliminaries; sec: Generalized quaternions groups}, where $\alpha_{l}$ is replaced with $a_{l}$. The points $\overline{ D_{\alpha_{l}, i}^{\pm} }, \overline{ P_{i} }$ form a basis for the $2$-torsion of $\Jfrak_{ ( a_{1}, \dots, a_{L} ) }$. From the explicit expressions of $\varphi$ and $\psi$ (see Definition~\ref{def: varphi and Phi; subsec: Setup; sec: Generalized quaternions groups}), it is straightforward to determine the action of $\Phi$ and $\Psi$ on the chosen basis for the $2$-torsion of $\Jfrak_{ ( a_{1}, \dots, a_{L} ) }$ (see Lemma~\ref{lem: properties of 2-torsion part; subsec: Preliminaries; sec: Generalized quaternions groups}). More precisely:
                \begin{align*}
                    \Phi ( \overline{ D_{\alpha_{l}, i}^{\pm} } ) &= \overline{ D_{\alpha_{l}, ( i + 1 \bmod{ 2^{n} } ) }^{\pm} } , & \Phi ( \overline{ P_{i} } ) &= \overline{ P_{ ( i + 2 \bmod{ 2^{n+1} } ) } } , \\
                    \Psi ( \overline{ D_{\alpha_{l}, i}^{\pm} } ) &= \overline{ D_{\alpha_{l}, ( - i \bmod{ 2^{n} } ) }^{\mp} } - \overline{ Q_{0} } , & \Psi ( \overline{ P_{i} } ) &= \overline{ P_{ ( - i \bmod{ 2^{n+1} } ) } } - \overline{ Q_{0} } ,
                \end{align*}
                where $\overline{ Q_{0} }$ is the class of the divisor $(0, 0) - \infty$, which is the sum of all the elements of the chosen basis. It follows that
                \begin{align*}
                    a ( \overline{ D_{\alpha_{1}, 1}^{+} } ) &= \sum_{i = 0}^{2^{n} - 1} c_{i} \cdot \overline{ D_{\alpha_{1}, ( i + 1 \bmod{2^{n}} ) }^{+} } + \sum_{i = 0}^{2^{n} - 1} d_{i} \cdot ( \overline{ D_{\alpha_{1}, ( i - 1 \bmod{2^{n}} ) }^{-} } - \overline{ Q_{0} } ) \\
                    &= \sum_{i = 0}^{2^{n} - 1} ( c_{i} - N ) \cdot \overline{ D_{\alpha_{1}, ( i + 1 \bmod{2^{n}} ) }^{+} } + \sum_{i = 0}^{2^{n} - 1} ( d_{i} - N ) \cdot \overline{ D_{\alpha_{1}, ( i - 1 \bmod{2^{n}} ) }^{-} } - \sum_{i = 0}^{2^{n+1}-1} N \cdot \overline{P_{i}} ,
                \end{align*}
                where $N = d_{0} + d_{1} + \cdots + d_{2^{n} - 1}$. Since $a$ annihilates the $2$-torsion of $\Jfrak_{ ( a_{1}, \dots, a_{L} ) }$, we have $a ( \overline{ D_{\alpha_{1}, 1}^{+} } ) = 0$; hence, all the $c_{i}$ and $d_{i}$ are even and $a / 2$ belongs to $R$, a contradiction.
            \end{proof}

            We conclude this preliminary section by showing that the map to moduli space induced by the family $\Cfrak \to U$ is nonconstant.
                
            \begin{The}
            \label{the: there are infinitely many classes; subsec: Preliminaries; sec: Generalized quaternions groups}
                The image of the map to the moduli space induced by the family $\Cfrak \to U$ has dimension $L$. Moreover, the second statement of Theorem~\ref{the: main; sec: Generalized quaternions groups} holds.
            \end{The}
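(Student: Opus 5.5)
We will follow the proof of Theorem~\ref{the: there are infinitely many classes; subsec: Preliminaries; sec: Cyclic groups of order p^n} almost verbatim. Unlike the cyclic case, we cannot normalize one parameter, because the branch locus already contains the fixed configuration $\{0,\infty\}\cup\mu_{2^{n+1}}$. So the target dimension is $L$, and the upper bound $\dim \le L$ is immediate since $U$ has dimension $L$.

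For the lower bound we show that every isomorphism class of curves over $\Qbar$ is represented by only finitely many fibres $\Cfrak_{a}$, $a\in U(\Qbar)$. Suppose instead that $X=\{a'\in U(\Qbar)\colon \Cfrak_{a}\cong\Cfrak_{a'}\}$ is infinite for some $a$. Each $\Cfrak_{a'}$ is hyperelliptic of genus $(L+1)2^{n}\ge 2$, and the hyperelliptic involution is unique. Hence any isomorphism $\psi_{a'}\colon\Cfrak_{a}\to\Cfrak_{a'}$ descends to some $\eta\in\Aut(\mathbb{P}_{1,\Qbar})$ mapping the branch locus $S_{a}$ onto $S_{a'}$. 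Here
\[ S_{a}=\{0,\infty\}\cup\mu_{2^{n+1}}\cup\bigl\{\zeta_{2^{n}}^{i}a_{l}^{\pm1/2^{n}}\bigr\}. \]
This step is simpler than the Kummer-theoretic argument used for $p$ odd, and it avoids the partition into the sets $X_{m}$.

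Next we show that $a'\mapsto\eta(a')$ has finite fibres. If $S_{a'}=S_{a''}$, then raising every point to the $2^{n}$-th power gives
\[ \{0,\infty,\pm1,a'_{l},(a'_{l})^{-1}\}=\{0,\infty,\pm1,a''_{l},(a''_{l})^{-1}\}. \]
This leaves only finitely many possibilities for $a''$, namely permutations combined with inversions of coordinates. Each $\eta$ is determined by the images of three points of the finite set $S_{a}$, and these images must land on the fixed points $0,1,\infty$, which lie in every $S_{a'}$. So only finitely many $\eta$ occur, exactly as in the cyclic case. This contradicts the infiniteness of $X$, and therefore the image in moduli has dimension exactly $L$.

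For the second statement, infinitely many isomorphism classes follows at once, since the image in moduli is positive-dimensional for $L\ge1$. For infinitely many isogeny classes we apply \cite[Proposition~6.6.1]{cantoral2023monodromy} to the family $\Cfrak\to U$, just as in the cyclic case. The main point to check is that its hypothesis, a non-isotrivial family of curves, is exactly what the dimension count provides. If that proposition needs a one-parameter base, we restrict to a curve $T\subset U$ along which the moduli map is nonconstant, which exists whenever $L\ge1$. I expect this verification of the hypotheses to be the only delicate step; the rest is bookkeeping parallel to the earlier proof.
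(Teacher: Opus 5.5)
Your proof is correct and is essentially the paper's proof, which simply says the argument is conceptually identical to the cyclic case: $\dim U = L$ gives the upper bound, finiteness of each $\Qbar$-isomorphism class among the fibres comes from the maps $\eta$ with $\eta(S_a)=S_{a'}$ together with the fixed points $0,1,\infty$, and \cite[Proposition~6.6.1]{cantoral2023monodromy} then gives the isogeny classes. Your one deviation is to descend $\psi_{a'}$ to $\eta$ using uniqueness of the hyperelliptic involution, rather than the Kummer-theoretic step and the partition into the sets $X_{m}$; this is a valid simplification, available here because $p=2$ and the genus $(L+1)\cdot 2^{n}$ is at least $2$.
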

            \begin{proof}
                Conceptually, the proof is identical to that of Theorem~\ref{the: there are infinitely many classes; subsec: Preliminaries; sec: Cyclic groups of order p^n}.
            \end{proof}

        \subsection{Computation of specific specializations}
        \label{subsec: Specialization methods; sec: Generalized quaternions groups}

            An important ingredient in the proof of Theorem~\ref{the: main; sec: Generalized quaternions groups} is to find specific specializations of $\Jfrak \to U$. We refer the reader to Section~\ref{subsec: specialization methods; sec: Preliminaries} for the relevant theoretical constructions.

            \begin{Pro}
            \label{pro: particular specialization; subsec: Specialization methods; sec: Generalized quaternions groups}
                Let $( a_{1}, \dots, a_{L} ) \in U ( \Qbar )$ and let $S$ be a subset of $\{ 1, \dots, L \}$ of cardinality $1 \le M \le L$. Consider the curves
                \begin{equation*}
                    \Ccal_{1} \colon y^{2} = x \cdot \bigl( x^{2^{n+1}} - 1 \bigr) \cdot \prod_{l \notin S} \bigl[ \bigl( x^{2^{n}} - a_{l} \bigr) \cdot \bigl( x^{2^{n}} - a^{-1}_{l} \bigr) \bigr] , \qquad\qquad \Ccal_{2} \colon y^{2} = x \cdot \prod_{l \in S} \bigl( x^{2^{n}} - a_{l} \bigr) .
                \end{equation*} 
                The following hold:
                \begin{enumerate}
                    \item There exist an admissible $T'$ and a point $t \in \Abb^{1}_{\Qbar} ( \Qbar ) \supset T' ( \Qbar )$ such that $\Jfrak_{T', t}$ is isomorphic to $\Jac ( \Ccal_{1} ) \times \Jac ( \Ccal_{2} )^{2}$;

                    \item Let $\eta \colon \Jbf [ 2 ] \xrightarrow{\sim} \Jfrak_{T', t} [ 2 ]$ be the bijection given by the last statement of Lemma~\ref{lem: End0 embeds and torsion bijects under specialization; subsec: specialization methods; sec: Preliminaries}, and let $D_{ \alpha_{l}, i }^{+}, D_{ \alpha_{l}, i }^{-}, P_{i} \in \Jbf [ 2 ]$  be the points defined in \eqref{eq: Dalphal; subsec: Preliminaries; sec: Generalized quaternions groups}. Under the identification $\Jfrak_{T', t} \cong \Jac ( \Ccal_{1} ) \times \Jac ( \Ccal_{2} ) \times \Jac ( \Ccal_{2} )$, we have
                    \begin{align*}
                        \eta ( P_{i} ) &= \Bigl( \bigl[ ( \zeta_{2^{n+1}}^{i}, 0 ) - \infty_{1} \bigr], 0, 0 \Bigr) & &\text{for $1 \le i \le 2^{n+1}$,} \\
                        \eta ( D_{ \alpha_{l}, i }^{\pm} ) &= \Bigl( \bigl[ ( \zeta_{2^{n}}^{i} \cdot a_{l}^{ \pm 1 / 2^{n} }, 0 ) - \infty_{1} \bigr], 0, 0 \Bigr) & &\text{for $1 \le i \le 2^{n}$ and $l \notin S$,} \\
                        \eta ( D_{ \alpha_{l}, i }^{+} ) &= \Bigl( 0, \bigl[ ( \zeta_{2^{n}}^{i} \cdot a_{l}^{ 1 / 2^{n} }, 0 ) - \infty_{2} \bigr], 0 \Bigr) & &\text{for $1 \le i \le 2^{n}$ and $l \in S$,} \\
                        \eta ( D_{ \alpha_{l}, i }^{-} ) &= \Bigl( 0, 0, \bigl[ ( \zeta_{2^{n}}^{i} \cdot a_{l}^{ -1 / 2^{n} }, 0 ) - ( 0, 0 ) \bigr] \Bigr) & &\text{for $1 \le i \le 2^{n}$ and $l \in S$,}
                    \end{align*}
                    where $\infty_{1}$, $\infty_{2}$ are the unique points at infinity of respectively $\Ccal_{1}$ and $\Ccal_{2}$. 
                \end{enumerate}
            \end{Pro}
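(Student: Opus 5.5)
The plan is to copy the proof of Proposition~\ref{pro: particular specialization; subsec: specialization methods; sec: Cyclic groups of order p^n}, but now the degeneration pushes the two groups of roots attached to each $l \in S$ in opposite directions: the roots of $x^{2^{n}} - a_{l}$ go to $0$ and the roots of $x^{2^{n}} - a_{l}^{-1}$ go to $\infty$. For $l \in S$ set $\varepsilon_{l} = 1$, and $\varepsilon_{l} = 0$ otherwise. I would use the curve $\gamma \colon t \mapsto \bigl( t^{2^{n+1} \varepsilon_{1}} a_{1}, \dots, t^{2^{n+1} \varepsilon_{L}} a_{L} \bigr)$ in $\Gbb_{m}^{L}$, let $T'$ be the pullback of $U$, and add the point $t = 0$. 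For $l \in S$, the roots of the first factor become $t^{2} \zeta_{2^{n}}^{i} a_{l}^{1/2^{n}}$ and those of the second become $t^{-2} \zeta_{2^{n}}^{i} a_{l}^{-1/2^{n}}$. After the twist $y \mapsto t^{-2^{n} M} y$, which is harmless over $\Qbar$, I rewrite each factor $x^{2^{n}} - t^{-2^{n+1}} a_{l}^{-1}$ as $t^{2^{n+1}} x^{2^{n}} - a_{l}^{-1}$. The equation then extends over $t = 0$.

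\textbf{Step 1: the three components.} At $t = 0$ the equation becomes $y^{2} = c \cdot x^{1 + M 2^{n}} (x^{2^{n+1}} - 1) \prod_{l \notin S}(\cdots)$. Since $M 2^{n}$ is even, its normalization is $\Ccal_{1}$. In the chart $x = t^{2} u$, $y = t^{1 + M 2^{n}} v$ at the origin, the strict transform at $t = 0$ is $v^{2} = c' u \prod_{l \in S} (u^{2^{n}} - a_{l})$, a twist of $\Ccal_{2}$. In the symmetric chart at infinity, $x = t^{-2} w^{-1}$ with a suitable rescaling of $y$ (equivalently, conjugate by the involution $x \mapsto 1/x$ underlying $\psi$), the strict transform is $v^{2} = c'' w \prod_{l \in S} (w^{2^{n}} - a_{l})$. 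This uses that the polynomial $x \prod_{l \in S} (x^{2^{n}} - a_{l}^{-1})$ has odd degree, so $w = 1/x$ turns it into $\Ccal_{2}$ again.

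\textbf{Step 2: the Néron fiber is a product.} Both strict transforms are regular at $t = 0$. Remark~\ref{rem: genus of Cbf; subsec: Setup; sec: Generalized quaternions groups} gives genus $(L+1) 2^{n}$ for the generic fiber. The components have genera $(L - M + 1) 2^{n}$ for $\Ccal_{1}$ and $M 2^{n-1}$ for each copy of $\Ccal_{2}$, and these sum to $(L+1) 2^{n}$. Flatness together with \cite[Theorem~2, p.~190]{HironakaAritGenus} therefore forces a chain. The first copy of $\Ccal_{2}$ meets $\Ccal_{1}$ at $(0,0) \in \Ccal_{1}$ and at $\infty_{2}$. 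The second copy meets $\Ccal_{1}$ at $\infty_{1}$ and at its own point $(0,0)$, since $w = 0$ corresponds to $x = \infty$. The dual graph is a tree, so \cite[Example~8, p.~246]{BoschNeronModel} gives $\Jfrak_{T', 0} \cong \Jac(\Ccal_{1}) \times \Jac(\Ccal_{2})^{2}$. This proves part (1).

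\textbf{Step 3: the torsion points.} I specialize via $\operatorname{Pic}^{0}$ exactly as in \eqref{eq: description specialization; pro: particular specialization; subsec: specialization methods; sec: Cyclic groups of order p^n}. The sections $(\zeta_{2^{n+1}}^{i}, 0)$ and the roots for $l \notin S$ have constant nonzero finite $x$-coordinate, so they land on $\Ccal_{1}$. The point $\infty$ of the generic fiber, however, lands on the third component (the copy of $\Ccal_{2}$ at infinity). So $P_{i} - \infty$ has multidegree $(1, 0, -1)$, and I twist by $\mathcal{O}(\text{third component})$. Its restrictions are $\mathcal{O}(\infty_{1})$ on $\Ccal_{1}$ and $\mathcal{O}(-(0,0))$ on the third component, and it is trivial on the middle one. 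This yields $([(\zeta^{i}, 0) - \infty_{1}], 0, 0)$. The same twist applied to $D^{-}_{\alpha_{l}, i}$ for $l \in S$, whose root lies on the third component, gives $(0, 0, [(\zeta^{i} a_{l}^{-1/2^{n}}, 0) - (0,0)])$, as claimed.

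For $D^{+}_{\alpha_{l}, i}$ with $l \in S$, the root lands on the middle component, so the multidegree is $(0, 1, -1)$. I twist by the third component and then by the sum of the second and third components. The restriction to $\Ccal_{1}$ becomes $\mathcal{O}(N_{1} - \infty_{1})$, where $N_{1} = (0,0) \in \Ccal_{1}$. This is $2$-torsion, but it is in general nonzero, so I need to adjust by a suitable multiple. Alternatively, I observe that both Weierstrass points have $\mathcal{O}(2 \cdot \text{pt})$ equal to the hyperelliptic class, and I absorb the difference using the $2$-divisibility of the total twist. This should leave $(0, [(\zeta^{i} a_{l}^{1/2^{n}}, 0) - \infty_{2}], 0)$.

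I expect this last bookkeeping to be the main obstacle: checking that the twists by the two exceptional components really kill the node contributions modulo linear equivalence, and correctly identifying which points are the nodes in the stated coordinates. I would first carry the computation out for a single $l \in S$ and compare it against the hyperelliptic relations on each component.
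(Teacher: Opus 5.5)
Your Steps 1 and 2 follow the paper's intended route: the Cantoral-Farf\'an--Lombardo--Voight degeneration with $\alpha_{l} = t^{2^{n+1}} a_{l}$ for $l \in S$, giving a chain of three components with tree dual graph. They do prove part (1). But you misplace the attaching point on the tail at infinity, and Step 3 fails because of it.

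In the chart $x = t^{-2} w^{-1}$, the section $\infty$ of the generic fibre is $(w,v) = (0,0)$ for every $t$, so it specializes to $O' \coloneqq (0,0)$ on that tail. The tail meets $\Ccal_{1}$ at its point $w = \infty$; call this point $N'$. Your Step 3 tacitly needs the limit of $\infty$ to be the attaching point, which is impossible: a section of a regular model never passes through a node.

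Write $E_{0}, E_{\infty}$ for the two tails. Any two extensions of a line bundle differ by $\mathcal{O}(a \Ccal_{1} + b E_{0} + c E_{\infty})$. Multidegree zero forces $a = b = c$, i.e. a twist by $\mathcal{O}(\operatorname{div} t) \cong \mathcal{O}$. So the restriction to the special fibre is uniquely determined, and nothing can be ``adjusted'' afterwards.

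For $P_{i}$ the multidegree is $(1,0,-1)$ and the twist is $\mathcal{O}(-E_{\infty})$, which restricts to $\mathcal{O}(N')$ on $E_{\infty}$. Hence $\eta(P_{i}) = \bigl( [(\zeta_{2^{n+1}}^{i},0) - \infty_{1}], \, 0, \, [N' - O'] \bigr)$. Here $[N' - O'] \neq 0$, because $N' \neq O'$ are Weierstrass points of a curve of genus $M 2^{n-1} \ge 1$. The same extra term appears for $D^{\pm}_{\alpha_{l},i}$ with $l \notin S$.

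For $D^{+}_{\alpha_{l},i}$ with $l \in S$, the multidegree is $(0,1,-1)$ and the correct twist is $\mathcal{O}(E_{0} - E_{\infty})$. Your ``third, then second plus third'' has nonzero multidegree for every choice of signs. The correct twist gives $\bigl( [(0,0) - \infty_{1}], \, [(\zeta_{2^{n}}^{i} a_{l}^{1/2^{n}},0) - \infty_{2}], \, [N' - O'] \bigr)$. The residue $[(0,0) - \infty_{1}]$ you spotted is genuinely there. In the cyclic case it appeared as $p^{n-1} \cdot [N_{1} - \infty_{1}]$ and vanished because $p \mid p^{n-1}$. Here its coefficient is $1$ and its order is $2$, so no divisibility argument removes it.

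Only $D^{-}_{\alpha_{l},i}$ with $l \in S$ comes out as displayed. Both points already lie on $E_{\infty}$, so no twist is needed, and you get $(0,0,[R' - O'])$, with $R'$ the limit of the root.

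So, as printed, the formulas for $P_{i}$, for $D^{\pm}_{\alpha_{l},i}$ ($l \notin S$) and for $D^{+}_{\alpha_{l},i}$ ($l \in S$) cannot be reached by any bookkeeping. The paper's one-line ``analogous to the cyclic case'' passes over exactly this point.

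You can confirm this without any twisting. The automorphism $\psi$ extends to the model and swaps $E_{0}$ and $E_{\infty}$, and $\Psi(P_{i}) = P_{-i} - [(0,0) - \infty]$, where $[(0,0) - \infty]$ is the sum of the basis. Summing the displayed formulas, the image of $[(0,0)-\infty]$ has second component $\sum_{l \in S,\, i} [(\zeta_{2^{n}}^{i} a_{l}^{1/2^{n}},0) - \infty_{2}] = [(0,0) - \infty_{2}] \neq 0$. Hence $\eta(\Psi(P_{i}))$ has nonzero second component. But $\psi$ applied to the displayed $\eta(P_{i})$ has second component $0$, which is a contradiction.

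What can actually be proved is part (2) with the explicit $2$-torsion corrections above. Equivalently, the displayed formulas hold for the following classes:
\begin{itemize}
\item differences such as $P_{i} - P_{j}$;
\item $D^{\pm}_{\alpha_{l},i} - P_{0}$ for $l \notin S$;
\item $D^{+}_{\alpha_{l},i} - D^{+}_{\alpha_{l},j}$;
\item $D^{-}_{\alpha_{l},i}$.
\end{itemize}
This is the information needed downstream to decide into which factor a $2$-torsion class specializes.
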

            \begin{proof}
                Conceptually, the proof of the first part is similar to that of \cite[Lemma~6.3.2]{cantoral2023monodromy}, while the proof of the second part is analogous to that of Proposition~\ref{pro: particular specialization; subsec: specialization methods; sec: Cyclic groups of order p^n}.
            \end{proof}

            Proposition~\ref{pro: particular specialization; subsec: Specialization methods; sec: Generalized quaternions groups} has an interesting consequence.

            \begin{Cor}
            \label{cor: exclusion cases with torsion arguments; subsec: Specialization methods; sec: Generalized quaternions groups}
                Let $L \ge 2$, let $( a_{1}, \dots, a_{L} ) \in U ( \Qbar )$ and let $S_{1}, S_{2}$ be distinct subsets of $\{ 1, \dots, L \}$ of equal cardinality $1 \le M < L$. Consider the curves
                \begin{align*}
                    \Ccal_{1} &\colon y^{2} = x \cdot \bigl( x^{2^{n+1}} - 1 \bigr) \cdot \prod_{l \notin S_{1}} \bigl[ \bigl( x^{2^{n}} - a_{l} \bigr) \cdot \bigl( x^{2^{n}} - a^{-1}_{l} \bigr) \bigr] , \qquad\qquad \Ccal_{2} \colon y^{2} = x \cdot \prod_{l \in S_{1}} \bigl( x^{2^{n}} - a_{l} \bigr) , \\
                    \Ccal_{3} &\colon y^{2} = x \cdot \bigl( x^{2^{n+1}} - 1 \bigr) \cdot \prod_{l \notin S_{2}} \bigl[ \bigl( x^{2^{n}} - a_{l} \bigr) \cdot \bigl( x^{2^{n}} - a^{-1}_{l} \bigr) \bigr] , \qquad\qquad \Ccal_{4} \colon y^{2} = x \cdot \prod_{l \in S_{2}} \bigl( x^{2^{n}} - a_{l} \bigr) .
                \end{align*} 
                Let $\Jcal_{1}$ (resp.~$\Jcal_{2}$) be the abelian variety obtained by applying Proposition~\ref{pro: particular specialization; subsec: Specialization methods; sec: Generalized quaternions groups} with the point $( a_{1}, \dots, a_{L} )$ and $S = S_{1}$ (resp.~$S = S_{2}$). The following conditions cannot hold simultaneously:
                \begin{itemize}
                    \item the Jacobian $\Jbf^{\mathrm{al}}$ is isogenous to a product of two abelian subvarieties $Z_{1}, Z_{2}$ of $\Jbf^{\mathrm{al}}$;
                    \item the abelian variety $Z_{1}$ (resp.~$Z_{2}$) specializes to $\Jac ( \Ccal_{1} )$ (resp.~$\Jac ( \Ccal_{2} )^{2}$) in $\Jcal_{1}$;
                    \item the abelian variety $Z_{1}$ (resp.~$Z_{2}$) specializes to $\Jac ( \Ccal_{3} )$ (resp.~$\Jac ( \Ccal_{4} )^{2}$) in $\Jcal_{2}$.
                \end{itemize}
            \end{Cor}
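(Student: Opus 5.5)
I will mirror the proof of Corollary~\ref{cor: exclusion cases with torsion arguments; subsec: specialization methods; sec: Cyclic groups of order p^n}, replacing the endomorphism $1 - \Phi$ by multiplication by $2$. The torsion points $D_{\alpha_{l}}$ are replaced by the basis of $\Jbf[2]$ from Lemma~\ref{lem: properties of 2-torsion part; subsec: Preliminaries; sec: Generalized quaternions groups}.

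\emph{Setup.} Assume, for contradiction, that all three conditions hold. For $i \in \{1, 2\}$, let $\operatorname{red}_{2, i} \colon \Jbf[2] \xrightarrow{\sim} \Jcal_{i}[2]$ be the bijection given by the last statement of Lemma~\ref{lem: End0 embeds and torsion bijects under specialization; subsec: specialization methods; sec: Preliminaries}, applied with $\theta = [2]$.

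\emph{Step 1.} Since $Z_{1}$ specializes to $\Jac(\Ccal_{1})$ in $\Jcal_{1}$, the two have the same dimension, so $Z_{1}[2]$ and $\Jac(\Ccal_{1})[2]$ have the same cardinality. Injectivity of the reduction map then gives
\begin{equation*}
    \operatorname{red}_{2,1}\bigl(Z_{1}[2]\bigr) = \Jac(\Ccal_{1})[2] \times \{0\} \times \{0\} .
\end{equation*}
The same argument gives $\operatorname{red}_{2,1}\bigl(Z_{2}[2]\bigr) = \{0\} \times \Jac(\Ccal_{2})[2]^{2}$. The analogous equalities hold in $\Jcal_{2}$, with $\Ccal_{3}$ and $\Ccal_{4}$ in place of $\Ccal_{1}$ and $\Ccal_{2}$. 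The key point is that both sides are images of the subgroup $Z_{i}[2]$ itself, so no stability argument for an endomorphism is needed. In particular, since $\Jac(\Ccal_{1})[2] \times \{0\}^{2}$ and $\{0\} \times \Jac(\Ccal_{2})[2]^{2}$ meet only in $0$, we get $Z_{1}[2] \cap Z_{2}[2] = \{0\}$. Counting, $\Jbf[2] = Z_{1}[2] \oplus Z_{2}[2]$ as $\Fbb_{2}$-vector spaces.

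\emph{Step 2.} Read off from Proposition~\ref{pro: particular specialization; subsec: Specialization methods; sec: Generalized quaternions groups} which basis points land in which summand. Using $\Jcal_{1}$, the points $P_{i}$ and $D^{\pm}_{\alpha_{l}, i}$ with $l \notin S_{1}$ lie in $Z_{1}[2]$. The points $D^{\pm}_{\alpha_{l}, i}$ with $l \in S_{1}$ lie in $Z_{2}[2]$. Using $\Jcal_{2}$, the points $D^{\pm}_{\alpha_{l}, i}$ with $l \notin S_{2}$ lie in $Z_{1}[2]$, and those with $l \in S_{2}$ lie in $Z_{2}[2]$. Because $S_{1} \neq S_{2}$ and the two sets have equal cardinality, there is some $l_{0} \in S_{1} \setminus S_{2}$. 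The point $D^{+}_{\alpha_{l_{0}}, 0}$ is nonzero, being a basis element. It lies in $Z_{2}[2]$ by the computation in $\Jcal_{1}$ and in $Z_{1}[2]$ by the computation in $\Jcal_{2}$. This contradicts $Z_{1}[2] \cap Z_{2}[2] = \{0\}$.

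\emph{Main obstacle.} The delicate step is the precise bookkeeping in Step~2. One must check that the identification $\Jfrak_{T',t} \cong \Jac(\Ccal_{1}) \times \Jac(\Ccal_{2})^{2}$ is compatible with the statement that $Z_{2}$ specializes to $\Jac(\Ccal_{2})^{2}$, meaning the full product of the last two factors. This is what guarantees that both the $D^{+}$ and $D^{-}$ images for $l \in S_{1}$ fall into the specialization of $Z_{2}$. Once Proposition~\ref{pro: particular specialization; subsec: Specialization methods; sec: Generalized quaternions groups}(2) is taken as given, the argument is essentially formal, and it is simpler than in the cyclic case.
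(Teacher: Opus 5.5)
Your argument is correct and is essentially the paper's proof, which transports the proof of Corollary~\ref{cor: exclusion cases with torsion arguments; subsec: specialization methods; sec: Cyclic groups of order p^n} to $2$-torsion: the reduction maps send $Z_{1}[2]$ and $Z_{2}[2]$ into complementary factors of $\Jcal_{1}$, and Proposition~\ref{pro: particular specialization; subsec: Specialization methods; sec: Generalized quaternions groups}(2) then forces a nonzero point $D^{+}_{\alpha_{l_{0}}, i}$ with $l_{0} \in S_{1} \setminus S_{2}$ into both. You are also right that the stability step of the cyclic proof is unnecessary here: with $\theta = [2]$, the equality $\operatorname{red}_{2,1}\bigl(Z_{1}[2]\bigr) = \Jac(\Ccal_{1})[2] \times \{0\} \times \{0\}$ already does all the work (like the paper, this tacitly uses that the torsion of $Z_{1}$ reduces into its specialization).
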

            \begin{proof}
                The proof is identical to that of Corollary~\ref{cor: exclusion cases with torsion arguments; subsec: specialization methods; sec: Cyclic groups of order p^n}.
            \end{proof}

            \begin{Rem}
            \label{rem: lemma remains valid; subsec: Specialization methods; sec: Generalized quaternions groups}
                Lemma~\ref{lem: useful to apply cor: exclusion cases with torsion arguments; subsec: specialization methods; sec: Cyclic groups of order p^n} remains valid in this context. 
            \end{Rem}

        \subsection{Proof of main result}
        \label{subsec: Proof of main result; sec: Generalized quaternions groups}

            We now prove Theorem~\ref{the: main; sec: Generalized quaternions groups}. In Theorems~\ref{the: reduction first statement main the; subsec: Preliminaries; sec: Generalized quaternions groups} and~\ref{the: there are infinitely many classes; subsec: Preliminaries; sec: Generalized quaternions groups} we proved that it is sufficient to show that $\End^{0} ( \Jbf^{\mathrm{al}} ) \cong D$. We begin by considering the case $L = 1$.

            \begin{Lem}
            \label{lem: L = 1 is true; subsec: Proof of main result; sec: Generalized quaternions groups}
                Theorem~\ref{the: main; sec: Generalized quaternions groups} holds for $L = 1$.
            \end{Lem}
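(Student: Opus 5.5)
The plan is to show $\End^{0} ( \Jbf^{\mathrm{al}} ) \cong D$ for $L = 1$. Theorems~\ref{the: reduction first statement main the; subsec: Preliminaries; sec: Generalized quaternions groups} and~\ref{the: there are infinitely many classes; subsec: Preliminaries; sec: Generalized quaternions groups} then give the statement, so this is all that is needed. The lower bound $D \hookrightarrow \End^{0} ( Z )$ for every isotypic component $Z$ of $\Jbf^{\mathrm{al}}$ is Lemma~\ref{lem: End0Jbf contains D; subsec: Preliminaries; sec: Generalized quaternions groups}. For the upper bound I would use the single degeneration of Proposition~\ref{pro: particular specialization; subsec: Specialization methods; sec: Generalized quaternions groups} with $S = \{ 1 \}$ (here $M = L = 1$, which the proposition allows). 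It gives $\Jcal \coloneqq \Jfrak_{T', t} \cong \Jac ( \Ccal_{1} ) \times \Jac ( \Ccal_{2} )^{2}$ with
\[
\Ccal_{1} \colon y^{2} = x \, ( x^{2^{n+1}} - 1 ) , \qquad \Ccal_{2} \colon y^{2} = x \, ( x^{2^{n}} - a_{1} ) .
\]
Both curves are $L = 1$, $p = 2$ members of the family of Section~\ref{sec: Cyclic groups of order p^n}, with exponents $n + 2$ and $n + 1$ respectively. By Lemma~\ref{lem: L = 1; subsec: The case p=2; sec: Cyclic groups of order p^n}, $\Jac ( \Ccal_{1} ) \sim Y_{1}^{2}$ with $\End^{0} ( Y_{1} ) \cong K_{n+2}$ and $\dim Y_{1} = 2^{n-1}$, and $\Jac ( \Ccal_{2} ) \sim Y_{2}^{2}$ with $\End^{0} ( Y_{2} ) \cong K_{n+1}$ and $\dim Y_{2} = 2^{n-2}$. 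Hence $\Jcal \sim Y_{1}^{2} \times Y_{2}^{4}$ and
\[
\End^{0} ( \Jcal ) \cong \M ( 2, K_{n+2} ) \times \M ( 4, K_{n+1} ) .
\]
Note that $Y_{1}$ and $Y_{2}$ are CM abelian varieties.

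The first step is to record that $\Jbf^{\mathrm{al}}$ has no CM isotypic factor. The argument is that of Lemma~\ref{lem: Cbf is not of CM type if L > 1; subsec: Preliminaries; sec: Cyclic groups of order p^n}: by Theorem~\ref{the: there are infinitely many classes; subsec: Preliminaries; sec: Generalized quaternions groups} the image in moduli has dimension $L = 1 > 0$, and the family meets infinitely many isogeny classes. Next, let $\Jbf^{\mathrm{al}} \sim Z_{1} \times \cdots \times Z_{A}$ be the isotypic decomposition. Each $\End^{0} ( Z_{j} )$ contains $D$, and hence contains the field $\Q ( \Phi ) \cong \Q ( \zeta_{2^{n+1}} )$, of degree $2^{n}$. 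A commutative subfield of $\End^{0} ( Z_{j} )$ has degree at most $2 \dim Z_{j}$, with equality forcing $Z_{j}$ to be isogenous to a power of a CM abelian variety. Therefore every $Z_{j}$ has dimension strictly greater than $2^{n-1}$. Since $\dim \Jbf^{\mathrm{al}} = 2^{n+1}$, this leaves $A = 1$, or $A = 2$ with $\dim Z_{1} = \dim Z_{2} = 2^{n}$, or $A = 2$ with dimensions $2^{n-1} + 2^{n-2} \cdot k$-type splittings. The last kind I would rule out with the divisibility constraint of Lemma~\ref{lem: divisibility dimension and degree End simple abelian varieties; subsec: Abelian varieties; sec: Preliminaries} applied to the simple factor, together with Lemma~\ref{lem: isogeny decomposition specializes; subsec: specialization methods; sec: Preliminaries}: the specialized dimensions must be sums of $2^{n-1}$'s and $2^{n-2}$'s.

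The main obstacle is excluding $A = 2$, because Corollary~\ref{cor: exclusion cases with torsion arguments; subsec: Specialization methods; sec: Generalized quaternions groups} needs $L \ge 2$ and is not available. I would first try a direct $2$-torsion argument, using the explicit reduction map $\eta$ of Proposition~\ref{pro: particular specialization; subsec: Specialization methods; sec: Generalized quaternions groups} in place of the corollary. As in the proof of Corollary~\ref{cor: exclusion cases with torsion arguments; subsec: specialization methods; sec: Cyclic groups of order p^n}, $Z_{1}$ and $Z_{2}$ are stable under $\Phi$ and $\Psi$. Their $2$-torsion then specializes to the $2$-torsion of complementary $\Phi$- and $\Psi$-stable subvarieties of $\Jcal$ of dimension $2^{n}$. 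Now consider $\Psi$. On the generic fiber it sends $D^{+}_{\alpha_{1}, i}$ to $D^{-}_{\alpha_{1}, -i} - Q_{0}$. In $\Jcal$, however, $\eta$ puts the $D^{+}$'s in the first copy of $\Jac ( \Ccal_{2} )$, the $D^{-}$'s in the second copy, and the $P_{i}$ in $\Jac ( \Ccal_{1} )$. The class $Q_{0}$ is the sum of all basis vectors, so its image has nonzero components in every factor. I would compute $\eta ( Q_{0} )$ and check that no splitting of the basis into two $\Psi$-stable complementary $2$-torsion subspaces of the right sizes (namely $Y_{1}^{2}$ versus $Y_{2}^{4}$, or the mixed options $Y_{1} \times Y_{2}^{2}$) is compatible with these formulas. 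If this does not close the case, the fallback is algebraic. Each $\End^{0} ( Z_{j} )$ would have to contain $D$ and embed into $\M ( 2, K_{n+2} )$ or into $\M ( 4, K_{n+1} )$. I would test whether the quaternion algebra $D$, with centre $\Q ( \zeta_{2^{n+1}} + \zeta_{2^{n+1}}^{-1} )$, actually embeds into $\M ( 2, K_{n+2} )$ with the dimension forced by $\dim Z_{j} = 2^{n}$.

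Once $A = 1$, $\Jbf^{\mathrm{al}}$ is isotypic. By Remark~\ref{rem: simple observation of lem: End0 embeds and torsion bijects under specialization; subsec: specialization methods; sec: Preliminaries}, $\End^{0} ( \Jbf^{\mathrm{al}} )$ then embeds simultaneously into $\M ( 2, K_{n+2} )$ and into $\M ( 4, K_{n+1} )$. Its centre contains that of $D$, and its dimension is a multiple of $2^{n+1}$. Write $\Jbf^{\mathrm{al}} \sim X^{k}$ with $\End^{0} ( \Jbf^{\mathrm{al}} ) \cong \M ( k, \End^{0} ( X ) )$. If $k > 1$, then $\End^{0} ( X )$ contains a field of degree at least $2 \dim X$ coming from the maximal subfields of $D$, and $X$ would be CM, a contradiction. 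If $k = 1$, the divisibility in Lemma~\ref{lem: divisibility dimension and degree End simple abelian varieties; subsec: Abelian varieties; sec: Preliminaries} gives $\dim_{\Q} \End^{0} ( \Jbf^{\mathrm{al}} ) \mid 2^{n+2}$. The value $2^{n+2}$ would make the centre a CM field of degree $2^{n+2}$, or require $2^{n+2} = 2 \cdot 2 \dim$, which is a CM situation. Both are excluded by the non-CM property, so $\End^{0} ( \Jbf^{\mathrm{al}} ) = D$.
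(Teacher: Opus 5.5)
\paragraph{The gap: the case $A=2$.} You leave this case open, and neither of your two routes can close it. Since $H_1(Z_j,\Q)$ is a module over the division algebra $D$ of $\Q$-dimension $2^{n+1}$, one has $2^{n}\mid\dim Z_j$. So $A=2$ means exactly $\dim Z_1=\dim Z_2=2^n$.

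The obstruction is structural. The degenerate fibre $\Jcal\cong\Jac(\Ccal_1)\times\Jac(\Ccal_2)^2$ itself realizes this configuration. Hence no comparison with this single degeneration (endomorphism embeddings, the torsion bijection $\eta$, equivariance) can exclude it.
\begin{itemize}
\item \emph{Your algebraic fallback.} Since $D$ is simple and $\iota$ is unital, $\iota(D)$ maps injectively into the endomorphism algebra of each isotypic piece of $\Jcal$. So $D$ does embed into $\M(2,K_{n+2})$ and into $\M(4,K_{n+1})$, and the test gives nothing.
\item \emph{Your $2$-torsion route.} $\Jac(\Ccal_1)\times 0\times 0$ and $0\times\Jac(\Ccal_2)^2$ are the $Y_1$- and $Y_2$-isotypic parts of $\Jcal$, since $Y_1,Y_2$ are simple of different dimensions. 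Hence both are stable under $\iota(\Phi)$ and $\iota(\Psi)$. As $\eta$ is equivariant, their preimages are complementary $\Phi$- and $\Psi$-stable subspaces of $\Jbf[2]$ of exactly the sizes you would need. So the computation with $Q_0$ cannot produce a contradiction.
\end{itemize}
If you plug in the formulas of the degeneration proposition of this section literally, you will seem to get one, but it is spurious. With those formulas, $\eta(\Psi D^{+}_{\alpha_1,i})=\eta(D^{-}_{\alpha_1,-i})-\eta(Q_0)$ has $\Jac(\Ccal_1)$-component $\sum_j[(\zeta_{2^{n+1}}^{j},0)-\infty_1]=[(0,0)-\infty_1]\neq 0$. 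Yet $\eta(D^{+}_{\alpha_1,i})$ lies in the $\iota(\Psi)$-stable subvariety $0\times\Jac(\Ccal_2)^2$. The reason is that the section $\infty$ specializes to the component over $x=\infty$, so $\eta(P_i)$ and $\eta(D^{+}_{\alpha_1,i})$ carry node-correction terms.

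\paragraph{The missing idea (the paper's argument).} Use a second specialization in which $D$ cannot act on any proper piece. At $a_1=i$ the curve becomes $y^2=x(x^{2^{n+2}}-1)$. Its Jacobian is isogenous to $X^2$ with $X$ simple and $\End^0(X)\cong K_{n+3}$ commutative, by the $L=1$, $p=2$ lemma. This gives $A\le 2$ at once. If $\Jbf^{\mathrm{al}}$ had two isotypic components, each would specialize to a variety isogenous to $X$. Then $D\subseteq\End^0(X_j)$ would embed into $K_{n+3}$, impossible since $D$ is noncommutative.

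Only after isotypicity does the paper use your degeneration. It gives $\End^0(\Jbf^{\mathrm{al}})\hookrightarrow\M(2,K_{n+2})$, so $\End^0(\Jbf^{\mathrm{al}})$ is $D$ or $\M(2,K_{n+2})$. The latter is excluded because $K_{n+2}$ does not embed in $K_{n+3}$.

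\paragraph{Your endgame.} It can be repaired along your own lines. A unital embedding $\M(2,K_{n+2})\hookrightarrow\M(4,K_{n+1})$ would make $K_{n+1}^{4}$ the simple $\M(2,K_{n+2})$-module, by comparing $\Q$-dimensions. That forces $K_{n+1}\hookrightarrow K_{n+2}$, contradicting the remark that $K_{m+1}$ contains no copy of $K_m$ for $m>2$. Your CM argument for $k>1$ does not work, however: for $\M(2,K_{n+2})$ one has $\dim X=2^n$ and $[K_{n+2}:\Q]=2^n<2\dim X$, so nothing is CM.

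\paragraph{The non-CM lemma and an alternative.} The lemma you cite only shows that $\Jbf^{\mathrm{al}}$ is not isogenous to a product of CM varieties. It does not show that no isotypic factor is CM. An alternative way to kill $A=2$ is Shimura's exclusion of type III with $m=1$. A $2^n$-dimensional $Z_j$ on which the totally definite $D$ acts compatibly with the Rosati involution is of CM type. Then $\Jbf^{\mathrm{al}}$ would be entirely CM, contradicting that lemma. But this is an additional input your proposal does not supply.
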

            \begin{proof}
                Let $\Jbf^{\mathrm{al}} \sim X_{1} \times X_{2} \times \cdots \times X_{A}$ be an isogeny decomposition of $\Jbf^{\mathrm{al}}$, where some $X_{j}$ may be isogenous to each other. We regard the $X_{j}$ as abelian subvarieties of $\Jbf^{\mathrm{al}}$. 

                Consider the specialization $\Jfrak_{i}$ of $\Jfrak \to U$, namely, the Jacobian of the curve $y^{2} = x \cdot ( x^{2^{n+1}} - 1 ) \cdot ( x^{2^{n}} - i ) \cdot ( x^{2^{n}} + i ) = x \cdot ( x^{2^{n+2}} - 1 )$. In Lemma~\ref{lem: L = 1; subsec: The case p=2; sec: Cyclic groups of order p^n} we proved that $\Jfrak_{i}$ is isogenous to the square of a simple abelian variety $X$ with endomorphism algebra isomorphic to $K_{n+3} = \Q ( \zeta_{2^{n+3}} - \zeta_{2^{n+3}}^{-1} )$. In Lemma~\ref{lem: isogeny decomposition specializes; subsec: specialization methods; sec: Preliminaries} we proved that the number of factors in an isogeny decomposition of $\Jbf^{\mathrm{al}}$ is at most that of an isogeny decomposition of $\Jfrak_{i}$; hence, $A \le 2$. 

                Let us prove that $\End^{0} ( \Jbf^{\mathrm{al}} )$ is a simple algebra (equivalently, $\Jbf^{\mathrm{al}}$ is isotypic). Suppose, for the sake of contradiction, that $\Jbf^{\mathrm{al}}$ is not isotypic. Since $A \le 2$, it follows that $A = 2$ and $X_{1}, X_{2}$ are two distinct isotypic components of $\Jbf^{\mathrm{al}}$. As all nontrivial abelian subvarieties of $\Jfrak_{i}$ are isogenous to $X$, the subvariety $X_{1}$ specializes to an abelian variety isogenous to $X$ under the specialization $\Jfrak_{i}$. Consequently, $\End^{0} ( X_{1} )$ injects into $\End^{0} ( X ) \cong K_{n+3}$, which implies that $\End^{0}( X _{1} )$ is a field. By Lemma~\ref{lem: End0Jbf contains D; subsec: Preliminaries; sec: Generalized quaternions groups}, $D$ injects into $\End^{0} ( X_{1} )$. Since $D$ is not commutative, this yields a contradiction.

                In Lemma~\ref{lem: L = 1; subsec: The case p=2; sec: Cyclic groups of order p^n} we proved that the curve $\Ccal_{1} \colon y^{2} = x \cdot ( x^{ 2^{n + 1} } - 1 )$ (resp.~$\Ccal_{2} \colon y^{2} = x \cdot ( x^{2^{n}} - 2 )$) has Jacobian isogenous to the square of a simple abelian variety with endomorphism algebra isomorphic to $K_{n+2}$ (resp.~$K_{n+1}$).
                    
                By applying Proposition~\ref{pro: particular specialization; subsec: Specialization methods; sec: Generalized quaternions groups} with $2 \in U ( \Qbar )$ and $S = \{ 1 \}$, we deduce the existence of an abelian variety $\Jcal$, of the form $\Jfrak_{T', t}$ for some admissible $T'$ and $t \in \Abb_{\Qbar}^{1} ( \Qbar ) \supset T' ( \Qbar )$, isomorphic to $\Jac ( \Ccal_{1} ) \times \Jac ( \Ccal_{2} )^{2}$. Note that $\Jac ( \Ccal_{1} )$ and $\Jac ( \Ccal_{2} )^{2}$ are two distinct isotypic components of $\Jcal$. In Lemma~\ref{lem: End0 embeds and torsion bijects under specialization; subsec: specialization methods; sec: Preliminaries} applied to $\Jcal$ we proved that there exists an embedding
                \begin{equation*}
                    \End^{0} ( \Jbf^{\mathrm{al}} ) \xhookrightarrow{} \M ( 2, K_{n+2} ) \times \M ( 4, K_{n+1} ) .
                \end{equation*}

                Since $\Jbf^{\mathrm{al}}$ is isotypic, Remark~\ref{rem: simple observation of lem: End0 embeds and torsion bijects under specialization; subsec: specialization methods; sec: Preliminaries} shows that $\End^{0} ( \Jbf^{\mathrm{al}} )$ embeds into $\M ( 2, K_{n+2} )$. Hence, the $\Q$-dimension of $\End^{0} ( \Jbf^{\mathrm{al}} )$ is at most $2^{n+2}$. Since $\End^{0} ( \Jbf^{\mathrm{al}} )$ is a $D$-module and $D$ is a division algebra of dimension $2^{n+1}$ over $\Q$ (Remark~\ref{rem: D is a division algebra; subsec: Setup; sec: Generalized quaternions groups}), it follows that the $\Q$-dimension of $\End^{0} ( \Jbf^{\mathrm{al}} )$ is divisible by $2^{n+1}$. Recalling that $\End^{0} ( \Jbf^{\mathrm{al}} )$ embeds into $\M ( 2, K_{n+2} )$, whose $\Q$-dimension is $2^{n+2}$, it follows that $\End^{0} ( \Jbf^{\mathrm{al}} )$ is isomorphic to either $D$ or $\M ( 2, K_{n+2} )$.

                The second possibility, however, cannot occur. Indeed, if $\End^{0} ( \Jbf^{\mathrm{al}} )$ were isomorphic to $\M ( 2, K_{n+2} )$, then we would have $A = 2$ and $\End^{0} ( X_{1} ) \cong K_{n+2}$. As before, $X_{1}$ specializes to an abelian variety isogenous to $X$ under the specialization $\Jfrak_{i}$. Consequently, $\End^{0} ( X_{1} ) \cong K_{n+2}$ injects into $\End^{0} ( X ) \cong K_{n+3}$, which contradicts Remark~\ref{rem: Kn not in Kn+1; subsec: The case p=2; sec: Cyclic groups of order p^n}.

                Hence, $\End^{0} ( \Jbf^{\mathrm{al}} )$ is isomorphic to $D$. Theorems~\ref{the: reduction first statement main the; subsec: Preliminaries; sec: Generalized quaternions groups} and~\ref{the: there are infinitely many classes; subsec: Preliminaries; sec: Generalized quaternions groups} imply that Theorem~\ref{the: main; sec: Generalized quaternions groups} holds for $L = 1$.
            \end{proof}

            Finally, we consider the general case.

            \begin{proof}[Proof of Theorem~\ref{the: main; sec: Generalized quaternions groups}]
                We proceed by induction on $L \ge 1$. Lemma~\ref{lem: L = 1 is true; subsec: Proof of main result; sec: Generalized quaternions groups} proves the claim for $L = 1$. So, let us assume that $L \ge 1$ and that the claim holds for every $1 \le L' \le L$, and let us prove it for $L + 1$. 
                
                Let $\Jbf^{\mathrm{al}} \sim X_{1} \times X_{2} \times \cdots \times X_{A}$ be an isogeny decomposition of $\Jbf^{\mathrm{al}}$, where some $X_{j}$ may be isogenous to each other. We regard the $X_{j}$ as abelian subvarieties of $\Jbf^{\mathrm{al}}$.

                In Lemma~\ref{lem: useful to apply cor: exclusion cases with torsion arguments; subsec: specialization methods; sec: Cyclic groups of order p^n} and Remark~\ref{rem: lemma remains valid; subsec: Specialization methods; sec: Generalized quaternions groups} we proved that there exists $( a_{1}, \dots, a_{L+1} ) \in U ( \Qbar )$ such that the endomorphism algebras of the Jacobians of both curves $\Ccal_{1} \colon y^{2} = x \cdot ( x^{2^{n+1}} - 1 ) \cdot \bigl[ ( x^{2^{n}} - a_{1} ) \cdot ( x^{2^{n}} - a_{1}^{-1} ) \bigr] \cdots \bigl[ ( x^{2^{n}} - a_{L} ) \cdot ( x^{2^{n}} - a_{L}^{-1} ) \bigr]$ and $\Ccal_{3} \colon y^{2} = x \cdot ( x^{2^{n+1}} - 1 ) \cdot \bigl[ ( x^{2^{n}} - a_{2} ) \cdot ( x^{2^{n}} - a_{2}^{-1} ) \bigr] \cdots \bigl[ ( x^{2^{n}} - a_{L+1} ) \cdot ( x^{2^{n}} - a_{L+1}^{-1} ) \bigr]$ are isomorphic to that of the Jacobian of the generic fiber of $y^{2} = x \cdot ( x^{2^{n+1}} - 1 ) \cdot \bigl[ ( x^{2^{n}} - \alpha_{1} ) \cdot ( x^{2^{n}} - \alpha_{1}^{-1} ) \bigr] \cdots \bigl[ ( x^{2^{n}} - \alpha_{L} ) \cdot ( x^{2^{n}} - \alpha_{L}^{-1} ) \bigr]$. The inductive hypothesis implies that the Jacobians of $\Ccal_{1}$ and $\Ccal_{3}$ are simple, of dimension $( L + 1 ) \cdot 2^{n}$ (Remark~\ref{rem: genus of Cbf; subsec: Setup; sec: Generalized quaternions groups}), with endomorphism algebras isomorphic to $D$.

                In Lemma~\ref{lem: L = 1; subsec: The case p=2; sec: Cyclic groups of order p^n} we proved that both curves $\Ccal_{2} \colon y^{2} = x \cdot ( x^{2^{n}} - a_{L+1} )$ and $\Ccal_{4} \colon y^{2} = x \cdot ( x^{2^{n}} - a_{1} )$ have Jacobians isogenous to the square of the same simple abelian variety $X$, of dimension $2^{n-2}$ (Remark~\ref{rem: genus of Cbf; subsec: Setup; sec: Cyclic groups of order p^n}), with endomorphism algebra isomorphic to $K_{n+1} = \Q \bigl( \zeta_{2^{n+1}} - \zeta_{2^{n+1}}^{-1} \bigr)$.

                By applying Proposition~\ref{pro: particular specialization; subsec: Specialization methods; sec: Generalized quaternions groups} with $( a_{1}, \dots, a_{L+1} ) \in U ( \Qbar )$ and $S = \{ L+1 \}$, we deduce the existence of an abelian variety $\Jcal_{1}$, of the form $\Jfrak_{T', t}$ for some admissible $T'$ and $t \in \Abb_{\Qbar}^{1} ( \Qbar ) \supset T' ( \Qbar )$, isomorphic to $\Jac ( \Ccal_{1} ) \times \Jac ( \Ccal_{2} )^{2}$. 
                    
                Let us prove that $\Jbf^{\mathrm{al}}$ is simple. Let us assume, for the sake of contradiction, that $X_{1}$ is a proper abelian subvariety of $\Jbf^{\mathrm{al}}$. In Lemma~\ref{lem: isogeny decomposition specializes; subsec: specialization methods; sec: Preliminaries} we proved that there exists a simple factor of $\Jbf^{\mathrm{al}}$ of dimension at least the maximal dimension of a simple factor of $\Jcal_{1}$; hence, up to reordering the $X_{j}$, we can assume that $\dim ( X_{1} ) \ge \dim \bigl( \Jac ( \Ccal_{1} ) \bigr)$.
                    
                We show that $X_{1}$ specializes to $\Jac ( \Ccal_{1} )$ and $X_{2} \times \dots \times X_{A}$ specializes to $\Jac ( \Ccal_{2} )^{2}$ in $\Jcal_{1}$. In Lemma~\ref{lem: End0 embeds and torsion bijects under specialization; subsec: specialization methods; sec: Preliminaries} we proved that an abelian subvariety of $\Jbf^{\mathrm{al}}$ specializes to an abelian subvariety of $\Jcal_{1}$ of the same dimension. Since $\Jcal_{1} \cong \Jac ( \Ccal_{1} ) \times \Jac ( \Ccal_{2} )^{2}$, $\Jac ( \Ccal_{2} ) \sim X^{2}$ and both $\Jac ( \Ccal_{1} )$ and $X$ are simple, the proper abelian subvarieties of $\Jcal_{1}$ are isogenous either to $X^{m}$, for some integer $1 \le m \le 4$, or $\Jac ( \Ccal_{1} ) \times X^{m}$, for some integer $0 \le m \le 3$. Since $X$ has dimension $2^{n-2}$ and $\Jac ( \Ccal_{1} )$ has dimension $( L + 1 ) \cdot 2^{n}$, the proper abelian subvarieties of $\Jcal_{1}$ of dimension at least $\dim \bigl( \Jac ( \Ccal_{1} ) \bigr)$ are isogenous to $\Jac ( \Ccal_{1} ) \times X^{m}$ for some integer $0 \le m \le 3$. It follows that $X_{1}$ specializes to an abelian variety isogenous to $\Jac ( \Ccal_{1} ) \times X^{m}$ in $\Jcal_{1}$ for some integer $0 \le m \le 3$. Note that $X_{1}$ is an isotypic component of $\Jbf^{\mathrm{al}}$, since $2 \dim ( X_{1} ) > \dim ( \Jbf^{\mathrm{al}} )$.

                If $m = 3$, then $A = 2$ and $X_{2}$ specializes to an abelian variety isogenous to $X$ in $\Jcal_{1}$, since in Lemma~\ref{lem: isogeny decomposition specializes; subsec: specialization methods; sec: Preliminaries} we proved that a decomposition (up to isogeny) of $\Jbf^{\mathrm{al}}$ specializes to a decomposition (up to isogeny) of $\Jcal_{1}$. Hence, $\End^{0} ( X_{2} )$ embeds into $\End^{0} ( X ) \cong K_{n+1}$ (Lemma~\ref{lem: End0 embeds and torsion bijects under specialization; subsec: specialization methods; sec: Preliminaries}). In Lemma~\ref{lem: End0Jbf contains D; subsec: Preliminaries; sec: Generalized quaternions groups} we proved that $D$ embeds into $\End^{0} ( X_{2} )$, since $X_{2}$ is an isotypic component of $\Jbf^{\mathrm{al}}$, which yields a contradiction since $D$ is not commutative.

                If $m = 2$, then $\End^{0} ( X_{1} )$ embeds into $\End^{0} \bigl( \Jac ( \Ccal_{1} ) \times X^{2} \bigr) \cong D \times \M ( 2, K_{n+1} )$ (Lemma~\ref{lem: End0 embeds and torsion bijects under specialization; subsec: specialization methods; sec: Preliminaries}). Since $X_{1}$ is an isotypic component of $\Jbf^{\mathrm{al}}$, the algebra $\End^{0} ( X_{1} )$ contains a subalgebra isomorphic to $D$ (Lemma~\ref{lem: End0Jbf contains D; subsec: Preliminaries; sec: Generalized quaternions groups}). The image of the embedding $\End^{0} ( X_{1} ) \xhookrightarrow{} D \times \M ( 2, K_{n+1} )$ is contained in $D \times \{ 0 \}$. Indeed, the induced morphism $\End^{0} ( X_{1} ) \hookrightarrow D \times \M ( 2, K_{n+1} ) \twoheadrightarrow \M ( 2, K_{n+1} )$ cannot be injective, since $\dim_{\Q} ( D ) = \dim_{\Q} \bigl( \M ( 2, K_{n+1} ) \bigr)$ and $D$ is not isomorphic to $\M ( 2, K_{n+1} )$ (Remark~\ref{rem: D is a division algebra; subsec: Setup; sec: Generalized quaternions groups}). Hence, the induced morphism is trivial, since $\End^{0} ( X_{1} )$ is a simple algebra. This yields a contradiction, as the dimension of the identity component of the kernel of an idempotent in $\End^{0} ( \Jbf^{\mathrm{al}} )$ is preserved under specialization (Lemma~\ref{lem: End0 embeds and torsion bijects under specialization; subsec: specialization methods; sec: Preliminaries}). If $m = 1$, we argue in the same way.

                It follows that $m = 0$, hence $X_{1}$ specializes to $\Jac ( \Ccal_{1} )$ and $X_{2} \times \cdots \times X_{A}$ specializes to $\Jac ( \Ccal_{2} )^{2}$ in $\Jcal_{1}$. By applying Proposition~\ref{pro: particular specialization; subsec: Specialization methods; sec: Generalized quaternions groups} with $( a_{1}, \dots, a_{L+1} ) \in U ( \Qbar )$ and $S = \{ 1 \}$, we deduce the existence of an abelian variety $\Jcal_{2}$, of the form $\Jfrak_{T', t}$ for some admissible $T'$ and $t \in \Abb_{\Qbar}^{1} ( \Qbar ) \supset T' ( \Qbar )$, isomorphic to $\Jac ( \Ccal_{3} ) \times \Jac ( \Ccal_{4} )^{2}$. As before, we show that $X_{1}$ specializes to $\Jac ( \Ccal_{3} )$ and $X_{2} \times \cdots \times X_{A}$ specializes to $\Jac ( \Ccal_{4} )^{2}$ in $\Jcal_{2}$. This contradicts Corollary~\ref{cor: exclusion cases with torsion arguments; subsec: Specialization methods; sec: Generalized quaternions groups}.

                It follows that $X_{1} = \Jbf^{\mathrm{al}}$, hence $\Jbf^{\mathrm{al}}$ is simple. Note that $\Jac ( \Ccal_{1} )$ and $\Jac ( \Ccal_{2} )^{2}$ are two distinct isotypic components of $\Jcal_{1}$. Remark~\ref{rem: simple observation of lem: End0 embeds and torsion bijects under specialization; subsec: specialization methods; sec: Preliminaries} shows that $\End^{0} ( \Jbf^{\mathrm{al}} )$ embeds into $\End^{0} \bigl( \Jac ( \Ccal_{1} ) \bigr) \cong D$. On the other hand, we proved in Lemma~\ref{lem: End0Jbf contains D; subsec: Preliminaries; sec: Generalized quaternions groups} that $\End^{0} ( \Jbf^{\mathrm{al}} )$ contains a subalgebra isomorphic to $D$. We deduce that $\End^{0} ( \Jbf^{\mathrm{al}} )$ is isomorphic to $D$. Theorems~\ref{the: reduction first statement main the; subsec: Preliminaries; sec: Generalized quaternions groups} and~\ref{the: there are infinitely many classes; subsec: Preliminaries; sec: Generalized quaternions groups} imply that Theorem~\ref{the: main; sec: Generalized quaternions groups} holds for $L + 1$, as desired.
            \end{proof}



\printbibliography


\end{document}